\def\hor{\mathrm{hor}}
\def\ver{\mathrm{ver}}
\def\Z{\mathrm{Z}}
\def\E{\mathbf{E}}
\def\P{\mathbf{P}}
\def\rw{\omega}
\def\curv{\sigma}
\def\Exp{\mathrm{Exp}}
\def\E{\bfE}
\def\P{\bfP}
\def\M{\mathrm{M}}
\def \G{\mathrm G}
\def\ShpMin{\zeta}
\def\dd{\mathrm d}
\def\shp{\gamma}
\newcommand{\bbE}{\mathbb{E}}
\newcommand{\bbR}{\mathbb{R}}
\newcommand{\bbZ}{\mathbb{Z}}
\newcommand{\bfE}{\mathbf{E}}
\newcommand{\bfP}{\mathbf{P}}
\newcommand{\bfQ}{\mathbf{Q}}
\newcommand{\bfp}{\mathbf{p}}
\newcommand{\bfu}{\mathbf{u}}
\newcommand{\bfv}{\mathbf{v}}
\newcommand{\bfx}{\mathbf{x}}
\newcommand{\bfy}{\mathbf{y}}
\newcommand{\mrL}{\mathrm{L}}
\newcommand{\mrX}{\mathrm{X}}
\newcommand{\sG}{\mathcal{G}}
\newcommand{\Fabs}[4]
{
\ifthenelse{\isempty{#2}}
{
{#1}_{#3}^{#4}
}
{
{#1}_{#3}^{#4}(#2)
}
}
\newcommand{\one}{\mathbf{1}}
\newcommand{\lc}{\lceil}
\newcommand{\rc}{\rceil}
\newcommand{\lf}{\lfloor}
\newcommand{\rf}{\rfloor}
\newcommand{\Var}{\mathbf{Var}}
\def\dd{\mathrm d}
\definecolor{darkblue}{rgb}{.1,.1,.6}
\newcommand{\wt}[1]{\widetilde{#1}}
\newtheorem{thm}{Theorem}[section]
\newtheorem{prop}[thm]{Proposition}
\newtheorem{lem}[thm]{Lemma}
\theoremstyle{remark}
\newtheorem{rem}{Remark}[section]
\newcommandx{\note}[2][1=]{\todo[linecolor=yellow,backgroundcolor=yellow!25,bordercolor=yellow,#1]{#2}}
\newcommand{\norm}[1]{\lVert{#1}\rVert}
\renewcommand{\thesubsection}{\thesection.\Alph{subsection}}
\title[Central moment bounds in exponential LPP]
{Coupling derivation of optimal-order central moment bounds in exponential last-passage percolation}
\author[E.~Emrah]{Elnur Emrah}
\address{Elnur Emrah\\ University of Bristol \\ School of Mathematics \\ Bristol \\ United Kingdom}
\email{e.emrah@bristol.ac.uk}
\urladdr{https://sites.google.com/view/elnur-emrah}
\thanks{E.\ Emrah was supported by the grant KAW 2015.0270 from the Knut and Alice Wallenberg Foundation and by the EPSRC grant EP/W032112/1.} 
\author[N.~Georgiou]{Nicos Georgiou}
\address{Nicos Georgiou\\ University of Sussex\\ Department of  Mathematics \\ Falmer Campus\\ Brighton BN1 9QH\\ UK.}
\email{N.Georgiou@sussex.ac.uk}
\urladdr{http://www.sussex.ac.uk/profiles/329373} 
\thanks{N. Georgiou was partially supported by the Dr.\ Perry James Browne Research Centre.} 
\author[J.~Ortmann]{Janosch Ortmann}
\address{Janosch Ortmann\\ Universit\'e du Qu\'ebec \`a Montr\'eal\\ Case postale 8888, succ. Centre-ville\\ Montr\'eal (QC) H3C 3P8\\ Canada}
\email{ortmann.janosch@uqam.ca}
\urladdr{http://crm.umontreal.ca/~ortmann/}
\thanks{}
\keywords{Central moments, corner growth model, exit points, fluctuation exponents, last-passage percolation, tail bounds}
\subjclass[2000]{60K35, 60K37} 
\begin{document}
\begin{abstract}
We introduce new probabilistic arguments to derive optimal-order central moment bounds in planar directed last-passage percolation. Our technique is based on couplings with the increment-stationary variants of the model, and is presented in the context of i.i.d.\ exponential weights for both zero and near-stationary boundary conditions. A main technical novelty in our approach is a new proof of the left-tail fluctuation upper bound with exponent $3/2$ for the last-passage times. 
\end{abstract}

\maketitle



\section{Introduction}

\subsection{Exponential LPP}
\label{Ss:ExpLPP}

This article studies the \emph{exponential last-passage percolation} (LPP) on the lattice quadrant. The most standard version of the model can be introduced as follows. 
With $\bbZ^2$ equipped with the coordinatewise partial order, for each $\bfu, \bfv \in \bbZ^2$, let $\Pi_{\bfu, \bfv}$ denote the set of all maximally large, totally ordered subsets of the integer grid $\{\bfp \in \bbZ^2: \bfu \le \bfp \le \bfv\}$. Each $\pi \in \Pi_{\bfu, \bfv}$ can be viewed as a nearest-neighbor path from $\bfu$ to $\bfv$ with increments $(1, 0)$ and $(0, 1)$. Given IID $\Exp(1)$-distributed \emph{weights} $\rw = \{\rw_{\bfv}: \bfv \in \bbZ_{>0}^2\}$, the \emph{last-passage times} are defined by 
\begin{align}
\G_{\bfu, \bfv} = \max_{\pi \in \Pi_{\bfu, \bfv}} \left\{\sum_{\bfp \in \pi}\rw_{\bfp}\right\} \quad \text{ for } \bfu, \bfv \in \bbZ_{>0}^2. \label{EBlkLPP}
\end{align}
By definition, $\G_{\bfu, \bfv} = -\infty$ if the inequality $\bfu \le \bfv$ fails.  When $\bfu \le \bfv$, a.s., there exists a unique maximizer $\pi_{\bfu, \bfv} \in \Pi_{\bfu, \bfv}$ in \eqref{EBlkLPP}, which is called the \emph{geodesic} from $\bfu$ to $\bfv$. Typically of interest are statistical properties of last-passage times and geodesics, particularly on large scales. 

The study of the exponential LPP has been both motivated and enabled by its well-known connections to several other prominent models, which we briefly mention. Abbreviating 
\begin{align}
\label{E:BlkLPP1}
\G_{\bfv} = \G_{(1, 1), \bfv} \quad \text{ for } \bfv \in \bbZ_{>0}^2, 
\end{align}
consider the random cluster of unit squares 
\begin{align}
\sG_t = \{\bfx \in \bbR_{>0}^2: \G_{\lc \bfx \rc} \le t\} \label{E:CGM}
\end{align} 
evolving over time $t \ge 0$. The process $(\sG_t)_{t \ge 0}$ is Markovian and coincides in distribution with the growth process of the \emph{corner growth model}, which first appeared in the seminal article \cite{Rost_81} as a geometric interpretation of the \emph{totally asymmetric simple exclusion process} (TASEP) with the step initial condition. The latter is a prototypical interacting particle system where particles labeled with $\bbZ_{>0}$ perform independent, unit-step, rightward jumps at rate $1$ on sites of $\bbZ$, subject to the exclusion rule that at most one particle is allowed per site at any time. The initial configuration places particle $j$ at site $-j$. Then $\G_{(i, j)}$ is equal in distribution to the time when particle $j$ executes its $i$th jump. Another equivalent model is a tandem of M/M/1 queues \cite{Glyn_Whit_91}, namely, a series of single-server queues with exponential service times and first-in first-out discipline, attending a stream of customers with exponential inter-arrival times. In the queueing context, $\G_{(i, j)}$ corresponds to the time when customer $j$, having just been serviced at queue $i$, joins the next queue $i+1$. The preceding connections are relatively straightforward consequences of expressing formula \eqref{EBlkLPP} in a recursive form. There is also the following deeper link to the random matrix theory. Let $\mrX_{m, n}$ be an $m \times n$ random matrix filled with independent complex Gaussian entries of mean zero and variance one. The (law of the) 
$n \times n$ random Hermitian matrix $\mrL_{m, n} = \mrX^*_{m, n}\mrX_{m, n}$ where * denotes the conjugate transpose is called the \emph{Laguerre {\rm(\emph{Wishart})} unitary ensemble} (LUE) with variance one (and parameter $m$). Then $\G_{(m,n)}$ coincides in distribution with the largest eigenvalue of $\mrL_{m, n}$ \cite{Joha_00}. 

\subsection{Stochastic growth and KPZ universality}
\label{Ss:KPZ}

Through its interpretation in \eqref{E:CGM} as a growth process, the exponential LPP also ties to a broader stream of 
research on stochastic growth, which has been a major topic of probability since the introduction of Eden's model \cite{Eden_61} and the first-passage percolation (FPP) \cite{Hamm_Wels_65} around sixty years ago. The sustained interest is partly driven by the sweeping predictions of the Kardar-Parisi-Zhang (KPZ) universality \cite{Kard_Pari_Zhan_86} for the asymptotic growth statistics. In the best understood case of 
dimension two in particular, 
under fairly broad conditions, 
the long-time fluctuations and spatial correlations of a growth process are expected to scale with exponents $1/3$ and $2/3$, respectively. Furthermore, canonical limit objects such as the Tracy-Widom distributions \cite{Trac_Wido_94},  Airy processes \cite{Prah_Spoh_02},  KPZ fixed point \cite{Mate_Quas_Reme_21} and directed landscape \cite{Dauv_Ortm_Vira_22} are expected to universally arise in the appropriate scaling limits. These predictions are believed to hold in wide generality in many other contexts as well, for the suitable analogues of a growth process, such as the free energy of a planar random polymer and the cumulative particle current of a one-dimensional exclusion process. Further discussion can be found in the review articles \cite{Corw_12, Quas_12, Quas_Spoh_15}. 

The rigorous study of the KPZ universality has progressed predominantly through a small collection of integrable (exactly solvable) settings where explicit calculations are possible.  Some main examples of integrable growth models come from planar LPP with special weight distributions \cite{Baik_Rain_01a, Joha_00b} with the most studied cases being the exponential, geometric and Poisson LPP.  It is of utmost interest but presently out of reach to prove any element of the KPZ universality for a large class of growth processes, for example, for the LPP with general IID weights subject to mild conditions. The current state of the art for accessing the KPZ limit objects is rooted in the pioneering works \cite{Baik_Deif_Joha_99, Joha_00},  and entails a sophisticated and expanding suite of integrable techniques drawing from combinatorics, representation theory, random matrices and asymptotic analysis; see the survey articles \cite{Boro_Gori_16, Boro_Petr_14, Joha_Lec_06, Zygo_22}. These techniques have yielded an impressively detailed picture of limit fluctuations for various integrable LPP to the point of identifying the limit of the entire LPP process \cite{Dauv_Nica_Vira_23, Dauv_Ortm_Vira_22,  Mate_Quas_Reme_21}. 

\subsection{Coupling approach to KPZ fluctuation exponents}
\label{Ss:CoupKPZ}

Some aspects of the KPZ universality such as the scaling exponents (although not any of the KPZ limits at the moment) can also be captured via more 
probabilistic arguments based on couplings and invariant measures. This coupling approach was first developed for the Poisson LPP in the seminal works \cite{Cato_Groe_05, Cato_Groe_06} of Cator and Groeneboom. It was subsequently adapted in strenghtened form to the exponential LPP by Bal\'{a}zs, Cator and Sepp\"{a}l\"{a}inen \cite{Bala_Cato_Sepp_06}. 
Since the present work refines the coupling method in the context of the exponential LPP, in what follows, we first state some results from \cite{Bala_Cato_Sepp_06} and briefly explain the underlying approach. More detailed expositions of the coupling method can be found in the lecture notes \cite{Cato_Pime_11, Sepp_18_CGM}.

We begin with recalling the stationary version of the exponential LPP introduced in \cite{Bala_Cato_Sepp_06}. Given a (boundary) parameter $z \in (0, 1)$, consider independent weights $\rw^{z} = \{\rw^{z}_\bfv: \bfv \in \bbZ^2_{\ge 0}\}$ that extend the \emph{bulk} weights $\rw$ defined on $\bbZ_{>0}^2$ to the nonnegative axes. Thus,    
\begin{align}
\label{E:Coup}
\rw_{\bfv}^z = \rw_{\bfv} \sim \Exp(1) \quad \text{ for } \bfv \in \bbZ^2_{>0}. 
\end{align}
Assume that the marginals of the boundary weights are given by  
\begin{align}
\label{E:wBd}
\begin{split}
\rw_{(i, 0)}^z \sim \Exp(z) \ \text{ for } i \in \bbZ_{>0} \quad \text{ and } \quad \rw_{(0, j)}^z \sim \Exp(1-z) \ \text{ for } j \in \bbZ_{>0},  
\end{split}
\end{align}
and set $\rw_{(0, 0)}^z = 0$. As in \eqref{EBlkLPP}, define the last-passage times of the $\rw^z$-weights by 
\begin{align}
\label{E:LPPwBd}
\G^z_{\bfu, \bfv} = \max_{\pi \in \Pi_{\bfu, \bfv}} \left\{\sum_{\bfp \in \pi}\rw_{\bfp}^z\right\} \quad \text{ for } \bfu, \bfv \in \bbZ_{\ge 0}^2. 
\end{align}
Also, in the case $\bfu \le \bfv$, let $\pi_{\bfu, \bfv}^z$ denote the a.s.\ unique geodesic in \eqref{E:LPPwBd}. 

A tractable feature of the preceding LPP model is that the last-passage times with the initial vertex at the origin, 
\begin{align}
\label{E:IncStLPP}
\G_\bfv^z = \G_{(0, 0), \bfv}^z \quad \text{ for } \bfv \in \bbZ_{\ge 0}^2, 
\end{align}
enjoy the Burke property \cite[Lemma 4.2]{Bala_Cato_Sepp_06}. The aspect of this property relevant to the current work is that the distributional identity   
\begin{align}
\label{EIncDis}
\begin{split}
&\{\G^{z}_{\bfu + (i, 0)}-\G^{z}_{\bfu + (i-1, 0)}: i \in \bbZ_{>0}\} \cup \{\G^{z}_{\bfu + (0, j)}-\G^z_{\bfu + (0, j-1)}: j \in \bbZ_{>0}\} \\
&\stackrel{\text{dist}}{=} \{\rw^z_{(i, 0)}: i \in \bbZ_{>0}\} \cup \{\rw^z_{(0, j)}: j \in \bbZ_{>0}\} 
\end{split}
\end{align}
holds for each $\bfu \in \bbZ_{\ge 0}^2$. It follows from \eqref{EIncDis} and a deterministic property of the last-passage times recorded, for example, in \cite[Lemma A.1]{Sepp_18_CGM} that the $\G^z$-process has stationary increments:  
\begin{align}
\label{EIncSt}
\G^{z}_{\bfv + \bfu}-\G^z_{\bfu} \stackrel{\text{dist}}{=} \G^{z}_{\bfv} \quad \text{ for } \bfu, \bfv \in \bbZ_{\ge 0}^2. 
\end{align}
This stationarity will also be the technical basis of our work. 
As a useful consequence of stationarity and the knowledge of the boundary weights, one can compute the last-passage means explicitly as follows. Writing 
\begin{align}
\label{EMeanF}
\M^z_{\bfx} = \frac{x}{z} + \frac{y}{1-z} \quad \text{ for } \bfx = (x, y) \in \bbR_{\ge 0}^2,     
\end{align}
one obtains from \eqref{EIncSt} and \eqref{E:wBd} that 
\begin{align}
\label{EMean}
\begin{split}
\E[\G^z_{(m, n)}] &= \E[\G^z_{(m, n)}-\G^z_{(0, n)}] + \E[\G^z_{(0, n)}] \\ 
&= \E[\G^z_{(m, 0)}] + \E[\G^z_{(0, n)}] = \M^z_{(m, n)} \quad \text{ for } m, n \in \bbZ_{\ge 0}. 
\end{split}
\end{align}

Through the Burke property, the variance of $\G^z_\bfv$ relates to the expected total weight that the geodesic $\pi_\bfv^z = \pi_{(0, 0), \bfv}^z$ collects on either boundary \cite[Lemma 4.6]{Bala_Cato_Sepp_06}: For $m, n \in \bbZ_{\ge 0}$, 
\begin{align}
\label{E:VarId}
\begin{split}
\Var[\G^z_{(m, n)}] 
&= -\frac{m}{z^2}+\frac{n}{(1-z)^2}+\frac{2}{z}\E\bigg[\sum_{i = 0}^m \rw^{z}_{(i, 0)} \cdot \one_{\{(i, 0) \in \pi_{(m, n)}^z\}}\bigg] \\ 
&= \frac{m}{z^2}-\frac{n}{(1-z)^2} + \frac{2}{1-z}\E\bigg[\sum_{j=0}^n \rw_{(0, j)}^z\cdot \one_{\{(0, j) \in \pi_{(m, n)}^z\}}\bigg].    
\end{split}
\end{align}
These identities are the analogues of the variance identity recorded in \cite[Theorem 2.1]{Cato_Groe_06} for the Poisson LPP. From \eqref{E:VarId}, it is possible to extract the order of the variance with $(m, n) = (m_k, n_k)$ growing to infinity in a given direction as $k \to \infty$. In particular, if $m_k = \lf z^2 k \rf$ and $n_k = \lf (1-z)^2 k \rf$ for $k \in \bbZ_{>0}$ then 
\begin{align}
\label{E:VarBd}
c_0 k^{2/3} \le \Var[\G^z_{(m_k, n_k)}] \le C_0 k^{2/3} \quad \text{ for } k \ge k_0
\end{align}
for some $z$-dependent positive constants $C_0$, $c_0$ and $k_0$ \cite[Theorem 2.1]{Bala_Cato_Sepp_06}. Thus, the fluctuations of $\G^z_{(m_k, n_k)}$ are of order $k^{1/3}$ as in the KPZ class. For the Poisson LPP, bounds analogous to \eqref{E:VarBd} previously appeared in \cite[Corollaries 4.3 and 6.2]{Cato_Groe_06}. As it relates to the topic of the current work, we also remark here that the lower bound in \eqref{E:VarBd} together with Jensen's inequality readily implies correct-order lower bounds for higher central moments: For each $p \in [2, \infty)$,     
\begin{align}
\label{E:StCMLB}
\E[|\G^z_{(m_k, n_k)}-\M_{(m_k, n_k)}^z|^p] \ge c_0^p k^{p/3} \quad \text{ for } k \ge k_0.  
\end{align}

The direction (with $L^1$ normalization) $\xi^z$ of the special vector $(z^2, (1-z)^2)$ chosen above is called the \emph{characteristic direction} in reference to the closely related characteristic curves of Burgers' equation 
\begin{align}
\label{E:Burgers}
\partial_t \rho(t, x) + (1-\rho(t, x)) \partial_x \rho(t, x) = 0 \quad \text{ for } t \ge 0 \text{ and } x \in \bbR 
\end{align}
with the initial condition 
\begin{align}
\label{E:Burgers-IC}
\rho(0, x) = 1-z \quad \text{ for } x \in \bbR. 
\end{align}
The system \eqref{E:Burgers}-\eqref{E:Burgers-IC} describes the evolution of the macroscopic particle density $\rho$ of the TASEP started from the initial configuration where each site of $\bbZ$ is independently empty with probability $z$; see \cite{Bala_Cato_Sepp_06} for details. In terms of \eqref{E:VarId}, the characteristic direction is the unique direction along which the first two terms on the right-hand sides essentially cancel out. For any other direction, these terms together contribute linearly in $k$ and, consequently, the fluctuations are of order $k^{1/2}$. Thus, the $\G_\bfv^z$-process exhibits a mix of KPZ and Gaussian behavior, which will also be reflected in our central moments in Section \ref{S:Res}. 

Via the coupling of the weights in \eqref{E:Coup}, the bulk LPP process $\G$ can be studied by comparison with the increment-stationary version $\G^z$ with advantageously chosen boundary parameter. Through an early instance of this coupling approach, \cite[Corollary 7.2]{Bala_Cato_Sepp_06} derived tail bounds for the $\G$-process, which can be equivalently stated as the following central moment\footnote{In \eqref{E:CMBd}, the centering $k$ is different from the mean $\bbE[\G_{(m_k, n_k)}]$ at least for large enough $k$; see Lemma \ref{LIncRTLB} ahead. Thus, the term \emph{central moment} is used in this work in a more general sense than usual.} bound: With $1 \le p < 3/2$ and the sequence $(m_k, n_k)$ as above,     
\begin{align}
\label{E:CMBd}
\E[|\G_{(m_k, n_k)}-k|^p] \le C_0 k^{p/3} \quad \text{ for } k \in \bbZ_{>0}
\end{align}
for some constant $C_0 > 0$ depending on $p$ and $z$. 
Again for the Poisson LPP, the earlier work \cite[p.\ 24]{Cato_Groe_06} observed an analogous bound in the case $p = 1$. 

Further adaptations applied the coupling method in various KPZ-class models including the asymmetric simple exclusion process (ASEP) \cite{Bala_Sepp_09, Bala_Sepp_10}, the O'Connell--Yor polymer with fixed temperature \cite{Sepp_Valk_10} and in the intermediate disorder regime \cite{More_Sepp_Valk_14}, the KPZ equation \cite{Bala_Quas_Sepp_11}, the log-gamma polymer \cite{Sepp_12_corr} and its inhomogeneous 
generalization \cite{Kang_19_PhD}, the beta, inverse-beta and strict-weak  polymers \cite{Chau_Noac_18b} and the Bernoulli LPP with strict-strict paths \cite{Ciec_Geor_19}. In the span of these works, the strength of the central moment bounds (in comparison to \eqref{E:VarBd} and \eqref{E:CMBd}) advanced mainly in the following respects. First, in the bulk setting, article \cite{More_Sepp_Valk_14} discovered a coupling argument for optimally bounding the first absolute central moment from below, which again yields optimal-order lower bounds for all higher central moments via Jensen's inequality. (Proposition \ref{P:BlkCMLB} ahead records these these lower bounds for the exponential LPP). 
Second, the PhD thesis \cite{Kang_19_PhD} extended the upper limit of the power $p$ in the bulk central moments upper bounds from $3/2$ up to $2$ with a logarithmic weakening at $2$. 

\subsection{Recent developments in the coupling approach}
\label{Ss:Coup20+}

Until 2020, the coupling method could not produce good upper bounds for the central moments beyond the variance. Sharp right-tail upper bounds 
were already available due to superadditivity \cite{Sepp_98}; the difficulty was the lack of sufficient control over the left-tail fluctuations. 
A significant advance came in \cite{Noac_Soso_22}. This article developed higher-degree cumulant identities for the O'Connell--Yor polymer, generalizing the variance identity in \cite{Sepp_Valk_10}. From their cumulant identities, the authors then derived nearly optimal upper bounds for all central moments in the increment-stationary case and around the characteristic direction. The follow-up work of the authors \cite{Noac_Soso_22b} tailored their technique to the four basic integrable lattice polymers (log-gamma, beta, inverse-beta, strict-weak) using the framework of \cite{Chau_Noac_18b}. For ease of comparison later, we mention that the counterparts of the central moment bounds from \cite{Noac_Soso_22, Noac_Soso_22b} would (up to minor differences) read as follows for the exponential LPP: Fix $A \ge 0$, $p \in (0, \infty)$, $z \in (0, 1)$ and $\epsilon > 0$, and assume that $|m_k-z^2k|\le Ak^{2/3}$ and $|n_k-(1-z)^2k| \le Ak^{2/3}$ for $k \in \bbZ_{>0}$. Then there exists a constant $C_0$ (depending on $A$, $p$, $z$ and $\epsilon$) such that  
\begin{align}
\label{Eq-NS}
\E[|\G^z_{(m_k, n_k)}-\M^z_{(m_k, n_k)}|^p] \le C_0 k^{p/3+\epsilon} \quad \text{ for } k \in \bbZ_{>0}, 
\end{align}
which misses the correct-order by an (arbitrarily small) $\epsilon$ power. 

More recently, the coupling method has seen rapid improvement with the utilization of certain MGF identities originally due to Rains \cite{Rain_00} in the context of integrable LPP models on the lattice quadrant. To state Rains' identity for the exponential LPP, let $w \in (0, 1)$ be a new boundary parameter and consider independent weights $\rw^{w, z} = \{\rw_\bfv^{w, z}: \bfv \in \bbZ_{\ge 0}^2\}$ that coincide with the $\rw^z$-weights except on the horizontal boundary where 
\begin{align}
\label{E:Coup-2}
\rw^{w, z}_{(i, 0)} \sim \Exp(w) \quad \text{ for } i \in \bbZ_{>0}.  
\end{align}
The corresponding last-passage times are then defined by  
\begin{align}
\label{E:LPPwBd-2}
\G^{w, z}_{\bfu, \bfv} = \max_{\pi \in \Pi_{\bfu, \bfv}} \left\{\sum_{\bfp \in \pi}\rw_{\bfp}^{w, z}\right\} \quad \text{ for } \bfu, \bfv \in \bbZ_{\ge 0}^2. 
\end{align}
As before, for the last-passage times from the origin, we drop the initial vertex and write
\begin{align}
\label{E:Lpp2Bd}
\G_{\bfv}^{w, z} = \G_{(0, 0), \bfv}^{w, z} \quad \text{ for } \bfv \in \bbZ^2_{\ge 0}. 
\end{align}
Recall the definition \eqref{EMeanF}. Then, as a special case of \cite[Corollary 3.4]{Rain_00}, one has the formula    
\begin{align}
\label{E:Rains}
\E[\exp\{(w-z) \G^{w, z}_{(m, n)}\}] = \exp\bigg\{\int_z^w \M^t_{(m, n)} \dd t\bigg\} = \bigg(\frac{w}{z}\bigg)^{m} \bigg(\frac{1-z}{1-w}\bigg)^{n}
\end{align}
for $\bfv = (m, n) \in \bbZ_{\ge 0}^2$. As noted in \cite[Remark 2.4]{Emra_Janj_Sepp_23}, identity \eqref{E:Rains} includes the variance identity \eqref{E:VarId}, versions of which have been the technical centerpiece of the coupling approach since the seminal article \cite{Cato_Groe_06}. 

Identity \eqref{E:Rains} as well as its counterparts for other integrable LPP models was derived in \cite{Rain_00} from the determinantal formulas developed in \cite{Baik_Rain_01a} for the one-point distribution of the last-passage times. In 2020, the preprint \cite{Emra_Janj_Sepp_20}\footnote{This is a joint work in progress of the first author.} reproved \eqref{E:Rains} via a simple change-of-measure argument using the stationarity property \eqref{EIncSt} and the independence of the boundary weights \eqref{E:wBd}. Combining \eqref{E:Rains} with other elements of the coupling method, \cite{Emra_Janj_Sepp_20} obtained sharp moderate deviation bounds for last-passage times and optimal-order fluctuation bounds for geodesics. The geodesic results were later generalized and developed into a separate article \cite{Emra_Janj_Sepp_23}. There are two related key points conveyed in the works \cite{Emra_Janj_Sepp_20, Emra_Janj_Sepp_23}. First, the coupling method can reach strong bounds previously accessible only through deeper and more delicate tools of integrable probability. Second, with the improvements of \cite{Emra_Janj_Sepp_20, Emra_Janj_Sepp_23}, the coupling method remains adaptable to a variety of settings besides the exponential LPP. Indeed, since the initial appearance of this work in 2022, independent groups have derived and employed the analogues of \eqref{E:Rains} to obtain fluctuation bounds for several LPP models \cite{Busa_Sepp_Sore_24, Groa_Janj_Rass_21}, lattice polymers \cite{Land_Soso_23, Xie_22}, the O'Connell--Yor polymer \cite{Land_Soso_23, Land_Soso_24} and its generalization to a system of interacting diffusions \cite{Land_Noac_Soso_23, Land_Soso_23}, the stochastic six vertex model and the asymmetric simple exclusion process (ASEP) \cite{Land_Soso_24b}. The preceding settings with the exception of LPP models are not covered by \cite{Rain_00} where the MGF identities of the form \eqref{E:Rains} were first recorded. A main reason for the amenability of the above models to the coupling approach is the presence of product-form invariant measures, which in our setting translates to the increment-stationary process having independent boundary weights as described in \eqref{E:wBd}. It is worth emphasis that the interacting diffusion model of \cite{Land_Noac_Soso_23} is not expected to possess integrable structure beyond its invariant measures. Therefore, 
this model cannot presently be studied through the finer techniques of integrable probability involving Fredholm determinants. 
   
\subsection{Contribution of the present work}
\label{Ss:Cont}

As discussed in Subsection \ref{Ss:Coup20+}, with the initial impetus from \cite{Emra_Janj_Sepp_20, Emra_Janj_Sepp_23} and a flurry of subsequent works, the coupling approach to KPZ fluctuations has recently been reinvigorated through the incorporation of Rains-type MGF identities. The current article contributes to this thread of research by demonstrating that the coupling method can also access optimal-order central moment bounds. Although prior works \cite{Emra_Janj_Sepp_20, Emra_Janj_Sepp_23} also relied strongly on identity \eqref{E:Rains} to strengthen the coupling approach, their focus was on right-tail bounds and geodesic fluctuations whereas our motivation is to achieve strong left-tail bounds. It is worth remarking here that fluctuation bounds with sufficient decay are often useful for probing deeper properties of last-passage times, for example, tightness \cite{Chhi_Ferr_Spoh_18} and time correlations \cite{Basu_Gang_21,  Ferr_Occe_19}. Indeed, in one broad approach to LPP, one typically inputs tail bounds at the outset and then proceeds with robust geometric arguments. See the articles \cite{Basu_Gang_21, Basu_Gang_Hamm_Hegd_22} for some representative of this geometric approach and the thesis \cite{Hegd_21} for a nice introduction. An upshot of our work is that the aforementioned inputs can be derived via the coupling method, which offers a soft probabilistic alternative to the earlier arguably more intricate proofs briefly covered in Section \ref{S:Res}.  

We develop our argument for the exponential LPP, one of the simplest yet representative models in the KPZ universality class. Our choice of the setting enables us to import from \cite{Emra_Janj_Sepp_23} certain bounds for geodesic fluctuations (Lemmas \ref{LExit} and \ref{LExit2} ahead) and thereby leads to a more concise presentation. Similarly to \cite{Emra_Janj_Sepp_20, Emra_Janj_Sepp_23}, we believe that our improvement of the coupling method can be implemented in a broader range of models but leave the careful exploration of these extensions to the future. Working with the exponential LPP also means that, at a technical level, our contribution can be seen as a direct refinement of the influential article \cite{Bala_Cato_Sepp_06} reviewed in Subsection \ref{Ss:CoupKPZ}. 

We continue with a summary of our specific results leaving a more detailed account to Section \ref{S:Res}. Concerning the bulk LPP, Theorem \ref{T:Blk} shows that the upper bound in \eqref{E:CMBd} extends optimally and uniformly to all powers and all directions away from the axes. The new aspect of this result lies in its proof, which applies the coupling method to rederive a known left-tail bound with exponent $3/2$, recorded ahead as Theorem \ref{T:BlkLT}. Before the present work, the coupling approach could not produce exponentially decaying left-tail bounds; see Subsections \ref{Ss:CoupKPZ} and \ref{Ss:Coup20+} for a review of prior literature. Thanks to the generic bootstrapping arguments of \cite{Gang_Hegd_23}, the exponent $3/2$ in Theorem \ref{T:BlkLT} readily boosts to its optimal value $3$ as observed in Proposition \ref{P:BlkLT-HG}. For the KPZ-class models in general, bounding the left tail optimally is typically a difficult task. Proposition \ref{P:BlkLT-HG} specifically was obtained before through a suitable tridiagonalization of LUE \cite{Ledo_Ride_10}. The two-stage argument here that combines the coupling method with the results of \cite{Gang_Hegd_23} exemplifies a new simple approach to the left-tail bounds in LPP. Interestingly, this approach can also be implemented for directed polymers where the connection to random matrices is unavailable. Indeed, independently but several months after our initial preprint, the article \cite{Land_Soso_24} utilized the same strategy to establish optimal-order tail bounds for the O'Connell--Yor polymer. 

Our remaining results are optimal-order fluctuation bounds for the LPP with boundary introduced in \eqref{E:LPPwBd-2}. Strictly speaking, these bounds are all new although the main novelty is still in the proof technique. As indicated below and explained further in Section \ref{S:Res}, some of our bounds have appeared before in special cases or in slightly weaker forms,  
or can be extracted from bounds obtained by other means. As argued previously, a strength of our derivation via the coupling method is that the reliance on the integrable aspects of the exponential LPP is limited to the stationarity property \eqref{EIncSt}. Before mentioning our bounds, we briefly return to definition \eqref{E:LPPwBd-2} and first note that the restriction $w, z \in (0, 1)$ there can be relaxed to $w > 0$ and $z < 1$. Since these parameters are only relevant to the axis weights, one can unambiguously abbreviate by   
\begin{align}
\label{E:LppwBd-3}
\begin{split}
\G_{\bfu, \bfv}^w = \G_{\bfu, \bfv}^{w, z} \quad \text{ and } \quad \omega_{\bfu}^w = \omega_{\bfu}^{w, z} \quad \text{ when } \bfu \cdot (1, 0) > 0, \\ 
\G_{\bfu, \bfv}^z = \G_{\bfu, \bfv}^{w, z} \quad \text{ and } \quad \omega_{\bfu}^z = \omega_{\bfu}^{w, z} \quad \text{ when } \bfu \cdot (0, 1) > 0, 
\end{split}
\end{align}
which is also 
consistent with \eqref{E:wBd} and \eqref{E:LPPwBd}. We also distinguish the two special cases $\bfu = (1, 0)$ and $\bfu = (0, 1)$ of the initial vertex with the notation    
\begin{align}
\label{E:Lpp1Bd}
\G_{\bfv}^{w, \hor} = \G_{(1, 0), \bfv}^w \quad \text{ and } \quad \G_{\bfv}^{z, \ver} = \G_{(0, 1), \bfv}^z. 
\end{align}
By definition \eqref{E:Lpp2Bd} and because $\rw^{w, z}_{(0, 0)} = 0$, 
\begin{align}
\label{Emax}
\G_{\bfv}^{w, z} = \max \{\G_{\bfv}^{w, \hor}, \G_{\bfv}^{z, \ver}\} \quad \text{ for } \bfv \in \bbZ_{\ge 0}^2 \smallsetminus \{(0, 0)\}. 
\end{align}
Writing $\wt{\bfx} \in \bbR^2$ for the vector obtained by interchanging the coordinates of a given vector $\bfx \in \bbR^2$, one can also see from definition \eqref{E:LPPwBd-2} that 
\begin{align}
\label{ELPPTrans}
\G^{z, \ver}_{\bfv} \stackrel{\text{dist.}}{=} \G^{1-z, \hor}_{\wt{\bfv}} \quad \text{ for } \bfv \in \bbZ_{>0}^2. 
\end{align}
On account of \eqref{Emax} and \eqref{ELPPTrans}, for the purposes of central moment bounds, it turns out sufficient to focus on the process $\G^{w, \hor}$ with horizontal boundary. This model also recovers the bulk LPP in two ways as follows. 
\begin{align}
\label{E:1Bd2Blk}
\G_\bfv \stackrel{\rm{dist.}}{=} \G^{1, \hor}_{\bfv-(1, 0)} \quad \text{ and } \quad \G_\bfv \stackrel{\rm{dist.}}{=} \lim_{w \to \infty} \G^{w, \hor}_\bfv \quad \text{ for } \bfv \in \bbZ_{>0}^2. 
\end{align}
However, our argument completes the treatment of the bulk model first.  Hence, the separation of the two settings throughout despite one being a special case.

Among our results for the LPP with boundary, Theorems \ref{TIncRTUB} and \ref{TIncRTLB} provide matching-order upper and lower bounds, respectively, for the right tail of the last-passage increment $\G_{\bfv}^{w, \hor}-\G_{\bfv}$. Theorem \ref{TIncRTUB} in particular plays a key role in our proof of the left-tail bound in Theorem \ref{T:BlkLT}. Next recall from Subsection that \ref{Ss:CoupKPZ} the characteristic direction $\xi^w$ corresponding to the boundary parameter $w \in (0, 1)$ is the vector $(w^2, (1-w)^2)$ normalized by the $L^1$ norm. It is known that the limit fluctuations of the $\G_{\bfv}^{w, \hor}$-process are Gaussian below the direction $\xi^w$, and are of KPZ type along and above the direction $\xi^w$. This phase transition is a special case of the Baik--Ben Arous--P\'{e}ch\'{e} (BBP) transition established in \cite{Baik_BenA_Pech_05}. Our last group of results are optimal-order central moment bounds for the $\G_{\bfv}^{w, \hor}$-process. These bounds capture the transition in the fluctuation behavior. Theorems \ref{TBdUB} and \ref{TBdLB} record the upper and lower bounds, respectively for the KPZ regime. The corresponding results for the Gaussian regime are stated in Theorems \ref{TGausUB} and \ref{TGausLB}, respectively. 

Within the coupling approach employed here, the LPP with boundary enters naturally as an auxiliary gadget. However, this model is also of independent significance and has received considerable attention on its own. Similarly to the link between the bulk LPP and the TASEP with the step initial condition recalled in Subsection \ref{Ss:ExpLPP}, one can interpret $\G^{w, z}_{(i, j)}$ as the time of $i$th jump of particle $j$ in the TASEP with the following two-sided Bernoulli initial condition \cite{Bala_Cato_Sepp_06, BenA_Corw_11}. Particles are now labeled with $\bbZ$ from right to left. At time zero, particle $0$ is at site $1$ while the origin is empty. Also, independently, each site of $\bbZ_{>1}$ is empty with probability $w \in (0, 1]$ and each site of $\bbZ_{<0}$ is empty with probability $z \in [0, 1)$. The LPP with one-sided boundary is also connected to the rank one perturbation of the LUE defined as follows. With $w > 0$ again, consider the $m \times (n+1)$ random matrix $\mrX_{m, n}^w$ obtained from the matrix $\mrX_{m, n}$ in Subsection \ref{Ss:ExpLPP} by adding a $0$th column of independent complex Gaussian entries with mean zero and variance $1/w$. Then the $n \times n$ random Hermitian matrix  
\begin{align}
\label{E:LUE_1RP}
\mrL_{m, n}^{w} = (\mrX_{m, n}^w)^*\mrX_{m, n}^w = \mrL_{m, n} + [\overline{\mrX_{m, n}^w}(0, i)\mrX_{m, n}^w(0, i)]_{i, j \in [n]}
\end{align}
is a sum of the $n \times n$ LUE and a matrix of rank one. The largest eigenvalue of $\mrL_{m, n}^w$ has the same distribution as the last-passage time $\G_{(m, n)}^{w, \hor}$ \cite{Baik_BenA_Pech_05}. The limit fluctuations of the $\G^{w, z}$-process exhibit a phase transition (known as the Baik--Rains transition) depending on the limit direction and the strength of the boundary parameters. This was conjectured by Pr\"{a}hofer and Spohn \cite{Prah_Spoh_02b} based in part on the prior work \cite{Baik_Rain_00} of Baik and Rains where the analogous transitions for the geometric and Poisson LPP were established along the diagonal direction. The Pr\"{a}hofer--Spohn conjecture was fully settled in \cite{BenA_Corw_11} relying crucially on the partial progress in \cite{Baik_BenA_Pech_05, Ferr_Spoh_06}.

\subsection*{Outline}
The remainder of this work is structured as follows.  Section \ref{S:Res} states the main results.  The proofs are presented in Sections \ref{S:PfLppInc},  \ref{S:PfBlk}, \ref{SPfBdUB} and \ref{SPfBdLB}.  Appendix \ref{AppA} collects and develops some auxiliary results.  

\subsection*{Notation and conventions}

$\bbR$ and $\bbZ$ stand for the sets of reals and integers, respectively. Let $A_{\square x} = \{a \in A: a \square x\}$ for each subset $A \subset \bbR$ and relation $\square \in \{\ge, >, \le, <\}$. For example, $\bbR_{\ge 0}$ denotes the set of nonnegative reals. 

The ceiling, floor, positive part and negative part functions are given respectively by $\lc x \rc = \min \bbZ_{\ge x}$, $\lf x \rf = \max \bbZ_{\le x}$, $x_+ = \max \{x, 0\}$ and $x_- = (-x)_+$ for $x \in \bbR$. $[n] = \{1, 2, \dotsc, n\}$ for $n \in \bbZ_{>0}$, and $[0] = \emptyset$ where $\emptyset$ denotes the empty set. As usual, $\min \emptyset = \inf \emptyset = \infty$ and $\max \emptyset = \sup \emptyset = -\infty$. 

The points in $\bbR^2$ are denoted with boldface letters e.g.\ $\bfx$. The dot product on $\bbR^2$ is given by $\bfx \cdot \bfy = x_1 y_1 + x_2 y_2$ for $\bfx = (x_1, x_2) \in \bbR^2$ and $\bfy = (y_1, y_2) \in \bbR^2$. $\norm{\cdot}$ refers to the $L^1$ norm on $\bbR^2$: $\norm{\bfx} = |x_1| + |x_2|$. The coordinatewise partial order $\le$ on $\bbR^2$ is defined as follows: $\bfx \le \bfy$ if $x_1 \le y_1$ and $x_2 \le y_2$. 

For an arbitrary set $S$ and any subset $E \subset S$, the indicator function $\one_E: S \to \{0, 1\}$ (also denoted $\one E$) evaluates to $1$ on $E$, and $0$ on the complement $S \smallsetminus E$. More generally, for any function $f: E \to \bbR \cup \{\infty, -\infty\}$, the product $\one_E f: S \to \bbR$ (also denoted $\one E \cdot f$) is defined as $f$ on $E$, and as $0$ on $S \smallsetminus E$. 

For $\lambda > 0$, $\Exp(\lambda)$ refers to the rate $\lambda$ exponential distribution, which has the density $t \mapsto \one\{t \ge 0\} \cdot \lambda e^{-\lambda t}$. The MGF of $\Exp(\lambda)$ is the function $t \mapsto \one\{t < \lambda\} \cdot \lambda(\lambda-t)^{-1} + \one\{t \ge \lambda\} \cdot \infty$ for $t \in \bbR$. The notation $X \sim \Exp(\lambda)$ means that the distribution of $X$ is $\Exp(\lambda)$. If $X \sim \Exp(\lambda)$ then $c^{-1}X \sim \Exp(c\lambda)$ for any $c > 0$. 

\subsection*{Acknowledgement}

E.\ Emrah is grateful to Christopher Janjigian, Timo Sepp\"{a}l\"{a}inen and Xiao Shen for helpful discussions. The authors would like to thank Shirshendu Ganguly and Milind Hegde for mentioning their related work \cite{Gang_Hegd_23}, which led to Proposition \ref{P:BlkLT-HG}. The authors would also like to thank the anonymous referees for their comments, particularly for pointing out the proof of \cite[Lemma 4.9]{Basu_Gang_Zhan_21} in connection with the right-tail lower bound in \eqref{EBlkRTLB}.

\section{Central moment bounds for exponential LPP}
\label{S:Res}

We now begin to present our main results in precise form. As mentioned in Subsection \ref{Ss:Cont} and further explained below, some of these results are either in the literature or can be deduced as corollaries of known bounds. In these cases, our contribution is to offer new proofs via the coupling method. For comparison, we also briefly mention the techniques behind the earlier proofs.

\subsection{Bounds for the bulk LPP}
\label{Ss:BlkBd}

The first set of results concern fluctuation bounds for the bulk LPP process given by \eqref{E:BlkLPP1}. Our first theorem below provides a central moment upper bound that extends \eqref{E:CMBd} to all powers $p \ge 1$. The centering in our bound is in terms of Rost's shape function given by    
\begin{align}
\shp_{\bfx} = (\sqrt{x}+\sqrt{y})^2 \quad \text{ for } \bfx = (x, y) \in \bbR_{>0}^2.   \label{EShp}
\end{align}
This function describes the \emph{limit shape} of the cluster $\sG_t$ in \eqref{E:CGM}, that is, the a.s.\ limit (in the Hausdorff metric) of the rescaled cluster $t^{-1}\sG_t$ as $t \to \infty$ \cite{Rost_81}. The same function also appears as the centering in \eqref{E:CMBd} because $\shp_{(z^2k, (1-z)^2k)} = k$. The bound in our result holds with the usual centering by the mean $\E[\G_\bfv]$ as well 
but we preferred the explicit centering by $\shp_\bfv$. Compared to \eqref{E:CMBd}, one difference in our formulation is that the moment bound is stated uniformly over the cone 
\begin{align}
S_\delta = \{(x, y) \in \bbR^2_{>0}: x \ge \delta y \text{ and } y \ge \delta x\}\label{ECone}
\end{align}
for any fixed $\delta > 0$ rather than along a given sequence. Our bound is also uniform in the power $p$, which in particular can grow with the vertex $\bfv$. 
\begin{thm}
\label{T:Blk}
Let $\delta > 0$. There exists a constant $C_0 = C_0(\delta) > 0$ such that 
\begin{align*}
\E[|\G_{\bfv}-\shp_{\bfv}|^p] \le C_0^p p^{2p/3} (\max \{p, \norm {\bfv}\})^{p/3}
\end{align*}
for $\bfv \in S_\delta \cap \bbZ_{>0}^2$ and $p \ge 1$. 
\end{thm}

Of course, one can readily switch between central moment and tail bounds in general. 
We presented Theorem \ref{T:Blk} as a central moment bound following the predecessor articles \cite{Noac_Soso_22, Noac_Soso_22b}, which provided the initial inspiration for the current work. See \eqref{Eq-NS} for a translation of their polymer results to the exponential LPP.
Combining Theorem \ref{T:Blk} with Markov's inequality and optimizing over $p$, one can produce the following tail bound: For some constants $c_0 = c_0(\delta) > 0$ and $s_0 = s_0(\delta) > 0$, 
\begin{align}
\label{E:BlkTaBd}
\bfP\{|\G_{\bfv}-\shp_\bfv| \ge s \norm{\bfv}^{1/3}\} \le \exp\{-c_0 \min\{s^{3/2}, s \norm{\bfv}^{1/3}\}\}
\end{align}
for $\bfv \in S_\delta \cap \bbZ^2_{>0}$ and $s \ge s_0$. 
Conversely, bound \eqref{E:BlkTaBd} implies Theorem \ref{T:Blk} through an integration, which is the direction our proof takes. 

Deriving \eqref{E:BlkTaBd} naturally breaks into bounding from above the right (upper) and left (lower) tail probabilities. As previously indicated, a coupling approach to sharp right-tail bounds was already known since the article \cite{Sepp_98}. The approach there relies on the superadditivity of the last-passage times to bound the tail probabilities by the right-tail rate function for large deviations, which can be computed via the coupling method. The preprint \cite{Emra_Janj_Sepp_20} recently discovered a more direct argument based on identity \eqref{E:Rains} and the exponential Markov inequality. The following bound and its proof is essentially from \cite{Emra_Janj_Sepp_20}, and is included here for completeness. 

\begin{prop}
\label{PBlkRTUB}
Let $\delta > 0$. There exist constants $c_0 = c_0(\delta) > 0$ and $N_0 = N_0(\delta) > 0$ such that 
\begin{align*}
\P\{\G_{\bfv} \ge \shp_{\bfv} + s \norm{\bfv}^{1/3}\} \le \exp\{-c_0 \min \{s^{3/2}, s \norm{\bfv}^{1/3}\}\}
\end{align*}
for $\bfv \in S_\delta \cap \bbZ_{\ge N_0}^2$ and $s \ge 0$. 
\end{prop} 

As an illustration of multiple approaches to the tail bounds in the KPZ class, we mention that Proposition \ref{PBlkRTUB} in some form has also been derived through different techniques; see \cite{Ledo_07} for a nice survey. For example, one can begin with examining the large deviations of the LUE spectrum via its Coulomb gas interpretation and then derive the right-tail rate function for the largest eigenvalue \cite[Remark 2.3]{Joha_00}. Another argument relies on the spectrum of LUE forming a determinantal point process with an explicit correlation kernel that can be represented as a contour integral. Then a suitable decay of the kernel, which can be established through steepest-descent analysis, leads to Proposition \ref{PBlkRTUB}. The proof of \cite[Lemma A.1]{Corw_Liu_Wang_16} illustrates the preceding argument for the geometric LPP. A third approach develops moment recurrences for the mean spectral measure of LUE and then extracts right-tail upper bound for the largest eigenvalue as a consequence \cite[Proposition 5.4]{Ledo_04}. Yet another method utilizes the Rayleigh representation of the largest eigenvalue of a tridiagonal matrix with the same spectrum as LUE \cite[Theorem 2]{Ledo_Ride_10}.

With Proposition \ref{PBlkRTUB} in place, to reach \eqref{E:BlkTaBd}, it remains to derive an analogous upper bound for the left tail. The main technical contribution with the present work is the attainment of such a bound via the coupling method for the first time. Prior to our proof, the best left-tail bounds achievable through coupling arguments were subpolynomial, and covered the increment-stationary versions of the O'Connell--Yor polymer \cite{Noac_Soso_22} and the four basic integrable lattice polymers \cite{Noac_Soso_22b}. These earlier bounds can be deduced from the central moment bounds in \cite{Noac_Soso_22, Noac_Soso_22b} (analogous to \eqref{Eq-NS}) and Markov's inequality. Before these recent results, the coupling method could only produce up to quadratic bounds on the left tail, which directly reflects the strength of the corresponding central moment bounds such as \eqref{E:VarBd} and \eqref{E:CMBd} for the exponential LPP. 

The next theorem states our left-tail upper bound. In contrast with Proposition \ref{PBlkRTUB}, there is no transition in the exponent now because $\G_{\bfv} \ge 0$ and, consequently, the probability below is zero when $s \ge \shp_\bfv \cdot \norm{\bfv}^{-1/3}$ (where the right-hand side is at most $2\norm{\bfv}^{2/3}$ by \eqref{EShp}). 
\begin{thm}
\label{T:BlkLT}
Let $\delta > 0$. There exist constants $c_0 = c_0(\delta) > 0$, $N_0 = N_0(\delta) > 0$ and $s_0 = s_0(\delta) > 0$ such that 
\begin{align*}
\P\{\G_{\bfv} \le \shp_{\bfv} - s\norm{\bfv}^{1/3}\} \le \exp\{-c_0s^{3/2}\}
\end{align*}
for $\bfv \in S_\delta \cap \bbZ_{\ge N_0}^2$ and $s \ge s_0$. 
\end{thm}

Alternative proofs of Theorem \ref{T:BlkLT} are available in the literature. For example, the article \cite[Proposition 4.3]{Ferr_Nejj_15} records a version of Theorem \ref{T:BlkLT} 
as a corollary of a trace lower bound \cite[Proposition 3]{Baik_Ferr_Pech_14} for the LUE kernel. Establishing the trace bound involves steepest-descent analysis in \cite{Baik_Ferr_Pech_14}. This line of argument is known as Widom's trick and was first used in the context of uniform permutations and the Bernoulli LPP with strict-weak paths \cite{Wido_02}. 
As observed by Ledoux \cite[Section 5.3]{Ledo_07}, the aforementioned trace bound can also be obtained more quickly from the LUE moment recurrences. Further proofs of Theorem \ref{T:BlkLT} are mentioned after Proposition \ref{P:BlkLT-HG} below. 

In a recent work \cite{Gang_Hegd_23}, Ganguly and Hegde derived optimal-order tail bounds (up to a logarithmic weakening in the case of right-tail upper bound) for LPP with general nonnegative i.i.d.\ weights, starting from weaker a priori tail bounds and a curvature bound on the limit shape. When $\bfv$ is diagonally directed, \cite[Theorem 3]{Gang_Hegd_23} improves the bound in Theorem \ref{T:BlkLT} to the optimal-order as follows. (Recall from \eqref{EShp} that $\shp_{(1, 1)} = 4$).   
\begin{prop}
\label{P:BlkLT-HG}
There exist positive absolute constants $c_0$, $N_0$ and $s_0$ such that 
\begin{align*}
\P\{\G_{(n, n)} \le 4n-sn^{1/3}\} \le \exp\{-c_0 s^3\}
\end{align*}
for $n \in \bbZ_{\ge N_0}$ and $s \ge s_0$. 
\end{prop}
The restriction to the diagonal above is because \cite[Theorem 3]{Gang_Hegd_23} covers only that direction. However, the authors explain in \cite[Remark 1.8, Section 4.3]{Gang_Hegd_23} that their result can be extended to the off-diagonal directions. Combining this extension with Theorem \ref{T:BlkLT}, one can likely generalize Proposition \ref{P:BlkLT-HG} to all directions. As mentioned in Subsection \ref{Ss:Cont}, our proof of Proposition \ref{P:BlkLT-HG} is a new argument for optimal-order left-tail deviations. The same approach has also been developed subsequently and independently for the O'Connell--Yor polymer \cite{Land_Soso_24}. (In fact, the counterpart of Proposition \ref{P:BlkLT-HG} is proved in \cite[(1.6)]{Land_Soso_24} for all directions). The previous proof of Proposition \ref{P:BlkLT-HG} (for all directions) can be found in \cite[Theorem 2]{Ledo_Ride_10} where the approach rests on tridiagonal matrix models as mentioned before. It is also widely expected that the Riemann--Hilbert analysis carried out in \cite{Baik_Deif_McLa_Mill_Zhou_01} for the geometric LPP can be adapted to produce sharp estimates for the left-tail probability in Proposition \ref{P:BlkLT-HG}. 

Although we do not prove it here, the upper bound in Theorem \ref{T:Blk} is in fact optimal in order in the sense that the following matching-order lower bound holds: Given $\delta > 0$ and $T > 0$, there exist positive constants $N_0 = N_0(\delta, T)$ and $c_0 = c_0(\delta, T)$ such that 
\begin{align}
\label{EBlkLB}
\E[|\G_{\bfv}-\shp_{\bfv}|^p] \ge c_0^p p^{2p/3} \norm{\bfv}^{p/3}
\end{align}
for $\bfv \in S_\delta \cap \bbZ_{\ge N_0}^2$ and $1 \le p \le T\norm{\bfv}$. In fact, the stronger inequality  
\begin{align}
\E[(\G_{\bfv}-\shp_{\bfv})_+^p] \ge c_0^p p^{2p/3} \norm{\bfv}^{p/3} \label{EBlkLB2}
\end{align}
holds for the same range of $\bfv$ and $p$. Through a simpler version of the argument proving Proposition \ref{PBdMomLB} ahead, one can derive \eqref{EBlkLB2} from the next tail bound: Given $\delta > 0$, there exist positive constants $N_0 = N_0(\delta)$, $C_0 = C_0(\delta)$, $\epsilon_0 = \epsilon_0(\delta)$ and $s_0 = s_0(\delta)$ such that 
\begin{align}
\label{EBlkRTLB}
\P\{\G_{\bfv} \ge \shp_{\bfv} + s\norm{\bfv}^{1/3}\} \ge \exp\{-C_0 s^{3/2}\}
\end{align} 
for $\bfv \in S_\delta \cap \bbZ_{\ge N_0}^2$ and $s \in [s_0, \epsilon_0 \norm{\bfv}^{2/3}]$. It is possible to derive \eqref{EBlkRTLB} via the coupling method but this seems to require more careful tracking of the constants in various bounds than is pursued here. A coupling proof of \eqref{EBlkRTLB} will be reported in the forthcoming update to the preprint \cite{Emra_Janj_Sepp_20}.

To our knowledge, at the time of our first preprint, the bound in \eqref{EBlkRTLB} had not been fully proved although it was widely considered accessible to both the Riemann--Hilbert approach of \cite{Baik_Deif_McLa_Mill_Zhou_01} and the tridiagonalization approach of \cite{Ledo_Ride_10}. The bound itself was recorded, for example, in \cite[Proposition 3.3]{Bhat_20} as an input. In the case of diagonally directed $\bfv$, \eqref{EBlkRTLB} is immediate from \cite[Lemma 4.9]{Basu_Gang_Zhan_21}. The elegant proof of this lemma relies on the superadditivity of the $\G$-process and its convergence to a limit distribution (the Tracy-Widom GUE \cite{Joha_00}) with the support $\bbR$. In fact, it suffices that $\bbR_{>0}$ has positive mass; see \cite[Remark 1.3, Theorem 2]{Gang_Hegd_23} where a similar argument appears. Although the proofs in \cite{Basu_Gang_Zhan_21, Gang_Hegd_23} can be adapted to any direction, \eqref{EBlkRTLB} does not seem immediate in this way because one would need directional uniformity in the distributional convergence result from \cite{Joha_00}. Very recently, the preprint \cite{Basl_Basu_Bhat_Kris_24} established \eqref{EBlkRTLB} using tridiagonal matrix models.

A neat coupling argument due to Moreno Flores, Sepp\"{a}l\"{a}inen and Valk\'{o} \cite{More_Sepp_Valk_14}, which was originally presented in the context of the O'Connell--Yor polymer, provides the following lower bound that matches the order in Theorem \ref{T:Blk} in the case of bounded $p$. Analogous lower bounds were established in \cite[Section 4.6]{Kang_19_PhD} for the inhomogeneous log-gamma polymer also adapting \cite{More_Sepp_Valk_14}.   
\begin{prop}
\label{P:BlkCMLB}
Let $\delta > 0$. There exist positive constants $c_0 = c_0(\delta)$ and $N_0 = N_0(\delta)$ such that 
\begin{align*}
\E[|\G_\bfv-\gamma_\bfv|^p] \ge c_0^p \norm{\bfv}^{p/3}
\end{align*}
for $\bfv \in S_\delta \cap \bbZ_{\ge N_0}^2$ and $p \ge 1$. 
\end{prop}

\subsection{Bounds for LPP increments}
\label{Ss:IncTaBd}

We next prepare to state our tail bounds for certain increments of the LPP with one-sided boundary given by \eqref{E:Lpp1Bd}. Define  
\begin{align}
\ShpMin_{\bfx} = \frac{\sqrt{x}}{\sqrt{x}+\sqrt{y}} \quad \text{ for } \bfx = (x, y) \in \bbR_{>0}^2, \label{EShpMin}
\end{align}
which can also be seen as an increasing continuous function of the ratio $x/y$ from $\bbR_{>0}$ onto $(0, 1)$. Since $\xi^{\ShpMin_{\bfx}} = \bfx\norm{\bfx}^{-1}$ for $\bfx \in \bbR_{>0}^2$ and $\ShpMin_{\xi^z} = z$ for $z \in (0, 1)$,  the characteristic direction map $z \mapsto \xi^z$ is the inverse of the function $\ShpMin$ restricted to the vectors of unit $L^1$ norm. It will be convenient to express our subsequent bounds in terms of $\zeta$ although one can also rewrite them more conventionally in terms of $\xi$ through the preceding correspondence. 

The following result allows for a useful comparison between the bulk LPP and the models with one-sided boundary defined in \eqref{E:Lpp1Bd}. This will enable us to derive the left-tail bound in Theorem \ref{T:BlkLT} from the left-tail bounds for the LPP with boundary. The latter bounds are recorded in Section \ref{S:PfLppInc} as relatively straightforward consequences of identity \eqref{E:Rains}. 
\begin{thm}
\label{TIncRTUB}
Let $\delta > 0$. There exist positive constants $c_0 = c_0(\delta), \epsilon_0 = \epsilon_0(\delta)$, $N_0 = N_0(\delta)$ and $s_0 = s_0(\delta)$ such that the following statements hold for $\bfv \in S_\delta \cap \bbZ_{\ge N_0}^2$, $s \ge s_0$, $w > 0$ and $z < 1$. 
\begin{enumerate}[\normalfont (a)]
\item If $w \ge \ShpMin_{\bfv}-\epsilon_0 \min \{s^{1/2} \norm{\bfv}^{-1/3}, 1\}$ then 
\begin{align*}
\P\{\G^{w, \hor}_{\bfv} - \G_{\bfv} \ge s\norm{\bfv}^{1/3}\} \le \exp\{-c_0\min \{s^{3/2}, s\norm{\bfv}^{1/3}\}\}. 
\end{align*}
\item If $z \le \ShpMin_{\bfv} + \epsilon_0 \min \{s^{1/2} \norm{\bfv}^{-1/3}, 1\}$ then 
\begin{align*}
\P\{\G^{z, \ver}_{\bfv} - \G_{\bfv} \ge s\norm{\bfv}^{1/3}\} \le \exp\{-c_0\min \{s^{3/2}, s\norm{\bfv}^{1/3}\}\}. 
\end{align*}
\end{enumerate}
\end{thm}
Parts (a) and (b) of Theorem \ref{TIncRTUB} are, in fact, equivalent. The same remark applies to all bounds for the $\G^{w, \hor}$ and $\G^{z, \ver}$-processes throughout the article. These equivalences can be readily checked using \eqref{ELPPTrans} and the identities 
\begin{align}
\label{ETrans}
\shp_{\wt{\bfx}} = \shp_{\bfx} \quad \text{ and } \quad \ShpMin_{\wt{\bfx}} = 1 - \ShpMin_{\bfx} \quad \text{ for } \bfx = (x, y) \in \bbR_{>0}^2  
\end{align}
where $\wt{\bfx} = (y, x)$. Hence, we will only focus on parts (a) in the proofs without further mention. 

Our proof of Theorem \ref{TIncRTUB} is a strengthening of the argument in \cite[Lemma 7.1]{Bala_Cato_Sepp_06}, which also applies the coupling method and obtains a decay of the form $s^{3/2-\epsilon}$ for arbitrarily small $\epsilon > 0$. The key aspect of our proof still involves coupling with a suitable pair of increment-stationary LPP and controlling the difference of their increments and their geodesic fluctuations. Such an argument was first devised in \cite{Cato_Groe_06} for the Poisson LPP. Our main innovation here is to utilize the geodesic fluctuation bounds recently obtained in \cite{Emra_Janj_Sepp_23} via the coupling method. Compared to \cite{Bala_Cato_Sepp_06}, our proof is also more direct in the sense that the notions of reversed LPP and competition interface are not used. Another difference is that Theorem \ref{TIncRTUB} is uniform for a broader range of directions while \cite[Lemma 7.1]{Bala_Cato_Sepp_06} covers only the characteristic direction (essentially the case $w = \zeta_\bfv$ in part (a)). For these reasons, our proof is not a line-by-line adaptation. 

Using the containment  
\begin{align}
\label{E:100}
\{\G^{w, \hor}_{\bfv} - \G_{\bfv} \ge s\norm{\bfv}^{1/3}\} \subset \bigg\{\G_\bfv \le \shp_\bfv - \frac{1}{2}s\norm{\bfv}^{1/3}\bigg\} \cup \bigg\{\G_{\bfv}^{w, \hor} \ge \shp_\bfv + \frac{1}{2}s\norm{\bfv}^{1/3}\bigg\}, 
\end{align}
one can alternatively obtain Theorem \ref{TIncRTUB}(a) from Theorem \ref{T:BlkLT} and a suitable right-tail bound for the $\G^{w, \hor}$-process. As mentioned, Theorem \ref{T:BlkLT} is already available by other means. The needed right-tail bound can also be derived, for example, from a sufficiently strong decay bound on the correlation kernel of the perturbed LUE in \eqref{E:LUE_1RP}; see the proof of \cite[Lemma 3.3]{Ferr_Occe_18}. In our argument, however, Theorem \ref{TIncRTUB} being an input to the proof of Theorem \ref{T:BlkLT} must be established first. 

By appropriately modifying the proof strategy for Theorem \ref{TIncRTUB}, we also obtain the following matching-order lower bounds in the moderate deviation regime (for small $s\norm{\bfv}^{-2/3}$). 
\begin{thm}
\label{TIncRTLB}
Let $\delta > 0$ and $K \ge 0$. There exist positive constants $C_0 = C_0(\delta, K)$, $\epsilon_0(\delta, K)$, $N_0 = N_0(\delta, K)$ and $s_0 = s_0(\delta)$ such that the following statements hold for $\bfv \in S_\delta \cap \bbZ_{\ge N_0}^2$, $s \in [s_0, \epsilon_0 \norm{\bfv}^{2/3}]$, $w > 0$ and $z < 1$.
\begin{enumerate}[\normalfont (a)]
\item If $w \le \ShpMin_{\bfv} + Ks^{1/2}\norm{\bfv}^{-1/3}$ then 
$
\P\{\G^{w, \hor}_{\bfv}-\G_{\bfv} \ge s\norm{\bfv}^{1/3}\} \ge \exp\{-C_0 s^{3/2}\}. 
$
\item If $z \le \ShpMin_{\bfv}-K s^{1/2}\norm{\bfv}^{-1/3}$ then 
$
\P\{\G^{z, \ver}_{\bfv}-\G_{\bfv} \ge s\norm{\bfv}^{1/3}\} \ge \exp\{-C_0 s^{3/2}\}. 
$
\end{enumerate}
\end{thm}
Unlike Theorem \ref{TIncRTUB}, Theorem \ref{TIncRTLB} does not seem a direct consequence of known bounds. It also appears to us that a (hypothetical) alternative proof of Theorem \ref{TIncRTLB} based on the LUE connection would be significantly more involved. 

\subsection{Bounds for LPP with boundary}
\label{Ss:LppBdBds}

Our last set of results concerns central moment bounds for the LPP with boundary. As previously noted, one can restrict to the case of one-sided boundary here because the two-sided boundary can then be handled using \eqref{Emax}. These corollaries as well as various (minor) extensions and variations of the following bounds can be found in Section 3 of our first preprint but are omitted from the current article. 

Theorem \ref{TBdUB} below states upper bounds in the KPZ regime where the order of the central moments are as in the bulk case. We capture this regime with precise conditions on the differences between the minimizer $\ShpMin$ defined at \eqref{EShpMin}, and the boundary parameters. As in Theorem \ref{T:Blk}, one can again alter the centerings in the theorem to the means. But since the means do not have a simple explicit form, we preferred the formulation with the shape function $\shp$. 

\begin{thm}
\label{TBdUB}
Let $\delta > 0$. There exist positive constants $C_0 = C_0(\delta)$ and $\epsilon_0 = \epsilon_0(\delta)$ such that the following statements hold for $\bfv \in S_\delta \cap \bbZ_{>0}^2$, $p \ge 1$, $w > 0$ and $z < 1$.  
\begin{enumerate}[\normalfont (a)]
\item If $w \ge \ShpMin_{\bfv}-\epsilon_0 \min \{p^{1/3}\norm{\bfv}^{-1/3}, 1\}$ then $$\E[|\G^{w, \hor}_{\bfv}-\shp_{\bfv}|^p] \le C_0^p p^{2p/3} (\max \{p, \norm{\bfv}\})^{p/3}.$$ 
\item If $z \le \ShpMin_{\bfv}+\epsilon_0 \min \{p^{1/3}\norm{\bfv}^{-1/3}, 1\}$ then $$\E[|\G^{z, \ver}_{\bfv}-\shp_{\bfv}|^p] \le C_0^p p^{2p/3} (\max \{p, \norm{\bfv}\})^{p/3}.$$
\end{enumerate}
\end{thm}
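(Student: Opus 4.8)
The plan is to prove Theorem~\ref{TBdUB} by reducing it to the bulk estimate (Theorem~\ref{TBlk}) together with a matched pair of tail bounds for $\G^{w,\hor}_\bfv$, one on each side of $\shp_\bfv$, with the Gaussian-type exponent $3/2$. By the transposition symmetry $\bfv = (x,y) \mapsto (y,x)$, which swaps the roles of $w$ and $z$ (note $\ShpMin_{(y,x)} = 1 - \ShpMin_{(x,y)}$ and $\shp$ is symmetric), part (b) follows from part (a), so I would only treat (a). Writing $m = \norm{\bfv}$ and assuming $w \ge \ShpMin_\bfv - \epsilon_0 \min\{p^{1/3}m^{-1/3},1\}$, the target is
\begin{align*}
\E\big[|\G^{w,\hor}_\bfv - \shp_\bfv|^p\big] \le C_0^p\, p^{2p/3} (\max\{p, m\})^{p/3}.
\end{align*}
I would split this into the right tail $\E[(\G^{w,\hor}_\bfv - \shp_\bfv)_+^p]$ and the left tail $\E[(\G^{w,\hor}_\bfv - \shp_\bfv)_-^p]$ and bound each by integrating a tail estimate of the form $\P\{|\G^{w,\hor}_\bfv - \shp_\bfv| \ge s\, m^{1/3}\} \le C\exp\{-c\,\min(s^{3/2}, s\, m^{1/3})\}$ valid for $s$ up to order $m^{2/3}$, which upon integration $\int_0^\infty p s^{p-1} (\cdots)\,ds$ yields exactly the claimed $p^{2p/3} m^{p/3}$ growth (the $\max\{p,m\}$ appears because for $p \gtrsim m$ the Gaussian part of the tail, governing $s \gtrsim m^{1/3}$, dominates).

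For the \textbf{upper tail}, since enlarging $w$ only decreases $\G^{w,\hor}_\bfv$ in the natural monotone coupling, I can assume $w = \ShpMin_\bfv - \epsilon_0 \min\{p^{1/3}m^{-1/3},1\} =: w_-$ (one must check $w_- > 0$ using $\bfv \in S_\delta$ and $p \le m$). Then I would compare $\G^{w_-,\hor}_\bfv$ with the increment-stationary process $\G^{w_-}_\bfv$, whose mean is the explicit $\M^{w_-}_\bfv$: since $\G^{w_-,\hor}_\bfv \le \G^{w_-}_\bfv$ always, it suffices to control the right tail of $\G^{w_-}_\bfv - \shp_\bfv$. Because $w_-$ is within $O(p^{1/3}m^{-1/3})$ of the minimizer $\ShpMin_\bfv$ of $z \mapsto \M^z_\bfv$, a Taylor expansion of $\M^z_\bfv$ around $\ShpMin_\bfv$ (using \eqref{EMeanDer} and that the second derivative is of order $m$ on $S_\delta$) gives $\M^{w_-}_\bfv - \shp_\bfv = O(\epsilon_0^2\, p^{2/3} m^{1/3})$, which is absorbable. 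The right-tail large-deviation bound for the increment-stationary LPP with exponent $3/2$ — the bound underlying \eqref{Eq-41}, available within the coupling framework via \cite{sepp98ebp} / \cite[(5.43)]{sepp-cgm-18} — then finishes the upper tail. One must be a little careful that this bound is stated for $z = \ShpMin_\bfv$ and extend/re-derive it for the nearby parameter $w_-$, or alternatively use $\G^{w_-}_\bfv \le \G^{\ShpMin_\bfv}_\bfv$ plus the mean gap; the latter is cleaner.

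For the \textbf{lower tail}, the obstacle is that $\G^{w,\hor}_\bfv$ could be small, and decreasing $w$ only makes it larger, so the extreme case is $w$ large — but then $\G^{w,\hor}_\bfv$ is close to the bulk $\G_\bfv$ shifted. Here I would instead use the reverse comparison: $\G^{w,\hor}_\bfv \ge \G_{(1,1),\bfv} = \G_\bfv$ (dropping to the interior), so $(\G^{w,\hor}_\bfv - \shp_\bfv)_- \le (\G_\bfv - \shp_\bfv)_-$, and the left tail of $\G^{w,\hor}_\bfv$ is controlled by the left tail of the bulk process, hence by Theorem~\ref{TBlk} directly: $\E[(\G^{w,\hor}_\bfv - \shp_\bfv)_-^p] \le \E[|\G_\bfv - \shp_\bfv|^p] \le C_0^p p^{2p/3}(\max\{p,m\})^{p/3}$. \emph{This is the key simplification} — the hard left-tail work (Proposition~\ref{PBlkLT}) is already packaged inside Theorem~\ref{TBlk}. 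So the real content of Theorem~\ref{TBdUB} beyond the bulk theorem is purely the upper-tail argument of the previous paragraph, which only uses superadditivity-type right-tail bounds plus the variational structure \eqref{EShpMean} of $\shp$; I expect the main obstacle to be the bookkeeping that makes the constants uniform over the whole admissible range of $w$ and over $1 \le p \le m$, in particular verifying $w_- \ge$ a positive constant (using $\ShpMin_\bfv \ge \ShpMin$ bounded away from $0$ on $S_\delta$ and choosing $\epsilon_0 = \epsilon_0(\delta)$ small), and handling the crossover at $p \asymp m$ where the Gaussian tail takes over.
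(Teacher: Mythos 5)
Your left-tail half is exactly the paper's argument: since $\G_{\bfv}\le\G^{w,\hor}_{\bfv}$, the left tail of the boundary process is dominated by that of the bulk process, and Proposition \ref{PBlkLT} (packaged into Theorem \ref{TBlk}) finishes it; this is Proposition \ref{PLppBdLT} in the paper. The genuine gap is in your right-tail half. You treat the right-tail upper bound for the increment-stationary process $\G^{w_-}_{\bfv}$ at the off-characteristic parameter $w_-=\ShpMin_{\bfv}-\epsilon_0\min\{p^{1/3}\norm{\bfv}^{-1/3},1\}$ as an available input, citing \cite{sepp98ebp} and \cite[(5.43)]{sepp-cgm-18}. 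But \cite[(5.43)]{sepp-cgm-18} is a right-tail \emph{lower} bound, and the superadditivity bound of \cite{sepp98ebp} covers only the bulk process $\G_{\bfv}$, not the process with boundary. A right-tail upper bound with exponent $3/2$ for $\G^{w,\hor}_{\bfv}$ (equivalently for $\G^{w_-}_{\bfv}$), uniform in $w$ near $\ShpMin_{\bfv}$, cannot be obtained from Rains' identity plus Markov either: that route requires a dominating two-sided process $\G^{w,z}_{\bfv}$ with $w>z$, whereas stochastic domination of $\G^{w_-,w_-}_{\bfv}$ forces $w\le w_-\le z$. This bound is precisely Proposition \ref{PBdRTUB}, whose proof rests on Proposition \ref{PIncRTUB} — the main technical work of Section \ref{SPfBlk}, involving the exit-point bounds (Lemmas \ref{LExit}--\ref{LExit2}), the northeast-boundary stationary process, and the martingale maximal inequality of Lemma \ref{LExpMaxIneq}. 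Your proposal skips this entirely.

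Two further points. First, your ``cleaner'' alternative uses $\G^{w_-}_{\bfv}\le\G^{\ShpMin_{\bfv}}_{\bfv}$, which is reversed: lowering the rate $w$ makes the $\Exp(w)$ boundary weights stochastically larger, so $\G^{w_-}_{\bfv}\ge\G^{\ShpMin_{\bfv}}_{\bfv}$. Second, even granting the tail bound, it cannot hold down to a constant $s_0$ for the right tail: with $w=w_-$ the mean $\M^{w_-}_{\bfv}$ exceeds $\shp_{\bfv}$ by order $\epsilon_0^2p^{2/3}\norm{\bfv}^{1/3}$, so decay is only possible for $s\gtrsim p^{2/3}$. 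This is why Proposition \ref{PBdRTUB} couples the admissible $w$ to $s$ via $w\ge\ShpMin_{\bfv}-\epsilon_0\min\{s^{1/2}\norm{\bfv}^{-1/3},1\}$, and why the proof of Theorem \ref{TBdUB} integrates $\int_0^{p^{2/3}}s^{p-1}\,\dd s$ trivially and invokes the tail bounds only for $s\ge p^{2/3}$, where $s^{1/2}\ge p^{1/3}$ makes the hypothesis available. You flag this crossover as bookkeeping, but it is load-bearing for the $p$-dependence of the final bound.
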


Similarly to \eqref{E:BlkTaBd}, Theorem \ref{TBdUB}(a) can be converted to the equivalent tail bound
\begin{align}
\label{E:BdTaUB}
\bfP\{|\G_{\bfv}^{w, \hor}-\shp_\bfv| \ge s \norm{\bfv}^{1/3}\} \le \exp\{-c_0 \min\{s^{3/2}, s \norm{\bfv}^{1/3}\}\}
\end{align}
for $\bfv \in S_\delta \cap \bbZ^2_{>0}$ and $s \ge s_0$ for some constants $c_0 = c_0(\delta) > 0$ and $s_0 = s_0(\delta) > 0$. Due to the inequality $\G_{\bfv} \le \G_{\bfv}^{w, \hor}$, the left-tail part of \eqref{E:BdTaUB} is immediate from left-tail bound in Theorem \ref{T:BlkLT}, which is available via other methods. The right-tail part can also be obtained in several ways. 
As noted after \eqref{E:100}, one proof would be via kernel analysis along the lines of \cite[Lemma 3.3]{Ferr_Occe_18}. One could also likely obtain the right-tail bound by coarse graining as in the proof of \cite[Theorem 2.6(1)]{Bhat_20} where the left-tail bound in Proposition \ref{P:BlkLT-HG} is taken as an input. In our argument, Theorem \ref{TBdUB} comes as a consequence of Theorem \ref{T:BlkLT} and the increment tail bounds in Theorem \ref{TIncRTUB}, which will be derived earlier.  

We next record central moment lower bounds in the KPZ regime. These bounds match the order of the upper bounds in Theorem \ref{TBdUB}. 
\begin{thm}
\label{TBdLB}
Let $\delta > 0$, $K \ge 0$ and $T > 0$. There exist positive constants $c_0 = c_0(\delta, K, T)$ and $N_0 = N_0(\delta, K, T)$ such that the following statements hold for $\bfv \in S_\delta \cap \bbZ_{\ge N_0}^2$, $p \in [1, T \norm{\bfv}]$, $w > 0$ and $z < 1$.  
\begin{enumerate}[\normalfont (a)]
\item If $w \le \ShpMin_{\bfv}+K\min \{p^{1/3}\norm{\bfv}^{-1/3}, 1\}$ then $$\E[|\G^{w, \hor}_{\bfv}-\shp_{\bfv}|^p] \ge c_0^p p^{2p/3} \norm{\bfv}^{p/3}.$$ 
\item If $z \ge \ShpMin_{\bfv}-K\min \{p^{1/3}\norm{\bfv}^{-1/3}, 1\}$ then $$\E[|\G^{z, \ver}_{\bfv}-\shp_{\bfv}|^p] \ge c_0^p p^{2p/3} \norm{\bfv}^{p/3}.$$
\end{enumerate}
\end{thm}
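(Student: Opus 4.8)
\textbf{Proof proposal for Theorem \ref{TBdLB}.}

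By the transposition symmetry recorded in Remark \ref{RTrans} (specifically \eqref{ETrans} and \eqref{ELPPTrans}), part (b) follows from part (a), so the plan is to prove (a). The centering is with respect to $\shp_{\bfv}$, and since $\E[\G^w_{\bfv}] = \M^w_{\bfv} \ge \shp_{\bfv}$ in the regime $w \le \ShpMin_{\bfv}$ (by \eqref{EMean} and \eqref{EShpMean}), it is natural to extract the lower bound from the right tail: I would show
\begin{align*}
\E[|\G^{w, \hor}_{\bfv}-\shp_{\bfv}|^p] \ge \E[(\G^{w, \hor}_{\bfv}-\shp_{\bfv})_+^p] \ge c_0^p p^{2p/3}\norm{\bfv}^{p/3}
\end{align*}
and the latter inequality would be reduced to a right-tail lower bound of the form
\begin{align*}
\P\{\G^{w, \hor}_{\bfv} \ge \shp_{\bfv} + s\, p^{1/3}\norm{\bfv}^{1/3}\} \ge \exp\{-C_0 s^{3/2} p\}
\end{align*}
valid for $s$ in a suitable range, via the standard device $\E[(X)_+^p] \ge (s p^{1/3}\norm{\bfv}^{1/3})^p \,\P\{X \ge s p^{1/3}\norm{\bfv}^{1/3}\}$ and then optimizing over $s$ (taking $s$ a fixed multiple of one gives $c_0^p p^{2p/3}\norm{\bfv}^{p/3}$, since $p^{p/3}\cdot p^{p/3} = p^{2p/3}$). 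The fact that we need the tail bound with the factor $p$ in the exponent of the exponential — as opposed to a fixed-$p$ statement — is exactly what allows $p$ to grow up to $T\norm{\bfv}$, and this is where the bulk of the new work lies.

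The core mechanism for the right-tail lower bound should be the change-of-measure argument originating in \cite{bala-sepp-aom} and developed in \cite{sepp-cgm-18}: one compares the bulk/near-stationary LPP at parameter $w$ with the increment-stationary LPP at a nearby parameter $\rho$, for which $\E[\G^\rho_{\bfv}] = \M^\rho_{\bfv}$ is known exactly and the increments on the boundary are explicit i.i.d.\ exponentials. Tilting $\rho$ slightly below $\ShpMin_\bfv$ shifts the mean $\M^\rho_\bfv$ above $\shp_\bfv$ by an amount of order $(\ShpMin_\bfv - \rho)^2 \norm{\bfv}$ (cf.\ Lemma \ref{LMeanEst}, referenced in the Gaussian-regime remarks), and choosing $\ShpMin_\bfv - \rho$ of order $s^{1/2} p^{1/6}\norm{\bfv}^{-1/3}$ produces the target displacement $s\, p^{1/3}\norm{\bfv}^{1/3}$; the price of the tilt is controlled by a relative-entropy / exponential-moment cost on the boundary weights of order $(\ShpMin_\bfv-\rho)^2\norm{\bfv} \asymp s\, p^{1/3}\norm{\bfv}^{1/3}$, which after the usual second-order bookkeeping gives the $\exp\{-C_0 s^{3/2}p\}$ decay once one also uses the near-stationary exit-point bounds (Lemmas \ref{LExit}, \ref{LExit2}) to argue the geodesic actually picks up the boundary contribution. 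Concretely I would start from the right-tail lower bound \cite[(5.43)]{sepp-cgm-18} (already invoked in Remark \ref{RBlkLB}) or Proposition \ref{PBdRTLB}, and sharpen its constants so that the exponent scales correctly in $p$ — this sharpening is flagged in Section \ref{SsCont} as the main effort for (\romannumeral4)–(\romannumeral5).

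I would organize the argument as: (1) reduce to part (a) by transposition; (2) reduce part (a) to the right-tail lower bound displayed above by the truncation-and-optimize step, handling the constraint $w \le \ShpMin_\bfv + K\min\{p^{1/3}\norm{\bfv}^{-1/3},1\}$ by noting that raising $w$ only decreases $\G^{w,\hor}_\bfv$ so it suffices to prove the bound at $w = \ShpMin_\bfv$ (monotonicity in the boundary rate) and then absorb the $K$-sized discrepancy; (3) establish the right-tail lower bound via the change of measure to the stationary model at parameter $\rho = \ShpMin_\bfv - \Theta(s^{1/2}p^{1/6}\norm{\bfv}^{-1/3})$, using \eqref{EMean}, the mean estimate of Lemma \ref{LMeanEst}, and the exit-point control of Lemmas \ref{LExit}–\ref{LExit2}; (4) track constants to confirm the exponent is linear in $p$ and verify the admissible range of $s$ is nonempty for $p \le T\norm{\bfv}$ and $\norm{\bfv} \ge N_0$, which is precisely where the hypotheses $p \le T\norm{\bfv}$ and the largeness of $\norm{\bfv}$ enter (so that $\rho$ stays in $(0,1)$ and bounded away from the endpoints uniformly over the cone $S_\delta$). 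The main obstacle I anticipate is step (3)–(4): getting the right-tail lower bound with the \emph{correct joint dependence on $s$ and $p$} rather than a fixed-$p$ version, which requires a careful, quantitative version of the \cite{bala-sepp-aom}–\cite{sepp-cgm-18} change of measure with all constants made explicit in terms of $\delta$ and uniform over the cone — routine in spirit but delicate in execution, and exactly the gap the paper says is being filled.
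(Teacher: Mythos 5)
Your overall route is the same as the paper's: reduce (b) to (a) by transposition (Remark \ref{RTrans}), reduce the moment lower bound to a right-tail lower bound for $\G^{w,\hor}_{\bfv}$ valid uniformly over the cone and over $w$ slightly above $\ShpMin_{\bfv}$, and prove that tail bound by a quantitative sharpening of the change-of-measure argument of \cite{bala-sepp-aom, sepp-cgm-18} with exit-point control and the mean estimate of Lemma \ref{LMeanEst} -- this is exactly Proposition \ref{PBdRTLB} (with the $K$-dependence of the constant tracked in Remark \ref{RConst}), after which the paper passes to moments through the layer-cake integral and Lemma \ref{LIntBd} (Proposition \ref{PBdMomLB}), using Jensen's inequality for $p$ comparable to $\norm{\bfv}$. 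So the key ingredients are correctly identified.

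The written reduction, however, has a scaling error that would make it fail as stated. The deviation scale at which the one-point bound $\E[(X)_+^p]\ge a^p\,\P\{X\ge a\}$ produces $p^{2p/3}\norm{\bfv}^{p/3}$ is $a\asymp p^{2/3}\norm{\bfv}^{1/3}$: applying $\P\{\G^{w,\hor}_{\bfv}\ge\shp_{\bfv}+s\norm{\bfv}^{1/3}\}\ge\exp\{-C_0 s^{3/2}\}$ at $s\asymp p^{2/3}$ gives probability $\exp\{-\Theta(p)\}=c^p$ and $a^p=p^{2p/3}\norm{\bfv}^{p/3}$. With your parametrization (deviation $s\,p^{1/3}\norm{\bfv}^{1/3}$, tail $\ge\exp\{-C_0 s^{3/2}p\}$, $s$ fixed) the one-point bound yields only $c^p p^{p/3}\norm{\bfv}^{p/3}$; the claimed ``$p^{p/3}\cdot p^{p/3}=p^{2p/3}$'' has no source, since the probability factor cannot contribute positive powers of $p$, and $\exp\{-C_0 s^{3/2}p\}$ is in any case not the natural decay at that deviation (which is $\exp\{-\Theta(s^{3/2}p^{1/2})\}$). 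Two further points: monotonicity in the boundary rate reduces the problem to the \emph{largest} admissible $w=\ShpMin_{\bfv}+K\min\{p^{1/3}\norm{\bfv}^{-1/3},1\}$, not to $w=\ShpMin_{\bfv}$ (a bound proved only at $\ShpMin_{\bfv}$ says nothing about supercritical $w$); handling that overshoot is precisely why the tail bound must be proved for $w\le\ShpMin_{\bfv}+Ks^{1/2}\norm{\bfv}^{-1/3}$ with constant $C_1\max\{K,1\}^2$ as in Remark \ref{RConst}. Finally, since the tail bound is only available for $s\le\epsilon_0\norm{\bfv}^{2/3}$, the choice $s\asymp p^{2/3}$ is unavailable once $p$ exceeds a small multiple of $\norm{\bfv}$; the paper covers $p\in(r_0\norm{\bfv},T\norm{\bfv}]$ by Jensen's inequality from the case $p=r_0\norm{\bfv}$, a step your outline needs to include explicitly rather than only checking that the $s$-range is nonempty.
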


The conditions in (a) and (b) are technical; the conclusions of the theorem in fact hold for all $w > 0$ and $z < 1$. The stronger statement is an immediate consequence of \eqref{EBlkLB2}, which in turn comes from the right-tail lower bound \eqref{EBlkRTLB}. Because the coupling proof of this bound has not yet appeared in the literature, we derive Theorem \ref{TBdLB} via an alternative argument, which relies on the conditions in (a) and (b). The main effort goes into strengthening a right-tail lower bound from \cite{Sepp_18_CGM} as in Proposition \ref{PBdRTLB} ahead, which is required to treat powers $p$ growing with $\norm{\bfv}$. The core of our argument is still the change-of-measure idea originating from \cite{Bala_Sepp_10}.  

We now switch attention to the central moments in situations complementary to the KPZ regime treated in Theorem \ref{TBdUB}. 

It is well-known that once the distance $|z-\ShpMin_{\bfv}|$ becomes sufficiently large (namely, when the direction of $\bfv$ is sufficiently away from the characteristic direction $\xi^z$),  
$\G^{z}_{\bfv}$ begins to display Gaussian fluctuations. The reason for the Gaussian behavior is that with high probability the geodesic $\pi_{(0, 0), \bfv}^z$ visits order $|\ShpMin_{\bfv}-z| \cdot \norm{\bfv}$ many vertices on the axes. Due to the central limit theorem, the sum of the weights along these vertices fluctuate on scale $|\ShpMin_{\bfv}-z|^{1/2} \cdot \norm{\bfv}^{1/2}$, which dominates order $\norm{\bfv}^{1/3}$ fluctations in the bulk when $|z-\ShpMin_{\bfv}|$ is much larger in order than $\norm{\bfv}^{-1/3}$ as $\norm{\bfv} \to \infty$. 

For the exponential LPP, a precise result showing the convergence of the  suitably rescaled $\G^z_{\bfv}$ to the standard Gaussian distribution along off-characteristic directions can be found in \cite[Corollary 5.2]{Sepp_18_CGM}. Analogous results have also been recorded for the log-gamma polymer \cite[Corollary 2.2]{Sepp_12_corr}, the other three basic integrable lattice polymers \cite[Corollary 1.4]{Chau_Noac_18b}, and the strict-strict Bernoulli LPP \cite[Theorem 2.4]{Ciec_Geor_19}. However, to our knowledge, central moment bounds in the Gaussian regime have not previously appeared in the literature. The next result provides such upper bounds, capturing also the transition from the KPZ to the Gaussian behavior. For example in part (a), roughly speaking, the upper bound interpolates from order $p^{2p/3}\norm{\bfv}^{p/3}$ to order $p^{p/2}\norm{\bfv}^{p/2}$ as the difference $\ShpMin_{\bfv}-w$ increases from order $p^{1/3}\norm{\bfv}^{-1/3}$ to order one. 

\begin{thm}
\label{TGausUB}
Let $\delta > 0$ and $\epsilon > 0$. There exists a constant $C_0 = C_0(\delta, \epsilon) > 0$ such that the following statements hold for $\bfv \in S_\delta \cap \bbZ_{>0}^2$, $p \ge 1$, $w > 0$ and $z < 1$. 
\begin{enumerate}[\normalfont (a)]
\item If $\epsilon \le w \le \ShpMin_{\bfv}-\epsilon \min \{p^{1/3}\norm{\bfv}^{-1/3}, 1\}$ then 
\begin{align*}
\E[|\G^{w, \hor}_{\bfv}-\M^{w}_{\bfv}|^p] \le C_0^p p^{p/2}  \max \{p, (\ShpMin_{\bfv}-w)\norm{\bfv}\}^{p/2}. 
\end{align*}
\item If $\ShpMin_{\bfv}+\epsilon \min \{p^{1/3}\norm{\bfv}^{-1/3}, 1\} \le z \le 1-\epsilon$ then 
\begin{align*}
\E[|\G^{z, \ver}_{\bfv}-\M^{z}_{\bfv}|^p] \le C_0^p p^{p/2} \max \{p, (z-\ShpMin_{\bfv})\norm{\bfv}\}^{p/2}. 
\end{align*}
\end{enumerate} 
\end{thm}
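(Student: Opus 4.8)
\textbf{Proof plan for Theorem \ref{TGausUB}.}

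By Remark \ref{RTrans}-type transposition symmetry, it suffices to prove part (a); part (b) then follows from \eqref{ETrans} and \eqref{ELPPTrans}. So fix $\delta, \epsilon > 0$, a vertex $\bfv \in S_\delta \cap \bbZ_{>0}^2$, a power $p \ge 1$, and a rate $w$ with $\epsilon \le w \le \ShpMin_{\bfv} - \epsilon \min\{p^{1/3}\norm{\bfv}^{-1/3}, 1\}$. The plan is to split $\G^{w, \hor}_{\bfv} - \M^w_{\bfv}$ into a \emph{boundary contribution} and a \emph{bulk contribution} and bound the $p$th central moment of each separately, using the triangle inequality $\E[|X+Y|^p]^{1/p} \le \E[|X|^p]^{1/p} + \E[|Y|^p]^{1/p}$. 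The natural decomposition is along the geodesic: writing $\csh{}{}$ (the notation $\csh{w,\hor}{\bfv}$) for the horizontal exit point of the geodesic $\pi^{w,\hor}_{(1,0),\bfv}$, i.e.\ the largest $i$ with $(i,0) \in \pi^{w,\hor}_{(1,0),\bfv}$, one has the representation $\G^{w,\hor}_{\bfv} = \sum_{i=1}^{\csh{w,\hor}{\bfv}} \rw^w_{(i,0)} + \G_{(\csh{w,\hor}{\bfv}+1,\, 1),\ \bfv}$ (with the convention that the bulk term is $\G_{(1,1),\bfv}$ when the exit point is $0$). The key heuristic is that in the Gaussian regime the exit point concentrates around $\theta^w_{\bfv} := (\ShpMin_{\bfv}-w)\norm{\bfv} \cdot (\text{explicit constant})$, a quantity of order $(\ShpMin_{\bfv}-w)\norm{\bfv}$, and the fluctuation is governed by the boundary sum of $\asymp \theta^w_\bfv$ i.i.d.\ $\Exp(w)$ weights, which has standard deviation of order $(\ShpMin_\bfv - w)^{1/2}\norm{\bfv}^{1/2}$, while the bulk term fluctuates only on the smaller KPZ scale.

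\textbf{Step 1: exit point upper bound.} First I would invoke the geodesic exit-point bounds of \cite{emra-janj-sepp-21-exit2} recorded as Lemmas \ref{LExit} and \ref{LExit2}: in the regime $w < \ShpMin_\bfv$, the exit point $\csh{w,\hor}{\bfv}$ has exponential-type upper tails on scale $(\ShpMin_\bfv - w)\norm{\bfv}$, i.e.\ there are constants so that $\P\{\csh{w,\hor}{\bfv} \ge r(\ShpMin_\bfv - w)\norm{\bfv}\} \le C e^{-c r}$ (or a suitable stretched/quadratic variant) for $r$ in an appropriate range, uniformly over $\bfv \in S_\delta$ and $\epsilon \le w$. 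The condition $w \le \ShpMin_\bfv - \epsilon\min\{p^{1/3}\norm{\bfv}^{-1/3},1\}$ guarantees that the relevant scale $(\ShpMin_\bfv - w)\norm{\bfv}$ dominates $p^{2/3}\norm{\bfv}^{1/3}$, so that $p$th moments computed against this scale absorb the $p^{p/2}$ factors. This controls how far along the horizontal axis the geodesic can travel.

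\textbf{Step 2: the boundary sum.} On the event $\{\csh{w,\hor}{\bfv} = k\}$, the weights $\rw^w_{(1,0)},\dots,\rw^w_{(k,0)}$ are \emph{not} independent of the event, so I cannot directly treat $\sum_{i=1}^{\csh{w,\hor}{\bfv}} \rw^w_{(i,0)}$ as a sum of i.i.d.\ variables. Instead I would bound it above and below by deterministic partial sums on a high-probability event: on $\{\csh{w,\hor}{\bfv} \le K\}$ with $K$ a large multiple of $(\ShpMin_\bfv - w)\norm{\bfv}$, monotonicity of the LPP in the boundary weights gives a sandwich of $\G^{w,\hor}_\bfv$ between $\G^{w,\hor}$-type quantities with truncated or extended boundaries whose boundary sums \emph{are} honest sums of $\asymp (\ShpMin_\bfv - w)\norm{\bfv}$ i.i.d.\ $\Exp(w)$ weights; centering by $\M^w_\bfv$ and using the sub-exponential moment bound for sums of i.i.d.\ exponentials (Rosenthal/Marcinkiewicz--Zygmund, giving $\E[|\sum_{i=1}^m \cen{\rw^w_{(i,0)}}|^p]^{1/p} \lesssim \sqrt{p}\sqrt{m}$ for $p \le m$) produces the $C_0^p p^{p/2}((\ShpMin_\bfv - w)\norm{\bfv})^{p/2}$ term. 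On the complementary low-probability event $\{\csh{w,\hor}{\bfv} > K\}$ I would use the exit-point tail from Step 1 together with crude bounds ($\G^{w,\hor}_\bfv \le \sum_{i=1}^{\norm\bfv}\rw^w_{(i,0)} + \G_\bfv$ say) to show the contribution to the $p$th moment is negligible, exploiting that $p \le \norm{\bfv}$. An alternative, cleaner route is to apply the stationary-LPP comparison $\G^{w,\hor}_\bfv \le \G^{w}_{\bfv} = \G^{w,w}_\bfv$ (and a matching lower bound via the vertical-boundary term being negligible in this regime) and then use \eqref{EMean}, \eqref{EIncSt} plus an exit-point decomposition of the \emph{two-sided} stationary model.

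\textbf{Step 3: the bulk term.} The bulk contribution $\G_{(\csh{w,\hor}{\bfv}+1,1),\bfv} - \E[\G_{(\csh{w,\hor}{\bfv}+1,1),\bfv}]$ is over a (random) rectangle whose dimensions are still comparable to $\bfv$ in both coordinates on the event $\{\csh{w,\hor}{\bfv} \le K\}$, provided $K$ is a small enough multiple of $\norm\bfv$ — here one uses that $\ShpMin_\bfv - w \le \ShpMin_\bfv - \epsilon \le 1 - \epsilon'$ keeps the endpoint inside a cone $S_{\delta'}$. By Theorem \ref{TBlk} (conditioning on $\csh{w,\hor}{\bfv}$ and using that $\csh{w,\hor}{\bfv}$ is a deterministic function of the weights on the axis and in the complementary region, or a union bound over the value of the exit point combined with the exit-point tail), this term contributes at most $C_0^p p^{2p/3}\norm{\bfv}^{p/3}$, which is dominated by $C_0^p p^{p/2}\max\{p,(\ShpMin_\bfv - w)\norm\bfv\}^{p/2}$ precisely because of the lower bound imposed on $\ShpMin_\bfv - w$ (when $(\ShpMin_\bfv - w)\norm\bfv \ge p$ one checks $p^{2p/3}\norm\bfv^{p/3} \le ((\ShpMin_\bfv - w)\norm\bfv)^{p/2}$ using $(\ShpMin_\bfv - w)\norm\bfv \ge \epsilon^3 p^{2/3}\norm\bfv^{1/3} \cdot$const; the $\max$ with $p$ handles the small range). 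Finally one also needs to compare the deterministic centering $\M^w_\bfv$ with $\sum_{i=1}^{k}\E[\rw^w_{(i,0)}] + \E[\G_{(k+1,1),\bfv}] = k/w + \E[\G_{(k+1,1),\bfv}]$ at the typical $k \approx \theta^w_\bfv$, i.e.\ verify that $\shp_\bfv$-type estimates (Theorem \ref{TBlk}, \eqref{EShpMean}, and the identity $\M^{\ShpMin_\bfv}_\bfv = \shp_\bfv$) make the bias $|\M^w_\bfv - k/w - \E[\G_{(k+1,1),\bfv}]|$ at most of order $(\ShpMin_\bfv-w)^{1/2}\norm\bfv^{1/2} + \norm\bfv^{1/3}$, absorbed into the two main terms.

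\textbf{Main obstacle.} The genuinely delicate point is Step 2: decoupling the random length of the boundary segment from the weights on it, while keeping the $p$-dependence sharp (factor $\sqrt p$, not $p$) and uniform in $w$ down to $w = \epsilon$ and in the transitional window where $(\ShpMin_\bfv - w)\norm\bfv$ is only of order $p$. Getting the constants to be genuinely of the form $C_0^p$ with $C_0$ independent of $p$ requires care in how the exit-point tail (which itself has a $p$-free rate but must be integrated against a $p$th power) interacts with the Rosenthal-type bound for the i.i.d.\ exponential sum; I expect to need the $\max\{p,(\ShpMin_\bfv - w)\norm\bfv\}$ exactly to cover the case where the Gaussian scale degenerates to the $p$-scale. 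The bulk estimate in Step 3 is comparatively routine given Theorem \ref{TBlk}, modulo the bookkeeping of keeping the random endpoint in a fixed cone.
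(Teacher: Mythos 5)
Your heuristic and the scales are right, but the core of your Step 2 --- decomposing at the \emph{random} exit point and then decoupling its length from the boundary weights --- is a genuine gap, and neither of the two fixes you offer closes it. Writing $\bfv=(m,n)$: on the good event where the exit point is at most $K$, with $K$ a large multiple of $(\ShpMin_{\bfv}-w)\norm{\bfv}$, your ``truncated/extended boundary'' sandwich replaces the random exit location by a worst case and thereby incurs a \emph{deterministic} bias. The crude upper bound $\G^{w,\hor}_{\bfv}\le \G^{w}_{(K,0)}+\G_{(1,1),\bfv}$ leaves the mismatch $K/w+\shp_{\bfv}-\M^w_{\bfv}$, which is of order $(\ShpMin_{\bfv}-w)\norm{\bfv}$ because Lemma \ref{LMeanEst} gives $\M^w_{\bfv}-\shp_{\bfv}\asymp(\ShpMin_{\bfv}-w)^2\norm{\bfv}\ll K/w$; and even if you optimize the cut over $k\le K$, the residual bias $\max_{k\le K}\{k/w+\shp_{m-k,n}\}-\M^w_{\bfv}$ is still of order $(\ShpMin_{\bfv}-w)^2\norm{\bfv}$. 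Both exceed the target $p^{1/2}\{(\ShpMin_{\bfv}-w)\norm{\bfv}\}^{1/2}$ except at the very edge of the Gaussian regime (for fixed $p$ and $\ShpMin_{\bfv}-w\asymp 1$ they are of order $\norm{\bfv}$ versus $\norm{\bfv}^{1/2}$). Your ``cleaner route'' via $\G^{w,\hor}_{\bfv}\le\G^{w,w}_{\bfv}$ is circular: off-characteristic central moment bounds for the stationary model are precisely the content of Remark \ref{RGausUB}, i.e.\ equivalent to what is being proved. Finally, the conditioning claim in Step 3 is false: the exit point is determined by \emph{all} of the weights, on the axis and in the bulk, so conditioning on its value does not leave the weights of the remaining rectangle with their product law; the union-bound alternative reintroduces the per-$k$ bias problem just described.

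The paper avoids the random exit point entirely. By Lemma \ref{LShiftProp}(a) one chooses a \emph{deterministic} $k$ with $\ShpMin_{m-k,n}<w\le\ShpMin_{m-k+1,n}$, so that $k\asymp(\ShpMin_{\bfv}-w)\norm{\bfv}$ and $w$ is within $O(\norm{\bfv}^{-1})$ of $\ShpMin_{\wt{\bfv}}$ for $\wt{\bfv}=(m-k,n)$, and then splits $\G^{w,\hor}_{\bfv}-\M^w_{\bfv}$ by the triangle inequality into three pieces: $\G^{w}_{(k,0)}-k/w$, a sum of a deterministic number of i.i.d.\ $\Exp(w)$ weights, contributing $C^pp^{p/2}\max\{p,k\}^{p/2}$; the piece $\G^{w,\hor}_{\bfv}-\G^{w}_{(k,0)}-\shp_{\wt{\bfv}}$, whose positive and negative parts are dominated, via monotonicity and stationarity \eqref{EIncSt}, by quantities distributed as $(\G^{w}_{\wt{\bfv}}-\shp_{\wt{\bfv}})_+$ and $(\G^{w,\hor}_{\wt{\bfv}}-\shp_{\wt{\bfv}})_-$ and then controlled by the KPZ-regime upper bound (Theorem \ref{TBdUB}) at the shifted vertex, legitimate precisely because $w>\ShpMin_{\wt{\bfv}}$; and the deterministic bias $\M^{w}_{\wt{\bfv}}-\shp_{\wt{\bfv}}$, which is $O(\norm{\bfv}^{-1})$ and hence harmless. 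This deterministic re-anchoring of the decomposition at the point where $w$ becomes characteristic is the missing idea; without it, the exit-point route would require a multi-scale argument over the exit location with bias control at every scale, which your sketch does not supply.
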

Unlike the earlier results, the conclusions of Theorem \ref{TGausUB} do not hold with the centering with respect to the shape function. For instance for part (a), this is because the difference $|\M^{w}_{\bfv}-\shp_{\bfv}|^p$ is of order $(\ShpMin_{\bfv}-w)^{2p}\norm{\bfv}^{p}$ (see Lemma \ref{LMeanEst} below) which surpasses the upper bound $p^{p/2}(\ShpMin_{\bfv}-w)^{p/2}\norm{\bfv}^{p/2}$ when $\ShpMin_{\bfv}-w$ is much larger in order than $p^{1/3}\norm{\bfv}^{-1/3}$. 

Our final result provides central moment lower bounds for the $\G^{w, z}$-process in the Gaussian regime, complementing the upper bounds in Theorem \ref{TGausUB}. 

\begin{thm}
\label{TGausLB}
Let $\delta > 0$, $\epsilon > 0$ and $T > 0$. There exist positive constants $c_0 = c_0(\delta, \epsilon, T)$, $K_0 = K_0(\delta, \epsilon, T)$ and $N_0 = N_0(\delta, \epsilon, T)$ such that the following statements hold for $\bfv \in S_\delta \cap \bbZ_{\ge N_0}^2$, $p \in [1, T \norm{\bfv}]$, $w > 0$ and $z < 1$.  
\begin{enumerate}[\normalfont (a)]
\item If $\epsilon \le w \le \ShpMin_{\bfv}-\min \{K_0 p^{1/3}\norm{\bfv}^{-1/3}, \epsilon\}$ then $$\E[|\G^{w, \hor}_{\bfv}-\M^w_{\bfv}|^p] \ge c_0^p p^{p/2} (\ShpMin_{\bfv}-w)^{p/2}\norm{\bfv}^{p/2}.$$ 
\item If $\ShpMin_{\bfv}+\min \{K_0p^{1/3}\norm{\bfv}^{-1/3}, \epsilon\} \le z \le 1-\epsilon$ then $$\E[|\G^{z, \ver}_{\bfv}-\M^w_{\bfv}|^p] \ge c_0^p p^{p/2} (z-\ShpMin_{\bfv})^{p/2}\norm{\bfv}^{p/2}.$$
\end{enumerate}
\end{thm}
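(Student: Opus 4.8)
The plan is to reduce part (b) to part (a) by the transposition symmetry of Remark~\ref{RTrans}: the identity $\G^{z, \ver}_{\bfv} \stackrel{\text{dist.}}{=} \G^{1-z, \hor}_{\wt{\bfv}}$ together with $\shp_{\wt\bfv} = \shp_\bfv$, $\ShpMin_{\wt\bfv} = 1 - \ShpMin_\bfv$ and $\M^{z}_{\bfv} = \M^{1-z}_{\wt\bfv}$ converts the statement for $\G^{z, \ver}$ into the statement for $\G^{w, \hor}$ with $w = 1 - z$ and the same constants (this also confirms that the centering in (b) should read $\M^z_{\bfv}$). So I focus on (a). Throughout, $\bfv \in S_\delta$ forces $w \in [\epsilon, 1 - \delta_0]$ for some $\delta_0 = \delta_0(\delta) > 0$, since $\ShpMin_{\bfv} \le (1 + \sqrt\delta)^{-1}$ and the hypothesis gives $\epsilon \le w < \ShpMin_{\bfv}$.

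The engine is an exit-point lower bound with a deterministic bottleneck. Writing $\bfv = (v_1, v_2)$, let $i_0 = \lfloor i_0^* \rfloor$, where $i_0^* = v_1 - v_2 w^2 (1-w)^{-2} > 0$ is the unique maximizer of the concave map $m \mapsto m w^{-1} + \shp_{v_1 - m,\, v_2}$ on $[0, v_1]$. A short computation gives the exact tangency $i_0^* w^{-1} + \shp_{v_1 - i_0^*,\, v_2} = \M^w_{\bfv}$ and $i_0^* \asymp (\ShpMin_{\bfv} - w)\norm{\bfv}$ with constants depending only on $\delta$. Restricting the maximum defining $\G^{w, \hor}_{\bfv}$ to the path that runs along the horizontal axis to $(i_0, 0)$ and then enters the bulk yields
\begin{align*}
\G^{w, \hor}_{\bfv} - \M^w_{\bfv} \ \ge\ A + B + D, \qquad A := \sum_{k=1}^{i_0} \rw^w_{(k, 0)} - \frac{i_0}{w}, \quad B := \G_{(i_0, 1), \bfv} - \E[\G_{(i_0, 1), \bfv}],
\end{align*}
where $D := i_0 w^{-1} + \E[\G_{(i_0, 1), \bfv}] - \M^w_{\bfv}$ is deterministic. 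Here $A$ and $B$ are independent (they are measurable with respect to disjoint families of weights), $A$ is a centered sum of $i_0$ i.i.d.\ $\Exp(w)$ weights with variance $\sigma^2 := i_0 w^{-2} \asymp (\ShpMin_{\bfv} - w)\norm{\bfv}$, and $B$ is a centered bulk last-passage time over a rectangle that lies in a cone $S_{\delta'}$ with $\delta' = \delta'(\delta, \epsilon)$ and has $L^1$-size $\asymp \norm{\bfv}$. Combining the tangency identity, the Lipschitz continuity of $\shp$ on $S_{\delta'}$, the off-by-one from $i_0 = \lfloor i_0^* \rfloor$, and the bulk mean estimate $|\E[\G_{\bfu}] - \shp_{\bfu}| \le C_0 \norm{\bfu}^{1/3}$ from \eqref{Eq-BlkMeanBd} applied to the rectangle for $B$, one gets $|D| \le C_1 \norm{\bfv}^{1/3}$ with $C_1 = C_1(\delta, \epsilon)$.

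It then suffices to bound $\E[|\G^{w, \hor}_{\bfv} - \M^w_{\bfv}|^p] \ge \E[(A + B + D)_+^p]$ from below by conditioning on $B$. The map $s \mapsto \E[(A + s)_+^p]$ is convex with value $\E[A_+^p]$ at $s = 0$; since $A$ is centered one checks, using $\E[A^p \one\{A \ge 3\sigma\}] \ge \E[A_+^p] - (3\sigma)^p$ and treating bounded $p$ separately, that $\E[(A + s)_+^p] \ge c_4^p\, \E[A_+^p]$ uniformly over $|s| \le \sigma$, provided $\E[A_+^p] \ge (c_2 \sigma \sqrt p)^p$ for a suitable universal $c_2$. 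This last inequality is a moderate-deviation estimate for exponential sums: $\E[A_+^p] \ge s^p\, \P\{A \ge s\}$ with $s = (1/4)\sigma\sqrt p$, and the probability that a sum of $i_0$ i.i.d.\ $\Exp(1)$ variables exceeds $i_0 + (1/4)\sqrt{i_0 p}$ is at least $e^{-p}$ by the elementary bound $(1 + x)\log(1 + x) - x \le x^2/2$ (this is where a change-of-measure/tilting argument enters, as in \cite{bala-sepp-aom}, and where the restriction $p \le T\norm{\bfv}$ is used to keep the estimate uniform, forcing $K_0 = K_0(T)$ and $N_0 = N_0(T)$). Using independence,
\begin{align*}
\E[(A + B + D)_+^p] \ \ge\ c_4^p\, \E[A_+^p]\cdot \P\{|B + D| \le \sigma\} \ \ge\ (c_0 \sigma \sqrt p)^p\cdot \P\{|B + D| \le \sigma\}.
\end{align*}
By Chebyshev with $\Var(B) \le C_0^2 \norm{\bfv}^{2/3}$ (Theorem~\ref{TBlk} at $p = 2$) and $|D| \le C_1 \norm{\bfv}^{1/3}$, one has $\P\{|B + D| > \sigma\} \le 1/2$ once $\sigma^2$ is a large enough multiple of $\norm{\bfv}^{2/3}$; the two alternatives in the hypothesis $w \le \ShpMin_{\bfv} - \min\{K_0 p^{1/3}\norm{\bfv}^{-1/3}, \epsilon\}$ secure this either because $\ShpMin_{\bfv} - w \ge K_0 p^{1/3}\norm{\bfv}^{-1/3}$ with $K_0 = K_0(\delta, \epsilon, T)$ large, or because $\ShpMin_{\bfv} - w \ge \epsilon$ and $\norm{\bfv} \ge N_0 = N_0(\delta, \epsilon, T)$ large. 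Since $\sigma^2 \asymp (\ShpMin_{\bfv} - w)\norm{\bfv}$, this gives $\E[(A + B + D)_+^p] \ge c_0^p p^{p/2}(\ShpMin_{\bfv} - w)^{p/2}\norm{\bfv}^{p/2}$, as desired.

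The step I expect to be the main obstacle is the lower bound $\E[A_+^p] \ge (c_2 \sigma \sqrt p)^p$ with $c_2$ uniform over the relevant range of $p$: a bare central limit theorem for $A$ only handles bounded $p$, so one needs a non-asymptotic moderate/large-deviation lower bound for exponential sums that survives extraction of a $p$-th root, together with enough care that the rectangle for $B$ stays in a fixed cone and the tangency only costs $O(\norm{\bfv}^{1/3})$; this is precisely what the restriction $p \le T\norm{\bfv}$ and the $T$-dependence of $K_0, N_0$ are there to accommodate. A cleaner organization — and, judging from the remarks, probably closer to the paper's route through Proposition~\ref{PBdRTLB} — is to package the above as a Gaussian-scale right-tail lower bound $\P\{\G^{w, \hor}_{\bfv} - \M^w_{\bfv} \ge s\sqrt{(\ShpMin_{\bfv} - w)\norm{\bfv}}\} \ge \exp(-Cs^2)$, proved by combining the right tail of the exponential sum $A$ (a change-of-measure computation in the spirit of \cite{bala-sepp-aom}) with a concentration bound preventing $B$ from being too negative (Theorem~\ref{TBlk}), and then to integrate it against $p\, s^{p - 1}\, ds$. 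Either way the whole statement reduces to the bulk central moment bound of Theorem~\ref{TBlk} and elementary properties of i.i.d.\ exponential sums.
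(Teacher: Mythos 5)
Your proposal is essentially correct, and its engine coincides with the paper's: you place a bottleneck at the tangency point (your $i_0$ is exactly the $k$ the paper extracts from Lemma \ref{LShiftProp}(a), with $i_0\asymp(\ShpMin_{\bfv}-w)\norm{\bfv}$), you isolate the centered boundary sum $A$ of $\asymp(\ShpMin_{\bfv}-w)\norm{\bfv}$ exponential weights, and the Gaussian-scale lower bound for $A$ comes from a tilted/moderate-deviation estimate, which is precisely the paper's Lemma \ref{LExpTLB}(a) used in \eqref{Eq-56}; the role of $K_0$ large and of $p\le T\norm{\bfv}$ (keeping $p\lesssim i_0$ so the tilting constant is uniform) is the same in both arguments, cf.\ \eqref{Eq-76b}--\eqref{Eq-78}. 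Where you genuinely diverge is in how the bulk contribution is disposed of. The paper uses the triangle inequality \eqref{Eq-55} and \emph{subtracts} the full $p$th moment $\E[|\G^{w,\hor}_{\bfv}-\G^w_{(k,0)}-\shp_{\wt\bfv}|^p]\le C_0^pp^{2p/3}\norm{\bfv}^{p/3}$ (which requires the KPZ-regime upper bound Theorem \ref{TBdUB}(a) through \eqref{Eq-45}), then shows domination for $p\le\epsilon_0\norm{\bfv}$ via $K_0$ large and finishes the range $p\in[\epsilon_0\norm{\bfv},T\norm{\bfv}]$ by Jensen. You instead use the pathwise inequality $\G^{w,\hor}_{\bfv}\ge\G^w_{(i_0,0)}+\G_{(i_0,1),\bfv}$, the independence of $A$ and $B$, and then only a Chebyshev bound at variance scale for $B$ (Theorem \ref{TBlk} at $p=2$, plus \eqref{Eq-BlkMeanBd} for $|D|\le C_1\norm{\bfv}^{1/3}$); this avoids subtracting two quantities of the same order in $p$, avoids the Jensen interpolation, and needs far less of the upper-bound machinery. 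That is a real simplification, at the modest price of the extra lemma that $\inf_{|s|\le\sigma}\E[(A+s)_+^p]\ge(c\sigma\sqrt p)^p$ — which in fact follows more directly than your route through $\E[A_+^p]$: take $t=\sigma(1+\sqrt p/4)$ in $\E[(A-\sigma)_+^p]\ge(t-\sigma)^p\P\{A\ge t\}$ and apply Lemma \ref{LExpTLB}(a), treating bounded $p$ with a fixed deviation $s_0$.

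One point to repair in the write-up: the claimed tail lower bound $\P\{S_{i_0}\ge i_0+\tfrac14\sqrt{i_0p}\}\ge e^{-p}$ does not follow from an upper bound on a rate function alone (and the inequality you quote, $(1+x)\log(1+x)-x\le x^2/2$, is the Poisson rate, not the exponential one, whose Cram\'er function is $x-\log(1+x)$); a genuine non-asymptotic change-of-measure estimate with control of the tilted fluctuations is needed, exactly what Lemma \ref{LExpTLB}(a) provides, and the resulting constant is $e^{-Cp}$ with $C=C(\delta,\epsilon,T)$ rather than $e^{-p}$ — harmless, since it only alters $c_0$. With that substitution, and your (correct) observation that the centering in part (b) should read $\M^z_{\bfv}$ so that the transposition of Remark \ref{RTrans} reduces (b) to (a), your argument goes through.
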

With $K_0 \ge \epsilon$, the conditions on the boundary parameters in Theorem \ref{TGausLB} are more restrictive than those in Theorem \ref{TGausUB}. Therefore, it appears that we do not have a lower bound complementary to Theorem \ref{TGausUB}(a) in the regime given by $\epsilon p^{1/3}\norm{\bfv}^{-1/3} < \ShpMin_{\bfv}-w \le K_0 p^{1/3}\norm{\bfv}^{-1/3}$ and $p \le \epsilon^3 K_0^{-3}\norm{\bfv}$. But in fact there is no such gap because Theorem \ref{TBdLB}(a) provides the following bound for some positive constants $c_0 = c_0(\delta, T)$ and $N_0 = N_0(\delta, T)$:  \begin{align*}
\E[|\G^{w, \hor}_{\bfv}-\M^{w}_{\bfv}|^p] \ge c_0^p p^{2p/3}\norm{\bfv}^{p/3} \ge c_0^p K_0^{-p/2}p^{p/2}(\ShpMin_{\bfv}-w)^{p/2}\norm{\bfv}^{p/2}
\end{align*}
for $\bfv \in S_\delta \cap \bbZ_{\ge N_0}^2$, $p \in [1, T\norm{\bfv}]$ and $w > 0$ subject to $\ShpMin_{\bfv}-w \le K_0 p^{1/3}\norm{\bfv}^{-1/3}$.

\section{Proofs of the tail bounds for LPP increments}
\label{S:PfLppInc}

We now begin to prove our main results, devoting this section to the tail bounds stated in Subsection \ref{Ss:IncTaBd} for the increments of LPP with one-sided boundary. 

\subsection{Proof of Theorem \ref{TIncRTUB}}

We first collect a few ingredients towards the proof of Theorem \ref{TIncRTUB}. 

Recall the $\G^{w, z}$-process given by \eqref{E:Lpp2Bd} for some boundary parameters $w > 0$ and $z < 1$. Let $\pi_{\bfv}^{w, z} = \pi_{(0, 0), \bfv}^{w, z}$ denote the corresponding a.s.\ unique geodesic from the origin to $\bfv \in \bbZ_{\ge 0}^2$. The \emph{exit points} of $\pi^{w, z}_{\bfv}$ from the horizontal and vertical axes are defined by 
\begin{align}
\label{EExitPt}
\begin{split}
\Z^{w, z, \hor}_{\bfv} &= \max \{k \in \bbZ_{\ge 0}: (k, 0) \in \pi_{\bfv}^{w, z}\} \quad \text{ and } \quad \Z^{w, z, \ver}_{\bfv} = \max \{l \in \bbZ_{\ge 0}: (0, l) \in \pi_{\bfv}^{w, z}\}, 
\end{split}
\end{align}
respectively. 
When $\bfv \neq (0, 0)$, a.s., exactly one of $\Z^{w, z, \hor}_{\bfv}$ and $\Z^{w, z, \ver}_{\bfv}$ equals zero. 

The next pair of lemmas record some tail bounds for the exit points. These bounds were obtained in \cite{Emra_Janj_Sepp_23} also via the coupling approach using stationarity \eqref{EIncSt} and its consequence \eqref{E:Rains}. As such, they can be imported here without departing the coupling framework of this article. 

\begin{lem}[Corollary 3.3 in \cite{Emra_Janj_Sepp_23}]
\label{LExit}
Fix $\delta > 0$. There exist positive constants $c_0 = c_0(\delta)$, $\epsilon_0 = \epsilon_0(\delta)$, $N_0 = N_0(\delta)$ such that the following statements hold whenever $w > 0$, $z < 1$, $\bfv \in S_\delta \cap \bbZ_{\ge N_0}^2$ and $s \ge \norm{\bfv}^{-2/3}$. 
\begin{enumerate}[\normalfont (a)]
\item If $\min \{w, z\} \ge \ShpMin_{\bfv}-\epsilon_0 s \norm{\bfv}^{-1/3}$ then $\P\{\Z^{w, z, \hor} > s\norm{\bfv}^{2/3}\} \le \exp\{-c_0 s^3\}$. 
\item If $\max\{w, z\} \le \ShpMin_{\bfv}+\epsilon_0 s\norm{\bfv}^{-1/3}$ then $\P\{\Z^{w, z, \ver}_{\bfv} > s\norm{\bfv}^{2/3}\} \le \exp\{-c_0 s^3\}$. 
\end{enumerate}
\end{lem}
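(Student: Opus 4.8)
Here is a sketch of the argument I would use to establish the exit‑point bound of Lemma \ref{LExit}.

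\medskip

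By the reflection \eqref{ETrans}--\eqref{ELPPTrans}, which interchanges the two axes, sends $(w, z) \mapsto (1 - z, 1 - w)$, and identifies the horizontal exit point of the model at $(w, z)$ with the vertical exit point of the reflected model, part (b) is equivalent to part (a). So fix $\bfv \in S_{\delta} \cap \bbZ_{\ge N_0}^2$, write $N = \norm{\bfv}$ and $\ell = \lc s N^{2/3} \rc \ge 1$, and assume $\min\{w, z\} \ge \ShpMin_{\bfv} - \epsilon_0 s N^{-1/3}$ with $\epsilon_0 = \epsilon_0(\delta)$ to be chosen small. On the cone $\ShpMin_{\bfv}$ stays bounded away from $0$ and $1$, and for $\epsilon_0$ small and $N_0$ large the hypothesis also keeps $w$ and $z$ in a compact subinterval of $(0, 1)$.

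\medskip

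The geometric input is that on $\{\Z^{w, z, \hor}_{\bfv} > \ell\}$ the geodesic passes through $(\ell, 0)$, so that the best path through $(\ell, 0)$ has value $\G^{w, z}_{\bfv}$, whence
\begin{align*}
\sum_{i = 1}^{\ell} \rw^{w}_{(i, 0)} \; + \; \big(\G^{w}_{(\ell, 0), \bfv} - \rw^{w}_{(\ell, 0)}\big) \; = \; \G^{w, z}_{\bfv} \; \ge \; \G^{z, \ver}_{\bfv},
\end{align*}
the last step because $\G^{z, \ver}_{\bfv}$ is the value of the best path through $(0, 1)$. Now compare scales: the horizontal boundary sum has mean $\ell / w$ and fluctuations of order $\sqrt{\ell} = s^{1/2} N^{1/3}$; the typical cost of moving the bulk starting point from the vertical axis out to $(\ell, 0)$ en route to $\bfv$ is, by the \emph{strict concavity} of $\shp$, of the form $\ell / \ShpMin_{\bfv} + \Theta(\ell^{2}/N)$; and the proximity of $w$ to $\ShpMin_{\bfv}$ makes the two linear contributions $\ell/w$ and $\ell/\ShpMin_{\bfv}$ differ by at most $O(s N^{-1/3} \cdot \ell) = O(s^{2} N^{1/3})$, comparable to the curvature penalty $\ell^{2}/N \asymp s^{2} N^{1/3}$. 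Thus the displayed event forces a favourable fluctuation of order $s^{2} N^{1/3}$, i.e.\ of order $s^{3/2}$ standard deviations, and the sub‑exponential moderate‑deviation rate $\min\{a^{2}/\ell,\, a\}$ at $a \asymp s^{2} N^{1/3}$, $\ell \asymp s N^{2/3} \le N$, equals $\asymp s^{3}$, which is the asserted exponent.

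\medskip

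To turn this into a proof one tilts the horizontal boundary parameter from $w$ to $w - \lambda$ with $\lambda = \Theta(s N^{-1/3})$ (and, in the genuinely two‑sided case, $z$ accordingly), chosen so that the characteristic direction of the tilted model points beyond $(\ell, 0)$ and the event $\{\Z^{w, z, \hor}_{\bfv} > \ell\}$ becomes typical; the hypothesis $\min\{w, z\} \ge \ShpMin_{\bfv} - \epsilon_0 s N^{-1/3}$ is precisely what keeps the tilted parameter in a compact subinterval of $(0, 1)$, so that all constants are uniform. One then writes $\P\{\Z^{w, z, \hor}_{\bfv} > \ell\}$ as an expectation under the tilted law against the likelihood ratio of the two boundary weight families, and controls this ratio — equivalently, the relevant exit‑point Laplace transform — by the \emph{exact moment generating function identity} for the two‑sided increment‑stationary LPP (Lemma \ref{LRains}), or by differentiating it. Optimising $\lambda$ against the exponential weight harvested from the identity yields $\exp\{-c_0 s^{3}\}$ with $c_0 = c_0(\delta) > 0$. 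Part (b) follows by reflection, and the companion bound quoted as Lemma \ref{LExit2} is obtained by the same mechanism.

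\medskip

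\emph{Main obstacle.} The difficulty is the \emph{sharpness} of the cubic exponent. Soft arguments using only superadditivity and the second‑moment bounds of \cite{bala-cato-sepp} already give the correct order $s N^{2/3}$ for the exit point, but they cannot capture the exact cancellation between the linear boundary drift $\ell/w$ and the leading bulk cost $\ell/\ShpMin_{\bfv}$, nor the $\Theta(\ell^{2}/N)$ curvature penalty, nor the $s^{3}$‑rate of the resulting moderate deviation; these require an exactly solvable ingredient. The point relevant to the methodology of this paper is that the needed ingredient — the m.g.f.\ identity (Lemma \ref{LRains}) — is not drawn from determinantal or random‑matrix inputs but from the explicit product‑form invariant measures of the exponential LPP, so the whole argument stays within the coupling framework. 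A secondary, routine issue is uniformity over all $(w, z)$ in the stated window (handled via monotonicity of the exit point in the boundary parameters) and over all $s \ge N^{-2/3}$ (the small‑$s$ range being essentially vacuous, since then $\exp\{-c_0 s^{3}\}$ is of order one).
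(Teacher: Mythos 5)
The paper does not actually prove Lemma \ref{LExit}: it is imported verbatim from \cite{emra-janj-sepp-21-exit2} (Corollary 3.2 there), with only the remark that the cited proof is itself a coupling argument resting on the stationarity \eqref{EIncSt}. There is therefore no in-paper argument to compare yours against, and the right standard is whether your sketch is a viable route to the external result. It is: the reduction of (b) to (a) via \eqref{ETrans}--\eqref{ELPPTrans}, the restriction to the nontrivial range $s \lesssim \norm{\bfv}^{1/3}$ (beyond which the event is empty), the downward tilt of the horizontal boundary rate by $\Theta(s\norm{\bfv}^{-1/3})$ so that exit beyond $\ell \asymp s\norm{\bfv}^{2/3}$ becomes typical, the control of the resulting likelihood ratio through the Rains identity (Lemma \ref{LRains}), and the balancing of the linear drift mismatch against the $\Theta(\ell^2/\norm{\bfv})$ curvature penalty to extract the rate $s^3$ are exactly the ingredients of the cited proof. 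One caveat on your heuristic display: the inequality $\G^{w,z}_{\bfv} \ge \G^{z,\ver}_{\bfv}$ holds identically (it is just \eqref{Emax}) and so carries no information about the event $\{\Z^{w,z,\hor}_{\bfv} > \ell\}$; the comparison that actually sees the event is the one against the model with the tilted boundary parameter $u < \min\{w,z\}$, under the monotone coupling of the boundary weights, which forces $\G^{u,z}_{\bfv}-\G^{w,z}_{\bfv}$ to dominate a sum of $\ell$ i.i.d.\ increments --- this is what your third paragraph supplies, and it is where the hypothesis $\min\{w,z\} \ge \ShpMin_{\bfv}-\epsilon_0 s\norm{\bfv}^{-1/3}$ is genuinely used (to leave room for the tilt below both parameters), not merely to keep parameters in a compact interval. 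With that understood, the sketch is a faithful outline of the external proof.
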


\begin{lem}[Proposition 3.4 in \cite{Emra_Janj_Sepp_23}]
\label{LExit2}
Fix $\delta > 0$. There exists a constant $c_0 = c_0(\delta) > 0$ such that the following statements hold for all $\bfv \in S_\delta \cap \bbZ_{>0}^2$ and $z \in (0, 1)$. 
\begin{enumerate}[\normalfont (a)]
\item If $z > \ShpMin_\bfv$ then $\bfP\{\Z^{z, \hor}_\bfv > 0\} \le \exp\{-c_0 \norm{\bfv}(z-\ShpMin_{\bfv})^3\}$. 
\item If $z < \ShpMin_\bfv$ then $\bfP\{\Z^{z, \ver}_\bfv > 0\} \le \exp\{-c_0 \norm{\bfv} (\ShpMin_{\bfv}-z)^3\}$. 
\end{enumerate}
\end{lem}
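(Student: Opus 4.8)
The plan is as follows. First I would reduce (b) to (a): the transposition identities $\shp_{\wt{\bfx}} = \shp_{\bfx}$, $\ShpMin_{\wt{\bfx}} = 1 - \ShpMin_{\bfx}$ and $\G^{z, \ver}_{\bfv} \stackrel{\mathrm{dist}}{=} \G^{1 - z, \hor}_{\wt{\bfv}}$ (cf.\ Remark \ref{RTrans}) match up the corresponding exit points and preserve the $L^1$-norm, so (a) implies (b). For (a), fix $\bfv = (v_1, v_2) \in S_\delta \cap \bbZ_{>0}^2$ and $z \in (\ShpMin_{\bfv}, 1)$, and put $u = z - \ShpMin_{\bfv} \in (0, 1)$ and $\Delta = \M^z_{\bfv} - \shp_{\bfv}$; by \eqref{EShpMean} one has $\Delta > 0$, and a second-order Taylor expansion of $z' \mapsto \M^{z'}_{\bfv}$ about its minimizer $\ShpMin_{\bfv}$ (via \eqref{EMeanDer}, or Lemmas \ref{LMeanEst} and \ref{LMinEst}) gives $\Delta \ge c_1(\delta)\, u^2 \norm{\bfv}$. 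The backbone is the standard decomposition of the increment-stationary last-passage time by where the geodesic from the origin departs the axes: with $A_k = \sum_{i = 1}^{k} \rw^z_{(i, 0)}$, $B_l = \sum_{j = 1}^{l} \rw^z_{(0, j)}$, and $\G$ the bulk last-passage times (independent of the $A_k$ and $B_l$), one has $\G^z_{\bfv} = \max\{H, V\}$ with $H = \max_{1 \le k \le v_1}(A_k + \G_{(k, 1), \bfv})$ and $V = \max_{1 \le l \le v_2}(B_l + \G_{(1, l), \bfv})$; since $\rw^z_{(0, 0)} = 0$, almost surely $\{\Z^{z, \hor}_{\bfv} > 0\} = \{H > V\}$, and on this event $\G^z_{\bfv} = H$. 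Consequently, with the threshold $a = \shp_{\bfv} + \tfrac12 \Delta = \M^z_{\bfv} - \tfrac12 \Delta$, I get the inclusion $\{\Z^{z, \hor}_{\bfv} > 0\} \subseteq \{H \ge a\} \cup \{\G^z_{\bfv} \le a\}$, and the task reduces to bounding each of the two events on the right by $\exp\{-c\, u^3 \norm{\bfv}\}$.

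For $\{\G^z_{\bfv} \le a\} = \{\G^z_{\bfv} \le \M^z_{\bfv} - \tfrac12 \Delta\}$ (recall $\E[\G^z_{\bfv}] = \M^z_{\bfv}$ from \eqref{EMean}), I would invoke the order-$3/2$ left-tail bound for the increment-stationary last-passage time obtained by optimizing a Chernoff estimate in the moment generating function identity of Lemma \ref{LRains}, namely $\P\{\G^z_{\bfv} \le \M^z_{\bfv} - r\} \le \exp\{-c\, r^{3/2} \norm{\bfv}^{-1/2}\}$ for $r$ of order at most $\norm{\bfv}$ (with faster decay beyond); inserting $r = \tfrac12 \Delta \ge \tfrac12 c_1 u^2 \norm{\bfv}$ gives $\exp\{-c\, u^3 \norm{\bfv}\}$. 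For $\{H \ge a\}$, \eqref{Eq-MeanShpIneq} and translation invariance give $\E[A_k + \G_{(k, 1), \bfv}] \le f(k) := k/z + \shp_{v_1 - k, v_2}$ (a negligible shift in the arguments of $\shp$ being suppressed); the function $f$ is concave on $[0, v_1]$ with $f'(0) = z^{-1} - \ShpMin_{\bfv}^{-1} \le -u$ --- this is the only place the hypothesis $z > \ShpMin_{\bfv}$ enters --- so $f(k) \le \shp_{\bfv} - u k$, whence $a - f(k) \ge \tfrac12 c_1 u^2 \norm{\bfv} + u k$. A union bound over $1 \le k \le v_1$, splitting this excess evenly between $A_k$ and $\G_{(k, 1), \bfv}$ and then using the elementary exponential upper-tail bound for the Gamma distribution (with $z \ge \ShpMin_{\bfv} \ge c(\delta)$ for $\bfv \in S_\delta$) together with the order-$3/2$ bulk right-tail bound available from superadditivity \cite{sepp98ebp}, produces a per-$k$ estimate of the form $\exp\{-c\, u^3 \norm{\bfv} - c\, u^2 k\}$ --- here one combines the two available lower bounds $u^3 \norm{\bfv}$ and $u^2 k$ on the exponent via $(A + B)^2 \ge A^2 + B^2$ and $\max(A, B) \ge \tfrac12(A + B)$ --- and summing the geometric series in $k$ yields $\P\{H \ge a\} \le C(1 + u^{-2}) \exp\{-c\, u^3 \norm{\bfv}\}$.

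Putting the two bounds together, $\P\{\Z^{z, \hor}_{\bfv} > 0\} \le C u^{-2} \exp\{-c\, u^3 \norm{\bfv}\}$, with $u < 1$. When $u^3 \norm{\bfv}$ exceeds a sufficiently large $\delta$-dependent constant, the prefactor $u^{-2}$ is absorbed into the exponential and the stated bound follows; in the complementary range the stated bound is bounded below by a positive constant, so there it is enough that $\P\{\Z^{z, \hor}_{\bfv} > 0\} = 1 - \P\{\Z^{z, \ver}_{\bfv} > 0\}$ stays bounded away from $1$ uniformly over $\bfv \in S_\delta$ and $z \in (\ShpMin_{\bfv}, 1)$ --- which one gets from the monotonicity of $\Z^{z, \ver}_{\bfv}$ in $z$ under the natural monotone coupling of the boundary weights, reduced to the on-characteristic case $z = \ShpMin_{\bfv}$ --- after choosing $c_0 = c_0(\delta)$ small enough; this closes (a), and (b) follows by the transposition symmetry. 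The step I expect to be the crux is the order-$3/2$ left-tail bound for $\G^z_{\bfv}$: it is exactly what forces the use of the moment generating function identity (Lemma \ref{LRains}) rather than merely the variance identity, and it is the ingredient that upgrades the classical right-tail coupling argument of \cite{bala-cato-sepp} to the two-sided control needed here. (An alternative, essentially the path taken in \cite{emra-janj-sepp-21-exit2}, would be to first establish a tail bound for the two-sided last-passage time $\G^{w, z}_{\bfv}$ and then read off Lemma \ref{LExit2} from it using the monotonicity of the exit points in the boundary parameters.)
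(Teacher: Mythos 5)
This lemma is not proved in the paper at all: it is imported verbatim from \cite{emra-janj-sepp-21-exit2} (Proposition 3.3), so your attempt has to be judged on its own merits rather than against an internal proof. Most of your architecture is fine (the reduction of (b) to (a), the inclusion $\{\Z^{z,\hor}_\bfv>0\}\subseteq\{H\ge a\}\cup\{\G^z_\bfv\le a\}$ with $a=\shp_\bfv+\tfrac12\Delta$, the concavity estimate $f(k)\le\shp_\bfv-uk$, and the union bound for $\{H\ge a\}$ using Gamma tails plus the superadditivity right-tail bound). The decisive gap is the other half: the claimed bound $\P\{\G^z_\bfv\le\M^z_\bfv-r\}\le\exp\{-c\,r^{3/2}\norm{\bfv}^{-1/2}\}$ ``by optimizing a Chernoff estimate in Lemma \ref{LRains}''. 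The Rains identity only gives the exponential moment $\E[\exp\{(w-z)\G^{w,z}_\bfv\}]$ with the tilt tied to the difference of the two boundary parameters; at $w=z$ it is vacuous. A negative tilt ($w<z$) does give a left-tail bound, but only for $\G^{w,z}_\bfv$, which stochastically \emph{dominates} the stationary $\G^{z}_\bfv$ (the $\Exp(w)$ horizontal weights are larger), so it yields no upper bound on $\P\{\G^z_\bfv\le t\}$; and for $w>z$, where the domination goes the right way, Chernoff would require the negative exponential moment $\E[e^{-(w-z)\G^{w,z}_\bfv}]$, which the identity does not supply. Note also that the event you need to kill, $\{\G^z_\bfv\le\M^z_\bfv-\tfrac12\Delta\}=\{\G^z_\bfv\le\shp_\bfv+\tfrac12\Delta\}$, lies \emph{above} the shape function, so none of the paper's left-tail results centered at $\shp_\bfv$ (Lemma \ref{LLppBdLT}, Propositions \ref{PBlkLT}, \ref{PLppBdLT}) could help even if they were available here --- and they are not, since their proofs pass through Proposition \ref{PIncRTUB}, which uses Lemmas \ref{LExit} and \ref{LExit3} (the latter being a recast of the very statement you are proving), so invoking them would be circular. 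What you really need at this step is quantitative lower concentration of the stationary process around its own mean at depth $u^2\norm{\bfv}$, i.e.\ control of the boundary contribution the geodesic collects --- which is morally the content of the exit-point estimate itself; this is why the actual proof in \cite{emra-janj-sepp-21-exit2} takes a different route (tilting the boundary parameter and applying the Rains identity to a genuinely two-parameter process).

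A secondary, smaller gap: in the regime $u^3\norm{\bfv}=O(1)$ you need $\P\{\Z^{z,\hor}_\bfv>0\}$ bounded away from $1$ uniformly over $\bfv\in S_\delta$ and $z>\ShpMin_\bfv$. Monotonicity in $z$ reduces this to a uniform lower bound on $\P\{\Z^{\ShpMin_\bfv,\ver}_\bfv>0\}$ at the characteristic parameter, but that is itself a nontrivial quantitative claim (exactly $1/2$ only on the diagonal by symmetry) and is asserted rather than proved; as stated your argument closes only after this sub-claim is supplied.
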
 

Another auxiliary device in our proof of Proposition \ref{TIncRTUB} is the increment-stationary LPP with \emph{northeast boundary}, which can be introduced as follows. Let $\bfu = (m, n) \in \bbZ_{\ge 0}^2$ and $z \in (0, 1)$. Consider new independent weights $\wt{\rw}^{\bfu, z} = \{\wt{\rw}^{\bfu, z}_{\bfv}: (1, 1) \le \bfv \le \bfu + (1, 1)\}$ on the integer grid $[(1, 1), \bfu + (1, 1)] = [m+1] \times [n+1]$ such that 
\begin{align}
\label{EwNE}
\begin{split}
\wt{\rw}^{\bfu, z}_{\bfu+(1, 1)} &= 0, \qquad \wt{\rw}^{\bfu, z}_\bfv = \rw_{\bfv} \quad \text{ for } (1, 1) \le \bfv \le \bfu, \quad \text{ and }\\
\wt{\rw}^{\bfu, z}_{(i, n+1)}  &\sim \Exp(z) \quad \text{ and } \quad \wt{\rw}^{\bfu, z}_{(m+1, j)} \sim \Exp(1-z) \quad \text{ for } i \in [m] \text{ and } j \in [n]. 
\end{split}
\end{align}
Then define the associated last-passage times by 
\begin{align}
\label{ELppNE}
\wt{\G}^{\bfu, z}_{\bfv, \bfv'} =  \max_{\pi \in \Pi_{\bfv, \bfv'}} \left\{\sum_{\bfp \in \pi} \wt{\rw}^{\bfu, z}_{\bfp}\right\} \quad \text{ for } \bfv, \bfv' \in [(1, 1), \bfu+(1, 1)]. 
\end{align}
In the case $\bfv \le \bfv'$, write $\wt{\pi}^{\bfu, z}_{\bfv, \bfv'} \in \Pi_{\bfv, \bfv'}$ for the a.s.\ unique geodesic in \eqref{ELppNE}. 

Comparing \eqref{EwNE} with \eqref{E:Coup} and \eqref{E:wBd}, one can check the distributional equality 
\begin{align}
(\wt{\rw}^{\bfu, z}_{\bfv+(1, 1)}: (0, 0) \le \bfv  \le \bfu) \stackrel{\text{dist.}}{=} (\rw_{\bfu-\bfv}^z: (0, 0) \le \bfv \le \bfu). \label{EDistId}
\end{align}
Because definitions \eqref{E:LPPwBd} and \eqref{ELppNE} apply the same map to the given weights, it follows from \eqref{EDistId} that 
\begin{align}
\begin{split}
(\wt{\G}^{\bfu, z}_{\bfv+(1, 1), \bfv' + (1, 1)}: (0, 0) \le \bfv, \bfv' \le \bfu) \stackrel{\text{dist.}}{=} (\G^z_{\bfu-\bfv', \bfu-\bfv}: (0, 0) \le \bfv, \bfv' \le \bfu). 
\end{split}
\label{ELppDistId}
\end{align}
On the grounds of \eqref{EIncSt} and \eqref{ELppDistId}, the process $\{\wt{\G}^{\bfu, z}_{\bfu+(1, 1)-\bfv, \bfu+(1, 1)}: (0, 0) \le \bfv \le \bfu\}$ has stationary increments: For $\bfv, \bfv' \in [(0, 0), \bfu]$ with $\bfv + \bfv' \le \bfu$, 
\begin{align}
\label{EIncStNE}
\begin{split}
\wt{\G}^{\bfu, z}_{\bfu+(1, 1)-\bfv'-\bfv, \bfu+(1, 1)}-\wt{\G}^{\bfu, z}_{\bfu+(1, 1)-\bfv', \bfu+(1, 1)} &\stackrel{\text{dist.}}{=} \G^z_{\bfv' + \bfv} - \G^z_{\bfv'} \stackrel{\text{dist.}}{=} \G^{z}_{\bfv} \\
&\stackrel{\text{dist.}}{=} \wt{\G}^{\bfu, z}_{\bfu+(1, 1)-\bfv, \bfu+(1, 1)}.  
\end{split}
\end{align} 

Identity \eqref{ELppDistId} also allows one to recast Lemma \ref{LExit2} in terms of the LPP process with northeast boundary as follows. 
\begin{lem}
\label{LExit3}
Fix $\delta > 0$. There exists a constant $c_0 = c_0(\delta) > 0$ such that the following statements hold for all $\bfu \in S_\delta \cap \bbZ_{>0}^2$ and $z \in (0, 1)$. 
\begin{enumerate}[\normalfont (a)]
\item If $z > \ShpMin_\bfu$ then $\bfP\left\{\wt{\G}^{\bfu, z}_{(1, 1), \bfu+(0, 1)} > \wt{\G}^{\bfu, z}_{(1, 1), \bfu + (1, 0)}\right\} \le \exp\{-c_0 \norm{\bfu}(z-\ShpMin_{\bfu})^3\}$. 
\item If $z < \ShpMin_\bfu$ then $\bfP\left\{\wt{\G}^{\bfu, z}_{(1, 1), \bfu+(0, 1)} < \wt{\G}^{\bfu, z}_{(1, 1), \bfu + (1, 0)}\right\} \le \exp\{-c_0 \norm{\bfu}(\ShpMin_{\bfu}-z)^3\}$. 
\end{enumerate}
\end{lem}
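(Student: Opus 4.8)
The plan is to deduce Lemma \ref{LExit3} from Lemma \ref{LExit2} by transporting the latter through the distributional identity \eqref{ELppDistId}, after rewriting the exit-point events as comparisons of two last-passage times issued from the neighbors $(1, 0)$ and $(0, 1)$ of the origin.

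First I would record the elementary observation that, for $\bfu \in \bbZ_{>0}^2$ and $z \in (0, 1)$, one has the a.s.\ identities of events $\{\Z^{z, \hor}_{\bfu} > 0\} = \{\G^{z}_{(1, 0), \bfu} > \G^{z}_{(0, 1), \bfu}\}$ and $\{\Z^{z, \ver}_{\bfu} > 0\} = \{\G^{z}_{(0, 1), \bfu} > \G^{z}_{(1, 0), \bfu}\}$. Indeed, $(0, 0) \in \pi^{z}_{\bfu}$ always, so $\Z^{z, \hor}_{\bfu} > 0$ exactly when $(1, 0) \in \pi^{z}_{\bfu}$, i.e.\ when the first step of the geodesic is horizontal; since $\rw^{z}_{(0, 0)} = 0$, the best path from the origin through $(1, 0)$ has weight $\G^{z}_{(1, 0), \bfu}$ and the best path through $(0, 1)$ has weight $\G^{z}_{(0, 1), \bfu}$, while the a.s.\ uniqueness of the geodesic makes the comparison strict. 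Passing to probabilities gives $\P\{\Z^{z, \hor}_{\bfu} > 0\} = \P\{\G^{z}_{(1, 0), \bfu} > \G^{z}_{(0, 1), \bfu}\}$, and symmetrically for $\Z^{z, \ver}_{\bfu}$.

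Next I would specialize the joint distributional identity \eqref{ELppDistId} (with its parameter taken to be $\bfu$) to the two choices of index vectors that turn its right-hand side into the pair $(\G^{z}_{(1, 0), \bfu}, \G^{z}_{(0, 1), \bfu})$; reading off the matching left-hand sides yields
\begin{align*}
\bigl(\G^{z}_{(1, 0), \bfu},\ \G^{z}_{(0, 1), \bfu}\bigr) \ \stackrel{\text{dist.}}{=}\ \bigl(\wt{\G}^{\bfu, z}_{(1, 1),\, \bfu + (0, 1)},\ \wt{\G}^{\bfu, z}_{(1, 1),\, \bfu + (1, 0)}\bigr).
\end{align*}
This pairing is the one consistent with the reversal in \eqref{EDistId}, which carries the horizontal boundary of the two-sided model to the top, $\Exp\{z\}$-weighted, row of the northeast model, so that the last-passage time forced onto the horizontal axis corresponds to the one forced onto the top row. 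Combining the display with the first step and then applying Lemma \ref{LExit2} gives part (a) of Lemma \ref{LExit3} from the $z > \ShpMin_{\bfu}$ case of Lemma \ref{LExit2}(a) and part (b) from the $z < \ShpMin_{\bfu}$ case of Lemma \ref{LExit2}(b); here I use once more that the two northeast last-passage times above are a.s.\ distinct, so that one may pass freely between the strict inequality events and their complements.

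I do not anticipate a genuine obstacle: the whole argument is bookkeeping built on \eqref{ELppDistId}. The only point demanding care is matching the correct boundary of the northeast model with the correct boundary of the two-sided model under the reversal, so that the case $z > \ShpMin_{\bfu}$ lands in part (a) and $z < \ShpMin_{\bfu}$ in part (b) rather than the other way around.
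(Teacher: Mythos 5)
Your proposal is correct and follows essentially the same route as the paper: transfer the comparison event through the joint distributional identity \eqref{ELppDistId} (with the same pairing, sending $\wt{\G}^{\bfu,z}_{(1,1),\bfu+(0,1)}$ to $\G^z_{(1,0),\bfu}$ and $\wt{\G}^{\bfu,z}_{(1,1),\bfu+(1,0)}$ to $\G^z_{(0,1),\bfu}$), identify the resulting event with $\{\Z^{z,\hor}_{\bfu}>0\}$ via \eqref{EExitPt} and \eqref{Emax}, and conclude by Lemma \ref{LExit2}. The only difference is that you spell out the justification of the event identity and of the case-matching, which the paper leaves implicit.
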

\begin{proof}
We verify (a) only, the proof of (b) being analogous. Let $\bfu \in S_\delta \cap \bbZ_{>0}^2$ and $z \in (0, 1)$ be such that $z > \ShpMin_\bfu$. Then, by virtue of \eqref{ELppDistId}, definition \eqref{EExitPt} and Lemma \ref{LExit2}(a), 
\begin{align*}
\begin{split}
\bfP\left\{\wt{\G}^{\bfu, z}_{(1, 1), \bfu+(0, 1)} > \wt{\G}^{\bfu, z}_{(1, 1), \bfu + (1, 0)}\right\} &= \bfP\{\G^z_{(1, 0), \bfu} > \G^z_{(0, 1), \bfu}\} = \P\{\Z^{z, \hor}_{\bfu} > 0\} \\
&\le \exp\{-c_0 \norm{\bfu}(z-\ShpMin_\bfu)^3\}
\end{split}
\end{align*}
for some constant $c_0 = c_0(\delta) > 0$. 
\end{proof}

We are now ready to derive Theorem \ref{TIncRTUB} with the strategy in the proof of \cite[Lemma 7.1]{Bala_Cato_Sepp_06}, which was invented in \cite{Cato_Groe_06} for the Poisson LPP. As mentioned after the theorem, our proof proceeds somewhat differently. More specifically, instead of the increment-comparison lemma (Lemma \ref{LComp}), the article \cite{Bala_Cato_Sepp_06} utilizes the monotonicity of the LPP increments with respect to weights \cite[Lemma 4.5]{Bala_Cato_Sepp_06} in a similar role. Also, the article \cite{Bala_Cato_Sepp_06} argues through the competition interface, which does not appear in our proof. While these differences do not seem serious, they still make it cumbersome to prove Theorem \ref{TIncRTUB} by pointing out the needed modifications to \cite{Bala_Cato_Sepp_06}.

Before presenting the complete proof, we describe its relatively simple overall structure. By \eqref{Emax}, it suffices to verify that  
\begin{align}
\P\{\G^{w, z}_{\bfv} - \G_{\bfv} \ge s\norm{\bfv}^{1/3}\} \le \exp\{-c \min \{s^{3/2}, s \norm{\bfv}^{1/3}\}\} \label{eq:NJ0}
\end{align}
for some constant $c = c(\delta) > 0$ under the assumptions of the theorem. Considering the first step of the geodesic $\pi_\bfv^{w, z}$, one can write 
\begin{align}
\begin{split}
\P\{\G^{w, z}_{\bfv} - \G_{\bfv} \ge s\norm{\bfv}^{1/3}\} &= \P\{\Z^{w, z, \hor}_{\bfv} > 0, \G^{w, \hor}_{\bfv} - \G_{\bfv} \ge s\norm{\bfv}^{1/3}\} \\
&+ \P\{\Z^{w, z, \ver}_{\bfv} > 0, \G^{z, \ver}_{\bfv} - \G_{\bfv} \ge s\norm{\bfv}^{1/3}\}. 
\end{split}
\label{E78}
\end{align}
Due to symmetry, it suffices to bound only the first term on the right-hand side. For any $p \in [m]$, definitions \eqref{E:Lpp1Bd} and \eqref{EExitPt} imply that
\begin{align}
\G^{w, \hor}_{\bfv} = \G^{w}_{\bfv} = \max \limits_{k \in [p]} \{\G^w_{(k, 0)} + \G_{(k, 1), \bfv}\} \label{E7}
\end{align}
on the event $\{\Z_\bfv^{w, z, \hor} \in [p]\}$. Therefore, a union bound gives 
\begin{align}
\label{E90}
\begin{split}
&\P\{\Z^{w, z, \hor}_{\bfv} > 0, \G^{w, \hor}_{\bfv} - \G_\bfv \ge s\norm{\bfv}^{1/3}\} \\
&\le \P\{\Z^{w, z, \hor}_{\bfv} > p\} + \P\bigg\{\max \limits_{k \in [p]} \{\G^w_{(k, 0)} + \G_{(k, 1), \bfv}\}-\G_{\bfv} \ge s\norm{\bfv}^{1/3}\bigg\}.
\end{split}
\end{align}

With suitable restrictions on the parameters, Lemma \ref{LExit} ensures that 
\begin{align}
\label{E:101}
\P\{\Z^{w, z, \hor}_{\bfv} > p\} \le \exp\{-c_0p^3 \norm{\bfv}^{-2}\}
\end{align}
for some constant $c_0 = c_0(\delta) > 0$. To bound the last probability in \eqref{E90}, one can first use the coupling in \eqref{EwNE} and apply Lemma \ref{LComp} to obtain 
\begin{align}
\label{E:102}
\G_{(k, 1), \bfv}-\G_{\bfv} = \wt{\G}^{\bfv, r}_{(k, 1), \bfv} - \wt{\G}_{(1, 1), \bfv}^{\bfv, r} \le \wt{\G}^{\bfv, r}_{(k, 1), \bfv+(1, 0)} - \wt{\G}_{(1, 1), \bfv+(1, 0)}^{\bfv, r}
\end{align}
where $r \in (0, 1)$ is a free parameter and $\wt{\G}^{\bfv, r}$ is the LPP with northeast boundary defined by \eqref{ELppNE}. As a consequence of planarity, on the event that the geodesic $\wt{\pi}^{\bfv, r}_{(1, 1), \bfv+(1, 1)}$ visits $\bfv + (1, 0)$, one has 
\begin{align}
\label{E:103}
\wt{\G}^{\bfv, r}_{(k, 1), \bfv+(1, 0)} - \wt{\G}_{(1, 1), \bfv+(1, 0)}^{\bfv, r} = \wt{\G}^{\bfv, r}_{(k, 1), \bfv+(1, 1)} - \wt{\G}_{(1, 1), \bfv+(1, 1)}^{\bfv, r}. 
\end{align}
See Figure \ref{fig}. 

\begin{figure}
	\begin{center}
		\begin{tikzpicture}[x=0.75pt,y=0.75pt,yscale=-1.2,xscale=1.2]
			
			\draw    (100.02,82) -- (101,250) ;
			\draw [shift={(100,79)}, rotate = 89.66] [fill={rgb, 255:red, 0; green, 0; blue, 0 }  ][line width=0.08]  [draw opacity=0] (8.93,-4.29) -- (0,0) -- (8.93,4.29) -- cycle    ;
			\draw    (101,250) -- (383,250) ;
			\draw [shift={(386,250)}, rotate = 180] [fill={rgb, 255:red, 0; green, 0; blue, 0 }  ][line width=0.08]  [draw opacity=0] (8.93,-4.29) -- (0,0) -- (8.93,4.29) -- cycle    ;
			
			\draw [color={rgb, 255:red, 0; green, 0; blue, 0 }  ,draw opacity=1 ][line width=2.25]    (337,115) -- (337,149) ;
			\draw [shift={(337,115)}, rotate = 90] [color={rgb, 255:red, 0; green, 0; blue, 0 }  ,draw opacity=1 ][fill={rgb, 255:red, 0; green, 0; blue, 0 }  ,fill opacity=1 ][line width=2.25]      (0, 0) circle [x radius= 3.22, y radius= 3.22]   ;
			\draw [color={rgb, 255:red, 0; green, 0; blue, 0 }  ,draw opacity=1 ][line width=2.25]    (337,149) -- (268,149) ;
			\draw [color={rgb, 255:red, 0; green, 0; blue, 0 }  ,draw opacity=1 ][line width=2.25]    (198,183) -- (198,217) ;
			\draw [color={rgb, 255:red, 0; green, 0; blue, 0 }  ,draw opacity=1 ][line width=2.25]    (268,183) -- (198,183) ;
			\draw [color={rgb, 255:red, 0; green, 0; blue, 0 }  ,draw opacity=1 ][line width=2.25]    (268,149) -- (268,183) ;
			\draw [color={rgb, 255:red, 0; green, 0; blue, 0 }  ,draw opacity=1 ][line width=2.25]    (198,217) -- (115,216) ;
			\draw [color={rgb, 255:red, 0; green, 0; blue, 0 }  ,draw opacity=1 ][line width=2.25]    (115,216) -- (115,239) ;
			\draw [shift={(115,239)}, rotate = 90] [color={rgb, 255:red, 0; green, 0; blue, 0 }  ,draw opacity=1 ][fill={rgb, 255:red, 0; green, 0; blue, 0 }  ,fill opacity=1 ][line width=2.25]      (0, 0) circle [x radius= 3.22, y radius= 3.22]   ;
			
			\draw [shift={(337,149)}, rotate = 90] [color={gray}  ,draw opacity=1 ][fill={gray}  ,fill opacity=1 ][line width=2.25]      (0, 0) circle [x radius= 2.14, y radius= 2.14]   ;
			\draw [dashed, color={gray}  ,draw opacity=1 ][line width=2.25]    (221,239) -- (138,239) ;
			\draw [shift={(138,239)}, rotate = 180] [color={gray}  ,draw opacity=1 ][fill={gray}  ,fill opacity=1 ][line width=2.25]      (0, 0) circle [x radius= 2.14, y radius= 2.14]   ;
			\draw [dashed, color={gray}  ,draw opacity=1 ][line width=2.25]    (221,205) -- (221,239) ;
			\draw [dashed, color={gray}  ,draw opacity=1 ][line width=2.25]    (299,240) -- (237,240) ;
			\draw [shift={(237,240)}, rotate = 180] [color={gray}  ,draw opacity=1 ][fill={gray} ,fill opacity=1 ][line width=2.25]      (0, 0) circle [x radius= 2.14, y radius= 2.14]   ;
			\draw [dashed, color={gray}  ,draw opacity=1 ][line width=2.25]    (249,183) -- (249,205) ;
			\draw [shift={(249,183)}, rotate = 90] [color={gray}  ,draw opacity=1 ][fill={gray}   ,fill opacity=1 ][line width=2.25]      (0, 0) circle [x radius= 2.14, y radius= 2.14]   ;
			\draw [dashed, color={gray}  ,draw opacity=1 ][line width=2.25]    (249,205) -- (221,205) ;
			\draw [dashed, color={gray}  ,draw opacity=1 ][line width=2.25]    (337,239) -- (310,239) ;
			\draw [shift={(310,239)}, rotate = 180] [color={gray}  ,draw opacity=1 ][fill={gray}  ,fill opacity=1 ][line width=2.25]      (0, 0) circle [x radius= 2.14, y radius= 2.14]   ;
			\draw [dashed, color={gray}  ,draw opacity=1 ][line width=2.25]    (299,240) -- (299,151) ;
			\draw [shift={(299,151)}, rotate = 270] [color={gray}  ,draw opacity=1 ][fill={gray} ,fill opacity=1 ][line width=2.25]      (0, 0) circle [x radius= 2.14, y radius= 2.14]   ;
			\draw [dashed, color={gray}  ,draw opacity=1 ][line width=2.25]    (337,149) -- (337,239) ;
			
			\draw (345,109) node [anchor=north west][inner sep=0.75pt]  [font=\scriptsize]  {${\textstyle (m+1, n+1)}$};
			\draw (345, 142) node [anchor=north west][inner sep=0.75pt]  [font=\scriptsize]  {${\textstyle (m + 1, n)}$};
			\draw (200, 170) node [anchor=north west][inner sep=0.75pt]  {${\textstyle \pi}$};
		\end{tikzpicture}
	\end{center}
	\caption{Illustrates the justification for equation \eqref{E:103} with $\bfv = (m, n)$. When the geodesic $\pi = \wt{\pi}_{(1, 1), \bfv + (1, 1)}^{\bfv, u}$ (black) visits $\bfv+(1, 0) = (m+1, n)$, for any $i \in [m+1]$, the geodesic $\pi' = \wt{\pi}_{(i, 1), \bfv + (1, 1)}^{\bfv, u}$ (any of the dashed gray) must intersect $\pi$ strictly before the endpoint $\bfv + (1, 1) = (m+1, n+1)$. Once they intersect, $\pi$ and $\pi'$ a.s.\ coalesce by the a.s.\ uniqueness of geodesics. In particular, $\pi'$ also visits $(m+1, n)$ a.s., which implies \eqref{E:103}.}
	\label{fig}
\end{figure}

Combining \eqref{E:102} and \eqref{E:103} with a union bound, one finds that the last probability in \eqref{E90}
\begin{align}
\label{E:104}
\begin{split}
&\P\bigg\{\max \limits_{k \in [p]} \{\G^w_{(k, 0)} + \G_{(k, 1), \bfv}-\G_{\bfv}\} \ge s\norm{\bfv}^{1/3}\bigg\} \\ 
&\le \P\bigg\{\max \limits_{k \in [p]} \{\G^w_{(k, 0)} + \wt{\G}^{\bfv, r}_{(k, 1), \bfv+(1, 1)} - \wt{\G}_{(1, 1), \bfv+(1, 1)}^{\bfv, r}\} \ge s\norm{\bfv}^{1/3}\bigg\} \\ 
&+ \P\{\bfv + (0, 1) \in \wt{\pi}^{\bfv, r}_{(1, 1), \bfv+(1, 1)}\}. 
\end{split}
\end{align}
A main point in \eqref{E:104} is that while the increment $\G_{(k, 1), \bfv}-\G_{\bfv}$ has a complicated distribution, the increment $\wt{\G}^{\bfv, r}_{(k, 1), \bfv+(1, 1)} - \wt{\G}_{(1, 1), \bfv+(1, 1)}^{\bfv, r}$ is a sum of IID exponentials and is also independent of $\G^w_{(k, 0)}$, which is another sum of IID exponentials. Consequently, the second probability in \eqref{E:104} can be bounded via Doob's inequality. Also, the last event in \eqref{E:104} is of the type covered in Lemma \ref{LExit3}(a) and, therefore, is at most $\exp\{-c_0 \norm{\bfv} (r-\zeta_\bfv)^3\}$ assuming that $r > \zeta_\bfv$. 

Our argument puts together the preceding two bounds with \eqref{E:101} and selects the free parameters $p$ and $r$ carefully with the aim of achieving the best possible upper bound for the widest range of the boundary parameters $w$ and $z$. This is done in the background, and involves balancing the restrictions of the exit point lemmas and the strengths of the three bounds combined together. 

\begin{proof}[Proof of Theorem \ref{TIncRTUB}]
Let $\epsilon_0 = \epsilon_0(\delta)$, $N_0 = N_0(\delta)$ and $s_0 = s_0(\delta)$ denote positive constants to be determined below. Let $\bfv = (m, n) \in S_\delta \cap \bbZ_{\ge N_0}^2$, $s \ge s_0$, $w = \ShpMin_{\bfv}-\epsilon_0 \min \{s^{1/2}\norm{\bfv}^{-1/3}, 1\}$ and $z = \ShpMin_{\bfv} + \epsilon_0 \min \{s^{1/2}\norm{\bfv}^{-1/3}, 1\}$. By Lemma \ref{LMinEst}(b), $\ShpMin_\bfv \in [a_0, 1-a_0]$ for some constant $a_0 = a_0(\delta) \in (0, 1/2)$. Therefore, $w, z \in [a_0/2, 1-a_0/2]$ upon choosing $\epsilon_0 \le a_0/2$.   

Let $p = \min \{\lf s^{1/2} \norm{\bfv}^{2/3} \rf, m\}$ and $E = \{\Z^{w, z, \hor}_{\bfv} > p\}$. Then the bound in \eqref{E:101} takes the form
\begin{align}
\label{E91}
\begin{split}
	\P\{E\} = \P\{\Z_{\bfv}^{w, z, \hor} > s^{1/2}\norm{\bfv}^{2/3}\} \le \exp\{-c_0s^{3/2}\}
\end{split}
\end{align}
for some constant $c_0 = c_0(\delta) > 0$. In the case $p < m$, the equality in \eqref{E91} holds because the exit points are integer-valued. The same equality also holds in the remaining case $p = m$ because then the both events there are empty. The subsequent inequality in \eqref{E91} holds for sufficiently small $\epsilon_0$ and sufficiently large $N_0 = N_0(s_0)$ by virtue of Lemma \ref{LExit}(a). The hypotheses of the lemma are in place as long as 
$s_0^{1/2} \ge N_0^{-2/3}$ and $\epsilon_0 \le \wt{\epsilon}_0$ where $\wt{\epsilon}_0$ refers to the constant $\epsilon_0$ in the lemma.  

We now turn to bounding the last term in \eqref{E90}. Since this probability is trivially zero when $p = 0$, assume that $p > 0$ from here on. We break the argument into two cases. 

The case $s > 2w^{-1}\norm{\bfv}^{2/3}$ can be handled with a simple argument as follows. One has 
\begin{align}
\label{E87}
\begin{split}
&\P\bigg\{\max \limits_{k \in [p]} \{\G^w_{(k, 0)} + \G_{(k, 1), \bfv}\}-\G_{\bfv} \ge s\norm{\bfv}^{1/3}\bigg\} \le \P\{\G^{w}_{(m, 0)} \ge s\norm{\bfv}^{1/3}\} \\
&\le \P\left\{w\G^w_{(m, 0)} \ge m + \frac{1}{2}ws\norm{\bfv}^{1/3}\right\} \le \exp\left\{-2b_0ws\norm{\bfv}^{1/3}\right\} \le \exp\left\{-a_0 b_0 s\norm{\bfv}^{1/3}\right\}
\end{split}
\end{align}
for some absolute constant $b_0 > 0$. The first inequality above results from dropping the nonpositive terms $\G_{(k, 1), \bfv}-\G_{\bfv}$ for $k \in [p]$, and the monotonicity of the sequence $(\G^{w}_{(k, 0)})_{k \in \bbZ_{>0}}$. To obtain the second inequality, invoke Lemma \ref{LExpTUB}(a) with the sum $w\G^{w}_{(m, 0)} = \sum_{i = 1}^m w\rw^w_{(i, 0)}$ of independent $\Exp(1)$-distributed weights, noting that in the present case  
\begin{align*}
	\frac{1}{2}ws\norm{\bfv}^{1/3} > m = \E[w\G^w_{(m, 0)}]. 
\end{align*}
The last step in \eqref{E87} uses that $w \ge a_0/2$. 

In the remaining, more intricate case $s \le 2w^{-1}\norm{\bfv}^{2/3}$, we carry out the strategy sketched before the proof. Choose the free parameter in \eqref{E:102} as $r= \ShpMin_\bfv + \epsilon_0 s^{1/2}\norm{\bfv}^{-1/3}$, the reason for which will be made more clear along the proof. Since $w \ge a_0/2$, taking $\epsilon_0 \le a_0^{3/2}/4$ yields the upper bound in 
\begin{align}
	a_0 \le \ShpMin_{\bfv} < r \le \ShpMin_{\bfv} + \epsilon_0 2^{1/2}w^{-1/2} \le 1- a_0 + 2\epsilon_0a_0^{-1/2} \le 1- a_0/2  \label{E86} 
\end{align}  
for $r$. In particular, $r \in (0, 1)$ is an admissible parameter for the $\wt{\G}^{\bfv, r}$-process defined at \eqref{ELppNE}. (This also indicates the role of the small $\epsilon_0$ factor in the choice of $r$). 

Continuing from \eqref{E:104} and defining the event 
\begin{align}
	F = \{\wt{\G}_{(1, 1), \bfv + (0, 1)}^{\bfv, r} > \wt{\G}_{(1, 1), \bfv + (1, 0)}^{\bfv, r}\} \stackrel{\text{a.s.}}{=} \{\bfv + (0, 1) \in \wt{\pi}^{\bfv, r}_{(1, 1), \bfv+(1, 1)}\}, \label{E97}
\end{align}
one obtains that 
\begin{align}
\label{E92}
\begin{split}
&\P\bigg\{\max\limits_{k \in [p]} \{\G^w_{(k, 0)} + \G_{(k, 1), \bfv}\}-\G_{\bfv} \ge s\norm{\bfv}^{1/3}\bigg\} \\
&\le \P\{F\} + \P\bigg\{\max \limits_{k \in [p]} \{\G^w_{(k, 0)} + \wt{\G}_{(k, 1), \bfv + (1, 1)}^{\bfv, r}-\wt{\G}_{(1, 1), \bfv + (1, 1)}^{\bfv, r}\} \ge s\norm{\bfv}^{1/3}\bigg\} \\
&\le \P\{F\} + \P\left\{\rw^{w}_{(1, 0)} \ge \frac{1}{2}s\norm{\bfv}^{1/3}\right\} \\
&+\P\bigg\{\max \limits_{k \in [p] \smallsetminus\{1\}} \{\G^w_{(2, 0), (k, 0)} + \wt{\G}_{(k, 1), \bfv + (1, 1)}^{\bfv, r}-\wt{\G}_{(1, 1), \bfv + (1, 1)}^{\bfv, r}\} \ge \frac{1}{2}s\norm{\bfv}^{1/3}\bigg\}.
\end{split}
\end{align}
The last step in \eqref{E92} uses a union bound to separate the term with $k = 1$ within the maximum. 

Decreasing $c_0 = c_0(\delta)$ if necessary, and applying Lemma \ref{LExit3}(a), one obtains that
\begin{align}
\P\{F\} \le \exp\{-c_0\epsilon_0^3 s^{3/2}\}, \label{E98}
\end{align}
(The preceding bound is the motivation for choosing $r$ order $s^{1/2}\norm{\bfv}^{-1/3}$ above $\zeta_\bfv$). Since $\rw^{w}_{(1, 0)} \sim \Exp(w)$, the second-last probability in \eqref{E92} equals  
\begin{align}
	\label{E100}
	\P\left\{\rw^{w}_{(1, 0)} \ge \frac{1}{2}s\norm{\bfv}^{1/3}\right\}=\exp\left\{-\frac{1}{2}w s \norm{\bfv}^{1/3}\right\} \le \exp\left\{-\frac{1}{4}a_0 s\norm{\bfv}^{1/3}\right\}. 
\end{align}
It remains to bound the last probability in \eqref{E92}. This term equals zero if $p = 1$. Assume now that $p > 1$. Introduce the sequence 
\begin{align}
\label{E81}
\begin{split}
M_k &= \G^w_{(2, 0), (k, 0)} + \wt{\G}_{(k, 1), \bfv + (1, 1)}^{\bfv, r}-\wt{\G}_{(1, 1), \bfv + (1, 1)}^{\bfv, r} - \frac{k-1}{w} + \frac{k-1}{r}\quad \text{ for } k \in [p] \smallsetminus \{1\}. 
\end{split}
\end{align}
The contribution from the last two terms can be bounded as follows. 
\begin{align}
	\label{E82}
	\begin{split}
		0 \le \frac{k-1}{w} - \frac{k-1}{r} &\le (p-1) \cdot \frac{r-w}{rw} \le s^{1/2}\norm{\bfv}^{2/3} \cdot \frac{2}{a_0^2} \cdot 2\epsilon_0 s^{1/2} \norm{\bfv}^{-1/3}\\
		&= \frac{4\epsilon_0}{a_0^2} \cdot s \norm{\bfv}^{1/3} \le \frac{1}{4} s \norm{\bfv}^{1/3}. 
	\end{split}
\end{align}
The first line of \eqref{E82} is due to the choices of $w$, $p$ and $r$, and the bounds $r \ge a_0$ and $w \ge a_0/2$. For the last inequality in \eqref{E82}, reduce $\epsilon_0$ further if necessary to have $\epsilon_0 \le a_0^2/16$. (Note that one cannot eliminate the mean term in \eqref{E82} with the choice $r = w$ because $w < \zeta_\bfv$ while one needs $r > \zeta_\bfv$ for \eqref{E98}). From \eqref{E81} and \eqref{E82}, one obtains that 
\begin{align}
	\label{E85}
	\begin{split}
		\P&\bigg\{\max \limits_{k \in [p] \smallsetminus \{1\}} \{\G^w_{(2, 0), (k, 0)} + \wt{\G}_{(k, 1), \bfv + (1, 1)}^{\bfv, r}-\wt{\G}_{(1, 1), \bfv + (1, 1)}^{\bfv, r}\} \ge \frac{1}{2}s\norm{\bfv}^{1/3}\bigg\} \\
		&= \P\bigg\{\max \limits_{k \in [p] \smallsetminus \{1\}} \bigg\{M_k + \frac{k-1}{w} - \frac{k-1}{r}\bigg\}\ge \frac{1}{2}s\norm{\bfv}^{1/3}\bigg\} \\
		&\le \P\bigg\{\max \limits_{k \in [p] \smallsetminus \{1\}} M_k \ge \frac{1}{4}s\norm{\bfv}^{1/3}\bigg\}. 
	\end{split}
\end{align}
Next recall that $\G_{(2, 0), (k, 0)}^{w}$ is a sum of $k-1$ independent $\Exp(w)$-distributed weights for each $k \in \bbZ_{>0}$. Also, on account of \eqref{EIncStNE}, 
\begin{align}
	\wt{\G}_{(1, 1), \bfv + (1, 1)}^{\bfv, r} -\wt{\G}_{(k, 1), \bfv+(1, 1)}^{\bfv, r} \stackrel{\text{dist.}}{=} \G_{(k-1, 0)}^{r} \quad \text{ for each } k \in [m+1],  \label{E9}
\end{align}
where the right-hand side is a sum of $k-1$ independent $\Exp(r)$-distributed weights. Furthermore, the left-hand side of \eqref{E9} as a sequence in $k \in [m+1]$, and the sequence $(\G_{(k, 0)}^w)_{k \in \bbZ_{>0}}$ are independent due to being defined from disjoint collections of weights. Therefore, it follows from Lemma \ref{LExpMaxIneq} combined with the bounds $r, w \in [a_0/2, 1-a_0/2]$ and $1 < p \le s^{1/2}\norm{\bfv}^{2/3}$ that the last probability in \eqref{E85} is at most 
\begin{align}
	\label{E99}
	\begin{split}
		\exp\left\{-d_0 s \norm{\bfv}^{1/3} \min\left\{\frac{s\norm{\bfv}^{1/3}}{p}, 1\right\} \right\} \le \exp\{-d_0 \min\{s^{3/2}, s\norm{\bfv}^{1/3}\}\} 
	\end{split}
\end{align}
for some constant $d_0 = d_0(\delta) > 0$. Putting together \eqref{E92}, \eqref{E98}, \eqref{E100}, \eqref{E85} and \eqref{E99} establishes the bound 
\begin{align}
	&\P\bigg\{\max \limits_{k \in [p]} \{\G^w_{(k, 0)} + \G_{(k, 1), \bfv}\}-\G_{\bfv} \ge s\norm{\bfv}^{1/3}\bigg\} \le 3\exp\{-c \min \{s^{3/2}, s\norm{\bfv}^{1/3}\}\} \label{E101}
\end{align}
for some constant $c = c(\delta) > 0$ in the case $s \le 2w^{-1}\norm{\bfv}^{2/3}$. 

Now combine \eqref{E87} and \eqref{E101} with \eqref{E90} and \eqref{E91} to obtain 
\begin{align*}
	\P\{\Z^{w, z, \hor}_{\bfv} > 0, \G^{w, \hor}_{\bfv} - \G_{\bfv} \ge s\norm{\bfv}^{1/3}\} \le 4\exp \{-c \min \{s^{3/2}, s\norm{\bfv}^{1/3}\}\} 
\end{align*}
after decreasing $c$ if necessary. By choosing $s_0$ sufficiently large and adjusting $c$, the factor of $4$ can be replaced with $1/2$. This finishes the proof since the second term on the right-hand side of \eqref{E78} also obeys a similar bound. 
\end{proof}

\subsection{Proof of Theorem \ref{TIncRTLB}}

We next adapt the approach in the proof of Theorem \ref{TIncRTUB} to establish Theorem \ref{TIncRTLB}. 

We will need some basic estimates on the increments of the function $\zeta$ defined by \eqref{EShpMin}. To obtain these estimates, we first connect $\zeta$ to the mean function $\M$ at \eqref{EMeanF}. For each $\bfx = (x, y) \in \bbR_{>0}^2$, the derivative 
\begin{align}
\partial_z \M^z_{\bfx} = -\frac{x}{z^2} + \frac{y}{(1-z)^2} \quad \text{ for } z \in (0, 1) \label{EMeanDer}
\end{align}
is an increasing function with the unique zero at $\ShpMin_\bfx$ and, therefore, $\zeta_\bfx$ is the unique minimizer of the function $z \mapsto \M_{\bfx}^z$. In fact, the minimum value is given by the shape function introduced in \eqref{EShp}: 
\begin{align}
\label{EShpMean}
\shp_{\bfx} = \M^{\zeta_\bfx}_{\bfx} = \inf_{z \in (0, 1)} \M^z_{\bfx} \quad \text{ for } \bfx \in \bbR_{>0}^2.   
\end{align}
The next lemma records that the minimizer changes approximately linearly with shifts. 

\begin{lem}
\label{LMinShift}
Let $\delta > 0$ and $K \ge 0$. There exist positive constants $C_0 = C_0(\delta, K)$ and $c_0 = c_0(\delta, K)$ such that the following inequalities hold for $\bfx \in S_\delta$ and $0 \le h \le K\norm{\bfx}$. 
\begin{enumerate}[\normalfont (a)]
\item $c_0h\norm{\bfx}^{-1} \le \ShpMin_{\bfx + (h, 0)}-\ShpMin_{\bfx} \le C_0h{\norm{\bfx}}^{-1}$
\item $c_0h\norm{\bfx}^{-1} \le \ShpMin_{\bfx}-\ShpMin_{\bfx + (0, h)} \le C_0h\norm{\bfx}^{-1}$. 
\end{enumerate}
\end{lem}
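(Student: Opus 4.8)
The plan is to work entirely from the implicit characterization of the minimizer. Set $D(z, \bfx) = \partial_z \M^z_{\bfx} = -x z^{-2} + y(1-z)^{-2}$ for $\bfx = (x, y)$ and $z \in (0, 1)$, so that $\ShpMin_{\bfx}$ is the unique zero of $z \mapsto D(z, \bfx)$ by \eqref{EMeanDer} and \eqref{EShpMin}. Three structural facts drive the argument: $D(\cdot, \bfx)$ is strictly increasing on $(0,1)$, with $\partial_z D(z, \bfx) = 2x z^{-3} + 2y(1-z)^{-3} > 0$; $D$ is strictly decreasing in $x$, with $\partial_x D(z, \bfx) = -z^{-2}$; and $D$ is strictly increasing in $y$, with $\partial_y D(z, \bfx) = (1-z)^{-2}$. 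Since $D(\ShpMin_{\bfx}, \bfx + (h,0)) = D(\ShpMin_{\bfx}, \bfx) - h\ShpMin_{\bfx}^{-2} < 0$ for $h > 0$, monotonicity of $D(\cdot, \bfx + (h,0))$ forces $\ShpMin_{\bfx + (h,0)} > \ShpMin_{\bfx}$; symmetrically $\ShpMin_{\bfx + (0,h)} < \ShpMin_{\bfx}$. This fixes the signs in (a) and (b).

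Next I would pin down a cone and a compact range of minimizers that survive the shifts. If $\bfx = (x,y) \in S_\delta$ and $0 \le t \le h \le K\norm{\bfx}$, then $x + t \ge x \ge \delta y$ and $x + t \le (1+K)x + Ky \le \big((1+K)\delta^{-1} + K\big) y$, so $\bfx + (t, 0) \in S_{\delta'}$ for a constant $\delta' = \delta'(\delta, K) > 0$; the same reasoning gives $\bfx + (0, t) \in S_{\delta'}$, and in all cases $\norm{\bfx} \le \norm{\bfx + (t, 0)}, \norm{\bfx + (0, t)} \le (1 + K)\norm{\bfx}$. By Lemma \ref{LMinEst}(b) applied on $S_{\delta'}$ (or directly from the identity $x\ShpMin_{\bfy}^{-2} = y(1-\ShpMin_{\bfy})^{-2}$ for $\bfy \in S_{\delta'}$), there is $a_1 = a_1(\delta, K) \in (0, 1/2)$ with $\ShpMin_{\bfy} \in [a_1, 1-a_1]$ for every $\bfy$ on the segments $\{\bfx + (t, 0): 0 \le t \le h\}$ and $\{\bfx + (0, t): 0 \le t \le h\}$.

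For (a), the implicit function theorem gives, for $\bfy = (x', y') \in S_{\delta'}$, $\partial_{x'}\ShpMin_{\bfy} = -\partial_x D / \partial_z D \big|_{z = \ShpMin_{\bfy}} = \ShpMin_{\bfy}^{-2} \big(2x'\ShpMin_{\bfy}^{-3} + 2y'(1-\ShpMin_{\bfy})^{-3}\big)^{-1}$. The bound $\ShpMin_{\bfy} \in [a_1, 1-a_1]$ places the numerator in $[1, a_1^{-2}]$ and the denominator in $[2\norm{\bfx}, 2a_1^{-3}(1+K)\norm{\bfx}]$, using $\norm{\bfx} \le \norm{\bfy} \le (1+K)\norm{\bfx}$; hence $\partial_{x'}\ShpMin_{\bfx + (t, 0)} \in \big[\tfrac{a_1^3}{2(1+K)\norm{\bfx}}, \tfrac{1}{2a_1^2 \norm{\bfx}}\big]$ for all $t \in [0, h]$. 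Integrating over $t \in [0, h]$ yields $\ShpMin_{\bfx + (h,0)} - \ShpMin_{\bfx} \in \big[\tfrac{a_1^3 h}{2(1+K)\norm{\bfx}}, \tfrac{h}{2a_1^2\norm{\bfx}}\big]$, which is (a) with $c_0 = a_1^3/(2(1+K))$ and $C_0 = (2a_1^2)^{-1}$. Part (b) is the mirror image: $|\partial_{y'}\ShpMin_{\bfy}| = (1-\ShpMin_{\bfy})^{-2}\big(2x'\ShpMin_{\bfy}^{-3} + 2y'(1-\ShpMin_{\bfy})^{-3}\big)^{-1}$ obeys the same two-sided bounds, so integrating $-\partial_{y'}\ShpMin_{\bfx + (0,t)}$ over $t \in [0,h]$ puts $\ShpMin_{\bfx} - \ShpMin_{\bfx + (0,h)}$ in the claimed range. (If one prefers to avoid the implicit function theorem, splitting $0 = D(\ShpMin_{\bfx + (h,0)}, \bfx + (h,0)) - D(\ShpMin_{\bfx}, \bfx)$ into an integral over $z$ and one over $x$ gives $\int_{\ShpMin_{\bfx}}^{\ShpMin_{\bfx + (h,0)}} \partial_z D(z, \bfx + (h,0))\,\dd z = h\ShpMin_{\bfx}^{-2}$, and the same estimates on the integrand and on $\ShpMin_{\bfx}^{-2}$ close the argument.)

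I do not anticipate a genuine obstacle; the one point requiring care is that $\bfx + (h,0)$ typically leaves $S_\delta$, so the uniform control $\ShpMin_{\bfy} \in [a_1, 1-a_1]$ must be secured on the enlarged cone $S_{\delta'}$, with $\delta'$ and $a_1$ depending on both $\delta$ and $K$ — after which everything reduces to calculus, and the stated $(\delta, K)$-dependence of $c_0$ and $C_0$ comes out automatically.
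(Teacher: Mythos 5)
Your proof is correct and rests on essentially the same ingredients as the paper's: the first-order condition $\partial_z\M^z_{\bfy}=0$ at $z=\ShpMin_{\bfy}$, the observation that the shifted points stay in an enlarged cone $S_{\delta'}$ with $\delta'=\delta'(\delta,K)$, and Lemma \ref{LMinEst}(b) on that cone to keep the minimizers in a compact subinterval of $(0,1)$. The only difference is cosmetic: the paper subtracts the two stationarity equations and factors the finite difference exactly, obtaining $h\,\wt{\ShpMin}^{-2}=(\wt{\ShpMin}-\ShpMin)\cdot(\cdots)$ with a two-sidedly bounded bracket, whereas you differentiate the implicit equation and integrate along the segment --- indeed your parenthetical ``integral splitting'' variant is precisely the paper's identity written in integral form.
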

\begin{proof}[Proof of {\rm(}a{\rm)}]
Let $(x, y) \in S_\delta$ and $0 \le h \le K(x+y)$. Then $x+h \le (1+K) (x+y) \le (1+K)(1+\delta^{-1})y$. This shows that $(x+h, y) \in S_\epsilon$ where $\epsilon =  (1+K)^{-1}(1+\delta)^{-1}$. Abbreviate $\ShpMin = \ShpMin_{x, y}$ and $\wt{\ShpMin} = \ShpMin_{x+h, y}$. Because the derivatives $\partial^z \M^z_{x, y}$ and $\partial^z \M^{z}_{x+h, y}$ vanish at $z = \ShpMin$ and $z = \wt{\ShpMin}$, respectively, one has  
\begin{align}
\label{Eq-46}
\begin{split}
\frac{h}{\wt{\ShpMin}^2} &= \frac{y}{(1-\wt{\ShpMin})^2}-\frac{x}{\wt{\ShpMin}^2} = \frac{y}{(1-\wt{\ShpMin})^2}-\frac{y}{(1-\ShpMin)^2}-\frac{x}{\wt{\ShpMin}^2} + \frac{x}{\ShpMin^2} \\
&= (\wt{\ShpMin}-\ShpMin) \cdot \left(\frac{y(2-\wt{\ShpMin}-\ShpMin)}{(1-\wt{\ShpMin})^2(1-\ShpMin)^2} + \frac{x(\wt{\ShpMin} + \ShpMin)}{\wt{\ShpMin}^2 \ShpMin^2}\right). 
\end{split}
\end{align}
Since $(x, y) \in S_\delta$ and $(x+h, y) \in S_\epsilon$, part (a) follows from \eqref{Eq-46} and Lemma \ref{LMinEst}(b). 
\end{proof}

\begin{proof}[Proof of Theorem \ref{TIncRTLB}{\rm(}a{\rm)}]
Let $\epsilon_0 = \epsilon_0(\delta, K)$, $s_0 = s_0(\delta)$ and $N_0 = N_0(\delta, K) \ge \epsilon_0^{-3/2}s_0^{3/2}$ denote positive constants to be chosen later in the proof. Pick $\bfv = (m, n) \in S_\delta \cap \bbZ_{\ge N_0}^2$ and $s \in [s_0, \epsilon_0 \norm{\bfv}^{2/3}]$. To prove (a), it suffices to check the case $w = \ShpMin_{\bfv} + Ks^{1/2} \norm{\bfv}^{-1/3}$ due to monotonicity. Let $k = \lc s^{1/2}\norm{\bfv}^{2/3}\rc$.  With $\epsilon_0$ chosen sufficiently small, $1 < k \le m$ and $\wt{\bfv} = (m-k+1, n) \in S_{\delta/2}$. Then, by Lemma \ref{LMinEst}(b), $\ShpMin_{\wt{\bfv}} \in [a_0, 1-a_0]$ for some constant $a_0 = a_0(\delta) > 0$. Also, Lemma \ref{LMinShift}(a) implies the existence of constants $b_0 = b_0(\delta) > 0$ and $B_0 = B_0(\delta) > 0$ such that  
\begin{align}
\label{Eq-100}
\ShpMin_{\bfv}-\ShpMin_{\wt{\bfv}} \in [2b_0 (k-1) \norm{\bfv}^{-1}, B_0 (k-1) \norm{\bfv}^{-1}] \subset [b_0 s^{1/2}\norm{\bfv}^{-1/3}, B_0 s^{1/2}\norm{\bfv}^{-1/3}]. 
\end{align}

Introduce $z = \ShpMin_{\wt{\bfv}}-R_0 s^{1/2}\norm{\bfv}^{-1/3}$ where $R_0 = R_0(\delta, K) > 0$ is a constant be tuned below. After decreasing $\epsilon_0 = \epsilon_0(R_0)$ if necessary, $z \ge a_0/2 > 0$. In particular, the northeast LPP process $\wt{\G}^{\bfv, z}$ defined at \eqref{ELppNE} makes sense. Recall also the notation $\wt{\pi}^{\bfv, z}$ for the associated geodesics. The following display uses definition \eqref{E:Lpp1Bd} in the first step, and then appeals to the coupling in  \eqref{EwNE} along with definitions \eqref{EBlkLPP} and \eqref{ELppNE} for the subsequent equality.  The second inequality in \eqref{Eq-101} is due to Lemma \ref{LComp}(a). The final inequality comes from a union bound and planarity.
\begin{align}
\label{Eq-101}
\begin{split}
&\P\{\G^{w, \hor}_{\bfv}-\G_{\bfv} \ge s\norm{\bfv}^{1/3}\} \ge \P\{\G^w_{(k, 0)} + \G_{(k, 1), \bfv}-\G_{\bfv} \ge s \norm{\bfv}^{1/3}\} \\
&=\P\{\G^w_{(k, 0)} + \wt{\G}^{\bfv, z}_{(k, 1), \bfv}-\wt{\G}^{\bfv, z}_{(1, 1), \bfv} \ge s \norm{\bfv}^{1/3}\} \\
&\ge \P\{\G^w_{(k, 0)} + \wt{\G}^{\bfv, z}_{(k, 1), \bfv+(0, 1)}-\wt{\G}^{\bfv, z}_{(1, 1), \bfv+(0, 1)} \ge s \norm{\bfv}^{1/3}\} \\
&\ge \P\{\G^w_{(k, 0)} + \wt{\G}^{\bfv, z}_{(k, 1), \bfv+(1, 1)}-\wt{\G}^{\bfv, z}_{(1, 1), \bfv+(1, 1)} \ge s \norm{\bfv}^{1/3}\}-\P\{\bfv + (1, 0) \in \wt{\pi}^{\bfv, z}_{(k, 1), \bfv + (1, 1)}\}. 
\end{split}
\end{align}
Recall now that $\G^{w}_{(k, 0)}$ is a sum of $k$ independent $\Exp(w)$-distributed weights. Also, \eqref{EIncStNE} shows that $\wt{\G}^{\bfv, z}_{(k, 1), \bfv+(1, 1)}-\wt{\G}^{\bfv, z}_{(1, 1), \bfv+(1, 1)} \stackrel{\text{dist}}{=} \G^z_{(k-1, 0)}$ is a sum of $k-1$ independent $\Exp(z)$-distributed increments. Furthermore, these two sums are independent being defined from disjoint sets of weights. The preceding observations lead to 
\begin{align}
\label{Eq-102}
\begin{split}
&\P\{\G^w_{(k, 0)} + \wt{\G}^{\bfv, z}_{(k, 1), \bfv+(1, 1)}-\wt{\G}^{\bfv, z}_{(1, 1), \bfv+(1, 1)} \ge s \norm{\bfv}^{1/3}\} \\
&\ge \P\left\{\G^w_{(k, 0)} \ge \frac{k}{z} + 2s \norm{\bfv}^{1/3}, \wt{\G}^{\bfv, z}_{(1, 1), \bfv+(1, 1)} - \wt{\G}^{\bfv, z}_{(k, 1), \bfv+(1, 1)} \le \frac{k-1}{z} + s\norm{\bfv}^{1/3}\right\} \\
&= \P\left\{\G^w_{(k, 0)} \ge \frac{k}{z} + 2s \norm{\bfv}^{1/3}\right\} \cdot \P\left\{\G^z_{(k-1, 0)} \le \frac{k-1}{z} + s\norm{\bfv}^{1/3}\right\}. 
\end{split}
\end{align}

We proceed to develop lower bounds for the last two probabilities in \eqref{Eq-102}. From the inequalities $w \ge a_0$ and $z \ge a_0/2$, and the choices of $k$, $w$ and $z$, one obtains that 
\begin{align}
\label{Eq-103}
\begin{split}
0 &\le \frac{k}{z}-\frac{k}{w} = \frac{k(w-z)}{wz} \le 2a_0^{-2} \cdot k \cdot (w-\ShpMin_{\bfv} + \ShpMin_{\bfv}-\ShpMin_{\wt{\bfv}} + \ShpMin_{\wt{\bfv}}-z) \\
&\le C_0 \cdot (K+ B_0 + R_0) \cdot s \norm{\bfv}^{1/3}
\end{split}
\end{align}
for some constant $C_0 = C_0(\delta) > 0$. It follows from \eqref{Eq-103} that 
\begin{align}
\label{Eq-104}
\begin{split}
&\P\left\{\G^w_{(k, 0)} \ge \frac{k}{z} + 2s \norm{\bfv}^{1/3}\right\} \ge \P\left\{w\G^w_{(k, 0)} \ge k + C_0 (K+B_0 + R_0 + 1) s \norm{\bfv}^{1/3}\right\} \\
&\ge \exp\{-D_0 (K^2 + B_0^2 + R_0^2 + 1)s^2 \norm{\bfv}^{2/3} k^{-1}\} \ge \exp\{-D_0 (K^2 + B_0^2 + R_0^2 + 1)s^{3/2}\}
\end{split}
\end{align}
for some absolute constant $D_0 > 0$ and sufficiently large $s_0$ and $N_0$. For the first inequality in \eqref{Eq-104}, use that $w \le 1$ for sufficiently small $\epsilon_0 = \epsilon_0(K)$, and take $C_0 \ge 2$. The second inequality invokes Lemma \ref{LExpTLB}(a) (with $T = 1$), noting from the inequality $\epsilon_0^{-1} \le s^{-1}\norm{\bfv}^{2/3}$ and the choice of $k$ that the deviation 
\begin{align}
\begin{split}
&C_0 (K+B_0 + R_0 + 1) s \norm{\bfv}^{1/3} \le C_0 (K+B_0 + R_0 + 1) \epsilon_0^{1/2} \cdot \epsilon_0^{-1/2}s \norm{\bfv}^{1/3} \\ 
&\le s^{1/2}\norm{\bfv}^{2/3} \le k
\end{split}
\end{align}
for sufficiently small $\epsilon_0 = \epsilon_0(C_0, B_0, K, R_0)$. Next apply Lemma \ref{LExpTUB}(a) to obtain 
\begin{align}
\label{Eq-105}
\begin{split}
&\P\left\{\G^z_{(k-1, 0)} \le \frac{k-1}{z} + s\norm{\bfv}^{1/3}\right\} \ge 1 - \exp\{-2d_0 s^2 \norm{\bfv}^{2/3}k^{-1}\} \\
&\ge 1-\exp\{-d_0 s^{3/2}\} \ge 1/2
\end{split}
\end{align}
for some constant $d_0 = d_0(\delta) > 0$ and sufficiently large $s_0 = s_0(d_0)$. 

Consider now the last probability in \eqref{Eq-101}. By shift invariance and Lemma \ref{LExit3}, 
\begin{align}
\label{Eq-106}
\begin{split}
&\P\left\{\bfv + (1, 0) \in \wt{\pi}^{\bfv, z}_{(k, 1), \bfv + (1, 1)}\right\} = \P\left\{\wt{\bfv} + (1, 0) \in \wt{\pi}^{\wt{\bfv}, z}_{(1, 1), \wt{\bfv}+(1, 1)}\right\} \\
&\le \exp\{-c_0 \norm{\bfv} (\ShpMin_{\wt{\bfv}}-z)^3\} = \exp\{-c_0 R_0^3s^{3/2}\}
\end{split}
\end{align}
for some constant $c_0 = c_0(\delta) > 0$. 

Collecting the bounds from \eqref{Eq-101}, \eqref{Eq-102}, \eqref{Eq-104}, \eqref{Eq-105} and \eqref{Eq-106} results in 
\begin{align}
\label{Eq-107}
\begin{split}
\P\{\G^{w, \hor}_{\bfv}-\G_{\bfv} \ge s\norm{\bfv}^{1/3}\}  &\ge \frac{1}{2}\exp\{-D_0 (K^2 + B_0^2 + R_0^2 + 1)  s^{3/2}\} - \exp\{-c_0 R_0^3 s^{3/2}\} \\
&\ge \frac{1}{4}\exp\{-D_0 (K^2 + B_0^2 + R_0^2 + 1)  s^{3/2}\}
\end{split}
\end{align}
provided that $R_0 = R_0(B_0, c_0, D_0, K)$ is sufficiently large. The claim in (a) then follows upon increasing $s_0 = s_0(D_0)$ if necessary. 
\end{proof}

\section{Proofs of the bounds for bulk LPP}
\label{S:PfBlk}

We continue with the proofs of the bounds in Subsection \ref{Ss:BlkBd} for the bulk LPP. 

\subsection{Proof of Theorem \ref{T:BlkLT}}

Similarly to the argument mentioned after \eqref{E:100}, our proof of Theorem \ref{T:BlkLT} is based on the containment 
\begin{align}
\label{E:105}
\{\G_{\bfv} \le \shp_{\bfv}-s\norm{\bfv}^{1/3}\} &\subset \left\{\G_{\bfv}^{w, z} \le \shp_{\bfv}-\frac{1}{2}s\norm{\bfv}^{1/3}\right\} \cup \left\{\G_{\bfv}^{w, z}-\G_{\bfv} \ge \frac{1}{2}s\norm{\bfv}^{1/3}\right\}
\end{align}
with suitably chosen boundary parameters $w$ and $z$. Then the probability of the last event in \eqref{E:105} can be bounded via Theorem \ref{TIncRTUB}. Hence, it remains to develop sufficient control over the left tail of the $\G^{w, z}$-process, which will be achieved in Lemma \ref{LLppBdLT} below through identity \eqref{E:Rains}. 

We will use a Taylor estimate from \cite{Emra_Janj_Sepp_23} for the mean function in \eqref{EMean}. Define the function 
\begin{align}
\curv_{\bfx} = \frac{1}{2}\partial_{z}^2|_{z = \ShpMin_{\bfx}}\{\M^z_{\bfx}\} = \frac{(\sqrt{x}+\sqrt{y})^{4/3}}{x^{1/6}y^{1/6}} \quad \text{ for } \bfx = (x, y) \in \bbR_{>0}  \label{ECurv}
\end{align}
where $\zeta_\bfx$ is given by \eqref{EShpMin} and is the unique minimizer at \eqref{EShpMean}. 
\begin{lem}[Lemma C.2 in \cite{Emra_Janj_Sepp_23}]
\label{LMeanEst}
Fix $\delta > 0$ and $\epsilon > 0$. There exists a constant $C_0 = C_0(\delta, \epsilon) > 0$ such that 
\begin{align*}
|\M^z_{\bfx}-\shp_{\bfx} - \curv_{\bfx}^3(z-\ShpMin_{\bfx})^2| \le C_0 \norm{\bfx}|z-\ShpMin_{\bfx}|^3 
\end{align*}
whenever $\bfx \in S_\delta$ and $z \in [\epsilon, 1-\epsilon]$. 
\end{lem}
The following lemma estimates the exponent in identity \eqref{E:Rains}. 
\begin{lem}
\label{LMgfEst}
 Fix $\delta > 0$ and $\epsilon > 0$. There exists a constant $C_0 = C_0(\delta, \epsilon) > 0$ such that 
\begin{align*}
&\bigg|\int_z^w \M^t_{\bfx} \dd t-(w-z)\shp_{\bfx} - \dfrac{1}{3} \curv_{\bfx}^3\{(w-\ShpMin_{\bfx})^3-(z-\ShpMin_{\bfx})^3\}\bigg| \le C_0 \norm{\bfx}\{(w-\ShpMin_{\bfx})^4 + (z-\ShpMin_{\bfx})^4\} 
\end{align*}
whenever $\bfx \in S_\delta$ and $w, z \in [\epsilon, 1-\epsilon]$. 
\end{lem}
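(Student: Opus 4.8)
The plan is to integrate the pointwise estimate of Lemma \ref{LMeanEst} and control the error term. Fix $\bfx \in S_\delta$ and $w, z \in [\epsilon, 1-\epsilon]$, and abbreviate $\theta = \ShpMin_{\bfx}$ and $\curv = \curv_{\bfx}$. Lemma \ref{LMeanEst} states that
\begin{align*}
\M^t_{\bfx} = \shp_{\bfx} + \curv^3 (t-\theta)^2 + R(t), \qquad |R(t)| \le C_0 \norm{\bfx}|t-\theta|^3
\end{align*}
for all $t \in [\epsilon, 1-\epsilon]$, with $C_0 = C_0(\delta, \epsilon)$. Integrating this identity in $t$ over the interval with endpoints $z$ and $w$ (with the signed-integral convention already in force in the excerpt) gives
\begin{align*}
\int_z^w \M^t_{\bfx}\, \dd t = (w-z)\shp_{\bfx} + \curv^3 \int_z^w (t-\theta)^2\, \dd t + \int_z^w R(t)\, \dd t,
\end{align*}
and the middle integral is exactly $\tfrac13\{(w-\theta)^3 - (z-\theta)^3\}$ by the fundamental theorem of calculus, which produces the claimed main term.

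It remains to bound $\bigl|\int_z^w R(t)\, \dd t\bigr|$. Since $|R(t)| \le C_0\norm{\bfx}|t-\theta|^3$ and $t-\theta$ ranges between $w-\theta$ and $z-\theta$ as $t$ ranges between $w$ and $z$, we have $|t-\theta| \le \max\{|w-\theta|, |z-\theta|\}$ throughout the interval of integration, whose length is $|w-z| \le |w-\theta| + |z-\theta|$. Hence
\begin{align*}
\Bigl|\int_z^w R(t)\, \dd t\Bigr| \le C_0\norm{\bfx}\max\{|w-\theta|,|z-\theta|\}^3 \cdot (|w-\theta|+|z-\theta|) \le 2C_0\norm{\bfx}\{(w-\theta)^4 + (z-\theta)^4\},
\end{align*}
using $\max\{a,b\}^3(a+b) \le 2(a^4+b^4)$ for $a, b \ge 0$. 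Renaming $2C_0$ as the constant in the statement completes the proof.

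There is essentially no obstacle here: the lemma is a routine integration of Lemma \ref{LMeanEst}, the only minor points being the bookkeeping of the signed integral when $w < z$ (handled automatically by the convention recalled after Lemma \ref{LRains}) and the elementary inequality $\max\{a,b\}^3(a+b)\le 2(a^4+b^4)$. One should double-check that the hypothesis $w,z \in [\epsilon,1-\epsilon]$ guarantees the whole segment between $z$ and $w$ lies in $[\epsilon, 1-\epsilon]$, so that Lemma \ref{LMeanEst} applies at every $t$ in the range of integration — this is immediate since $[\epsilon, 1-\epsilon]$ is an interval.
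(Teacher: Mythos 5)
Your proof is correct and follows exactly the route the paper takes: its entire proof of this lemma is the single sentence ``This comes from integrating the estimate in Lemma \ref{LMeanEst}.'' Your write-up simply fills in the bookkeeping (the signed integral, the endpoint bound on $|t-\ShpMin_{\bfx}|$, and the elementary inequality $\max\{a,b\}^3(a+b)\le 2(a^4+b^4)$), all of which is sound.
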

\begin{proof}
This comes from integrating the estimate in Lemma \ref{LMeanEst}. 
\end{proof}

We combine the preceding lemmas to produce the following left-tail upper bound. 
\begin{lem}
\label{LLppBdLT}
Let $\delta > 0$. There exist positive constants $C_0 = C_0(\delta)$ and $\epsilon_0 = \epsilon_0(\delta)$ such that
\begin{align*}
\log \P\{\G^{w, z}_{\bfv} \le \shp_{\bfv}-\curv_{\bfv}s\} \le -\frac{1}{3}(y^3-x^3)-(y-x)s + \frac{C_0(x^4 + y^4)}{\norm{\bfv}^{1/3}}
\end{align*}
whenever $\bfv \in S_\delta \cap \bbZ_{>0}^2$, $s \in \bbR$, and $w = \ShpMin_{\bfv} + x\curv_{\bfv}^{-1}$ and $z = \ShpMin_{\bfv} + y\curv_{\bfv}^{-1}$ for some $x, y \in [-\epsilon_0\norm{\bfv}^{1/3}, \epsilon_0\norm{\bfv}^{1/3}]$ with $x \le y$. 
\end{lem}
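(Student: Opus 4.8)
The plan is to derive the bound from the moment generating function identity of Lemma~\ref{LRains} via an exponential Markov (Chernoff) inequality on the left tail, and then to expand the resulting integral with the Taylor estimate of Lemma~\ref{LMgfEst}.

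First I would record that the boundary parameters stay in a compact subinterval of $(0,1)$ depending only on $\delta$. From \eqref{ECurv} and $\bfv \in S_\delta$ one has $c(\delta)\norm{\bfv}^{1/3} \le \curv_\bfv \le C(\delta)\norm{\bfv}^{1/3}$, and Lemma~\ref{LMinEst}(b) gives $\ShpMin_\bfv \in [a_0, 1-a_0]$ for some $a_0 = a_0(\delta) \in (0, 1/2)$. Hence $|w - \ShpMin_\bfv| = |x|\,\curv_\bfv^{-1} \le \epsilon_0 \norm{\bfv}^{1/3}\curv_\bfv^{-1} \le \epsilon_0 c(\delta)^{-1}$, and likewise for $z$; choosing $\epsilon_0 = \epsilon_0(\delta)$ small enough forces $w, z \in [a_0/2, 1 - a_0/2] =: [\epsilon, 1-\epsilon]$, so that both Lemma~\ref{LRains} and Lemma~\ref{LMgfEst} are applicable with $\epsilon = \epsilon(\delta)$.

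Next, since $x \le y$ we have $w \le z$, so $\lambda := z - w = (y-x)\curv_\bfv^{-1} \ge 0$ and, by Lemma~\ref{LRains}, $\E[\exp\{-\lambda \G^{w,z}_\bfv\}] = \exp\{-\int_w^z \M^t_\bfv \,\dd t\}$. The exponential Markov inequality gives, for every $a \in \bbR$,
$$\P\{\G^{w,z}_\bfv \le a\} \le e^{\lambda a}\, \E[\exp\{-\lambda \G^{w,z}_\bfv\}] = \exp\Big\{\lambda a - \int_w^z \M^t_\bfv \,\dd t\Big\},$$
and with $a = \shp_\bfv - \curv_\bfv s$ and $\lambda\curv_\bfv = y - x$ this becomes
$$\log \P\{\G^{w,z}_\bfv \le \shp_\bfv - \curv_\bfv s\} \le \lambda\shp_\bfv - (y-x)s - \int_w^z \M^t_\bfv \,\dd t .$$
If $x = y$ then $\lambda = 0$ and the left side is trivially $\le 0$, while the right side of the asserted inequality is nonnegative, so I may assume $x < y$.

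It then remains to expand $\int_w^z \M^t_\bfv\,\dd t$. Substituting $w - \ShpMin_\bfv = x\curv_\bfv^{-1}$ and $z - \ShpMin_\bfv = y\curv_\bfv^{-1}$ into Lemma~\ref{LMgfEst}, the cubic term is $\tfrac13 \curv_\bfv^3\{(x\curv_\bfv^{-1})^3 - (y\curv_\bfv^{-1})^3\} = \tfrac13(x^3 - y^3)$, the linear term is $(w - z)\shp_\bfv = -\lambda\shp_\bfv$, and the error is bounded by $C_0\norm{\bfv}\curv_\bfv^{-4}(x^4 + y^4)$, which is at most $C_0'(x^4 + y^4)\norm{\bfv}^{-1/3}$ because $\curv_\bfv \asymp \norm{\bfv}^{1/3}$. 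Thus $\int_w^z \M^t_\bfv\,\dd t = \lambda\shp_\bfv + \tfrac13(y^3 - x^3) + \mathrm{Err}$ with $|\mathrm{Err}| \le C_0'(x^4+y^4)\norm{\bfv}^{-1/3}$; inserting this into the previous display cancels the $\lambda\shp_\bfv$ terms and yields exactly the claimed inequality after renaming the constant. The computation is essentially routine; the only points that need care are the sign bookkeeping — the left-tail Markov bound works precisely because $w \le z$ makes the exponential parameter $w - z$ nonpositive while $\G^{w,z}_\bfv \ge 0$ — the verification that $w$ and $z$ remain in $[\epsilon, 1-\epsilon]$ after the reparametrization, and absorbing the factor $\norm{\bfv}\curv_\bfv^{-4}$ into $\norm{\bfv}^{-1/3}$.
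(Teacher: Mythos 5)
Your proposal is correct and follows essentially the same route as the paper: the exponential Markov inequality with nonpositive exponent $w-z$, Rains's identity (Lemma \ref{LRains}), the Taylor estimate of Lemma \ref{LMgfEst}, and Lemma \ref{LMinEst}(b)--(c) to keep $w,z$ in a compact subinterval of $(0,1)$ and to absorb $\norm{\bfv}\curv_{\bfv}^{-4}$ into $\norm{\bfv}^{-1/3}$. The sign bookkeeping and the reduction of the error term match the paper's argument exactly.
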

\begin{proof}
Let $\bfv \in S_\delta \cap \bbZ_{>0}^2$, $s \in \bbR$ and $w, z \in (0, 1)$ with $w \le z$. Apply the exponential Markov inequality with the exponent $w-z \le 0$ and then appeal \eqref{E:Rains} to obtain 
\begin{align}
\label{E1}
\begin{split}
\log \P\{\G^{w, z}_{\bfv} \le \shp_{\bfv}-\curv_{\bfv}s\} &\le \log \E[\exp\{(w-z)\G_{\bfv}^{w, z}\}] - (w-z)(\shp_{\bfv}-\curv_{\bfv}s) \\
&= \int_z^w \M^t_{\bfv} \dd t - (w-z)(\shp_{\bfv}-\curv_{\bfv} s).  
\end{split}
\end{align}

Let $\epsilon_0 = \epsilon_0(\delta) > 0$ be a constant to be specified below. Set $w = \ShpMin_{\bfv} + x/\curv_{\bfv}$ and $z = \ShpMin_{\bfv} + y/\curv_{\bfv}$ for some $x, y \in [-\epsilon_0\norm{\bfv}^{1/3}, \epsilon_0\norm{\bfv}^{1/3}]$ with $x \le y$. On account of Lemma \ref{LMinEst}(b)-(c), choosing $\epsilon_0$ sufficiently small ensures that $w, z \in [c_0, 1-c_0]$ for some constant $c_0 = c_0(\delta) > 0$. Then, estimating the integral in \eqref{E1} via Lemma \ref{LMgfEst} leads to 
\begin{align}
\label{E2}
\begin{split}
\log \P\{\G^{w, z}_{\bfv} \le \shp_{\bfv}-\curv_{\bfv}s\} &\le \frac{1}{3}\curv_{\bfv}^3\{(w-\ShpMin_{\bfv})^3 - (z-\ShpMin_{\bfv})^3\} + (w-z)\curv_{\bfv}s \\
&+ C_0 \norm{\bfv}\{(z-\ShpMin_{\bfv})^4 + (w-\ShpMin_{\bfv})^4\}\\
&= \frac{1}{3}(x^3-y^3)+(x-y)s + C_0\norm{\bfv}\curv_{\bfv}^{-4}(x^4 + y^4)
\end{split}
\end{align}
for some constant $C_0 = C_0(\delta) > 0$. Now using the lower bound in Lemma \ref{LMinEst}(c) completes the proof. 
\end{proof}

We now derive Theorem \ref{T:BlkLT} from Theorem \ref{TIncRTUB} and Lemma \ref{LLppBdLT}. 

\begin{proof}[Proof of Theorem \ref{T:BlkLT}]
Let $N_0 = N_0(\delta)$ and $s_0 = s_0(\delta)$ denote positive constants to be chosen in the course of the proof. Let $\bfv \in S_\delta \cap \bbZ_{\ge N_0}^2$ and $s \ge s_0$. It suffices to consider the case $s \le \shp_{\bfv}\norm{\bfv}^{-1/3}$ because the left tail event $\{\G_\bfv \le \gamma_\bfv-s \norm{\bfv}^{1/3}\}$ is empty otherwise. Then $s \le 2\norm{\bfv}^{2/3}$ due to Lemma \ref{LMinEst}(a). Put $w = \ShpMin_\bfv - \epsilon_0 s^{1/2}\curv_{\bfv}^{-1}$ where $\epsilon_0 = \epsilon_0(\delta) > 0$ is another constant to be selected. By Lemma \ref{LMinEst}(b), if $\epsilon_0$ is sufficiently small then $w > 0$ and the process $\G^{w, \ShpMin_\bfv}$ makes sense. 

The next inequality follows from \eqref{E:105} and a union bound. 
\begin{align}
\label{E4}
\begin{split}
\P\{\G_{\bfv} \le \shp_{\bfv}-s\norm{\bfv}^{1/3}\} &\le \P\left\{\G_{\bfv}^{w, \ShpMin_{\bfv}} \le \shp_{\bfv}-\frac{1}{2}s\norm{\bfv}^{1/3}\right\} \\
&+ \P\left\{\G_{\bfv}^{w, \ShpMin_{\bfv}}-\G_{\bfv} \ge \frac{1}{2}s\norm{\bfv}^{1/3}\right\}. 
\end{split}
\end{align}
We bound the two terms on the right-hand side separately. 

Since $x = \curv_{\bfv} \cdot (\ShpMin_{\bfv}-w) = \epsilon_0 s^{1/2} \in (0, 2\epsilon_0 \norm{\bfv}^{1/3}]$, for sufficiently small $\epsilon_0$, one can apply Lemma \ref{LLppBdLT} 
to obtain  
\begin{align}
\label{E3}
\begin{split}
\P\left\{\G_{\bfv}^{w, \ShpMin_{\bfv}} \le \shp_{\bfv}-\frac{1}{2}s\norm{\bfv}^{1/3}\right\} &\le \exp\bigg\{-\frac{x^3}{3}-\frac{xs\norm{\bfv}^{1/3}}{2\curv_{\bfv}} + \frac{C_0x^4}{\norm{\bfv}^{1/3}}\bigg\} \\
&\le \exp\bigg\{-\frac{xs\norm{\bfv}^{1/3}}{2\curv_{\bfv}} \bigg\} \le \exp\{-c_0 s^{3/2}\}
\end{split}
\end{align}
for some positive constants $C_0 = C_0(\delta)$ and $c_0 = c_0(\delta)$. The last inequality in \eqref{E3} relies also on Lemma \ref{LMinEst}(c). 

On the other hand, by virtue of Theorem \ref{TIncRTUB}, Lemma \ref{LMinEst}(b) and the bound $s \le 2\norm{\bfv}^{2/3}$, 
\begin{align}
\label{E5}
\P\left\{\G_{\bfv}^{w, \ShpMin_{\bfv}}-\G_{\bfv} \ge \frac{1}{2}s\norm{\bfv}^{1/3}\right\} \le \exp\{-c_0 s^{3/2}\}
\end{align}
for sufficiently large $N_0$ and $s_0$, and after shrinking $\epsilon_0$ and $c_0$ if necessary. 

Combining \eqref{E4}, \eqref{E3} and \eqref{E5} and adjusting the constants $c_0$ and $s_0$ 
suitably completes the proof. 
\end{proof}

\subsection{Proof of Proposition \ref{PBlkRTUB}}

We next obtain Proposition \ref{PBlkRTUB} via another use of identity \eqref{E:Rains} similarly to the proof \cite[Theorem 2.2]{Emra_Janj_Sepp_20}, which covers moderate deviations only.  

\begin{proof}[Proof of Proposition \ref{PBlkRTUB}]
It follows from \cite[Theorem 2.2]{Emra_Janj_Sepp_20} and Lemma \ref{LMinEst}(c) that 
\begin{align}
\label{Eq-25}
\P\{\G_{\bfv} \ge \shp_{\bfv} + s \norm{\bfv}^{1/3}\} \le \exp\{-c_0s^{3/2}\} \quad \text{ for } s \in [0, \epsilon_0 \norm{\bfv}^{2/3}]
\end{align}
for some constants $c_0 = c_0(\delta) > 0$ and $\epsilon_0 = \epsilon_0(\delta) > 0$. Assume now that $s > \epsilon_0 \norm{\bfv}^{2/3}$. By Lemma \ref{LMinEst}(b), $\ShpMin_{\bfv} \in [a_0, 1-a_0]$ for some constant $a_0 = a_0(\delta) > 0$. Decrease $\epsilon_0$ if necessary to have $\epsilon_0 \le a_0$. Set $w = \ShpMin_{\bfv} + \epsilon_0/2 \in [a_0, 1-a_0/2]$ and $z = \ShpMin_{\bfv} - \epsilon_0/2 \in [a_0/2, 1-a_0]$. Using monotonicity, the exponential Markov inequality, identity \ref{E:Rains}, Lemmas \ref{LMgfEst} and \ref{LMinEst}(c), and the lower bound on $s$, one obtains that 
\begin{align}
\label{Eq-26}
\begin{split}
&\P\{\G_{\bfv} \ge \shp_{\bfv} + s \norm{\bfv}^{1/3}\} \le \P\{\G^{w, z}_{\bfv} \ge \shp_{\bfv} + s \norm{\bfv}\} \\
&\le \E[\exp\{\epsilon_0\G^{w, z}_{\bfv}\}] \exp\{-\epsilon_0\shp_{\bfv}-\epsilon_0s\norm{\bfv}^{1/3}\} = \exp\left\{\int_{z}^w \M^t_{\bfv} \dd t - \epsilon_0 \shp_{\bfv} - \epsilon_0 s \norm{\bfv}^{1/3}\right\} \\
&\le \exp\left\{\frac{\curv^3\epsilon_0^3}{12} + C_0 \epsilon_0^4\norm{\bfv}  - \epsilon_0 s \norm{\bfv}^{1/3}\right\} \le \exp\left\{C_0 \epsilon_0^3 \norm{\bfv} -\frac{1}{2}\epsilon_0^2\norm{\bfv} - \frac{1}{2}\epsilon_0 s \norm{\bfv}^{1/3}\right\} \\
&\le \exp\left\{-\frac{1}{2}\epsilon_0 s \norm{\bfv}^{1/3}\right\} 
\end{split}
\end{align}
for some constant $C_0 = C_0(\delta)$ provided that $\epsilon_0 = \epsilon_0(C_0)$ is made sufficiently small. Combining \eqref{Eq-25} and \eqref{Eq-26}, and renaming $\min \{c_0, \epsilon_0/2\}$ as $c_0$ complete the proof. 
\end{proof}

\subsection{Proof of Theorem \ref{T:Blk}}

With the tail bounds in Theorem \ref{T:BlkLT} and Proposition \ref{PBlkRTUB} in place, the proof of Theorem \ref{T:Blk} now boils down to an integration. 

\begin{proof}[Proof of Theorem \ref{T:Blk}]
Let $N_0 = N_0(\delta) > 0$ denote a constant to be taken sufficiently large below. Let $\bfv \in S_\delta \cap \bbZ_{\ge N_0}^2$ and $p \ge 1$. Applying Fubini's theorem and then a change-of-variables give
\begin{align}
\label{Eq-9}
\begin{split}
\E[|\G_{\bfv}-\shp_{\bfv}|^p] &= \int_{0}^\infty \P\{|\G_{\bfv}-\shp_{\bfv}|^p \ge x\} \dd x \\
&= p\norm{\bfv}^{p/3}\int_0^\infty \P\{|\G_{\bfv}-\shp_{\bfv}| \ge s\norm{\bfv}^{1/3}\}s^{p-1} \dd s. 
\end{split}
\end{align}  

Pick a constant $s_0 = s_0(\delta) > 0$ at least as large as the constant named $s_0$ in Theorem \ref{T:BlkLT}.  
It follows from Proposition \ref{PBlkRTUB} and Theorem \ref{T:BlkLT} that, for some constant $c_0 = c_0(\delta) > 0$, 
\begin{align}
\label{Eq-10}
\begin{split}
&\int_0^\infty \P\{|\G_{\bfv}-\shp_{\bfv}| \ge s\norm{\bfv}^{1/3}\}s^{p-1} \dd s \\
&\le \int_0^{s_0} s^{p-1} \dd s + 2 \int_{s_0}^\infty \exp\{-c_0 \min \{s^{3/2}, s \norm{\bfv}^{1/3}\}\} s^{p-1} \dd s \\
&= p^{-1}s_0^p + 2\int_{s_0}^{\norm{\bfv}^{2/3}}\exp\{-c_0 s^{3/2}\} s^{p-1}\dd s + 2\int_{\norm{\bfv}^{2/3}}^{\infty} \exp\{-c_0 s \norm{\bfv}^{1/3}\}s^{p-1}\dd s \\
&\le p^{-1}s_0^p + 2\int_{0}^{\infty}\exp\{-c_0 s^{3/2}\} s^{p-1}\dd s + 2\int_{0}^{\infty} \exp\{-c_0 s \norm{\bfv}^{1/3}\}s^{p-1}\dd s. 
\end{split}
\end{align}

We bound the last two integrals in \eqref{Eq-10} via changing the variables and then applying Lemma \ref{LIntBd}(a) as follows. For some absolute constant $A_0 > 0$, 
\begin{align}
\label{Eq-11}
\begin{split}
&\int_{0}^{\infty}\exp\{-c_0 s^{3/2}\} s^{p-1}\dd s = c_0^{-2p/3}\int_{0}^\infty \exp\{-t^{3/2}\}  t^{p-1} \dd t \le c_0^{-2p/3} A_0^p p^{2p/3}, \\
&\int_{0}^{\infty} \exp\{-c_0 s \norm{\bfv}^{1/3}\}s^{p-1}\dd s = c_0^{-p}\norm{\bfv}^{-p/3} \int_{0}^{\infty} \exp\{-t\}t^{p-1} \dd t \le c_0^{-p}A_0^p p^p \norm{\bfv}^{-p/3}. 
\end{split}
\end{align}

Putting together \eqref{Eq-9}, \eqref{Eq-10} and \eqref{Eq-11}, one ends up with 
\begin{align*}
\E[|\G_{\bfv}-\shp_{\bfv}|^p] &\le s_0^p\norm{\bfv}^{p/3} + 2pc_0^{-2p/3} A_0^p p^{2p/3} \norm{\bfv}^{p/3} + 2pc_0^{-p}A_0^p p^p \\
&\le C_0^p p^{2p/3} \max \{p, \norm{\bfv}\}^{p/3}  
\end{align*}
for some constant $C_0 = C_0(\delta) > 0$.  

In the last bound,  the restriction $\norm{\bfv} \ge N_0$ can be removed after suitably increasing $C_0$.  This is because if $\norm{\bfv} < N_0$ then 
\begin{align*}
&\E[|\G_{\bfv}-\shp_{\bfv}|^p] \le 2^p\E[\G_{N_0, N_0}^p] + 2^p \shp_{N_0, N_0}^p,  \quad \text{ and } \\
&\E[\G_{N_0, N_0}^p] \le \E\left[\left(\sum_{i=1}^{N_0} \sum_{j =1}^{N_0} \rw_{i, j}\right)^p\right] \le N_0^{2p} \E[\rw_{1, 1}^p] \le 2^{p} N_0^{2p} p! \E\left[\exp\left\{\frac{1}{2}\rw_{1, 1}\right\}\right] \\
&\le 2^{2p}N_0^{2p} p^p.  \qedhere
\end{align*}
\end{proof}

\subsection{Proof of Proposition \ref{P:BlkLT-HG}}

We next obtain Proposition \ref{P:BlkLT-HG} by invoking \cite[Theorem 3]{Gang_Hegd_23}. In verifying the assumptions of that result, the following lemma will be useful. 

\begin{lem}
\label{LIncRTLB}
Let $\delta > 0$. There exist positive constants $c_0 = c_0(\delta)$ and $N_0 = N_0(\delta)$ such that 
\begin{align*}
\E[\G_{\bfv}] \le \shp_{\bfv}-c_0 \norm{\bfv}^{1/3} \qquad \text{ for } \bfv \in S_\delta \cap \bbZ_{\ge N_0}^2. 
\end{align*}
\end{lem}
\begin{proof}
Let $C_0 = C_0(\delta)$, $s_0 = s_0(\delta)$ and $N_0 = N_0(\delta) \ge s_0^{3/2}$ denote constants for which the assertions of Theorem \ref{TIncRTLB} hold with $K = 0$. Using the theorem along with \eqref{EMean} and \eqref{EShpMean} gives
\begin{equation*}
\shp_{\bfv}-\E[\G_{\bfv}] = \E[\G^{\ShpMin_{\bfv}}_\bfv - \G_{\bfv}] \ge s_0 \norm{\bfv}^{1/3} \P\{\G^{\ShpMin_{\bfv}}_{\bfv}-\G_{\bfv} \ge s_0 \norm{\bfv}^{1/3}\} \ge s_0 \exp\{-C_0 s_0^{3/2}\}\norm{\bfv}^{1/3}
\end{equation*}
for $\bfv \in S_\delta \cap \bbZ_{\ge N_0}^2$. 
\end{proof}

\begin{proof}[Proof of Proposition \ref{P:BlkLT-HG}]
Assume that $\bfv = (n, n)$ for some $n \in \bbZ_{>0}$, and abbreviate $\bfv_k = (n-k, n+k)$ for $k \in [n-1]$. Formula \eqref{EShp} shows that $\shp_{\bfv_{k}}-\shp_{\bfv} = 2n \cdot (\sqrt{1-k^2n^{-2}}-1)$. Let $\epsilon \in (0, 1)$, and work with $n \ge N_0 = N_0(\epsilon)$ and $k \le (1-\epsilon) n$ below, taking $N_0 > 0$ sufficiently large to ensure that such $k \in [n-1]$ exists. In particular, $\bfv_{k} \in S_{\eta_0}$ for some constant $\eta_0 = \eta_0(\epsilon) > 0$. The Taylor expansion $\sqrt{1-x} = 1 - x/2 - x^{2}/8 + \cdots$ reveals that
$\shp_{\bfv_k}-\shp_{\bfv} \in [-k^2n^{-1}-A_0 k^4 n^{-3}, -k^2n^{-1}]$ for some constant $A_0 = A_0(\epsilon) > 0$ provided that $\epsilon \le \epsilon_0$ for some absolute constant $\epsilon_0 > 0$. On account of Theorem \ref{T:Blk} and Lemma \ref{LIncRTLB}, one also has $\E[\G_{\bfv_k}] \in [\shp_{\bfv_{k}}-B_0 n^{-1/3}, \shp_{\bfv_k}-b_0 n^{-1/3}]$ for sufficiently large $N_0$ and some constants $B_0 = B_0(\epsilon) > 0$ and $b_0 = b_0(\epsilon) > 0$. Combining this inclusion with the previous one yields  
\begin{align*}
-k^2n^{-1} -A_0k^4n^{-3}-B_0n^{-1/3} \le \E[\G_{\bfv_k}] - \shp_{\bfv} \le -k^2n^{-1} -b_0 n^{-1/3}, 
\end{align*}
which implies \cite[Assumption 2]{Gang_Hegd_23} upon setting $\epsilon = \epsilon_0$, for example. Resuming with an arbitrary $\epsilon \in (0, 1)$, next apply Proposition \ref{PBlkRTUB} and Theorem \ref{T:BlkLT} to obtain  
\begin{align}
\label{Eq-84}
\max \{\P\{\G_{\bfv_k} \ge \shp_{\bfv_k} + s n^{1/3}\}, \P\{\G_{\bfv_k} \le \shp_{\bfv_k}-sn^{1/3}\}\} \le \exp\{-c_0 \min \{s^{3/2}, sn^{1/3}\}\}
\end{align}
for $s \ge s_0$ and some constants $c_0 = c_0(\epsilon) > 0$ and $s_0 = s_0(\epsilon) > 0$, and sufficiently large $N_0$. The inequality $|\E[\G_{\bfv_k}]-\shp_{\bfv_k}| \le B_0n^{-1/3}$ allows us to replace the term $\shp_{\bfv_k}$ with the mean $\E[\G_{\bfv_k}]$ at the cost of adjusting $c_0$ and possibly increasing $s_0$. Hence, \cite[Assumption 3]{Gang_Hegd_23} is also verified (for any power $\alpha < 1$ there). Now \cite[Theorem 3]{Gang_Hegd_23} implies that 
\begin{align*}
\P\{\G_{\bfv} \le \shp_{\bfv}-sn^{1/3}\} \le \exp\{-c_1 s^3\}
\end{align*}
for $s \ge s_1$ and $n \ge N_1$ for some absolute positive constants $c_1$, $N_1$ and $s_1$. 
\end{proof}

\subsection{Proof of Proposition \ref{P:BlkCMLB}}
We conclude this section with a brief coupling proof adapted from \cite[Section 2.6]{More_Sepp_Valk_14}. 

\begin{proof}[Proof of Proposition \ref{P:BlkCMLB}]
Given $\delta > 0$, there exist positive constants $C_0 = C_0(\delta)$, $N_0 = N_0(\delta)$ and $s_0 = s_0(\delta)$ such that the following inequalities hold 
\begin{align}
\label{Eq-41}
\begin{split}
\E[|\G_{\bfv}-\shp_{\bfv}|] &\ge \E[(\G_{\bfv}-\shp_{\bfv})_-] \ge \E[(\G^{\ShpMin_{\bfv}}_{\bfv}-\shp_{\bfv})_-] = \E[(\G^{\ShpMin_{\bfv}}_{\bfv}-\shp_{\bfv})_+] \\ 
&\ge s_0 \norm{\bfv}^{1/3}\P\{\G^{\ShpMin_{\bfv}}_{\bfv} \ge \shp_{\bfv} + s_0 \norm{\bfv}^{1/3}\} \ge s_0 \exp\{-C_0s_0^{3/2}\}\norm{\bfv}^{1/3}
\end{split}
\end{align}
for $\bfv \in S_\delta \cap \bbZ_{\ge N_0}^2$. The second step in \eqref{Eq-41} uses that $x \mapsto (x)_-$ is nonincreasing and $\G \le \G^{\ShpMin_{\bfv}}$. The subsequent equality holds because $\E[\G^{\ShpMin_{\bfv}}_{\bfv}] = \M^{\ShpMin_{\bfv}}_{\bfv} = \shp_{\bfv}$ in view of \eqref{EMean} and \eqref{EShpMean}. The last inequality in \eqref{Eq-41} comes from a suitable right-tail lower bound such as \cite[(5.43)]{Sepp_18_CGM}, or Proposition \ref{PBdRTLB} of this article. Now the result is a consequence of \eqref{Eq-41} and Jensen's inequality.
\end{proof}

\section{Proofs of the upper bounds for the LPP with boundary}
\label{SPfBdUB}

We now set out to prove our central moment upper bounds for the LPP with boundary, treating the KPZ and Gaussian regimes separately. As noted around \eqref{ETrans}, it suffices to prove only part (a) in various bounds from this section onward. 

\subsection{Proofs of the upper bounds in the KPZ regime}

In preparation for the proof of Theorem \ref{TBdUB}, we record suitable tail bounds for the LPP with boundary. These bounds are fairly immediate now given the development in Section \ref{S:PfBlk}. 

\begin{prop}
\label{PBdRTUB}
Let $\delta > 0$. There exist positive constants $c_0 = c_0(\delta)$, $\epsilon_0 = \epsilon_0(\delta)$, $N_0 = N_0(\delta)$ and $s_0 =  s_0(\delta)$ such that the following statements hold for $\bfv \in S_\delta \cap \bbZ_{\ge N_0}^2$, $s \ge s_0$, $w > 0$ and $z < 1$. 
\begin{enumerate}[\normalfont (a)]
\item If $w \ge \ShpMin_{\bfv}-\epsilon_0 \min \{s^{1/2} \norm{\bfv}^{-1/3}, 1\}$ then 
\begin{align*}
\P\{\G^{w, \hor}_{\bfv} \ge \shp_{\bfv} + s \norm{\bfv}^{1/3}\} \le \exp\{-c_0 \min \{s^{3/2}, s\norm{\bfv}^{1/3}\}\}.  
\end{align*}
\item If $z \le \ShpMin_{\bfv} + \epsilon_0 \min \{s^{1/2} \norm{\bfv}^{-1/3}, 1\}$ then 
\begin{align*}
\P\{\G^{z, \ver}_{\bfv} \ge \shp_{\bfv} + s \norm{\bfv}^{1/3}\} \le \exp\{-c_0 \min \{s^{3/2}, s\norm{\bfv}^{1/3}\}\}. 
\end{align*}
\end{enumerate}
\end{prop}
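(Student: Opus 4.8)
The plan is to reduce the right-tail upper bound for the LPP with boundary to the corresponding bound for the bulk LPP (Proposition~\ref{PBlkRTUB}) together with the increment right-tail bound from Proposition~\ref{PIncRTUB}. By Remark~\ref{RTrans} it suffices to treat part~(a). The key observation is the pointwise decomposition
\begin{align*}
\G^{w,\hor}_{\bfv} = \G_{\bfv} + (\G^{w,\hor}_{\bfv} - \G_{\bfv}),
\end{align*}
where $\G^{w,\hor}_{\bfv} - \G_{\bfv} \ge 0$ always, and moreover $\G^{w,\hor}_{\bfv} = \G_{\bfv}$ on the event $\{\Z^{w,z,\hor}_{\bfv} = 0\}$ (using the notation of \eqref{EExitPt}, taking any admissible $z$, e.g.\ $z = \ShpMin_{\bfv} + \epsilon_0 \min\{s^{1/2}\norm{\bfv}^{-1/3},1\}$ so that the $\G^{w,z}$-process is defined). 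Hence on $\{\G^{w,\hor}_{\bfv} \ge \shp_{\bfv} + s\norm{\bfv}^{1/3}\}$ at least one of the events $\{\G_{\bfv} \ge \shp_{\bfv} + \tfrac12 s\norm{\bfv}^{1/3}\}$ or $\{\G^{w,\hor}_{\bfv} - \G_{\bfv} \ge \tfrac12 s\norm{\bfv}^{1/3}\}$ must occur, so a union bound gives
\begin{align*}
\P\{\G^{w,\hor}_{\bfv} \ge \shp_{\bfv} + s\norm{\bfv}^{1/3}\} \le \P\bigl\{\G_{\bfv} \ge \shp_{\bfv} + \tfrac12 s\norm{\bfv}^{1/3}\bigr\} + \P\bigl\{\G^{w,\hor}_{\bfv} - \G_{\bfv} \ge \tfrac12 s\norm{\bfv}^{1/3}\bigr\}.
\end{align*}

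First I would bound the first term using Proposition~\ref{PBlkRTUB} applied with $s/2$ in place of $s$: for $\bfv \in S_\delta \cap \bbZ_{\ge N_0}^2$ and $s \ge 0$ this is at most $\exp\{-c_1 \min\{(s/2)^{3/2}, (s/2)\norm{\bfv}^{1/3}\}\}$, which is of the desired form after relabelling constants. Next I would bound the second term directly by Proposition~\ref{PIncRTUB}(a), again with $s/2$ in place of $s$ and with a correspondingly adjusted $\epsilon_0$: provided $w \ge \ShpMin_{\bfv} - \epsilon_0 \min\{(s/2)^{1/2}\norm{\bfv}^{-1/3}, 1\}$, which is implied (after shrinking $\epsilon_0$ by the factor $2^{-1/2}$, absorbing the constant) by the hypothesis $w \ge \ShpMin_{\bfv} - \epsilon_0 \min\{s^{1/2}\norm{\bfv}^{-1/3}, 1\}$, we get $\P\{\G^{w,\hor}_{\bfv} - \G_{\bfv} \ge \tfrac12 s\norm{\bfv}^{1/3}\} \le \exp\{-c_2 \min\{(s/2)^{3/2}, (s/2)\norm{\bfv}^{1/3}\}\}$. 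The constants $N_0$, $s_0$ would be taken as the maxima of those supplied by the two cited propositions.

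Adding the two bounds and renaming $\min\{c_1, c_2\}$ (times a small absolute factor to absorb the $2$ in the sum and the $2^{-3/2}$, $2^{-1}$ in the exponents) as $c_0$, and taking $\epsilon_0$ to be the (shrunken) constant compatible with Proposition~\ref{PIncRTUB}, completes the proof. There is no serious obstacle here: this proposition is, as the text notes, "fairly immediate now given the development in Section~\ref{SPfBlk}", and essentially all the work has already been done in proving Propositions~\ref{PIncRTUB}, \ref{PBlkLT} (not needed here) and \ref{PBlkRTUB}. The only point requiring minor care is the bookkeeping of how the $\epsilon_0$ in the hypothesis on $w$ propagates through the factor-of-$2$ rescaling of $s$ into the hypothesis of Proposition~\ref{PIncRTUB}, and the fact that for $s \le s_0$ the statement is vacuous after choosing $c_0$ small or absorbing it into the constant, so one may freely assume $s$ large.
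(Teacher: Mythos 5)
Your proposal is correct and is essentially identical to the paper's proof: the same split $\G^{w,\hor}_{\bfv} = \G_{\bfv} + (\G^{w,\hor}_{\bfv}-\G_{\bfv})$, a union bound at level $\tfrac12 s\norm{\bfv}^{1/3}$, Proposition~\ref{PBlkRTUB} for the bulk term and Proposition~\ref{PIncRTUB}(a) for the increment term, with the same adjustment of $\epsilon_0$, $N_0$, $s_0$ and $c_0$. The aside about the exit-point event is unnecessary but harmless.
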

\begin{proof}[Proof of {\rm(}a{\rm)}]
Let $\epsilon_0, N_0$ and $s_0$ denote positive constants depending only on $\delta$. Let $\bfv \in S_\delta \cap \bbZ_{\ge N_0}^2$ and $s \ge s_0$. Pick $w > 0$ such that $w \ge \ShpMin_{\bfv}-\epsilon_0 \min \{s^{1/2} \norm{\bfv}^{-1/3}, 1\}$. By virtue of Proposition \ref{PBlkRTUB} and Theorem \ref{TIncRTUB}(a), and a union bound, 
\begin{align*}
\P\{\G^{w, \hor}_{\bfv} \ge \shp_{\bfv} + s \norm{\bfv}^{1/3}\} &\le \P\{\G_{\bfv} \ge \shp_{\bfv} + \frac{1}{2}s \norm{\bfv}^{1/3}\} + \P\{\G^{w, \hor}_{\bfv}-\G_{\bfv} \ge \frac{1}{2}s\norm{\bfv}^{1/3}\} \\
&\le \exp\{-c_0 \min \{s^{3/2}, s \norm{\bfv}^{1/3}\}\}
\end{align*}
for some constant $c_0 = c_0(\delta) > 0$ provided that $\epsilon_0$ is sufficiently small, and $N_0$ and $s_0$ are sufficiently large. 
\end{proof}

\begin{prop}
\label{PLppBdLT}
Let $\delta > 0$. There exist positive constants $c_0 = c_0(\delta)$, $N_0 = N_0(\delta)$ and $s_0 = s_0(\delta)$ such that
\begin{align*}
\P\{\G^{w, \hor}_{\bfv} \le \shp_{\bfv} - s \norm{\bfv}^{1/3}\} &\le \exp\{-c_0 \min \{s^{3/2}, s\norm{\bfv}^{1/3}\}\}, \\
\P\{\G^{z, \ver}_{\bfv} \le \shp_{\bfv} - s \norm{\bfv}^{1/3}\} &\le \exp\{-c_0 \min \{s^{3/2}, s\norm{\bfv}^{1/3}\}\}
\end{align*}
for $\bfv \in S_\delta \cap \bbZ_{\ge N_0}^2$, $s \ge s_0$, and $w > 0$ and  $z < 1$. 
\end{prop}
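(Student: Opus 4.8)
The plan is to reduce both inequalities to the bulk left-tail bound already established in Proposition \ref{PBlkLT}, via a deterministic comparison between the LPP with boundary and the bulk LPP.

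First I would record the pointwise inequalities $\G^{w, \hor}_{\bfv} \ge \G_{\bfv}$ and $\G^{z, \ver}_{\bfv} \ge \G_{\bfv}$, valid for every $\bfv \in \bbZ_{>0}^2$, $w > 0$ and $z < 1$. These hold because the bulk geodesic $\pi_{(1, 1), \bfv}$ (which visits only vertices in $\bbZ_{>0}^2$, hence uses only bulk weights) can be prepended with the single step $(1, 0) \to (1, 1)$ to produce an admissible path in $\Pi_{(1, 0), \bfv}$; since $\rw^{w}_{(1, 0)} \ge 0$, this gives $\G^{w, \hor}_{\bfv} = \G^{w}_{(1, 0), \bfv} \ge \rw^{w}_{(1, 0)} + \G_{\bfv} \ge \G_{\bfv}$, and symmetrically for $\G^{z, \ver}_{\bfv}$ using the step $(0, 1) \to (1, 1)$. (Alternatively, these inequalities are immediate from Lemma \ref{LComp}, or the bound for $\G^{z, \ver}$ can be obtained from the one for $\G^{w, \hor}$ by the transposition symmetry \eqref{ELPPTrans} together with $\shp_{\wt{\bfv}} = \shp_{\bfv}$ and $\wt{\bfv} \in S_\delta$, exactly as in Remark \ref{RTrans}.) Consequently, for every real $x$, $\{\G^{w, \hor}_{\bfv} \le x\} \subseteq \{\G_{\bfv} \le x\}$ and likewise for $\G^{z, \ver}_{\bfv}$.

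Next, let $c_0 = c_0(\delta)$, $N_0 = N_0(\delta)$ and $s_0 = s_0(\delta)$ be the constants furnished by Proposition \ref{PBlkLT}. For $\bfv \in S_\delta \cap \bbZ_{\ge N_0}^2$ and $s \ge s_0$, the comparison above and Proposition \ref{PBlkLT} give
\begin{align*}
\P\{\G^{w, \hor}_{\bfv} \le \shp_{\bfv} - s\norm{\bfv}^{1/3}\} \le \P\{\G_{\bfv} \le \shp_{\bfv} - s\norm{\bfv}^{1/3}\} \le \exp\{-c_0 s^{3/2}\},
\end{align*}
and identically for $\G^{z, \ver}_{\bfv}$. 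Finally, since $\min\{s^{3/2}, s\norm{\bfv}^{1/3}\} \le s^{3/2}$, we have $\exp\{-c_0 s^{3/2}\} \le \exp\{-c_0 \min\{s^{3/2}, s\norm{\bfv}^{1/3}\}\}$, which yields the claimed bounds with the same constants.

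There is essentially no obstacle here: the $\min$-form appearing in the statement is strictly weaker than the pure $s^{3/2}$ bound that the comparison delivers, and it is written this way only for notational uniformity with the companion right-tail estimates (Propositions \ref{PIncRTUB} and \ref{PBdRTUB}), where the linear exponent genuinely arises once $s$ exceeds order $\norm{\bfv}^{2/3}$. The only point requiring (minimal) care is the comparison inequality and the harmless constraint $\bfv \ge (1, 1)$, which is automatic under $\bfv \in \bbZ_{\ge N_0}^2$.
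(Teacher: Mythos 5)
Your proof is correct and is essentially identical to the paper's own argument, which also deduces both bounds from Proposition \ref{PBlkLT} via the pointwise inequality $\G_{\bfv} \le \min\{\G^{w,\hor}_{\bfv}, \G^{z,\ver}_{\bfv}\}$. Your additional remarks on why the $\min$-form exponent suffices are accurate but not needed beyond the observation that $\min\{s^{3/2}, s\norm{\bfv}^{1/3}\} \le s^{3/2}$.
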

\begin{proof}
These bounds are corollaries of Theorem \ref{T:BlkLT} because $\G_{\bfv} \le \min \{\G^{w, \hor}_{\bfv}, \G^{z, \ver}_{\bfv}\}$ for any $\bfv \in \bbZ_{>0}^2$, $w > 0$ and $z < 1$. 
\end{proof}

\begin{proof}[Proof of Theorem \ref{TBdUB}{\rm(}a{\rm)}]
Let $N_0 = N_0(\delta)$ and $s_0 = s_0(\delta)$ denote positive constants to be chosen sufficiently large. Let $\bfv \in S_\delta \cap \bbZ_{\ge N_0}^2$, $p \ge 1$ and $w > 0$ be such that $w \ge \ShpMin_{\bfv}-\epsilon_0 \min \{p^{1/3}\norm{\bfv}^{-1/3}, 1\}$ where $\epsilon_0 = \epsilon_0(\delta)$ denotes the constant from Proposition \ref{PBdRTUB}. 

Assume that $p \ge s_0^{3/2}$ for now. Proceeding as in the proof of Theorem \ref{T:Blk}, one develops  
\begin{align}
\label{Eq-12}
\begin{split}
&\E[|\G^{w, \hor}_{\bfv}-\shp_{\bfv}|^p] = p\norm{\bfv}^{p/3}\int_0^\infty \P\{|\G^{w, \hor}_{\bfv}-\shp_{\bfv}| \ge s\norm{\bfv}^{1/3}\}s^{p-1} \dd s \\
&\le p\norm{\bfv}^{p/3}\left(\int_0^{p^{2/3}} s^{p-1} \dd s + 2 \int_{p^{2/3}}^\infty \exp\{-c_0 \min \{s^{3/2}, s \norm{\bfv}^{1/3}\}\} s^{p-1} \dd s \right)\\
&\le \norm{\bfv}^{p/3} \bigg(p^{2p/3} + 2p\one_{\{p \le \norm{\bfv}\}}\int_{p^{2/3}}^{\norm{\bfv}^{2/3}}\exp\{-c_0 s^{3/2}\} s^{p-1}\dd s \\
&\qquad \qquad \qquad \ \ \ + 2p\int_{\norm{\bfv}^{2/3}}^{\infty} \exp\{-c_0 s \norm{\bfv}^{1/3}\}s^{p-1}\dd s \bigg)\\
&\le \norm{\bfv}^{p/3} \bigg(p^{2p/3} + 2p\int_{0}^{\infty}\exp\{-c_0 s^{3/2}\} s^{p-1}\dd s + 2p\int_{0}^{\infty} \exp\{-c_0 s \norm{\bfv}^{1/3}\}s^{p-1}\dd s \bigg)\\
&\le \norm{\bfv}^{p/3} \left(p^{2p/3} + 2pc_0^{-2p/3} B_0^p p^{2p/3} + 2pc_0^{-p}B_0^p p^p \norm{\bfv}^{-p/3}\right)\\
&\le C_0^p p^{2p/3} \max \{p, \norm{\bfv}\}^{p/3}  
\end{split}
\end{align}
for some absolute constant $B_0 > 0$ and constants $c_0 = c_0(\delta) > 0$ and $C_0 = C_0(\delta) > 0$. The second line in \eqref{Eq-12} comes from Propositions \ref{PBdRTUB} and \ref{PLppBdLT}. To be able to invoke these results, choose $N_0$ and $s_0$ sufficiently large, and observe that $w \ge \ShpMin_{\bfv}-\epsilon_0 \min \{s^{1/2}\norm{\bfv}^{-1/3}, 1\}$ when $s \ge p^{2/3}$. The third inequality in \eqref{Eq-12} is an application of Lemma \ref{LIntBd}(a). 

The remaining case $p \in [1, s_0^{3/2})$ follows from the case $p = s_0^{3/2}$ and Jensen's inequality.  Finally, the restriction $\norm{\bfv} \ge N_0$ can be lifted as in the proof of Theorem \ref{T:Blk}. 
\end{proof}

\subsection{Proofs of the upper bounds in the Gaussian regime}\label{SsGausUB}
Our proof of Theorem \ref{TGausUB} follows the basic idea outlined before the theorem that the LPP fluctuations come from the boundary weights in the Gaussian regime. To build an argument along these lines, we will rely on Theorem \ref{TBdUB} applied with a suitable shift. The next lemma records the properties of this shift (represented with $k$ and $l$ in the lemma) to be used in our proof. The main point in part (a) is that the boundary parameter $w$ is close to the shifted minimizer $\zeta_{(m-k, n)}$, which will permit using Theorem \ref{TBdUB} with the last-passage time $\G_{(m-k, n)}^{w, \hor}$. 

\begin{lem}
\label{LShiftProp}
Let $\delta > 0$ and $\epsilon > 0$. There exist positive constants $c_0 = c_0(\delta, \epsilon)$, $C_0 = C_0(\delta, \epsilon)$, $N_0 = N_0(\delta, \epsilon)$ and $\eta_0 = \eta_0(\delta, \epsilon)$ such that the following statements hold for $\bfv = (m, n) \in S_\delta \cap \bbZ_{\ge N_0}^2$. 
\begin{enumerate}[\normalfont (a)]
\item Assume that $\epsilon \le w \le \ShpMin_{\bfv}$. There exists a necessarily unique $k \in [m-1]$ such that  
\begin{enumerate}[\normalfont (\romannumeral1)]
\item $\ShpMin_{(m-k, n)} < w \le \ShpMin_{(m-k+1, n)}$
\item $(m-k, n) \in S_{\eta_0}$
\item $c_0 (k-i) \norm{\bfv}^{-1} \le \ShpMin_{(m-i, n)}-\ShpMin_{(m-k, n)} \le C_0 (k-i) \norm{\bfv}^{-1}$  for $i \in [k] \cup \{0\}$. 
\end{enumerate}
\item Assume that $\ShpMin_{\bfv} \le z \le 1-\epsilon$. There exists a necessarily unique $l \in [n-1]$ such that  
\begin{enumerate}[\normalfont (\romannumeral1)]
\item $\ShpMin_{(m, n-l+1)} < w \le \ShpMin_{(m, n-l)}$
\item $(m, n-l) \in S_{\eta_0}$
\item $c_0 (l-j) \norm{\bfv}^{-1} \le \ShpMin_{(m, n-l)}-\ShpMin_{(m, n-j)} \le C_0 (l-j) \norm{\bfv}^{-1}$  for $j \in [l] \cup \{0\}$. 
\end{enumerate}
\end{enumerate}
\end{lem}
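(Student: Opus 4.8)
The plan is to prove part (a) directly; part (b) then follows by the coordinate swap $\bfx \mapsto \wt{\bfx}$, since $\ShpMin_{\wt{\bfx}} = 1 - \ShpMin_{\bfx}$ by \eqref{ETrans} (apply (a) to $\wt{\bfv} = (n, m)$ and to $1-z$ in place of $w$, and set $l = k$), or equivalently by rerunning the argument below with the two coordinates interchanged. Throughout, the only facts about the minimizer that I would use are those packaged in \eqref{EShpMean}--\eqref{EMeanDer}: for every $\bfx \in \bbR_{>0}^2$ the map $z \mapsto \partial_z \M^z_{\bfx}$ is strictly increasing on $(0, 1)$ with unique zero $\ShpMin_{\bfx}$. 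Consequently $j \mapsto \ShpMin_{j, n}$ is strictly increasing, and for any $\lambda \in (0, 1)$ one has $\ShpMin_{\bfx} < \lambda$ exactly when $\partial_z \M^z_{\bfx}|_{z = \lambda} > 0$, and $\ShpMin_{\bfx} \ge \lambda$ exactly when $\partial_z \M^z_{\bfx}|_{z = \lambda} \le 0$. I will take $N_0 = N_0(\delta, \epsilon)$ large; for $\bfv = (m, n) \in S_\delta \cap \bbZ_{\ge N_0}^2$ the defining inequalities \eqref{ECone} give $m \le \delta^{-1} n$ and $n \ge \tfrac{\delta}{1+\delta}\norm{\bfv} \ge \tfrac{\delta}{1+\delta} N_0$.

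First I would produce $k$ and establish (i). Fix $w$ with $\epsilon \le w \le \ShpMin_{\bfv}$; in particular $w < 1$. Evaluating \eqref{EMeanDer} at $(1, n)$ and $z = w$ gives $\partial_z \M^z_{1, n}|_{z = w} = -w^{-2} + n(1-w)^{-2}$, which is positive once $n > (1-\epsilon)^{2}\epsilon^{-2}$, hence for $N_0$ large; therefore $\ShpMin_{1, n} < w$. Since also $w \le \ShpMin_{m, n}$ and $j \mapsto \ShpMin_{j, n}$ is strictly increasing, the half-open intervals $(\ShpMin_{j, n}, \ShpMin_{j+1, n}]$, $j = 1, \dots, m-1$, partition $(\ShpMin_{1, n}, \ShpMin_{m, n}] \ni w$, so there is a unique $j \in [m-1]$ with $w \in (\ShpMin_{j, n}, \ShpMin_{j+1, n}]$. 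Setting $k = m - j$ yields (i) together with the asserted uniqueness.

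Next I would prove (ii) and then (iii). By (i), $\ShpMin_{m-k+1, n} \ge w \ge \epsilon$, so $\partial_z \M^z_{m-k+1, n}|_{z = \epsilon} \le 0$, which by \eqref{EMeanDer} reads $m - k + 1 \ge \epsilon^{2}(1-\epsilon)^{-2} n$; for $N_0$ large this gives $m - k \ge \tfrac12 \epsilon^{2}(1-\epsilon)^{-2} n$, and combined with $m - k \le m \le \delta^{-1} n$ it shows $(m - k, n) \in S_{\eta_0}$ with $\eta_0 := \min\{\delta, \tfrac12 \epsilon^{2}(1-\epsilon)^{-2}\}$, a constant depending only on $\delta, \epsilon$. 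For (iii), fix $i \in [k] \cup \{0\}$ and write $(m - i, n) = (m - k, n) + (k - i, 0)$; then $0 \le k - i \le k \le m \le \delta^{-1} n \le \delta^{-1}\norm{(m - k, n)}$, so Lemma \ref{LMinShift}(a), applied with cone parameter $\eta_0$ (legitimate by (ii)) and $K = \delta^{-1}$, furnishes constants $c_1, C_1$ depending only on $\delta, \epsilon$ with
\begin{align*}
c_1 (k - i)\,\norm{(m - k, n)}^{-1} \le \ShpMin_{m - i, n} - \ShpMin_{m - k, n} \le C_1 (k - i)\,\norm{(m - k, n)}^{-1}.
\end{align*}
Finally $\norm{(m - k, n)} \le \norm{\bfv}$ while, using $m \le \delta^{-1} n$ and $n \le \eta_0^{-1}(m - k)$ from (ii), $\norm{\bfv} = m + n \le (1 + \delta^{-1}) n \le (1 + \delta^{-1})\eta_0^{-1}(m - k) \le (1 + \delta^{-1})\eta_0^{-1}\norm{(m - k, n)}$. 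Thus $\norm{(m - k, n)}$ and $\norm{\bfv}$ are comparable up to a factor depending only on $\delta, \epsilon$, and (iii) follows after adjusting $c_0, C_0$.

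The argument is mostly bookkeeping, and I expect the only delicate point to be the one that forces the introduction of $\eta_0$: in order to obtain comparison constants in (iii) that are uniform in $\bfv$ and $w$ — both from Lemma \ref{LMinShift} and from passing between $\norm{(m-k, n)}$ and $\norm{\bfv}$ — one must guarantee that the auxiliary vertex $(m - k, n)$ lies in a cone whose aperture depends only on $\delta$ and $\epsilon$. This is precisely what the one-sided derivative estimate $\partial_z \M^z_{m-k+1, n}|_{z = \epsilon} \le 0$ in the previous step delivers, and it is the reason the proof can be carried out from the variational formula \eqref{EShpMean} alone, without recourse to the explicit expression \eqref{EShpMin}.
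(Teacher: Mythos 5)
Your proof is correct and is essentially the paper's own argument: you select $k$ through the strict monotonicity of $j\mapsto\ShpMin_{j,n}$ coming from \eqref{EMeanDer} (the paper takes $k$ minimal with $\ShpMin_{m-k,n}<w$, which is the same choice as your interval partition), obtain (ii) from the inequality $m-k+1\ge \epsilon^2(1-\epsilon)^{-2}n$ forced by $\ShpMin_{m-k+1,n}\ge\epsilon$, and deduce (iii) from Lemma \ref{LMinShift}(a) applied in the cone $S_{\eta_0}$ with $K=\delta^{-1}$ together with the comparability of $\norm{(m-k,n)}$ and $\norm{\bfv}$. The only differences are cosmetic (sign-of-derivative phrasing instead of the minimizer's defining equation, and the transposition shortcut for (b), which the paper likewise leaves to symmetry).
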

\begin{proof}[Proof of {\rm(}a{\rm)}]
Let $\bfv = (m, n) \in S_\delta \cap \bbZ_{\ge N_0}^2$ for some constant $N_0 = N_0(\delta, \epsilon) > 0$ to be selected. To obtain (a), first note from \eqref{EMeanDer} that 
\begin{align*}
\ShpMin_{(1, n)} = (1-\ShpMin_{(1, n)}) n^{-1/2} \le N_0^{-1/2} < \epsilon \le w \le \ShpMin_{\bfv} = \ShpMin_{(m, n)}
\end{align*}
for sufficiently large $N_0$. Therefore, one can choose $k \in [m-1]$ minimal such that $\ShpMin_{(m-k, n)} < w$. Due to minimality, $\ShpMin_{(m-k+1, n)} \ge w \ge \epsilon$. In particular, (\romannumeral1) holds. Using \eqref{EMeanDer} once more yields 
\begin{align*}
m-k = \frac{\ShpMin_{(m-k+1, n)}^2 n}{(1-\ShpMin_{(m-k+1, n)})^2} -1 \ge \epsilon^2 n-1 \ge \frac{1}{2}\epsilon^2 n   
\end{align*}
provided that $N_0$ is sufficiently large. Therefore, $(m-k, n) \in S_{\eta_0}$ with $\eta_0 = \min \{\delta, \epsilon^2/2\}$, proving (\romannumeral2). Also, $k < m \le \delta^{-1}n \le \delta^{-1}(m-k+n)$, and $\delta (1+\delta)^{-1}\norm{\bfv} \le n \le m-k+n \le \norm{\bfv} = m+n$. Then, by Lemma \ref{LMinShift}(a) (applied with $\eta_0$ and $\delta^{-1}$ in place of $\delta$ and $K$), 
\begin{align*}
c_0 (k-i) \norm{\bfv}^{-1} \le \ShpMin_{(m-i, n)}-\ShpMin_{(m-k, n)} \le C_0 (k-i) \norm{\bfv}^{-1} 
\end{align*}
for $i \in [k] \cup \{0\}$ and some constants $c_0 = c_0(\delta, \epsilon) > 0$ and $C_0 = C_0(\delta, \epsilon) > 0$. Hence, (\romannumeral3). 
\end{proof}

\begin{proof}[Proof of Theorem \ref{TGausUB}]
Let $\bfv = (m, n) \in S_\delta \cap \bbZ_{\ge N_0}^2$ for some constant $N_0 = N_0(\delta, \epsilon) \ge 1$ to be chosen sufficiently large. Let $p \ge 1$, and assume that $\epsilon \le w \le \ShpMin_{\bfv}-\epsilon \min \{p^{1/3}\norm{\bfv}^{-1/3}, 1\}$. Due to Lemma \ref{LMinEst}(b), the preceding interval for $w$ is nonempty provided that $\epsilon \le a_0(\delta)$ for some sufficiently small constant $a_0(\delta) \in (0, 1]$. 

Pick $k \in [m-1]$ as in Lemma \ref{LShiftProp}(a). Writing $\wt{\bfv} = (m-k, n)$, the triangle inequality and the linearity of $\M^w$ yield
\begin{align}
\label{Eq-44}
\begin{split}
\E[|\G^{w, \hor}_{\bfv}-\M^w_{\bfv}|^p] &\le 3^p \cdot \E[|\G^{w}_{(k, 0)}-kw^{-1}|^p] \\
&+ 3^p \cdot \E[|\G^{w, \hor}_{\bfv}-\G^{w}_{(k, 0)}-\shp_{\wt{\bfv}}|^p] + 3^p \cdot (\M^w_{\wt{\bfv}}-\shp_{\wt{\bfv}})^p
\end{split}
\end{align}
We treat the terms on the right-hand side of \eqref{Eq-44} separately. 

Applying Lemmas \ref{LExpTUB} and \ref{LIntBd}(c)-(d), one obtains that 
\begin{align}
\label{Eq-43}
\begin{split}
&\E[|\G^{w, \hor}_{(k, 0)}-kw^{-1}|^p] = w^{-p}\E[|w\G^{w, \hor}_{(k, 0)}-k|^p]  \le \epsilon^{-p}\E[|w\G^{w, \hor}_{(k, 0)}-k|^p]\\
&= p \epsilon^{-p}\int_0^\infty \P\{|w\G^{w}_{(k, 0)}-k| \ge s\} s^{p-1} \dd s \le 2p \epsilon^{-p}\int_0^\infty \exp\left\{-b_0 s\min \left\{\frac{s}{k}, 1\right\}\right\} s^{p-1} \dd s 
\\
&\le 2p \epsilon^{-p}\int_0^\infty \exp\left\{-b_0s^2k^{-1}\right\}t^{p-1} \dd s + 2p\epsilon^{-p} \int_0^\infty \exp\left\{-b_0s\right\}s^{p-1}\dd s  \\
&\le 2p \epsilon^{-p} b_0^{-p/2}k^{p/2} \int_0^\infty \exp\{-t^2\} t^{p-1} \dd t  + 2p \epsilon^{-p}b_0^{-p}\int_0^\infty \exp\{-t\}t^{p-1} \dd t \\ 
&\le B_0^p \cdot (p^{p/2} k^{p/2} + p^p) \le B_0^p p^{p/2} \max \{p, k\}^{p/2} 
\end{split}
\end{align}
for some constants $b_0 > 0$ and $B_0 = B_0(\epsilon) > 0$. Let $R_0 = R_0(\delta, \epsilon)$ and $r_0 = r_0(\delta, \epsilon)$ denote the constants called $C_0(\delta, \epsilon)$ and $c_0(\delta, \epsilon)$, respectively, in Lemma \ref{LShiftProp}. By properties (\romannumeral1) and (\romannumeral3) in part (a) of the lemma,  
\begin{align}
\label{Eq-51}
\begin{split}
k &\le r_0^{-1}\norm{\bfv} (\ShpMin_{\bfv}-\ShpMin_{\wt{\bfv}}) \le r_0^{-1}\norm{\bfv} (\ShpMin_{\bfv}-w + R_0 \norm{\bfv}^{-1}) \le 2r_0^{-1} \norm{\bfv}(\ShpMin_{\bfv}-w). 
\end{split}
\end{align}
The last step above holds for sufficiently large $N_0 = N_0(R_0, \epsilon)$ because then $R_0 \norm{\bfv}^{-1} \le \epsilon \norm{\bfv}^{-1/3} \le \ShpMin_{\bfv}-w$ due to the assumption on $w$ and the inequality $N_0 \ge 1$. Using \eqref{Eq-51} in \eqref{Eq-43}, and working with $r_0 \le 1$ yield
\begin{align}
\label{Eq-52}
\begin{split}
\E[|\G^{w, \hor}_{(k, 0)}-kw^{-1}|^p] &\le B_0^p 2^p r_0^{-p} p^{p/2}\max \{p, (\ShpMin_{\bfv}-w)\norm{\bfv}\}^{p/2} \\
&\le C_0^p p^{p/2}\max \{p, (\ShpMin_{\bfv}-w)\norm{\bfv}\}^{p/2} 
\end{split}
\end{align}
for some constant $C_0 = C_0(\delta, \epsilon) > 0$. 

Next develop the bound  
\begin{align}
\label{Eq-45}
\begin{split}
&\E[|\G^{w, \hor}_{\bfv}-\G^{w}_{(k, 0)}-\shp_{\wt{\bfv}}|^p] = \E[(\G^{w, \hor}_{\bfv}-\G^{w}_{(k, 0)}-\shp_{\wt{\bfv}})_+^p] + \E[(\G^{w, \hor}_{\bfv}-\G^{w}_{(k, 0)}-\shp_{\wt{\bfv}})_-^p] \\
&\le \E[(\G^{w}_{\bfv}-\G^{w}_{(k, 0)}-\shp_{\wt{\bfv}})_+^p] + \E[(\G^{w}_{(k+1, 0), \bfv}-\shp_{\wt{\bfv}})_-^p] \\
&= \E[(\G^{w}_{\wt{\bfv}}-\shp_{\wt{\bfv}})_+^p] + \E[(\G^{w, \hor}_{\wt{\bfv}}-\shp_{\wt{\bfv}})_-^p] \le \E[|\G^{w}_{\wt{\bfv}}-\shp_{\wt{\bfv}}|^p] + \E[|\G^{w, \hor}_{\wt{\bfv}}-\shp_{\wt{\bfv}}|^p] \\
&\le C_0^p p^{2p/3} \max \{p, \norm{\bfv}\}^{p/3} 
\end{split}
\end{align}
after possibly increasing $C_0$. The first inequality in \eqref{Eq-45} comes from the monotonicity of the functions $(\cdot)_+$ and $(\cdot)_-$, and the inequalities $\G^{w, \hor}_{\bfv} \le \G^{w}_{\bfv}$ and $\G^{w, \hor}_{\bfv} \ge \G^{w}_{(k, 0)} + \G^{w}_{(k+1, 0), \bfv}$. To justify the subsequent step, observe from increment-stationarity \eqref{EIncSt} that $\G^{w}_{\bfv}-\G^{w}_{(k, 0)} \stackrel{\text{dist.}}{=} \G^{w}_{\wt{\bfv}}$, and from definitions \eqref{E:LPPwBd} and \eqref{E:Lpp1Bd} that $\G^{w}_{(k+1, 0), \bfv} \stackrel{\text{dist.}}{=} \G^{w, \hor}_{\wt{\bfv}}$. For the last line in \eqref{Eq-45}, apply Theorem \ref{TBdUB}(a) with the vertex $\wt{\bfv}$. The hypotheses of the theorem are in place owing to Lemma \ref{LShiftProp}(a): There exists a constant $\eta_0 = \eta_0(\delta, \epsilon) > 0$ such that $\wt{\bfv} \in S_{\eta_0}$, and $w > \ShpMin_{\wt{\bfv}}$.  

Turn to the last term in \eqref{Eq-44}. Since $\wt{\bfv} \in S_{\eta_0}$, Lemmas \ref{LMeanEst} and \ref{LMinEst}(c) imply the existence of a constant $A_0 = A_0(\delta, \epsilon) > 0$ such that 
\begin{align}
\label{Eq-47}
\begin{split}
(\M^w_{\wt{\bfv}}-\shp_{\wt{\bfv}})^p &\le A_0^p \{(w-\ShpMin_{\wt{\bfv}})^2\norm{\bfv} + |w-\ShpMin_{\wt{\bfv}}|^3\norm{\bfv}\}^p \\
&\le A_0^p \{(\ShpMin_{m-k+1, n}-\ShpMin_{m-k, n})^2\norm{\bfv} + (\ShpMin_{m-k+1, n}-\ShpMin_{m-k, n})^3\norm{\bfv}\}^p \\
&\le A_0^p \{R_0^2 \norm{\bfv}^{-1} + R_0^3 \norm{\bfv}^{-2}\}^p \le 2^p A_0^p R_0^{2p} \norm{\bfv}^{-p} \le 1
\end{split}
\end{align}
provided that $N_0 = N_0(A_0, R_0)$ is sufficiently large for the last inequality. 

Now gather the bounds \eqref{Eq-44}, \eqref{Eq-52}, \eqref{Eq-45} and \eqref{Eq-47} to obtain 
\begin{align}
\label{Eq-53}
\begin{split}
\E[|\G^{w, \hor}_{\bfv}-\M^{w}_{\bfv}|^p] &\le 3^pC_0^p \cdot \{p^{p/2} \max \{p, (\ShpMin_{\bfv}-w)\norm{\bfv}\}^{p/2} + p^{2p/3} \max \{p, \norm{\bfv}\}^{p/3} + 1\} \\
&\le 3^{p+1}C_0^p p^{p/2} (1+\epsilon^{-p/2}) \max \{p, (\ShpMin_{\bfv}-w)\norm{\bfv}\}^{p/2}
\end{split}
\end{align}
To arrive at the last inequality in \eqref{Eq-53}, observe that 
\begin{align}
\label{Eq-54}
\begin{split}
&p^{p/2} \max \{p, (\ShpMin_{\bfv}-w)\norm{\bfv}\}^{p/2} \ge p^{p/2} \max \{p, \epsilon \min \{p^{1/3}\norm{\bfv}^{-1/3}, 1\}\norm{\bfv}\}^{p/2} \\
&\ge \epsilon^{p/2} p^{p/2} \max \{p, \min \{p^{1/3}\norm{\bfv}^{-1/3}, 1\}\norm{\bfv}\}^{p/2} \\
&= \epsilon^{p/2} p^{p/2} \left(\one_{\{p \le \norm{\bfv}\}}p^{p/6}\norm{\bfv}^{p/3} + \one_{\{p > \norm{\bfv}\}}p^{p/2}\right) = \epsilon^{p/2} p^{2p/3} \max \{p, \norm{\bfv}\}^{p/3}
\end{split}
\end{align}
due to the assumptions that $w \le \ShpMin_{\bfv}-\epsilon \min \{p^{1/3}\norm{\bfv}^{-1/3}, 1\}$ and $\epsilon \le 1$. 

Finally,  the restriction $\norm {\bfv} \ge N_0$ can be removed as in the proof of Theorem \ref{T:Blk}. 
\end{proof}

\section{Proofs of the lower bounds for the LPP with boundary}
\label{SPfBdLB}

We now turn to proving our central moment lower bounds for the LPP with boundary. As in the situation with the upper bounds, we first treat the KPZ regime and then move on to the Gaussian regime. 

\subsection{Proof of the lower bounds in the KPZ regime}

Our main step towards the proof of Theorem \ref{TBdLB} is to develop the following right-tail lower bounds. The result strengthens a recent lower bound \cite[(5.43)]{Sepp_18_CGM}, which was also derived via the coupling approach. In our notation, the earlier bound was stated for the increment-stationary $\G^z$-process defined by \eqref{E:IncStLPP} and covered the case of fixed $s$. To prove Theorem \ref{TBdLB}, however, one needs to permit $s$ to grow with $\norm{\bfv}$. 

\begin{prop}
\label{PBdRTLB}
Let $\delta > 0$ and $K \ge 0$. There exist positive constants $C_0 = C_0(\delta, K)$, $\epsilon_0 = \epsilon_0(\delta, K)$, $N_0 = N_0(\delta, K)$ and $s_0 = s_0(\delta)$ such that the following statements hold for $\bfv \in S_\delta \cap \bbZ_{\ge N_0}^2$, $s \in [s_0, \epsilon_0\norm{\bfv}^{2/3}]$, and $w > 0$ and  $z < 1$. 
\begin{enumerate}[\normalfont (a)]
\item If $w \le \ShpMin_{\bfv} + K s^{1/2}\norm{\bfv}^{-1/3}$ then 
$
\P\{\G^{w}_{(1, 0), \bfv} \ge \shp_{\bfv} + s \norm{\bfv}^{1/3}\} \ge \exp\{-C_0 s^{3/2}\}.  
$
\item If $z \ge \ShpMin_{\bfv} - K s^{1/2}\norm{\bfv}^{-1/3}$ then 
$
\P\{\G^{z}_{(0, 1), \bfv} \ge \shp_{\bfv} + s \norm{\bfv}^{1/3}\} \ge \exp\{-C_0 s^{3/2}\}.
$
\end{enumerate}
\end{prop}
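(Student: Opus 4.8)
The plan is the following. By the transpose symmetry of the model (cf.\ \eqref{ELPPTrans}) together with $\shp_{\wt\bfx}=\shp_\bfx$, $\ShpMin_{\wt\bfx}=1-\ShpMin_\bfx$ and $\norm{\wt\bfx}=\norm\bfx$, part (b) is equivalent to part (a), so I treat only (a). Since a smaller rate $w$ produces stochastically larger boundary weights $\rw^w_{(i,0)}\sim\Exp(w)$, the process $\G^{w,\hor}_{\bfv}$ is stochastically nonincreasing in $w$, and it therefore suffices to prove (a) for the largest admissible value $w=\ShpMin_\bfv+Ks^{1/2}\norm\bfv^{-1/3}$. By Lemma \ref{LMinEst}(b), once $\epsilon_0=\epsilon_0(\delta,K)$ is chosen so small that $K\epsilon_0^{1/2}\le a_0/2$, this $w$ lies in $[a_0,1-a_0/2]$ for a constant $a_0=a_0(\delta)\in(0,1/2)$; in particular $w$ is bounded away from $0$ and $1$.

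The core construction is to lower bound $\G^{w,\hor}_{\bfv}$ by the weight of a single path that runs along the horizontal boundary up to $(k,0)$, with $k:=\lc s^{1/2}\norm\bfv^{2/3}\rc$, and then follows the bulk geodesic. Writing $\bfv=(m,n)$, and noting that $k\le m/2$ provided $\norm\bfv\ge N_0$ and $\epsilon_0$ is small, this gives
\begin{align}
\G^{w,\hor}_{\bfv}\ \ge\ \sum_{i=1}^{k}\rw^w_{(i,0)}\ +\ \G_{(k,1),\bfv}, \label{EPPpath}
\end{align}
where the two terms are independent, the first being a sum of $k$ i.i.d.\ $\Exp(w)$ variables with mean $k/w$, and the second a bulk last-passage time with $\G_{(k,1),\bfv}\stackrel{\text{dist.}}{=}\G_{(m-k+1,\,n)}$. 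A second-order Taylor expansion of $\shp$, using $\partial_m\shp_{m,n}=1/\ShpMin_{m,n}$, $|\partial_m^2\shp|\asymp\norm\bfv^{-1}$ and $|\ShpMin_\bfv-w|\le Ks^{1/2}\norm\bfv^{-1/3}$, then yields the deterministic estimate
\begin{align}
\frac{k}{w}+\shp_{(m-k+1,\,n)}\ \ge\ \shp_\bfv-C_1\,s\,\norm\bfv^{1/3},\qquad C_1=C_1(\delta,K), \label{EPPdet}
\end{align}
since $|k/w-k/\ShpMin_\bfv|\asymp k\,|\ShpMin_\bfv-w|\asymp s\,\norm\bfv^{1/3}$, $|\shp_{(m-k,n)}-\shp_\bfv+k/\ShpMin_\bfv|\asymp k^2\norm\bfv^{-1}\asymp s\,\norm\bfv^{1/3}$, and the unit shift of the endpoint only increases the left side.

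Now fix $T_0=T_0(\delta)$ with $\exp\{-c_0T_0^{3/2}\}\le 1/2$, where $c_0$ is the constant of Proposition \ref{PBlkLT} for a cone $S_{\delta'}$ (some $\delta'=\delta'(\delta)$) containing $(m-k+1,n)$, and set $u:=C_2\,s\,\norm\bfv^{1/3}$ with $C_2=C_2(\delta,K)\ge C_1+T_0+1$. On the intersection of $\{\sum_{i=1}^k\rw^w_{(i,0)}\ge k/w+u\}$ and $\{\G_{(k,1),\bfv}\ge\shp_{(m-k+1,\,n)}-T_0\norm\bfv^{1/3}\}$, combining \eqref{EPPpath}, \eqref{EPPdet} and $s\ge s_0\ge 1$ forces $\G^{w,\hor}_{\bfv}\ge\shp_\bfv+s\norm\bfv^{1/3}$. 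The second event has probability at least $1/2$ by Proposition \ref{PBlkLT} applied at $(m-k+1,n)$ (whose $L^1$ norm exceeds the threshold there once $\norm\bfv\ge N_0$, and using $\norm{(m-k+1,n)}\le\norm\bfv$ to reconcile the two normalizations). By independence, the proof reduces to the moderate-deviation lower bound
\begin{align}
\P\Bigl\{\sum_{i=1}^k\rw^w_{(i,0)}\ \ge\ \tfrac{k}{w}+u\Bigr\}\ \ge\ \exp\{-C_0\,s^{3/2}\}. \label{EPPmd}
\end{align}

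Estimate \eqref{EPPmd} is where the change-of-measure idea of \cite{bala-sepp-aom} enters: tilting the common law of the summands from $\Exp(w)$ to $\Exp(\wt w)$ with $k/\wt w=k/w+u$ and invoking the Cram\'er lower bound (a standard moderate-deviation estimate for sums of i.i.d.\ exponentials) gives \eqref{EPPmd}, because the ratio $wu/k\asymp s^{1/2}\norm\bfv^{-1/3}\le\epsilon_0^{1/2}$ is small, so the Cram\'er rate is comparable to the Gaussian one, of order $w^2u^2/k\asymp s^{3/2}$, and the polynomial prefactor is absorbed for $s\ge s_0$. Tracking constants through \eqref{EPPdet} and the choice of $u$ shows $C_0,\epsilon_0,N_0$ depend only on $(\delta,K)$. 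I expect \eqref{EPPmd} to be the main obstacle, the delicate point being to make the exponent of the exact order $s^{3/2}$ \emph{uniform} over the whole window $s\in[s_0,\epsilon_0\norm\bfv^{2/3}]$ — i.e.\ controlling the Cram\'er correction to the Gaussian rate via the smallness of $wu/k$ and checking that the prefactor is swallowed for $s\ge s_0$ — since it is precisely this uniformity, rather than a fixed direction and bounded $s$, that strengthens \cite[(5.43)]{sepp-cgm-18} and is needed downstream for moments with $p$ growing like $\norm\bfv$.
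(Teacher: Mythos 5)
Your proof is correct, and it takes a genuinely different route from the paper's. You lower-bound $\G^{w,\hor}_{\bfv}$ by a single ``corner path'' — boundary sum up to $(k,0)$ with $k\asymp s^{1/2}\norm{\bfv}^{2/3}$, then the bulk geodesic from $(k,1)$ — and exploit the independence of the two pieces: the $\exp\{-C_0s^{3/2}\}$ cost is paid entirely by a moderate-deviation lower bound for the i.i.d.\ $\Exp(w)$ boundary sum (exactly Lemma \ref{LExpTLB}(a), whose hypotheses your window $s\le\epsilon_0\norm{\bfv}^{2/3}$ guarantees, since $wu/k\asymp s^{1/2}\norm{\bfv}^{-1/3}$ keeps you in the quadratic regime of the Cram\'er rate), while the bulk piece only needs to stay within $O(\norm{\bfv}^{1/3})$ of its shape, which you get with probability $\ge 1/2$ from Proposition \ref{PBlkLT} applied at $(m-k+1,n)\in S_{\delta'}$; your Taylor estimate \eqref{EPPdet} and the constant bookkeeping ($C_0,\epsilon_0,N_0$ depending on $(\delta,K)$, $s_0$ on $\delta$ only) check out, and there is no circularity since Proposition \ref{PBlkLT} is established in Section \ref{SPfBlk}. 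The paper instead keeps the same scale $k=\lf R_0 s^{1/2}\norm{\bfv}^{2/3}\rf$ but performs a change of measure on the first $k$ boundary weights (rate $w\to u=\ShpMin_{\bfv}-r_0s^{1/2}\norm{\bfv}^{-1/3}$) via Cauchy--Schwarz, and then shows the tilted event has probability $\ge 1/4$ using the exit-point bounds (Lemmas \ref{LExit}, \ref{LExit2}), the variance-difference inequality (Lemma \ref{LVarLip}) and Chebyshev for the stationary process with parameter $u$. What each buys: your argument is more elementary and transparent, but outsources the hard work to the left-tail bound Proposition \ref{PBlkLT}; the paper's argument never touches the left tail, staying entirely within the stationary-LPP toolkit (this is also what makes the $K$-dependence $C_0\le C_1\max\{K,1\}^2$ of Remark \ref{RConst} immediate there, though your choice $C_2\gtrsim C_1+T_0$ with $C_1\lesssim\max\{K,1\}$ would yield a comparable dependence).
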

\begin{rem}
\label{RConst}
Our proof shows that one can choose $C_0 = C_1 \max \{K, 1\}^2 $ for some constant $C_1 = C_1(\delta) > 0$. This fact becomes important in the proof of Proposition \ref{PBdMomLB} ahead. Similarly to Theorem \ref{TBdLB}, the restrictions in (a) and (b) are purely technical. Due to the inequality $\G_{\bfv} \le \min \{\G^{w, \hor}_{\bfv}, \G^{z, \ver}_{\bfv}\}$ for $\bfv \in \bbZ_{>0}^2$, the bounds in Proposition \ref{PBdRTLB} in fact hold for all $w > 0$ and $z < 1$ with the constant $C_0 = C_0(\delta) > 0$ chosen as in the lower bound \eqref{EBlkRTLB}.  
\end{rem}

Our aim now is to prove Proposition \ref{PBdRTLB} through a refinement of the change-of-measure argument proving \cite[(5.43)]{Sepp_18_CGM}, which in turn adapts the lower bound proof for the current variance in the asymmetric simple exclusion process (ASEP) \cite{Bala_Sepp_10}. A crucial component in our proof is the following difference inequality for the variance of the increment-stationary LPP. This result, in a more precise form, was observed in \cite[Lemma 4.7]{Bala_Cato_Sepp_06}. The version below is essentially the one from the lecture notes \cite{Sepp_18_CGM}. The proofs in \cite{Bala_Cato_Sepp_06, Sepp_18_CGM} employ the coupling method as in here but they rely on the Burke property \cite[Lemma 4.2]{Bala_Cato_Sepp_06}, specifically the independence of the LPP increments along any down-right path in the increment-stationary case. One can also obtain the lemma using the stationarity property \eqref{EIncSt} by first rederiving the variance identity \cite[Theorem 5.6]{Sepp_18_CGM} from \eqref{E:Rains} as indicated in \cite[Remark 2.4]{Emra_Janj_Sepp_23}. 

\begin{lem}[Lemma 5.7 in \cite{Sepp_18_CGM}]
\label{LVarLip}
Let $\delta > 0$. There exists a constant $C_0 = C_0(\delta) > 0$ such that 
\begin{align*}
\Var [\G_{\bfv}^{w}] \le \Var[\G_{\bfv}^z] + C_0 \norm{\bfv} (z-w)
\end{align*}
for $\bfv \in \bbZ_{\ge 0}^2$ and $w, z \in [\delta, 1-\delta]$ with $w \le z$. 
\end{lem}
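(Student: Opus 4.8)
The plan is to deduce this from the proof of \cite[Lemma~5.7]{sepp-cgm-18} after supplying the one ingredient of that proof which, as presented there, relies on the Burke property. Recall that the argument in \cite{sepp-cgm-18} runs through the exact \emph{variance identity} \cite[Theorem~5.6]{sepp-cgm-18}, which expresses $\Var[\G^z_{\bfv}]$ in closed form in terms of the coordinates of $\bfv$, rational functions of $z$, and the expectation of the weight collected by the geodesic along the horizontal axis; granted this identity, a monotone coupling of the $\G^w$- and $\G^z$-models (sharing the bulk weights) together with the monotonicity of the exit point in the boundary parameter and elementary bounds on rational functions of $z$ on the compact interval $[\delta,1-\delta]$ produces $\Var[\G^w_{\bfv}]\le\Var[\G^z_{\bfv}]+C_0\norm{\bfv}(z-w)$. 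Since every other step stays within the coupling framework of this article, the only point to verify is that \cite[Theorem~5.6]{sepp-cgm-18} can be re-established from the stationarity \eqref{EIncSt} alone, which is exactly the observation recorded in \cite[Remark~2.2]{emra-janj-sepp-21-exit2}.

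First I would re-derive the variance identity from \eqref{EIncSt} by differentiating the m.g.f.\ identity of Lemma~\ref{LRains} (which itself rests only on \eqref{EIncSt}). Fix $\bfv=(m,n)$. Generate the horizontal boundary weights from a single i.i.d.\ uniform field through the exponential inverse c.d.f., so that $\rw^{w,z}_{(i,0)}=-w^{-1}\ln U_i$ for $w$ near $z$, while keeping the vertical and bulk weights fixed; then $\partial_w\rw^{w,z}_{(i,0)}=-w^{-1}\rw^{w,z}_{(i,0)}$, and since the a.s.\ unique geodesic uses $(i,0)$ exactly when $i\le\Z^{w,z,\hor}_{\bfv}$, one gets $\partial_w\G^{w,z}_{\bfv}=-w^{-1}\G^{w,z}_{(\Z^{w,z,\hor}_{\bfv},\,0)}$ at a.e.\ $w$, a.s. Writing $\Lambda(w)=\E[\exp\{(w-z)\G^{w,z}_{\bfv}\}]$, Lemma~\ref{LRains} gives $\log\Lambda(w)=\int_z^w\M^t_{\bfv}\,\dd t$, so $\Lambda(z)=1$, $\Lambda'(z)=\M^z_{\bfv}$ and $\Lambda''(z)=\partial_z\M^z_{\bfv}+(\M^z_{\bfv})^2=-\tfrac{m}{z^2}+\tfrac{n}{(1-z)^2}+(\M^z_{\bfv})^2$. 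On the other hand, differentiating $\Lambda$ twice under the expectation — justified by a standard domination argument, since the weights have locally uniform exponential moments — and using that all factors of $(w-z)$ vanish at $w=z$, yields $\Lambda'(z)=\E[\G^z_{\bfv}]$ and $\Lambda''(z)=\E[(\G^z_{\bfv})^2]-\tfrac{2}{z}\E[\G^z_{(\Z^{z,\hor}_{\bfv},\,0)}]$. Equating the two evaluations gives the variance identity
\begin{align*}
\Var[\G^z_{\bfv}]=-\frac{m}{z^2}+\frac{n}{(1-z)^2}+\frac{2}{z}\,\E\bigl[\G^z_{(\Z^{z,\hor}_{\bfv},\,0)}\bigr],
\end{align*}
which is \cite[Theorem~5.6]{sepp-cgm-18}, derived using only \eqref{EIncSt}.

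With the identity in hand, I would finish as in \cite{sepp-cgm-18}. For $\delta\le w\le z\le1-\delta$, couple the $\G^w$- and $\G^z$-models by common bulk weights and by coupling the boundary weights through common uniforms and inverse c.d.f.'s, so that the $\G^w$ horizontal boundary weights dominate the $\G^z$ ones pointwise and the $\G^w$ vertical boundary weights are dominated by the $\G^z$ ones; a planarity/crossing argument then pushes the $\G^w$-geodesic toward the horizontal axis, so the exit quantities of the two models are monotonically ordered. Substituting the variance identity for $w$ and for $z$ into $\Var[\G^w_{\bfv}]-\Var[\G^z_{\bfv}]$, the rational-in-$z$ terms contribute at most $C_\delta\norm{\bfv}(z-w)$ by the mean value theorem (their derivatives are bounded on $[\delta,1-\delta]$), and the difference of the exit-weight terms is handled by splitting the change of the prefactor $2/z$ from the change of the exit weight, using $\G^z_{(\Z^{z,\hor}_{\bfv},0)}\le\G^z_{\bfv}$ (so its mean is $O_\delta(\norm{\bfv})$) and a sharpened version of the exit-point comparison. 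The main obstacle — and the only genuinely new content relative to \cite{sepp-cgm-18} — is precisely this Burke-free derivation of the variance identity in the second paragraph; everything else is the argument of \cite[Lemma~5.7]{sepp-cgm-18} essentially verbatim, the hypothesis $w,z\in[\delta,1-\delta]$ entering only to keep the relevant rational functions of $z$ and their derivatives bounded and bounded away from zero.
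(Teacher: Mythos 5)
Your plan coincides with the paper's: the paper gives no independent proof of Lemma \ref{LVarLip} either, but cites \cite[Lemma 5.7]{sepp-cgm-18} and notes that the only Burke-dependent ingredient, the variance identity \cite[Theorem 5.6]{sepp-cgm-18}, can be recovered from stationarity \eqref{EIncSt} alone — exactly what you do by differentiating the Rains identity of Lemma \ref{LRains} twice at $w=z$, which is the $p=2$ computation the paper records in \eqref{E200} of Appendix \ref{AppC}. One caution about your paraphrase of the concluding comparison: with the horizontal-exit form of the identity that you derived, the coupling makes the exit term move the wrong way as the parameter decreases ($\Z^{w,\hor}_{\bfv}\ge\Z^{z,\hor}_{\bfv}$ and the horizontal boundary weights increase), so splitting off the prefactor $\tfrac2w-\tfrac2z$ and bounding the exit weight by $\E[\G^w_{\bfv}]=O_\delta(\norm{\bfv})$ leaves the nonnegative difference $\tfrac2z\bigl(\E\bigl[\sum_{i\le\Z^{w,\hor}_{\bfv}}\rw^w_{(i,0)}\bigr]-\E\bigl[\sum_{i\le\Z^{z,\hor}_{\bfv}}\rw^z_{(i,0)}\bigr]\bigr)$ uncontrolled; the cited argument instead uses the transposed (vertical-exit) form of the identity, in which both the prefactor $2/(1-\lambda)$ and the coupled exit weight are monotone in the favorable direction, so that term is $\le 0$ and only the rational-in-$\lambda$ terms remain, each $O_\delta(\norm{\bfv}(z-w))$ on $[\delta,1-\delta]$.
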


\begin{proof}[Proof of Proposition \ref{PBdRTLB}{\rm(}a{\rm)}]
Let $\epsilon_0 = \epsilon_0(\delta, K)$, $N_0 = N_0(\delta, K)$ and $s_0 = s_0(\delta)$ denote positive constants to be chosen within the proof. Pick $\bfv \in S_\delta \cap \bbZ_{\ge N_0}^2$ and $s \in [s_0, \epsilon_0 \norm{\bfv}^{2/3}]$, taking $N_0 = N_0(\epsilon_0, s_0)$ sufficiently large to ensure that such $s$ exists. Due to monotonicity, it suffices to verify (a) for the case $w = \ShpMin_{\bfv}+Ks^{1/2}\norm{\bfv}^{-1/3}$. 

Introduce $k = \lf R_0 s^{1/2} \norm{\bfv}^{2/3}\rf$ and $z = \ShpMin_{\bfv}-r_0s^{1/2}\norm{\bfv}^{-1/3}$ where $R_0 = R_0(\delta) > 0$ and $r_0 > 0$ are constants to be specified. Because $\ShpMin_{\bfv} \in [a_0, 1-a_0]$ for some constant $a_0 = a_0(\delta) > 0$ due to Lemma \ref{LMinEst}(b), the first and last inequalities in $a_0/2 \le \ShpMin_{\bfv}- r_0 \epsilon_0^{1/2} \le z \le \ShpMin_{\bfv} \le w = \ShpMin_{\bfv} + K \epsilon_0^{1/2} \le 1-a_0/2$ hold upon choosing $\epsilon_0 = \epsilon_0(a_0, K, r_0)$ sufficiently small. In particular, the use of $z > 0$ below as the rate on horizontal axis is legitimate. 

With the preliminaries in order, we begin to develop a probability lower bound for the event $\{\G^{w, \hor}_{\bfv} \in I_{\bfv, s}\}$ where $I_{\bfv, s}$ denotes the interval $[\shp_{\bfv}+s\norm{\bfv}^{1/3}, \infty)$. In the next display, the first inequality is immediate from definition \eqref{E:Lpp1Bd}. (Recall here that if $i > m$ then the corresponding term inside the maximum is $-\infty$). For the second step, note that $\G^{w}_{(k, 0)}$ is a sum of $k$ independent $\Exp(w)$-distributed weights, and choose $\epsilon_0 = \epsilon_0(a_0, K, r_0)$ sufficiently small so that the middle inequality in $w-z = (K+r_0)s^{1/2}\norm{\bfv}^{-1/3} \le (K+r_0)\epsilon_0^{1/2} \le a_0/4 \le w/2$ is in force. Subsequently, \eqref{Eq-2} applies the Cauchy-Schwarz inequality. The final step changes the rate of the exponentials on the vertices $\{(i, 0): i \in [k]\}$ from $w$ to $z$. The Radon-Nikodym derivative of this change-of-measure is given by $(w/z)^k \exp\{(z-w)\G^z_{(k, 0)}\}$. 
\begin{align}
\label{Eq-2}
\begin{split}
&\P\{\G^{w, \hor}_{\bfv} \in I_{\bfv, s}\} \ge \P\left\{\max_{i \in [k]} \{\G_{(i, 0)}^{w} + \G_{(i, 1), \bfv} \} \in I_{\bfv, s}\right\}\\
&= \P\left\{\max_{i \in [k]} \{\G_{(i, 0)}^{w} + \G_{(i, 1), \bfv} \} \in I_{\bfv, s}\right\}\E[\exp\{2(w-z)\G^{w}_{(k, 0)}\}] \left(\frac{w}{w-2(w-z)}\right)^{-k} \\
&\ge \E\left[\one\left\{\max_{i \in [k]} \{\G_{(i, 0)}^{w} + \G_{(i, 1), \bfv}\} \in I_{\bfv, s}\right\} \cdot \exp\{(w-z) \G^{w}_{(k, 0)}\}\right]^2 \left(\frac{w}{w-2(w-z)}\right)^{-k} \\
&= \P\left\{\max_{i \in [k]} \{\G_{(i, 0)}^{z} + \G_{(i, 1), \bfv}\} \in I_{\bfv, s}\right\}^2 \left(\frac{w}{z}\right)^{2k} \left(\frac{w}{w-2(w-z)}\right)^{-k}. 
\end{split}
\end{align}

The logarithm of the last two terms in \eqref{Eq-2} can be bounded from below as follows.  
\begin{align}
\label{Eq-3}
\begin{split}
&-2k \log \left(1 - \frac{w-z}{w}\right) + k \log \left(1-\frac{2(w-z)}{w}\right) 
\ge -\frac{k(w-z)^2}{w^2} - \frac{10 k (w-z)^3}{w^3} \\
&\ge -R_0 (K+r_0)^2a_0^{-2}s^{3/2}-10 R_0 (K+r_0)^3a_0^{-3}s^2\norm{\bfv}^{-1/3} \ge - 2R_0(K+r_0)^2 a_0^{-2}s^{3/2}. 
\end{split}
\end{align}
The first bound in \eqref{Eq-3} uses the estimate $|\log (1-t)+t+t^2/2|$ $\le |t|^3$ for $t \in [-1/2, 1/2]$, and that $0 \le (w-z)w^{-1} \le (K+r_0)s^{1/2} \norm{\bfv}^{-1/3} a_0^{-1} \le (K+r_0)\epsilon_0^{1/2} a_0^{-1} \le 1/4$ where the last inequality has already been assumed. The preceding bound on $(w-z)w^{-1}$ along with the bound $k \le R_0 s^{1/2} \norm{\bfv}^{2/3}$ also justifies the second inequality in \eqref{Eq-3}. For the last step, choose $\epsilon_0 = \epsilon_0(a_0, K, r_0)$ sufficiently small.  

It remains to bound the final probability in \eqref{Eq-2} from below. By definition \eqref{EExitPt} of the exit points, and a union bound,  
\begin{align}
\label{Eq-1}
\begin{split}
&\P\left\{\max_{i \in [k]} \{\G_{(i, 0)}^{z} + \G_{(i, 1), \bfv} \} \ge \shp_{\bfv} + s\norm{\bfv}^{1/3}\right\} \ge \P\left\{\G_{\bfv}^z \ge \shp_{\bfv} + s\norm{\bfv}^{1/3}, \Z^{z, \hor}_{\bfv} \in [k]\right\} \\
&\ge \P\left\{\G_{\bfv}^z \ge \shp_{\bfv} + s\norm{\bfv}^{1/3}\right\} - \P\{\Z^{z, \ver}_{\bfv} > 0\} - \P\{\Z^{z, \hor}_{\bfv} > k\}. 
\end{split}
\end{align}
Applying Lemma \ref{LExit2}(b) and recalling the choice of $z \in (0, 1)$, one obtains that 
\begin{align}
\label{Eq-4}
\P\{\Z^{z, \ver}_{\bfv} > 0\} \le \exp\{-c_0 \norm{\bfv} (\ShpMin_{\bfv}-z)^3\} = \exp\{-c_0 r_0^3 s^{3/2}\} \le \exp\{-c_0 r_0^3 s_0^{3/2}\}
\end{align}
for some constant $c_0 = c_0(\delta) > 0$. Next choose $N_0 = N_0(\wt{N}_0, R_0, s_0) \ge \wt{N}_0$ and $R_0 = R_0(\wt{\epsilon}_0, r_0)$ sufficiently large such that $R_0 s_0^{1/2} \ge N_0^{-2/3}$ and $R_0 \wt{\epsilon}_0 \ge r_0$ where $\wt{\epsilon}_0 = \wt{\epsilon}_0(\delta)$ and $\wt{N}_0 = \wt{N}_0(\delta)$ stand for the constants named $\epsilon_0$ and $N_0$, respectively, in Lemma \ref{LExit}. Then it follows from Lemma \ref{LExit}(a) and the choices of $k$ and $z$ that 
\begin{align}
\label{Eq-5}
\P\{\Z^{z, \hor}_{\bfv} > k\} = \P\{\Z^{z, \hor}_{\bfv} > R_0 s^{1/2}\norm{\bfv}^{2/3}\} \le \exp\{-c_0R_0^3 s^{3/2}\} \le \exp\{-c_0 R_0^3 s_0^{3/2}\}
\end{align}
after reducing $c_0$ if necessary. (The equality in \eqref{Eq-5} is because $\Z^{z, \hor}_{\bfv}$ is integer-valued). Finally, observe from Lemmas \ref{LMeanEst} and \ref{LMinEst}(c) that, for some constant $B_0 = B_0(\delta) > 0$,  
\begin{align}
\label{Eq-6}
\begin{split}
\shp_{\bfv} + s\norm{\bfv}^{1/3} &\le \M^z_{\bfv} - \norm{\bfv}(z-\ShpMin_{\bfv})^2 + B_0 \norm{\bfv}(\ShpMin_{\bfv}-z)^3 + s \norm{\bfv}^{1/3} \\
&= \M^z_{\bfv}-(r_0^2-1)s\norm{\bfv}^{1/3} + B_0 r_0^3 s^{3/2} \le \M^z_{\bfv}-(r_0^2-2B_0r_0^3\epsilon_0^{1/2})s\norm{\bfv}^{1/3} \\
&\le \M^z_{\bfv}-s\norm{\bfv}^{1/3}
\end{split}
\end{align}
provided that $r_0 \ge 2$, and $\epsilon_0 = \epsilon_0(B_0, r_0)$ is sufficiently small. Using \eqref{Eq-6}, Chebyshev's inequality, Lemma \ref{LVarLip}, Theorem \ref{TBdUB} (recalling that $\shp_\bfv = \M_\bfv^{\zeta_\bfv}$ by \eqref{EShpMean}) and the fact that $a_0/2 \le z \le \ShpMin_{\bfv} \le 1-a_0$ then yield 
\begin{align}
\label{Eq-7}
\begin{split}
\P\left\{\G_{\bfv}^z \ge \shp_{\bfv} + s\norm{\bfv}^{1/3}\right\} &= 1 - \P\left\{\G_{\bfv}^z < \shp_{\bfv} + s\norm{\bfv}^{1/3}\right\} \\
&\ge 1- \P\left\{\G_{\bfv}^z < \M^z_{\bfv} - s\norm{\bfv}^{1/3}\right\} \ge 1 - \frac{\Var[\G_{\bfv}^z]}{s^2\norm{\bfv}^{2/3}} \\
&\ge 1- \frac{\Var[\G_{\bfv}^{\ShpMin_{\bfv}}]}{s^2 \norm{\bfv}^{2/3}}- \frac{D_0(\ShpMin_{\bfv}-z)\norm{\bfv}}{s^2 \norm{\bfv}^{2/3}} \ge 1-\frac{D_0}{s^2}-\frac{D_0 r_0}{s^{3/2}} \ge \frac{1}{2}
\end{split}
\end{align}
for some constant $D_0 = D_0(\delta) > 0$ after taking $s_0 = s_0(D_0, r_0)$ sufficiently large. Combining \eqref{Eq-1}, \eqref{Eq-4}, \eqref{Eq-5} and \eqref{Eq-7} gives  
\begin{align}
\label{Eq-8}
\begin{split}
\P\left\{\max_{i \in [k]} \{\G_{(i, 0)}^{z} + \G_{(i, 1), \bfv} \} \ge \shp_{\bfv} + s\norm{\bfv}^{1/3}\right\} &\ge \frac{1}{2}-\exp\{-c_0 s^{3/2}\} - \exp\{-c_0 R_0^3 s^{3/2}\} \\
&\ge \frac{1}{4}
\end{split}
\end{align}
after increasing $s_0 = s_0(c_0)$ and $R_0 = R_0(c_0)$ if necessary. 

Returning to \eqref{Eq-2} and appealing to \eqref{Eq-3} and \eqref{Eq-8}, one concludes that 
\begin{align*}
\begin{split}
\P\{\G^{w, \hor}_{\bfv} \in I_{\bfv, s}\} \ge \frac{1}{4}\exp\{-2R_0(K+r_0)^2 a_0^{-2} s^{3/2}\} \ge \exp\{-C_0s^{3/2}\}, 
\end{split}
\end{align*}
for some constant $C_0 = C_0(\delta, K) > 0$. 
%
\end{proof}

With the aid of Proposition \ref{PBdRTLB}, we next establish our central moment lower bounds in the KPZ regime. In fact, we obtain the following stronger proposition than Theorem \ref{TBdLB}. 

\begin{prop}
\label{PBdMomLB}
Let $\delta > 0$, $K \ge 0$ and $T > 0$. There exist positive constants $c_0 = c_0(\delta, K, T)$ and $N_0 = N_0(\delta, K, T)$ such that the following statements hold for $\bfv \in S_\delta \cap \bbZ_{\ge N_0}^2$, $1 \le p \le T\norm{\bfv}$, $w > 0$ and $z < 1$. 
\begin{enumerate}[\normalfont (a)]
\item If $w \le \ShpMin_{\bfv} + K\min \{p^{1/3}\norm{\bfv}^{-1/3}, 1\}$ then $\E[(\G_\bfv^{w, \hor}-\shp_\bfv)_+^p] \ge c_0^p p^{2p/3} \norm{\bfv}^{p/3}.$ 
\item If $z \ge \ShpMin_{\bfv}-K\min \{p^{1/3}\norm{\bfv}^{-1/3}, 1\}$ then $\E[(\G_\bfv^{z, \ver}-\shp_\bfv)_+^p] \ge c_0^p p^{2p/3} \norm{\bfv}^{p/3}.$ 
\end{enumerate}
\end{prop}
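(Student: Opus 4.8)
\textbf{Proof strategy for Proposition \ref{PBdMomLB}.}
By Remark \ref{RTrans}-style symmetry it suffices to prove part (a). The plan is to integrate the right-tail lower bound of Proposition \ref{PBdRTLB}(a) against the measure $p s^{p-1}\,\dd s$, exactly mirroring the integration in the proof of Theorem \ref{TBlk} but now producing a lower bound. Writing $\bfv \in S_\delta \cap \bbZ_{\ge N_0}^2$ and $1 \le p \le T\norm{\bfv}$, and assuming first that $p$ exceeds a threshold $p_0 = p_0(\delta)$ (the small-$p$ case being handled at the end), one has
\begin{align*}
\E[(\G^{w, \hor}_{\bfv}-\shp_{\bfv})_+^p] = p\norm{\bfv}^{p/3}\int_0^\infty \P\{\G^{w, \hor}_{\bfv}-\shp_{\bfv} \ge s\norm{\bfv}^{1/3}\} s^{p-1}\,\dd s \ge p\norm{\bfv}^{p/3}\int_{a}^{b} \P\{\cdots\} s^{p-1}\,\dd s
\end{align*}
for any $[a, b] \subset [s_0, \epsilon_0\norm{\bfv}^{2/3}]$. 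The natural choice is to concentrate the integral near $s \asymp p^{2/3}$, since $s^{p-1}$ peaks there against a Gaussian-type weight; concretely, take $a = \beta p^{2/3}$ and $b = 2\beta p^{2/3}$ for a small constant $\beta = \beta(\delta)$. This forces the constraint $2\beta p^{2/3} \le \epsilon_0\norm{\bfv}^{2/3}$, i.e. $p \le (\epsilon_0/2\beta)^{3/2}\norm{\bfv}$, which is where the hypothesis $p \le T\norm{\bfv}$ is used (choosing $\beta$ small enough in terms of $T$, $\epsilon_0$); and it also forces $\beta p^{2/3} \ge s_0$, which holds once $p \ge p_0(\delta, \beta)$.

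The delicate point is matching the hypothesis of Proposition \ref{PBdRTLB}(a) on the range of $s$. That hypothesis requires $w \le \ShpMin_{\bfv} + K' s^{1/2}\norm{\bfv}^{-1/3}$ with the constant $K'$ in the bound $C_0 = C_1\max\{K', 1\}^2 s^{3/2}$ (Remark \ref{RConst}) allowed to depend on $K'$. Our hypothesis gives $w \le \ShpMin_{\bfv} + K\min\{p^{1/3}\norm{\bfv}^{-1/3}, 1\} \le \ShpMin_{\bfv} + K p^{1/3}\norm{\bfv}^{-1/3}$. For $s \in [\beta p^{2/3}, 2\beta p^{2/3}]$ one has $s^{1/2} \ge \beta^{1/2}p^{1/3}$, so $w \le \ShpMin_{\bfv} + K\beta^{-1/2} s^{1/2}\norm{\bfv}^{-1/3}$, and Proposition \ref{PBdRTLB}(a) applies with $K' = K\beta^{-1/2}$ (which depends only on $\delta, K, T$ via $\beta$), yielding $\P\{\G^{w, \hor}_{\bfv}-\shp_{\bfv} \ge s\norm{\bfv}^{1/3}\} \ge \exp\{-C_0 s^{3/2}\} \ge \exp\{-C_0 (2\beta)^{3/2} p\}$ throughout the interval. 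Hence
\begin{align*}
\E[(\G^{w, \hor}_{\bfv}-\shp_{\bfv})_+^p] \ge p\norm{\bfv}^{p/3}\exp\{-C_0(2\beta)^{3/2}p\}\int_{\beta p^{2/3}}^{2\beta p^{2/3}} s^{p-1}\,\dd s \ge \norm{\bfv}^{p/3}\exp\{-C_0(2\beta)^{3/2}p\}\,(\beta p^{2/3})^{p}\bigl(2^p - 1\bigr)p^{-1}\cdot p,
\end{align*}
and since $(\beta p^{2/3})^p = \beta^p p^{2p/3}$ and $2^p - 1 \ge 1$, the right-hand side is at least $c_0^p p^{2p/3}\norm{\bfv}^{p/3}$ with $c_0 = \beta\exp\{-C_0(2\beta)^{3/2}\} > 0$ depending only on $\delta, K, T$.

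It remains to treat $1 \le p < p_0$, where $p_0 = p_0(\delta, K, T)$ is the threshold above. For these bounded powers one can argue directly: by Jensen's inequality applied to the (positive) random variable $(\G^{w, \hor}_{\bfv}-\shp_{\bfv})_+$, a lower bound on $\E[(\G^{w, \hor}_{\bfv}-\shp_{\bfv})_+^{p_0}]^{1/p_0}$ is not what we want (Jensen goes the wrong way for the upper power); instead, use that for fixed $s_1 = s_1(\delta)$ large,
\begin{align*}
\E[(\G^{w, \hor}_{\bfv}-\shp_{\bfv})_+^p] \ge (s_1\norm{\bfv}^{1/3})^p\,\P\{\G^{w, \hor}_{\bfv}-\shp_{\bfv} \ge s_1\norm{\bfv}^{1/3}\} \ge s_1^p\norm{\bfv}^{p/3}\exp\{-C_0 s_1^{3/2}\},
\end{align*}
where Proposition \ref{PBdRTLB}(a) is invoked at the single value $s = s_1$ (legitimate since $w \le \ShpMin_{\bfv} + K \le \ShpMin_{\bfv} + K s_1^{1/2}\norm{\bfv}^{-1/3}$ once $\norm{\bfv} \ge s_1^{3/2}$, so $N_0$ must be taken $\ge s_1^{3/2}$, and $s_1 \le \epsilon_0\norm{\bfv}^{2/3}$ for $\norm{\bfv}$ large). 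Since $p < p_0$ is bounded, $p^{2p/3}$ is bounded above and below by constants depending on $p_0$, so the displayed bound reads $c_0^p p^{2p/3}\norm{\bfv}^{p/3}$ after adjusting $c_0$. Finally the restriction $\norm{\bfv} \ge N_0$ is part of the statement, so no removal step is needed. The main obstacle throughout is bookkeeping the interlocking constants — in particular ensuring $\beta$ is chosen small enough that $2\beta p^{2/3} \le \epsilon_0\norm{\bfv}^{2/3}$ holds uniformly over $p \le T\norm{\bfv}$ while keeping $K' = K\beta^{-1/2}$ (and hence $C_0$) a legitimate $(\delta, K, T)$-dependent constant.
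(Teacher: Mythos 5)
Your overall strategy is the paper's: reduce to part (a), integrate the right-tail lower bound of Proposition \ref{PBdRTLB}(a) against $p s^{p-1}\,\dd s$ over an interval of length comparable to $p^{2/3}$, and treat bounded $p$ by evaluating the tail at a single $s$. However, the circularity you flag in your last sentence is a genuine gap, not mere bookkeeping. To apply Proposition \ref{PBdRTLB}(a) on $[\beta p^{2/3}, 2\beta p^{2/3}]$ you must take $K' = K\beta^{-1/2}$, and then both $C_0$ \emph{and} $\epsilon_0$ become functions of $K'$. Remark \ref{RConst} controls only $C_0$; the proof of Proposition \ref{PBdRTLB} forces $\epsilon_0(\delta, K') \lesssim 1/K'^2 = \beta/K^2$ (e.g.\ from the requirement $w = \ShpMin_{\bfv} + K'\epsilon_0^{1/2} \le 1 - a_0/2$). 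The constraint $2\beta p^{2/3} \le \epsilon_0\norm{\bfv}^{2/3}$ at $p = T\norm{\bfv}$ then reads $2\beta T^{2/3} \le c(\delta)\beta/K^2$, i.e.\ $2T^{2/3}K^2 \le c(\delta)$ --- the $\beta$'s cancel, so shrinking $\beta$ does not help, and the condition fails outright for large $T$ or $K$ (it is fine only when $K = 0$). Shifting the window to $[\alpha p^{2/3}, 2\alpha p^{2/3}]$ with $\alpha$ large does not escape this either: it only buys $p \le r_0\norm{\bfv}$ for some $r_0$ determined by $\delta$ and $K$, not by $T$.

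The paper closes exactly this hole by splitting into three ranges: $p < p_0$, $p \in [p_0, r_0\norm{\bfv}]$ (where the integration argument works), and $p \in (r_0\norm{\bfv}, T\norm{\bfv}]$, where one bootstraps via Jensen's inequality,
\begin{align*}
\E[(\G^{w, \hor}_{\bfv}-\shp_{\bfv})_+^p] \ge \left\{\E[(\G^{w, \hor}_{\bfv}-\shp_{\bfv})_+^{r_0\norm{\bfv}}]\right\}^{p r_0^{-1}\norm{\bfv}^{-1}} \ge c_0^p (r_0\norm{\bfv})^{2p/3}\norm{\bfv}^{p/3} \ge d_0^p p^{2p/3}\norm{\bfv}^{p/3},
\end{align*}
using $p \le T\norm{\bfv}$ in the last step. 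You need this third case (or an equivalent device). Separately, a small slip in your bounded-$p$ case: the chain $w \le \ShpMin_{\bfv} + K \le \ShpMin_{\bfv} + K s_1^{1/2}\norm{\bfv}^{-1/3}$ requires $s_1 \ge \norm{\bfv}^{2/3}$, i.e.\ $\norm{\bfv} \le s_1^{3/2}$, the opposite of what you assert; the correct route is $K\min\{p^{1/3}\norm{\bfv}^{-1/3}, 1\} \le K p_0^{1/3}\norm{\bfv}^{-1/3} \le K' s_1^{1/2}\norm{\bfv}^{-1/3}$ for a suitable fixed $K'$.
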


\begin{proof}[Proof of Proposition \ref{PBdMomLB}{\rm(}a{\rm)}]
Let $N_0 = N_0(\delta, K, T)$, $p_0 = p_0(\delta, K)$ and $r_0 = r_0(\delta, K)$ denote positive constants to be chosen suitably below. Let $\bfv \in S_\delta \cap \bbZ_{\ge N_0}^2$ and $p \in [1, T\norm{\bfv}]$. By monotonicity, it suffices to prove (a) for the extreme case $w = \ShpMin_{\bfv} + K\min \{p^{1/3}\norm{\bfv}^{-1/3}, 1\}$. 

Consider first the situation $p \in [p_0, r_0 \norm{\bfv}]$, taking $N_0 = N_0(p_0, r_0)$ sufficiently large to avoid a vacuous case. Let $L_0 = L_0(\delta, K) \ge K$ denote a constant to be chosen sufficiently large. Pick constants $\epsilon_0 = \epsilon_0(\delta, L_0) > 0$ and $s_0 = s_0(\delta) > 0$ as in Proposition \ref{PBdRTLB}. Then a computation similar to \eqref{Eq-9} leads to  
\begin{align}
\label{Eq-30}
\begin{split}
\E[(\G_{\bfv}^{w, \hor}-\shp_{\bfv})_+^p] &= p\norm{\bfv}^{p/3} \int_0^\infty \P\{\G_{\bfv}^{w, \hor} \ge \shp_{\bfv} + s\norm{\bfv}^{1/3}\}s^{p-1} \dd s \\
&\ge p \norm{\bfv}^{p/3} \int_{K^2 L_0^{-2}p^{2/3}}^{\epsilon_0 \norm{\bfv}^{2/3}}\P\{\G_{\bfv}^{w, \hor} \ge \shp_{\bfv} + s\norm{\bfv}^{1/3}\}s^{p-1} \dd s  \\
&\ge p \norm{\bfv}^{p/3} \int_{K^2L_0^{-2}p^{2/3}}^{\epsilon_0 \norm{\bfv}^{2/3}} \exp\{-L_0^2 C_0 s^{3/2}\} s^{p-1} \dd s \\
&= pL_0^{-4p/3}C_0^{-2p/3}\norm{\bfv}^{p/3} \int_{K^2L_0^{-2/3}C_0^{2/3}p^{2/3}}^{L_0^{4/3}C_0^{2/3}\epsilon_0\norm{\bfv}^{2/3}} \exp\{-t^{3/2}\} t^{p-1} \dd t
\end{split}
\end{align}
for some constant $C_0 = C_0(\delta) > 0$. In the second line of \eqref{Eq-30}, the lower limit of integration does not exceed the upper limit provided that $r_0 = r_0(\epsilon_0)$ is sufficiently small because then $K^2 L_0^{-2}p^{2/3} \le p^{2/3} \le r_0^{2/3} \norm{\bfv}^{2/3} \le \epsilon_0\norm{\bfv}^{2/3}$. For the third line of \eqref{Eq-30}, invoke Proposition \ref{PBdRTLB} observing that $w = \ShpMin_{\bfv} + Kp^{1/3}\norm{\bfv}^{-1/3} \le \ShpMin_{\bfv} + L_0 s^{1/2}\norm{\bfv}^{-1/3}$ for $s \ge K^2 L_0^{-2}p^{2/3}$ with $r_0 \le 1$, and that $K^2 L_0^{-2}p^{2/3} \ge K^2 L_0^{-2}p_0^{2/3} \ge s_0$ with $p_0 = p_0(K, L_0, s_0)$ chosen sufficiently large. 

Next bound from below the last integral in \eqref{Eq-30} as follows. By virtue of Lemma \ref{LIntBd}(b)-(c), there exist absolute positive constants $a_0$, $B_0$ and $b_0$ such that 
\begin{align}
\label{Eq-31}
\begin{split}
&\int_0^\infty \exp\{-t^{3/2}\} t^{p-1} \dd t \ge b_0^p p^{2p/3}, \quad \text{ and }\\
&\int_{0}^{K^2 L_0^{-2/3}C_0^{2/3}p^{2/3}} \exp\{-t^{3/2}\} t^{p-1} \dd t + \int_{L_0^{4/3}C_0^{2/3}\epsilon_0\norm{\bfv}^{2/3}}^{\infty} \exp\{-t^{3/2}\} t^{p-1} \dd t \\
&\le K^{2p}L_0^{-2p/3}C_0^{2p/3}p^{2p/3} \\
&+ p^{2p/3} B_0^p\exp\left\{-2a_0(L_0^2C_0 \epsilon_0^{3/2}\norm{\bfv}-\lc 2p/3 \rc)_+\min \left\{\frac{L_0^2C_0\epsilon_0^{3/2}\norm{\bfv}}{\lc 2p/3\rc}-1, 1\right\}\right\}\\
&\le \frac{1}{4}b_0^p p^{2p/3} + p^{2p/3} B_0^p \exp\left\{-a_0L_0^2C_0\epsilon_0^{3/2}\norm{\bfv}\right\} \le \frac{1}{2}b_0^p p^{2p/3}. 
\end{split}
\end{align}
For the last two inequalities in \eqref{Eq-31}, choose $L_0 = L_0(b_0, C_0, K)$ sufficiently large and then $r_0 = r_0(B_0, b_0, C_0, \epsilon_0, L_0)$ sufficiently small recalling that $\norm{\bfv} \ge r_0^{-1}p$ in the present case. 
Combining \eqref{Eq-30} and \eqref{Eq-31} yields 
\begin{align}
\label{Eq-32}
\E[(\G^{w, \hor}_{\bfv}-\shp_{\bfv})_+^p] \ge \frac{1}{2}pb_0^p L_0^{-4p/3}C_0^{-2p/3}p^{2p/3}\norm{\bfv}^{p/3} \ge c_0^p p^{2p/3}\norm{\bfv}^{p/3}
\end{align}
for some constant $c_0 = c_0(\delta, K) > 0$. 

Consider the case $p < p_0$ now. Another application of Proposition \ref{PBdRTLB} yields  
\begin{align}
\label{Eq-35}
\begin{split}
\E[(\G^{w, \hor}_{\bfv}-\shp_{\bfv})_+^p] &\ge s_0^p \norm{\bfv}^{p/3}\P\{\G_{\bfv}^{w, \hor} \ge \shp_{\bfv} + s_0 \norm{\bfv}^{1/3}\} \\
&\ge s_0^p \exp\{-L_0^2 C_0 s_0^{3/2}\}\norm{\bfv}^{p/3} \ge c_0^p p_0^{2p_0/3}\norm{\bfv}^{p/3} \ge c_0^p p^{2p/3} \norm{\bfv}^{p/3}
\end{split}
\end{align}
after reducing $c_0$ further if necessary.

Finally, turn to the case $p \in (r_0 \norm{\bfv}, T\norm{\bfv}]$. Then Jensen's inequality and the lower bound \eqref{Eq-32} give 
\begin{align}
\label{Eq-33}
\begin{split}
\E[(\G^{w, \hor}_{\bfv}-\shp_{\bfv})_+^p] &\ge \left\{\E[(\G^{w, \hor}_{\bfv}-\shp_{\bfv})_+^{r_0\norm{\bfv}}]\right\}^{pr_0^{-1}\norm{\bfv}^{-1}} \ge c_0^{p} (r_0 \norm{\bfv})^{2p/3} \norm{\bfv}^{p/3} \\
&\ge c_0^p r_0^{2p/3} T^{-2p/3} p^{2p/3}\norm{\bfv}^{p/3} \ge d_0^p p^{2p/3} \norm{\bfv}^{p/3}
\end{split}
\end{align}
for some constant $d_0 = d_0(\delta, \epsilon, K) > 0$. 

Putting together \eqref{Eq-32}, \eqref{Eq-35} and \eqref{Eq-33} completes the proof (a). 
\end{proof}

\begin{proof}[Proof of Theorem \ref{TBdLB}]
This is now immediate from Proposition \ref{PBdMomLB}. 
\end{proof}

\subsection{Proof of the lower bounds in the Gaussian regime}

Our derivation of central moment lower bounds in the Gaussian regime reuses ideas from the proofs of Theorems \ref{TGausUB} and \ref{TBdLB}. The key point still is that in this regime the main contribution to the fluctuations of the $\G^{w, \hor}$-process comes from $\G^{w}_{(k, 0)}$ for a suitably chosen $k$. 

\begin{proof}[Proof of Theorem \ref{TGausLB}]
Let $K_0 = K_0(\delta, \epsilon, T) > 0$ and $N_0 = N_0(\delta, \epsilon, T) > 0$ denote constants to be chosen large enough. Let $\bfv = (m, n) \in S_\delta \cap \bbZ_{\ge N_0}^2$ and $p \in [1, T\norm{\bfv}]$. Let $\epsilon \le w \le\ShpMin_{\bfv}-\min \{K_0 p^{1/3}\norm{\bfv}^{-1/3}, \epsilon\}$. Working with $\epsilon \le a_0(\delta)$ for some sufficiently small $a_0 = a_0(\delta) > 0$ ensures that the preceding choice of $w$ is possible on account of Lemma \ref{LMinEst}(b). 

After increasing $N_0$ if necessary, choose $k \in [m-1]$ as in Lemma \ref{LShiftProp}(a): (\romannumeral1) $\ShpMin_{m-k, n} < w \le \ShpMin_{m-k+1, n}$, (\romannumeral2) $\wt{\bfv} = (m-k, n) \in S_{\eta_0}$ for some constant $\eta_0 = \eta_0(\delta, \epsilon) > 0$, and (\romannumeral3) $r_0 (k-i) \norm{\bfv}^{-1}$ $\le \ShpMin_{m-i, n}-\ShpMin_{m-k, n} \le$ $R_0 (k-i) \norm{\bfv}^{-1}$ for $i \in [k] \cup \{0\}$ and some constants $r_0 = r_0(\delta, \epsilon) > 0$ and $R_0 = R_0(\delta, \epsilon) > 0$. It follows from (\romannumeral1)-(\romannumeral3) and the choice of $w$ that 
\begin{align}
k &\ge (\ShpMin_{m, n}-\ShpMin_{m-k, n}) \cdot R_0^{-1}\norm{\bfv} \ge (\ShpMin_\bfv-w) \cdot R_0^{-1}\norm{\bfv} \label{Eq-76}\\
&\ge R_0^{-1} \min \{K_0 p^{1/3}\norm{\bfv}^{2/3}, \epsilon \norm{\bfv}\}. \label{Eq-76b}
\end{align} 

By the triangle inequality and the linearity of $\M^w$, 
\begin{align}
\label{Eq-55}
\begin{split}
\E[|\G_{\bfv}^{w, \hor}-\M_{\bfv}^w|^p] &\ge 3^{-p}\E[|\G^{w}_{(k, 0)}-kw^{-1}|^p] \\
&-\E[|\G^{w, \hor}_{\bfv}-\G^w_{(k, 0)}-\shp_{\wt{\bfv}}|^p]-(\M^w_{\wt{\bfv}}-\shp_{\wt{\bfv}})^p. 
\end{split}
\end{align}
The following bound for the last two terms in \eqref{Eq-55} is immediate from the bounds \eqref{Eq-45} and \eqref{Eq-47} obtained in the proof of Theorem \ref{TGausUB}, and the assumption that $p \le T\norm{\bfv}$
\begin{align}
\label{Eq-57}
\E[|\G^{w, \hor}_{\bfv}-\G^w_{(k, 0)}-\shp_{\wt{\bfv}}|^p]+(\M^w_{\wt{\bfv}}-\shp_{\wt{\bfv}})^p \le C_0^p p^{2p/3}\norm{\bfv}^{p/3}. 
\end{align} 
for some constant $C_0 = C_0(\delta, \epsilon, T) > 0$. 

Let $T_0 = T_0(\delta, \epsilon, T) > 1$ denote a constant to be specified later in the proof. Let $s_0 = s_0(T_0) > 0$ and $A_0 = A_0(T_0) > 0$ refer to the constants in Lemma \ref{LExpTLB}. Note that $A_0$ can be chosen as increasing in $T_0$. Now bound from below the first term on the right-hand side of \eqref{Eq-55} as follows. 
\begin{align}
\label{Eq-56}
\begin{split}
&\E[|\G^{w}_{(k, 0)}-kw^{-1}|^p] \ge \E[(\G^w_{(k, 0)}-kw^{-1})_+^p] = w^{-p}\E[(w\G^w_{(k, 0)}-k)_+^p] \\
&\ge pk^{p/2}\int_{0}^{\infty} \P\{w\G^w_{(k, 0)} \ge k + k^{1/2}s\} s^{p-1} \dd s \ge p k^{p/2} \int_{s_0}^{T_0 k^{1/2}} \exp\{-A_0s^2\}s^{p-1} \dd s \\
&= pk^{p/2}A_0^{-p/2}\int_{A_0^{1/2}s_0}^{A_0^{1/2}T_0 k^{1/2}} \exp\{-t^2\}t^{p-1}\dd t.
\end{split}
\end{align}
The second inequality above drops the factor $w^{-p} \ge 1$. The last inequality in \eqref{Eq-56} invokes Lemma \ref{LExpTLB}(a), and holds for large enough $N_0 = N_0(T_0)$. 

For the moment, continue with the assumption that $p \ge p_0$ for some constant $p_0 = p_0(\delta, \epsilon, T) \ge 2$ to be chosen shortly. Increase $N_0 = N_0(p_0, T)$ if necessary to have $p_0 \le T\norm{\bfv}$ so that the inequality $p \ge p_0$ can hold. The first and last integrals in the next display are bounded via Lemma \ref{LIntBd}(b)-(c).  
\begin{align}
\label{Eq-77}
\begin{split}
&\int_0^\infty \exp\{-t^2\}t^{p-1} \dd t \ge b_0^p p^{p/2}, \\
&\int_0^{A_0^{1/2} s_0} \exp\{-t^2\} t^{p-1} \dd t \le \frac{1}{p}A_0^{p/2}s_0^p \le \frac{1}{4}b_0^p p^{p/2}, \quad \text{ and }\\
&\int_{A_0^{1/2}T_0k^{1/2}}^\infty \exp\{-t^2\}t^{p-1} \dd t \\
&\le B_0^p p^{p/2} \exp\left\{-d_0 (A_0 T_0^2k-\lc p/2 \rc)_+\min \left\{\frac{A_0 T_0^2k}{\lc p/2 \rc} -1, 1\right\}\right\} \le \frac{1}{4}b_0^p p^{p/2} 
\end{split}
\end{align}
for some absolute positive constants $b_0$, $B_0$ and $d_0$. The final inequality of the middle line in \eqref{Eq-77} holds provided that $p_0 = p_0(A_0, b_0, s_0)$ is chosen sufficiently large. To reach the very last inequality in \eqref{Eq-77}, first pick a large absolute constant $L_0 \ge 2$ such that $4B_0 \exp\{-d_0 L_0\} \le b_0$. Then, for sufficiently large $T_0 = T_0(\epsilon, K_0, L_0, R_0, T)$,  
\begin{align}
\label{Eq-78}
\begin{split}
A_0 T_0^2 k \ge A_0 T_0^2 R_0^{-1} \min \{K_0 p^{1/3}\norm{\bfv}^{2/3}, \epsilon \norm{\bfv}\} \ge 2L_0 T \norm{\bfv} \ge 2L_0 p
\end{split}
\end{align}
in view of \eqref{Eq-76b}, and because $p \le T\norm{\bfv}$ and $A_0$ is nondecreasing in $T_0$. The final inequality in \eqref{Eq-77} now holds due to \eqref{Eq-78} and the choice of $L_0$. Putting together the bounds in \eqref{Eq-77} and \eqref{Eq-78}, and appealing to \eqref{Eq-76} lead to  
\begin{align}
\label{Eq-79}
\E[|\G^{w}_{(k, 0)}-kw^{-1}|^p] \ge \frac{1}{2}k^{p/2}A_0^{-p/2}b_0^p p^{p/2 + 1} \ge c_0^p (\ShpMin_{\bfv}-w)^{p/2} p^{p/2}\norm{\bfv}^{p/2}
\end{align}
for $p \in [p_0, T\norm{\bfv}]$ and some constant $c_0 = c_0(\delta, \epsilon, T) > 0$. 

The case $p < p_0$ is handled similarly to \eqref{Eq-35} as follows. After possibly reducing $c_0$, 
\begin{align}
\label{Eq-80}
\begin{split}
&\E[|\G^{w}_{(k, 0)}-kw^{-1}|^p] \ge \E[(w\G^w_{(k, 0)}-k)_+^p] \ge s_0^{p}k^{p/2}\P\{w\G^{w}_{(k, 0)} \ge k +  k^{1/2}s_0\} \\
&\ge s_0^p k^{p/2} \exp\{-A_0 s_0^2\} \ge s_0^p R_0^{-p/2}\exp\{-A_0 s_0^2\}p_0^{-p/2} (\ShpMin_{\bfv}-w)^{p/2}p^{p/2} \norm{\bfv}^{p/2} \\
&\ge c_0^p (\ShpMin_{\bfv}-w)^{p/2}p^{p/2} \norm{\bfv}^{p/2}, 
\end{split}
\end{align}
which extends \eqref{Eq-79} to all $p \in [1, T\norm{\bfv}]$. The penultimate inequality in \eqref{Eq-80} relies on \eqref{Eq-76} and that $p \le p_0$. 

The next task is to show that the lower bound in \eqref{Eq-80} dominates the upper bound \eqref{Eq-57}. Pick $\epsilon_0 = \epsilon_0(\delta, \epsilon, T) \in (0, T)$ such that $\epsilon_0 \le \epsilon^3 K_0^{-3}$ and work with $N_0 = N_0(\epsilon_0) \ge \epsilon_0^{-1}$. Consider the case $1 \le p \le \epsilon_0 \norm{\bfv}$, which is not vacuous $N_0$ being sufficiently large. Then, due to the choices of $w$ and $\epsilon_0$, 
\begin{align}
c_0^p (\ShpMin_{\bfv}-w)^{p/2}p^{p/2}\norm{\bfv}^{p/2} \ge c_0^p K_0^{p/2}p^{2p/3}\norm{\bfv}^{p/3} \ge 2C_0^p p^{2p/3}\norm{\bfv}^{p/3} \label{Eq-81}
\end{align}
provided that $K_0 = K_0(C_0, c_0)$ is sufficiently large. 

Now it follows from \eqref{Eq-55}, \eqref{Eq-57}, \eqref{Eq-80} and \eqref{Eq-81} that 
\begin{align}
\label{Eq-82}
\begin{split}
\E[|\G_{\bfv}^{w, \hor}-\M_{\bfv}^w|^p] &\ge c_0^p (\ShpMin_{\bfv}-w)^{p/2}p^{p/2} \norm{\bfv}^{p/2}-C_0^p p^{2p/3}\norm{\bfv}^{p/3} \\ 
&\ge \frac{1}{2}c_0^p (\ShpMin_{\bfv}-w)^{p/2} p^{p/2}\norm{\bfv}^{p/2} \quad \text{ for } p \in [1, \epsilon_0 \norm{\bfv}]. 
\end{split}
\end{align} 
When $p \in [\epsilon_0\norm{\bfv}, T\norm{\bfv}]$, Jensen's inequality together with \eqref{Eq-82} also gives 
\begin{align}
\label{Eq-83}
\begin{split}
&\E[|\G_{\bfv}^{w, \hor}-\M_{\bfv}^w|^p] \ge \E[|\G_{\bfv}^{w, \hor}-\M_{\bfv}^w|^{\epsilon_0 \norm{\bfv}}]^{p\epsilon_0^{-1}\norm{\bfv}^{-1}} \\
&\ge 2^{-p}c_0^{-p} (\ShpMin_{\bfv}-w)^{p/2}(\epsilon_0\norm{\bfv})^{p/2}\norm{\bfv}^{p/2} \ge 2^{-p} c_0^{-p} \epsilon_0^{p/2}T^{-p/2} (\ShpMin_{\bfv}-w)^{p/2} p^{p/2}\norm{\bfv}^{p/2}. 
\end{split}
\end{align}
To complete the proof, combine \eqref{Eq-82} and \eqref{Eq-83}, and redefine the constant $c_0$ suitably. 
\end{proof}

\appendix

\renewcommand{\thesubsection}{\thesection.\arabic{subsection}}

\section{}
\label{AppA}
\subsection{A few basic bounds}

Some frequently used elementary bounds are collected in the next lemma. These bounds are immediate from formulas \eqref{EShp}, \eqref{EShpMin} and \eqref{ECurv}. 
\begin{lem}
\label{LMinEst}
Fix $\delta > 0$. The following bounds hold. 
\begin{enumerate}[\normalfont (a)]
\item $\norm{\bfx} \le \shp_{\bfx} \le 2 \norm{\bfx}$ for $\bfx \in \bbR_{>0}^2$.  
\item $a_0 \le \ShpMin_{\bfx} \le 1-a_0$ for $\bfx \in S_\delta$ for some constant $a_0 = a_0(\delta) > 0$. 
\item $\norm{\bfx}^{1/3} \le \curv_\bfx \le A_0 \norm{\bfx}^{1/3}$ for $\bfx \in S_\delta$ for some constant $A_0 = A_0(\delta) > 0$. 
\end{enumerate}
\end{lem}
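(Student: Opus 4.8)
\textbf{Proof proposal for Lemma \ref{LMinEst}.}

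The plan is to derive all three bounds directly from the variational formula \eqref{EShpMean}, namely $\shp_{\bfx} = \inf_{z \in (0,1)} \M^z_{\bfx}$, together with the basic structure of $\M^z_{\bfx} = x z^{-1} + y (1-z)^{-1}$ and its derivatives \eqref{EMeanDer}, \eqref{ECurv}, rather than relying on the closed forms \eqref{EShp}, \eqref{EShpMin}. Throughout, write $\bfx = (x,y)$ and $\norm{\bfx} = x + y$, and recall that on $S_\delta$ we have $\delta \le x/y \le \delta^{-1}$, equivalently $\tfrac{\delta}{1+\delta}\norm{\bfx} \le \min\{x,y\}$ and $\max\{x,y\} \le \tfrac{1}{1+\delta}\norm{\bfx}$.

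For part (a), the upper bound $\shp_{\bfx} \le 2\norm{\bfx}$ follows by evaluating the infimum defining $\shp_{\bfx}$ at the single point $z = 1/2$: $\M^{1/2}_{\bfx} = 2x + 2y = 2\norm{\bfx}$. For the lower bound, I would observe that for every $z \in (0,1)$, $\M^z_{\bfx} = xz^{-1} + y(1-z)^{-1} \ge x + y = \norm{\bfx}$ since $z^{-1} \ge 1$ and $(1-z)^{-1} \ge 1$; taking the infimum over $z$ preserves the inequality, giving $\shp_{\bfx} \ge \norm{\bfx}$. (Alternatively one can plug $z = \ShpMin_{\bfx}$ into $\M^z_{\bfx} \ge \norm{\bfx}$.) This step is entirely elementary. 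For part (b), I would use \eqref{EMeanDer}: the minimizer $\ShpMin_{\bfx}$ is the unique zero of $z \mapsto -x z^{-2} + y (1-z)^{-2}$, which rearranges to $x (1-\ShpMin_{\bfx})^2 = y \ShpMin_{\bfx}^2$, i.e. $\sqrt{x}(1-\ShpMin_{\bfx}) = \sqrt{y}\,\ShpMin_{\bfx}$ (both sides positive), so $\ShpMin_{\bfx} = \sqrt{x}/(\sqrt{x}+\sqrt{y})$ and hence $\ShpMin_{\bfx}/(1-\ShpMin_{\bfx}) = \sqrt{x/y} \in [\sqrt{\delta}, \sqrt{\delta^{-1}}]$ on $S_\delta$. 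Since $t \mapsto t/(1-t)$ is an increasing bijection of $(0,1)$ onto $(0,\infty)$, this confines $\ShpMin_{\bfx}$ to a compact subinterval $[a_0, 1-a_0] \subset (0,1)$ with $a_0 = a_0(\delta) > 0$ (explicitly $a_0 = \sqrt{\delta}/(1+\sqrt{\delta})$ works, after also checking the upper endpoint via $\delta^{-1}$). One can do this without ever writing $\ShpMin$ in closed form by just invoking monotonicity of $z \mapsto \partial_z \M^z_{\bfx}$ and evaluating the sign of the derivative at the two candidate endpoints.

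For part (c), I would combine (a) and (b): by definition \eqref{ECurv}, $\curv_{\bfx}^3 = \tfrac12 \partial_z^2|_{z=\ShpMin_{\bfx}} \M^z_{\bfx} = x \ShpMin_{\bfx}^{-3} + y (1-\ShpMin_{\bfx})^{-3}$. The lower bound $\curv_{\bfx}^3 \ge \norm{\bfx}$ comes from $\ShpMin_{\bfx}^{-3} \ge 1$ and $(1-\ShpMin_{\bfx})^{-3} \ge 1$, hence $\curv_{\bfx}^3 \ge x + y = \norm{\bfx}$ and $\curv_{\bfx} \ge \norm{\bfx}^{1/3}$. For the upper bound, use part (b): $\ShpMin_{\bfx}^{-3} \le a_0^{-3}$ and $(1-\ShpMin_{\bfx})^{-3} \le a_0^{-3}$, so $\curv_{\bfx}^3 \le a_0^{-3}(x+y) = a_0^{-3}\norm{\bfx}$, giving $\curv_{\bfx} \le a_0^{-1}\norm{\bfx}^{1/3} =: A_0 \norm{\bfx}^{1/3}$ with $A_0 = A_0(\delta) = a_0(\delta)^{-1}$.

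I do not anticipate a genuine obstacle here; the only point requiring a little care is making part (b) truly self-contained from the variational formula — i.e. arguing that $\ShpMin_{\bfx}$ stays bounded away from $0$ and $1$ on $S_\delta$ using only that $\partial_z \M^z_{\bfx}$ is increasing in $z$ (stated just below \eqref{EShpMean}) and the ratio constraint $x/y \in [\delta, \delta^{-1}]$, rather than quoting \eqref{EShpMin}. Once (b) is in hand, (c) is immediate, and (a) is independent of both. The parenthetical remark in the lemma statement (that the bounds follow from the variational formula alone) is thus honored throughout.
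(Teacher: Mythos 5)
Your proposal is correct. Note, though, that the paper itself offers no written proof of this lemma: it simply remarks that the bounds are immediate from the explicit formulas \eqref{EShp}, \eqref{EShpMin}, \eqref{ECurv} (for instance, $\shp_{x,y}=x+y+2\sqrt{xy}$ gives (a) at once, and $\ShpMin_{x,y}=\sqrt{x}/(\sqrt{x}+\sqrt{y})$ together with $x/y\in[\delta,\delta^{-1}]$ gives (b)), and only mentions in passing that the variational formula \eqref{EShpMean} would also suffice. What you do differently is carry out that second, unwritten route in full: you never use the closed forms, only $\M^z_{\bfx}=xz^{-1}+y(1-z)^{-1}$, the monotonicity of $\partial_z\M^z_{\bfx}$ from \eqref{EMeanDer}, and the characterization of $\ShpMin_{\bfx}$ as its unique zero. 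This buys exactly what the authors advertise elsewhere in the paper: the argument transfers to other models where $\shp$ is only known through a variational formula, whereas the paper's implicit one-line verification is specific to the exponential case. All three parts of your argument check out, including the explicit constant $a_0=\sqrt{\delta}/(1+\sqrt{\delta})$ in (b) and $A_0=a_0^{-1}$ in (c). One small point worth flagging: the display \eqref{ECurv} literally equates $\curv_{x,y}$ with $\tfrac12\partial_z^2|_{z=\ShpMin_{x,y}}\M^z_{x,y}$, which is dimensionally inconsistent with both the closed form $(\sqrt{x}+\sqrt{y})^{4/3}x^{-1/6}y^{-1/6}$ and with part (c); your reading, namely $\curv_{\bfx}^3=\tfrac12\partial_z^2|_{z=\ShpMin_{\bfx}}\M^z_{\bfx}=x\ShpMin_{\bfx}^{-3}+y(1-\ShpMin_{\bfx})^{-3}$, is the one consistent with the explicit formula and with how $\curv_{\bfx}^3(z-\ShpMin_{\bfx})^2$ is used in Lemma \ref{LMeanEst}, so your part (c) is sound as written.
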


\subsection{Comparison lemma}

The following standard lemma states a deterministic monotonicity property for the last-passage increments. See \cite[Lemma 6.2]{Rass_18} and \cite[Lemma 4.6]{Sepp_18_CGM} for different proofs. 

\begin{lem}
\label{LComp}
Let the LPP values $\{G_{\bfu, \bfv}: \bfu, \bfv \in \bbZ^2\}$ be defined as in \eqref{EBlkLPP}, over arbitrary real weights $\{ w_\bfv\}_{\bfv \in \bbZ^2}$.   Then the following inequalities hold for $\bfu, \bfv \in \bbZ^2$ with $\bfu \le \bfv$. 
\begin{enumerate}[\normalfont (a)]
\item $G_{\bfu, \bfv}-G_{\bfu+(1, 0), \bfv} \le G_{\bfu, \bfv+(0, 1)}-G_{\bfu+(1, 0), \bfv+(0, 1)}$. 
\item $G_{\bfu, \bfv}-G_{\bfu+(1, 0), \bfv} \ge G_{\bfu, \bfv+(1, 0)}-G_{\bfu+(1, 0), \bfv+(1, 0)}$. 
\item $G_{\bfu, \bfv}-G_{\bfu+(0, 1), \bfv} \le G_{\bfu, \bfv+(1, 0)}-G_{\bfu+(0, 1), \bfv+(1, 0)}$.
\item $G_{\bfu, \bfv}-G_{\bfu+(0, 1), \bfv} \ge G_{\bfu, \bfv+(0, 1)}-G_{\bfu+(0, 1), \bfv+(0, 1)}$. 
\end{enumerate}
\end{lem}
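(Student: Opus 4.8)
The plan is to derive all four inequalities from one planarity (``crossing-and-swap'') argument, using nothing beyond the definition of the last-passage values. First I would rewrite each of (a)--(d), by moving two terms across the inequality, as a sub- or supermodularity statement in the four corner values. For instance (a) is equivalent to
\[
G_{\bfu,\bfv} + G_{\bfu+(1,0),\bfv+(0,1)} \le G_{\bfu,\bfv+(0,1)} + G_{\bfu+(1,0),\bfv},
\]
and (b) to
\[
G_{\bfu,\bfv+(1,0)} + G_{\bfu+(1,0),\bfv} \le G_{\bfu,\bfv} + G_{\bfu+(1,0),\bfv+(1,0)} .
\]
So it suffices to establish the following \emph{swap lemma}: if $\bfp_i \le \bfq_i$ for $i = 1, 2$ and some maximizing path $\pi$ from $\bfp_1$ to $\bfq_1$ and some maximizing path $\pi'$ from $\bfp_2$ to $\bfq_2$ share a vertex $\bfz$, then $G_{\bfp_1,\bfq_1} + G_{\bfp_2,\bfq_2} \le G_{\bfp_1,\bfq_2} + G_{\bfp_2,\bfq_1}$. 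Indeed, concatenating the $\bfp_1 \to \bfz$ segment of $\pi$ with the $\bfz \to \bfq_2$ segment of $\pi'$ yields a path from $\bfp_1$ to $\bfq_2$, and symmetrically a path from $\bfp_2$ to $\bfq_1$; since each geodesic splits at $\bfz$ into two segments overlapping only in $\bfz$, the weights of the two spliced paths sum to exactly $\mathrm{wt}(\pi) + \mathrm{wt}(\pi') = G_{\bfp_1,\bfq_1} + G_{\bfp_2,\bfq_2}$, while each spliced path weighs at most the corresponding last-passage value. (When some $\bfp_i \not\le \bfq_j$ the target inequality is vacuous under the $\pm\infty$ conventions, as the matching term on the other side is then $-\infty$ as well, so nothing is lost.)

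\textbf{Verifying the crossing.} The second step is, for each of (a)--(d), to check that the two relevant geodesics must meet. This is a discrete intermediate-value argument: parametrize a monotone unit-step path by its anti-diagonal level $\ell = x + y$, so it has exactly one vertex per level, with first coordinate $x_\pi(\ell)$. On the range of levels common to $\pi$ and $\pi'$, the quantity $x_\pi(\ell) - x_{\pi'}(\ell)$ changes by at most $1$ when $\ell$ increments; and in each of the four configurations the placement of the corners forces this quantity to be $\le 0$ at the smallest common level and $\ge 0$ at the largest common level (for (a), say, $\pi'$ is at its start $\bfu+(1,0)$ on the smallest common level while $\pi$ is one step out of $\bfu$, and $\pi$ is at its end $\bfv$ on the largest common level while $\pi'$ is one step before $\bfv+(0,1)$). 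Hence it vanishes at some level, which is precisely a shared vertex. Finally, (c) and (d) can alternatively be deduced from (a) and (b) by applying the latter to the transposed weight array $\{w_{(j,i)}\}$ (transposition interchanges the $(1,0)$- and $(0,1)$-shifts while preserving the LPP structure), so only (a) and (b) need be written out in full.

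\textbf{Main obstacle.} There is no serious difficulty beyond bookkeeping: one must, in each of the four cases, identify the correct pair of geodesics, confirm the sign pattern of $x_\pi - x_{\pi'}$ at the two extreme common anti-diagonal levels, and check that the tail-swap reassembles exactly the two last-passage values on the opposite side of the inequality (in particular the degenerate cases where a corner pair is unordered). Once the swap lemma above is in place, each of (a)--(d) is a short verification.
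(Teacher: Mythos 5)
Your proof is correct. The paper does not actually prove Lemma \ref{LComp}; it labels it a standard deterministic fact and defers to \cite[Lemma 6.2]{rass-cgm-18} and \cite[Lemma 4.6]{sepp-cgm-18}, so there is no in-paper argument to compare against. Your crossing-and-splice argument is precisely the planarity proof alluded to in those references (the weight of the shared vertex being double-counted once in each spliced path is what makes the splice exactly weight-preserving, as you note), and the alternative route in the literature is an induction on $\norm{\bfv-\bfu}$ via the recursion $G_{\bfu,\bfv}=w_{\bfv}+\max\{G_{\bfu,\bfv-(1,0)},G_{\bfu,\bfv-(0,1)}\}$. One small caveat: your blanket remark that in the unordered degenerate cases "the matching term on the other side is $-\infty$ as well" is accurate for (a) and (d) (both sides become $+\infty$) but not for (b) and (c), where only the term being subtracted degenerates and the inequality instead holds because the favorable side becomes $+\infty$; this is harmless but worth stating case by case.
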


\subsection{Tail bounds for sums of i.i.d.\ exponentials}

Let $(X_i)_{i \in \bbZ_{>0}}$ be a sequence of independent $\Exp(1)$-distributed random real numbers. Our aim in this part is to record upper and lower bounds for the tail probabilities of the sums\footnote{Not to be confused with the cone $S_\delta$ defined at \eqref{ECone}.} $S_n = \sum_{i = 1}^n X_i$ for $n \in \bbZ_{>0}$. These bounds are obtained using some ideas within the proof of Cram\'{e}r's theorem presented, for example, in \cite[Section 2.2]{Demb_Zeit_10} and \cite[Section 2.4]{Sepp_Rass_15}. Although this argument is standard, we include a self-contained derivation of our tail bounds since we could not locate them in entirety in a reference. 

We begin with recalling that the l.m.g.f. of $X \sim \Exp(1)$ is given by 
\begin{align*}
\log \E[\exp\{\lambda X\}] = -\one_{\{\lambda < 1\}} \cdot \log (1-\lambda) + \one_{\{\lambda \ge 1\}} \cdot \infty \quad \text{ for } \lambda \in \bbR. 
\end{align*}
Define the associated \emph{Cram\'{e}r rate function} $I$ via convex conjugation: 
\begin{align}
\label{Eq-58}
\begin{split}
I(x) &= \sup_{\lambda \in \bbR} \{\lambda x - \log \E[\exp\{\lambda X\}]\} = \sup_{\lambda < 1} \{\lambda x + \log (1-\lambda)\} \\
&= \one_{\{x > 0\}} \cdot (x-1-\log x) + \one_{\{x \le 0\}} \cdot \infty \quad \text{ for } x \in \bbR. 
\end{split}
\end{align}
The next lemma provides some useful estimates for the rate function. 

\begin{lem}
\label{LRateEst}
Let $\epsilon \in (0, 1)$. There exist positive constants $a_0$, $b_0$, $A_0$ and $B_0 = B_0(\epsilon)$ such that the following inequalities hold. 
\begin{enumerate}[\normalfont (a)]
\item $I(1+x) \ge a_0 x\min \{x, 1\}$ for $x \ge 0$. 
\item $I(1-x) \ge b_0 x^2$ for $x \ge 0$. 
\item $I(1+x) \le A_0 x\min \{x, 1\}$ for $x \ge 0$. 
\item $I(1-x) \le B_0 x^2$ for $x \in [0, 1-\epsilon]$. 
\end{enumerate}
\end{lem}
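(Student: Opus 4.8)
\textbf{Proof plan for Lemma \ref{LRateEst}.}

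The plan is to work directly from the explicit formula $I(1+x) = x - \log(1+x)$ (valid for $x > -1$) obtained by specializing \eqref{Eq-58}, and to establish each of the four inequalities by elementary calculus comparisons. All four bounds are statements about a single smooth convex function on a half-line (or a closed subinterval), so there is no genuine obstacle; the only care needed is in matching the quadratic behaviour near $x=0$ with the linear behaviour for large $x$ in parts (a) and (c).

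For part (b), I would set $f(x) = I(1-x) = -x - \log(1-x)$ for $x \in [0,1)$ and observe $f(0)=0$, $f'(x) = x/(1-x) \ge x$, hence $f(x) \ge x^2/2$ by integration; this gives $b_0 = 1/2$ and the bound extends to all $x \ge 0$ since $I = +\infty$ for $x \ge 1$. Similarly for part (a), with $g(x) = I(1+x) = x - \log(1+x)$, one has $g'(x) = x/(1+x)$, which is $\ge x/2$ for $x \in [0,1]$ and $\ge 1/2$ for $x \ge 1$; integrating yields $g(x) \ge x^2/4$ on $[0,1]$ and $g(x) \ge g(1) + (x-1)/2 \ge x/4$ for $x \ge 1$ (using $g(1) = 1-\log 2 \ge 1/4$), so $g(x) \ge a_0 x \min\{x,1\}$ with $a_0 = 1/4$ works after a possible mild adjustment of the constant.

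For the upper bounds, part (c) follows from $g'(x) = x/(1+x) \le \min\{x, 1\}$, which integrates to $g(x) \le \int_0^x \min\{t,1\}\,dt \le x \min\{x,1\}$ (the last step: the integral is $x^2/2 \le x^2$ on $[0,1]$ and is $x - 1/2 \le x$ for $x \ge 1$), giving $A_0 = 1$. For part (d), on $[0, 1-\epsilon]$ we have $f'(x) = x/(1-x) \le x/\epsilon$, so $f(x) \le x^2/(2\epsilon)$, giving $B_0 = 1/(2\epsilon)$; the $\epsilon$-dependence of $B_0$ is unavoidable since $I(1-x) \to \infty$ as $x \to 1^-$. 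The main (very minor) point to watch is simply bookkeeping the two regimes $x \le 1$ and $x \ge 1$ consistently in (a) and (c) so that the single expression $x\min\{x,1\}$ captures both; everything else is a one-line monotonicity estimate on the derivative followed by integration from $0$.
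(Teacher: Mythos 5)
Your proof is correct, and it is essentially the paper's argument: the paper writes $I(1+x)=\int_0^x \frac{t\,\dd t}{1+t}$ and bounds the integrand on the relevant ranges, which is exactly your strategy of bounding the derivative $x/(1+x)$ (resp.\ $x/(1-x)$) and integrating. The constants differ slightly but all four estimates go through as you describe.
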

\begin{proof}
Definition \eqref{Eq-58} shows that $\displaystyle I(1+x) = \int_0^x \frac{t \dd t}{1+t}$ for $x > -1$. Therefore, 
$$I(1-x) = \int_0^{-x} \frac{t \dd t}{1+t} \dd t = \int_{0}^x \frac{t \dd t }{1-t}\ge \int_0^x t \dd t = \frac{x^2}{2} \quad \text{ for } x \in [0, 1).$$
The preceding bound also extends to $x > 1$ since $I = \infty$ on $\bbR_{\le 0}$. Hence, (b). The second equality above also gives 
$$I(1-x) \le \frac{1}{\epsilon} \int_0^x t \dd t \le \frac{x^2}{2\epsilon} \quad \text{ for } x \in [0, 1-\epsilon],$$
proving (d). Next note that 
$$I(1+x) \ge \int_0^x \frac{t \dd t}{1 + 1/2} = \frac{x^2}{3} \quad \text{ for } x \in [0, 1/2]. $$
Since the function $t \mapsto t(1+t)^{-1}$ is increasing on $[0, \infty)$, one also has 
$$I(1+x) \ge \int_{1/3}^x \frac{t \dd t}{1+t}\dd t \ge \left(x-\frac{1}{3}\right)\cdot \frac{1}{4} \ge \frac{x}{12} \quad \text{ when } x > \frac{1}{2}.$$  Combining this with the previous display proves (a). Finally, to obtain (c), note that 
\begin{equation*}
I(1+x) \le \int_0^x \min \{t, 1\} \dd t = \min \left\{\frac{x^2}{2}, x\right\} = \frac{1}{2}x\min \{x, 1\}\quad \text{ for } x \ge 0. \qedhere
\end{equation*} 
\end{proof}

\begin{lem}
\label{LExpTUB}
There exist absolute positive constants $a_0$ and $b_0$ such that the following bounds hold for $n \in \bbZ_{>0}$ and $s \ge 0$. 
\begin{enumerate}[\normalfont (a)]
\item $\bfP\{S_n \ge n + sn^{1/2}\} \le \exp\{-a_0\min \{s^2, sn^{1/2}\}\}$.  
\item $\bfP\{S_n \le n - sn^{1/2}\} \le \exp\{-b_0s^2\}$. 
\end{enumerate}
\end{lem}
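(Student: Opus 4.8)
\textbf{Proof proposal for Lemma \ref{LExpTUB}.}

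The plan is to use the standard Chernoff (exponential Markov) bound, optimize the exponent, and then translate the resulting Cram\'er rate function expression into the stated form via the estimates of Lemma \ref{LRateEst}. Since $S_n$ is a sum of $n$ i.i.d.\ $\Exp(1)$ variables, for any $\lambda < 1$ we have $\E[\exp\{\lambda S_n\}] = (1-\lambda)^{-n}$, so Markov's inequality applied at level $n+sn^{1/2} = n(1 + sn^{-1/2})$ gives
\begin{align*}
\bfP\{S_n \ge n + sn^{1/2}\} \le \exp\{-n I(1 + sn^{-1/2})\},
\end{align*}
where $I$ is the Cram\'er rate function from \eqref{Eq-58}; here one takes $\lambda = 1 - (1+sn^{-1/2})^{-1} \in [0,1)$, which is the maximizer in \eqref{Eq-58}, so this is exactly the Chernoff bound with optimal exponent. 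Likewise, applying Markov's inequality with a negative exponent $\lambda < 0$ at level $n - sn^{1/2}$, and noting $I(1-x) = \infty$ for $x > 1$ (consistent with $S_n > 0$ a.s.), yields $\bfP\{S_n \le n - sn^{1/2}\} \le \exp\{-nI(1-sn^{-1/2})\}$ for all $s \ge 0$.

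For part (a), apply Lemma \ref{LRateEst}(a) with $x = sn^{-1/2}$: this gives $nI(1+sn^{-1/2}) \ge a_0 n \cdot sn^{-1/2} \min\{sn^{-1/2}, 1\} = a_0 \min\{s^2, sn^{1/2}\}$, which is precisely the claimed bound. For part (b), apply Lemma \ref{LRateEst}(b) with $x = sn^{-1/2}$: this gives $nI(1-sn^{-1/2}) \ge b_0 n \cdot s^2 n^{-1} = b_0 s^2$, again exactly as stated (and when $sn^{-1/2} > 1$ the left side is $+\infty$, so the inequality is trivially true, matching the fact that Lemma \ref{LRateEst}(b) is asserted for all $x \ge 0$).

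I do not anticipate a genuine obstacle here: the only points requiring care are (i) checking that the chosen $\lambda$ lies in the admissible range $(-\infty, 1)$ so that the m.g.f.\ is finite — for (a) one has $\lambda = 1-(1+sn^{-1/2})^{-1} \in [0,1)$, and for (b) one has $\lambda \le 0$ always admissible; and (ii) that the case $s > n^{1/2}$ in part (b) is handled by the convention $I \equiv \infty$ on $\bbR_{\le 0}$, so no separate argument is needed. Everything else is a direct substitution into Lemma \ref{LRateEst}. The absolute constants $a_0, b_0$ in the statement are simply inherited from Lemma \ref{LRateEst}(a),(b).
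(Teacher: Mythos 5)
Your proposal is correct and follows essentially the same route as the paper: the Chernoff--Cram\'er bound $\P\{S_n \ge n+sn^{1/2}\} \le \exp\{-nI(1+sn^{-1/2})\}$ (and its left-tail analogue), followed by substituting $x = sn^{-1/2}$ into the rate-function estimates of Lemma \ref{LRateEst}(a)-(b). Your additional remarks on the admissible range of $\lambda$ and the convention $I \equiv \infty$ on $\bbR_{\le 0}$ for $s > n^{1/2}$ are fine and consistent with the paper's argument.
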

\begin{proof}
Let $n \in \bbZ_{>0}$ and $s \ge 0$. From the Chernoff-Cram\'{e}r bounds, one obtains that  
\begin{align*}
\P\{S_n \ge n + sn^{1/2}\} &\le \exp\{-nI(1 + sn^{-1/2})\}, \\
\P\{S_n \le n - sn^{1/2}\} &\le \exp\{-nI(1-sn^{-1/2})\}.  
\end{align*}
(Alternatively, these bounds can be taken from \cite[Theorem 5.1]{Jans_18}). The result is then immediate from Lemma \ref{LExpTUB}(a)-(b). 
\end{proof}

We now derive complementary lower bounds for the probabilities in Lemma \ref{LExpTUB} via the classical change-of-measure argument. For $n \in \bbZ_{>0}$ and $\mu < 1$, let $\bfQ_{n, \mu}$ denote the \emph{tilted} probability measure given by
\begin{align}
\bfQ_{n, \mu}\{\cdot\} = \frac{\E[\exp\{\mu S_n\}\one\{\cdot\}]}{\E[\exp\{\mu S_n\}]}. \label{Eq-63}
\end{align}
The next calculation verifies that the sequence $(X_i)_{i \in [n]}$ is independent with $\Exp(1-\mu)$ marginals under $\bfQ_{n, \mu}$. Since $X_i \sim \Exp(1)$ and are independent for $i \in [n]$ under $\bfP$, 
\begin{align}
\label{Eq-90}
\begin{split}
&\bfQ_{n, \mu}\{X_i > x_i \text{ for } i \in [n]\} = \frac{\E[\exp\{\mu S_n\}\prod_{i \in [n]}\one\{X_i > x_i\}]}{\E[\exp\{\mu S_n\}]} \\
&= \prod_{i \in [n]} \{(1-\mu)\E[\exp\{\mu X_i\}\one\{X_i > x\}]\} = \prod_{i \in [n]} \left\{(1-\mu) \int_0^\infty \one\{t > x_i\}e^{-(1-\mu)t} \dd t\right\} \\
&= \prod_{i \in [n]} \P\{X_i > (1-\mu) x_i\} = \P\{X_i > (1-\mu)x_i \text{ for } i \in [n]\}. 
\end{split}
\end{align}
The last line uses that $X_i (1-\mu)^{-1} \sim \Exp(1-\mu)$ and are independent for $i \in [n]$ under $\bfP$. 

Through the tilted measure, we obtain the following lemma. In fact, part (b) is not needed in the main text but included for completeness. 

\begin{lem}
\label{LExpTLB}
Let $T > 1$. There exist positive constants $A_0$, $B_0 = B_0(T)$, $N_0 = N_0(T)$ and $s_0 = s_0(T)$ such that the following bounds hold for $n \ge N_0$ and $s \ge s_0$.  
\begin{enumerate}[\normalfont (a)]
\item $\bfP\{S_n \ge n+sn^{1/2}\} \ge \exp\{-A_0\min \{s^2, sn^{1/2}\}\}$ when $s \le Tn^{1/2}$. 
\item $\bfP\{S_n \le n-sn^{1/2}\} \ge \exp\{-B_0s^2\}$ when $s \le (1-T^{-1})n^{1/2}$. 
\end{enumerate}
\end{lem}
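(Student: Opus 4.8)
\textbf{Proof proposal for Lemma \ref{LExpTLB}.} The plan is to run the classical change-of-measure (tilting) argument, using the measure $\bfQ_{n, \mu}$ defined at \eqref{Eq-63} together with the marginal computation \eqref{Eq-90}, which tells us that under $\bfQ_{n, \mu}$ the variables $X_i$ are i.i.d.\ $\Exp(1-\mu)$, hence $S_n$ has mean $n/(1-\mu)$ and variance $n/(1-\mu)^2$. For part (a), given $n$ and $s \le T n^{1/2}$, I would choose the tilt $\mu = \mu_{n,s} \in (0,1)$ so that the $\bfQ_{n,\mu}$-mean of $S_n$ lands exactly at the target level $n + s n^{1/2}$, i.e.\ $n/(1-\mu) = n + s n^{1/2}$, so $\mu = s n^{-1/2}/(1 + s n^{-1/2})$; the constraint $s \le T n^{1/2}$ ensures $\mu$ stays bounded away from $1$ (by $1/(1+T)$), so the relevant exponential moments are finite and the $\bfQ$-variance of $S_n$ is comparable to $n(1+s n^{-1/2})^2$. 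Then write
\begin{align*}
\bfP\{S_n \ge n + s n^{1/2}\} &= \E_{\bfQ_{n,\mu}}\!\left[\exp\{-\mu S_n\}\, \E[\exp\{\mu S_n\}]\, \one\{S_n \ge n + s n^{1/2}\}\right] \\
&\ge \E[\exp\{\mu S_n\}] \exp\{-\mu(n + s n^{1/2} + \sigma_{n,\mu})\}\, \bfQ_{n,\mu}\{n + s n^{1/2} \le S_n \le n + s n^{1/2} + \sigma_{n,\mu}\},
\end{align*}
where $\sigma_{n,\mu}$ is a suitable multiple of the $\bfQ$-standard deviation of $S_n$. The probability of the window on the right is bounded below by an absolute constant (say $1/4$) for $n$ large via Chebyshev or the Berry-Esseen theorem applied under $\bfQ_{n,\mu}$ (one checks the third moment is controlled uniformly since $\mu$ is bounded away from $1$). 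The prefactor $\E[\exp\{\mu S_n\}]\exp\{-\mu(n+sn^{1/2})\} = \exp\{-n I(1 + sn^{-1/2})\}$ by the Legendre-duality identity for the exponential rate function (this is exactly the tilt that realizes the Cramér bound with equality), and $\exp\{-\mu \sigma_{n,\mu}\}$ contributes a further factor of the form $\exp\{-c\sqrt{n}(1 + sn^{-1/2})\}$ which is $\exp\{-O(\sqrt n + s)\}$; combining with Lemma \ref{LRateEst}(c) ($nI(1+sn^{-1/2}) \le A_0 \min\{s^2, s n^{1/2}\}$) and absorbing the lower-order $\exp\{-O(\sqrt n + s)\}$ into the same quantity (valid once $s \ge s_0$ and $n \ge N_0$, using $s \le Tn^{1/2}$ to compare $\sqrt n$ against $s$ and $s^2/n$), yields the claimed lower bound $\exp\{-A_0 \min\{s^2, sn^{1/2}\}\}$.

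For part (b) the argument is the mirror image: with $s \le (1-T^{-1}) n^{1/2}$, choose the negative tilt $\mu = \mu_{n,s} < 0$ so that $n/(1-\mu) = n - s n^{1/2}$, i.e.\ $\mu = -s n^{-1/2}/(1 - s n^{-1/2})$, and the constraint on $s$ keeps $1 - s n^{-1/2} \ge T^{-1}$ so the $\bfQ$-variance $n/(1-\mu)^2 = (n - sn^{1/2})^2/n$ is still of order $n$ and all moments are uniformly controlled. The same window argument gives $\bfP\{S_n \le n - sn^{1/2}\} \ge \exp\{-nI(1 - sn^{-1/2})\} \cdot \exp\{-O(\sqrt n)\} \cdot \text{(const)}$, and Lemma \ref{LRateEst}(d) (with, say, $\epsilon = T^{-1}$) bounds $nI(1-sn^{-1/2}) \le B_0 s^2$; the leftover $\exp\{-O(\sqrt n)\}$ is absorbed into $\exp\{-B_0 s^2\}$ after adjusting $B_0$, using $s \ge s_0$ large (and noting $\sqrt n$ is dominated by $s^2$ is \emph{not} automatic here, so one actually keeps $s \le (1-T^{-1})\sqrt n$ to bound $\sqrt n \ge s/(1-T^{-1})$, hence $\exp\{-O(\sqrt n)\} \ge \exp\{-O(s)\}$, which is absorbed once $s_0$ is large relative to the implied constants).

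The main obstacle — really the only nontrivial point — is controlling the central-window probability $\bfQ_{n,\mu}\{S_n \in [\text{target}, \text{target} + \sigma]\}$ from below by a constant \emph{uniformly} in the relevant range of $(n,s)$. Since the tilt parameter $\mu$ is confined to a compact subset of $(-1,1)$ under the stated restrictions on $s$, the tilted summands $X_i \sim \Exp(1-\mu)$ have mean, variance, and third absolute moment all bounded above and below by constants depending only on $T$; thus a one-sided Berry-Esseen estimate (or even a second-moment/Paley-Zygmund argument after recentering) delivers the uniform lower bound on the window probability for $n \ge N_0(T)$. The rest is the bookkeeping already indicated: identifying the exponential prefactor with $\exp\{-nI(\cdot)\}$, invoking the rate-function estimates of Lemma \ref{LRateEst}(c),(d), and checking that the polynomially-many lower-order factors ($\exp\{-\mu\sigma_{n,\mu}\}$, the window constant, any $\sqrt{2\pi\sigma}$ normalization) are swallowed by enlarging $A_0$, $B_0$ and $s_0$ and shrinking nothing that matters. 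I would also remark, as the lemma statement already hints, that part (b) is not used elsewhere and is included only for symmetry.
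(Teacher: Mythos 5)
Your overall strategy --- tilt so that the $\bfQ_{n,\mu}$-mean of $S_n$ reaches the target, identify the Radon--Nikodym prefactor with $\exp\{-nI(1+sn^{-1/2})\}$, and bound a window probability under the tilted measure by a constant --- is exactly the strategy of the paper's proof. The implementation of the window step differs: the paper tilts to $n+(s+s_0)n^{1/2}$ rather than to the target itself, so that the \emph{symmetric} window of radius $s_0 n^{1/2}$ around the tilted mean has the target as its left endpoint, and then bounds the complement of this two-sided window by the Chernoff upper bounds of Lemma \ref{LExpTUB} applied under $\bfQ_{n,\mu}$ via \eqref{Eq-90}; this avoids any central limit theorem input at the cost of taking $s_0$ large. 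Your one-sided window $[\mathrm{mean},\mathrm{mean}+\sigma_{n,\mu}]$ genuinely requires Berry--Esseen (or some other one-sided CLT-type estimate): Chebyshev alone cannot bound a one-sided window from below, and Paley--Zygmund does not obviously apply either, so you should commit to Berry--Esseen, which does give a uniform constant here since the tilt stays in a compact subset of $(-1,1)$ and the third-moment ratio of an exponential is scale-invariant.

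There is, however, a bookkeeping error that, as written, breaks the final absorption step. The correction factor is $\exp\{-\mu\sigma_{n,\mu}\}$; since $\mu = sn^{-1/2}/(1+sn^{-1/2})$ and $\sigma_{n,\mu} \asymp n^{1/2}(1+sn^{-1/2})$, the exponent is $\mu\sigma_{n,\mu} \asymp s$. You instead record the factor as $\exp\{-c\sqrt{n}(1+sn^{-1/2})\} = \exp\{-O(\sqrt{n}+s)\}$, i.e.\ you have dropped the factor $\mu$. The distinction matters: $\exp\{-c\sqrt{n}\}$ cannot be absorbed into $\exp\{-A_0\min\{s^2, sn^{1/2}\}\}$ when $s$ stays bounded and $n\to\infty$, and your attempted rescue in part (b) --- deducing $\exp\{-O(\sqrt{n})\}\ge \exp\{-O(s)\}$ from $\sqrt{n}\ge s/(1-T^{-1})$ --- has the inequality pointing the wrong way ($\sqrt{n}\ge s/(1-T^{-1})$ makes $\exp\{-c\sqrt{n}\}$ \emph{smaller}, not larger, than $\exp\{-cs\}$). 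Once you compute $\mu\sigma_{n,\mu}=O(s)$ correctly (and likewise $|\nu|\sigma_{n,\nu}=O(s)$ in part (b), using $1-sn^{-1/2}\ge T^{-1}$), the correction is $\exp\{-O(s)\}$ and is absorbed for $s\ge s_0\ge 1$ exactly as in the paper's \eqref{Eq-69} and \eqref{Eq-67}. So the flaw is a slip in tracking the tilt parameter rather than a structural one, but it must be repaired for the proof to close.
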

\begin{proof}
Let $s_0  = s_0(T) > 0$ denote a constant to be chosen. Pick $N_0 = N_0(s_0, T) > 0$ sufficiently large such that $s_0 \le TN_0^{1/2}$. Let $n \ge N_0$ and $s \in [s_0, Tn^{1/2}]$. To obtain (a), 
choose $\mu \in (0, 1)$ such that $\mu \cdot (n^{1/2} + s+s_0) = s+s_0$, and consider the event 
\begin{align}
E_{n, s} = \{|S_n-n(1-\mu)^{-1}| \le s_0n^{1/2}\}  = \{S_n \in [n+sn^{1/2}, n+(s+2s_0) n^{1/2}]\}. \label{Eq-62}
\end{align}
Using the choice of $\mu$ along with definitions \eqref{Eq-63}, \eqref{Eq-62} and \eqref{Eq-58}, one develops
\begin{align}
\label{Eq-65}
\begin{split}
&\P\{S_n \ge n + sn^{1/2}\} \ge  \P\{E_{n, s}\} \ge \exp\{-\mu n - \mu (s+2s_0) n^{1/2}\} \E[\exp\{\mu S_n\}\one_{E_{n, s}}] \\
&= \exp\{-(s+s_0+\mu s_0)n^{1/2}\} \E[\exp\{\mu S_n\}]\bfQ_{n, \mu}\{E_{n, s}\} \\
&= \exp\{-(s+s_0+\mu s_0)n^{1/2} - n \log (1-\mu)] \bfQ_{n, \mu}\{E_{n, s}\} \\
&= \exp\{-\mu s_0 n^{1/2}-nI(1+(s+s_0)n^{-1/2})\}\bfQ_{n, \mu}\{E_{n, s}\}. 
\end{split}
\end{align}
We continue with bounding the terms on the last line of \eqref{Eq-65}. First, note that  
\begin{align}
\mu n^{1/2} = \frac{(s + s_0)n^{1/2}}{n^{1/2}+s+s_0} \le 2s. \label{Eq-69}
\end{align}
By Lemma \ref{LRateEst}(c) and since $s \ge s_0$, one also has 
\begin{align}
\label{Eq-66}
\begin{split}
I(1+(s+s_0) n^{-1/2}) \le A_0  s n^{-1/2} \min \{s n^{-1/2}, 1\}
\end{split}
\end{align}
for some absolute constant $A_0 > 0$. Finally, using \eqref{Eq-90} and Lemma \ref{LExpTUB}, bound the probability of the complement $E_{n, s}^c$ as follows. 
\begin{align}
\label{Eq-64}
\begin{split}
&\bfQ_{n, \mu}\{E_{n, s}^c\} = \bfQ_{n, \mu}\{|S_n-n(1-\mu)^{-1}| > s_0n^{1/2}\} = \bfP\{|S_n-n| > s_0(1-\mu) n^{1/2}\} \\
&\le 2\exp\{-a_0 \min \{s_0^2 (1-\mu)^2, s_0(1-\mu)n^{1/2}\}\} \le \frac{1}{2}
\end{split}
\end{align}
for some absolute constant $a_0 > 0$. For the last inequality in \eqref{Eq-64}, observe that $1-\mu = n^{1/2} \{n^{1/2} + s+s_0\}^{-1} \ge \{1+2T\}^{-1}$ and pick $s_0 = s_0(a_0, T)$ sufficiently large. Now collect the bounds \eqref{Eq-69}, \eqref{Eq-66} and \eqref{Eq-64} to conclude from \eqref{Eq-65} that 
\begin{align}
\label{Eq-70}
\begin{split}
\bfP\{S_n \ge n + sn^{1/2}\} &\ge \frac{1}{2} \exp\{-A_0 \min \{s^2, sn^{1/2}\}-2s_0 s\} \\
&\ge \exp\{-2A_0 \min \{s^2, sn^{1/2}\}\}, 
\end{split}
\end{align}
which proves (a). The last step of \eqref{Eq-70} holds provided that $s_0 = s_0(A_0)$ and $N_0 = N_0(A_0, s_0)$ are both sufficiently large. 

A few modifications are needed in the preceding argument to also derive (b). Assume now that $s \in [s_0, (1-T^{-1})n^{1/2}]$ and $n \ge N_0$, working with sufficiently large $N_0 = N_0(s_0, T)$ such that $s_0 \le (1-T^{-1})N_0^{1/2}$. After possibly increasing $N_0$ further, $2Ts_0 \le N_0^{1/2}$. Pick $\nu < 0$ such that $\nu \cdot (n^{1/2} - s-s_0) = -s-s_0$ noting that $s+s_0 \le (1-T^{-1})n^{1/2} + 2^{-1}T^{-1}N_0^{1/2}\le (1-2^{-1}T^{-1}) n^{1/2}$. As in \eqref{Eq-62}, define the event 
\begin{align*}
F_{n, s} = \{|S_n - n(1-\nu)^{-1}| \le s_0n^{1/2}\} = \{S_n \in [n - (s+2s_0) n^{1/2}, n-sn^{1/2}]\}. 
\end{align*}
Proceeding similarly to \eqref{Eq-65}, one obtains that 
\begin{align}
\label{Eq-75}
\begin{split}
\P\{S_n \le n - sn^{1/2}\} &\ge \P\{F_{n, s}\} \ge \exp\{-\nu (n - (s+2s_0)n^{1/2})\} \E[\exp\{\nu S_n\} \one_{F_{n, s}}]\\
&= \exp\{\nu s_0 n^{1/2} + (s+s_0)n^{1/2} - n \log (1-\nu)\}\bfQ_{n, \nu}\{F_{n, s}\}\\
&= \exp\{\nu s_0 n^{1/2} - n I(1-(s+s_0) n^{-1/2})\}\bfQ_{n, \nu}\{F_{n, s}\}. 
\end{split}
\end{align}
Due to the choices of $\nu$ and $N_0$, 
\begin{align}
\nu n^{1/2} =  - \frac{(s+s_0)n^{1/2}}{n^{1/2}-s-s_0} \ge -4Ts. \label{Eq-67}
\end{align}
By Lemma \eqref{LRateEst}(d) and since $s \ge s_0$, 
\begin{align}
\label{Eq-68}
I(1-(s+s_0)n^{-1/2}) \le B_0 s^2 n^{-1} 
\end{align}
for some constant $B_0 = B_0(T) > 0$. Appealing to Lemma \ref{LExpTUB} also gives 
\begin{align}
\label{Eq-74}
\begin{split}
\bfQ_{n, \nu}\{F_{n, s}^c\} &\le \exp\{-a_0 \min \{s_0^2(1-\nu)^2, s_0 (1-\nu)n^{1/2}\}\}\} \\
&\le \exp\{-a_0 \min \{s_0^2, s_0 n^{1/2}\}\} \le \frac{1}{2}
\end{split}
\end{align}
for sufficiently large $s_0 = s_0(a_0)$. Now (b) follows from \eqref{Eq-75}, \eqref{Eq-67}, \eqref{Eq-68} and \eqref{Eq-74} upon choosing $s_0 = s_0(B_0, T)$ sufficiently large. 
\end{proof}

For each $n \in \bbZ_{>0}$, the sum $S_n$ has the gamma distribution with density $s \mapsto \one_{\{s > 0\}}\dfrac{s^{n-1}e^{-s}}{(n-1)!}$. Therefore, 
\begin{align}
\label{EIntMon}
\int_{x}^\infty t^{n-1}\exp\{-t\} \dd t = (n-1)! \P\{S_n \ge x\} \quad \text{ for } n \in \bbZ_{>0} \text{ and } x \in \bbR_{\ge 0}.  
\end{align}
The following integral bounds are immediate from \eqref{EIntMon} and Lemmas \ref{LExpTUB} and \ref{LExpTLB}.  

\begin{lem}
\label{LGamTB}
Let $T > 1$. There exist positive constants $a_0$, $b_0$, $A_0 = A_0$, $B_0 = B_0(T)$, $s_0 = s_0(T)$ and $N_0 = N_0(T)$ such that the following bounds hold. 
\begin{enumerate}[\normalfont (a)]
\item 
$\displaystyle \int_{n + sn^{1/2}}^\infty t^{n-1}\exp\{-t\} \dd t \le (n-1)!\exp\{-a_0 \min \{s^2, sn^{1/2}\}\}$ for $n \in \bbZ_{>0}$ and $s \ge 0$. 
\item 
$\displaystyle \int_{0}^{n-sn^{1/2}} t^{n-1}\exp\{-t\} \dd t \le (n-1)! \exp\{-b_0 s^2\}$ for $n \in \bbZ_{>0}$ and $s \ge 0$. 
\item 
$\displaystyle \int_{n + sn^{1/2}}^\infty t^{n-1}\exp\{-t\} \dd t \ge (n-1)!\exp\{-A_0 \min \{s^2, sn^{1/2}\}\}$ for $n \in \bbZ_{\ge N_0}$ and $s \in [s_0, Tn^{1/2}]$. 
\item 
$\displaystyle \int_{0}^{n-sn^{1/2}} t^{n-1}\exp\{-t\} \dd t \ge (n-1)! \exp\{-B_0 s^2\}$ for $n \in \bbZ_{\ge N_0}$ and $s \in [s_0,  (1-T^{-1})n^{1/2}]$. 
\end{enumerate}
\end{lem}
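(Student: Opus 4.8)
The plan is to read off all four bounds directly from the Gamma--exponential identity \eqref{EIntMon} together with the tail estimates already established in Lemmas \ref{LExpTUB} and \ref{LExpTLB}. First I would record the companion of \eqref{EIntMon} for the lower incomplete integral: since $S_n$ has a continuous distribution, for $n \in \bbZ_{>0}$ and $x \ge 0$,
\begin{align*}
\int_0^x t^{n-1}\exp\{-t\}\dd t = (n-1)!-\int_x^\infty t^{n-1}\exp\{-t\}\dd t = (n-1)!\,\bfP\{S_n \le x\}.
\end{align*}
When $x < 0$ both sides of this equality vanish (the integrand is supported on $\bbR_{\ge 0}$ and $S_n > 0$ a.s.), so the identity, interpreted with $\int_0^x := 0$ for $x<0$, persists for all $x \in \bbR$.

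For part (a), apply \eqref{EIntMon} with $x = n + sn^{1/2} \ge 0$ and then Lemma \ref{LExpTUB}(a):
\begin{align*}
\int_{n+sn^{1/2}}^\infty t^{n-1}\exp\{-t\}\dd t = (n-1)!\,\bfP\{S_n \ge n + sn^{1/2}\} \le (n-1)!\exp\{-a_0\min\{s^2, sn^{1/2}\}\},
\end{align*}
with the same absolute constant $a_0$ as in that lemma. For part (b), apply the companion identity with $x = n - sn^{1/2}$ and Lemma \ref{LExpTUB}(b): if $x \ge 0$ the left side equals $(n-1)!\,\bfP\{S_n \le n - sn^{1/2}\} \le (n-1)!\exp\{-b_0 s^2\}$, while if $x < 0$ the left side is $0$ and the bound is trivial. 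Parts (c) and (d) are handled identically, invoking the lower bounds Lemma \ref{LExpTLB}(a) and (b) in place of the upper bounds; the constants $A_0$, $B_0 = B_0(T)$, $s_0 = s_0(T)$, $N_0 = N_0(T)$ and the admissible ranges $s \in [s_0, Tn^{1/2}]$ (resp. $s \in [s_0,(1-T^{-1})n^{1/2}]$), $n \ge N_0$, are inherited verbatim from Lemma \ref{LExpTLB}.

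There is essentially no obstacle here: the statement is a bookkeeping translation and the only point requiring a word of care is the sign of $n - sn^{1/2}$ in parts (b) and (d), which is dealt with by the convention above. I would present the argument in three or four lines, treating parts (a)--(d) in parallel.
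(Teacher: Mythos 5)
Your proposal is correct and follows exactly the route the paper takes: the paper states that Lemma \ref{LGamTB} is immediate from the identity \eqref{EIntMon} together with Lemmas \ref{LExpTUB} and \ref{LExpTLB}, which is precisely your translation, and your extra remark about the sign of $n-sn^{1/2}$ in part (b) is a harmless bit of care the paper leaves implicit (in part (d) the hypothesis $s \le (1-T^{-1})n^{1/2}$ already forces the upper limit to be positive). Nothing further is needed.
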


\subsection{Integral bounds for stretched exponentials}

\begin{lem}
\label{LFacBnd}
$n^{n}e^{-n} \le n! \le (2n+1) n^{n}e^{-n}$ for  $n \in \bbZ_{\ge 0}$. 
\end{lem}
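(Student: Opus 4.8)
\textbf{Proof proposal for Lemma \ref{LFacBnd}.}

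The plan is to prove the double inequality $n^n e^{-n} \le n! \le (2n+1)n^n e^{-n}$ by induction on $n \in \bbZ_{\ge 0}$, after checking the trivial base case. For $n = 0$ both bounds read $1 \le 0! \le 1$, which holds; for $n = 1$ they read $e^{-1} \le 1 \le 3e^{-1}$, which also holds. For the inductive step the key quantity to control is the ratio
\begin{align*}
\frac{(n+1)!}{(n+1)^{n+1}e^{-(n+1)}} \Big/ \frac{n!}{n^n e^{-n}} = \frac{(n+1) \cdot n^n e^{-n} \cdot e^{n+1}}{(n+1)^{n+1}} = e\Big(\frac{n}{n+1}\Big)^n = e\Big(1 + \tfrac1n\Big)^{-n}.
\end{align*}
So the whole lemma reduces to two-sided control of $(1 + 1/n)^n$, namely the classical facts that the sequence $a_n = (1+1/n)^n$ is increasing with limit $e$, hence $2 \le a_n < e$ for $n \ge 1$, equivalently $1 < e\, a_n^{-1} \le e/2 < 2$. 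The lower bound $n! \ge n^n e^{-n}$ then follows because the above ratio is $e\,a_n^{-1} > 1$, so the normalized sequence $n!/(n^n e^{-n})$ is nondecreasing and starts at $1$. For the upper bound, one uses $e\,a_n^{-1} \le e/2$: writing $b_n = n!/(n^n e^{-n})$ we get $b_{n+1} \le (e/2)\,b_n$, but this alone gives exponential growth, so instead I would argue directly that $b_n \le 2n+1$ by induction — assuming $b_n \le 2n+1$, we obtain $b_{n+1} \le (e/2)(2n+1)$, and it suffices to check $(e/2)(2n+1) \le 2n+3$, i.e. $(e-2)(2n+1) \le 2$, which unfortunately fails for large $n$.

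Because of that failure, the cleaner route — and the one I would actually carry out — is to avoid the crude ratio bound and instead use the sharper inequality $(1+1/n)^n \ge 2$ only on the lower side, while for the upper side I would appeal to a more refined estimate. A robust choice is to integrate: $\log n! = \sum_{k=1}^n \log k$, and compare the sum with $\int_1^n \log t\,dt = n\log n - n + 1$ via monotonicity of $\log$. Since $\log$ is increasing, $\int_{k-1}^k \log t\, dt \le \log k \le \int_k^{k+1}\log t\,dt$, summing over $k$ from $1$ to $n$ gives
\begin{align*}
n\log n - n + 1 = \int_0^n \log t\,dt \le \log n! \le \int_1^{n+1}\log t\,dt = (n+1)\log(n+1) - n.
\end{align*}
Wait — the left integral $\int_0^n \log t\,dt$ converges and equals $n\log n - n$; using this, the left inequality gives $\log n! \ge n \log n - n$, i.e. $n! \ge n^n e^{-n}$, the desired lower bound. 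For the upper bound, from $\log n! \le (n+1)\log(n+1) - n$ one gets $n! \le (n+1)^{n+1}e^{-n} = (n+1)\,(n+1)^n e^{-n}$, and since $(n+1)^n = n^n(1+1/n)^n \le n^n e$ (as $(1+1/n)^n < e$), we conclude $n! \le e(n+1)\,n^n e^{-n} = e(n+1)n^n e^{-(n-1)}$... that extra $e$ is off. Let me instead keep $n! \le (n+1)^{n+1}e^{-n}$ and bound $(n+1)^{n+1} = (n+1)(n+1)^n \le (n+1)\cdot n^n e \le (2n+1)n^n e$ — this gives $n! \le (2n+1)n^n e^{-(n-1)}$, still with a spurious $e$.

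The resolution, and the step I expect to be the genuine (if minor) obstacle, is getting the constant exactly right: one should center the integral comparison more carefully, e.g. compare $\log k$ with $\int_{k-1/2}^{k+1/2}\log t\,dt$, or simply bound $\log k \le \log t$ for $t \in [k, k+1]$ to obtain $\log n! = \sum_{k=1}^n \log k \le \int_1^{n+1}\log t\,dt$ but then use the tighter tail $\log(n+1) \le \log n + 1/n$ and the expansion $(n+1)\log(n+1) = (n+1)\log n + (n+1)\log(1+1/n) \le (n+1)\log n + (n+1)/n \cdot \text{(something)}$ — in short, a short elementary computation shows $\log n! \le n\log n - n + \log n + 1$, which rearranges to $n! \le e\, n \cdot n^n e^{-n} \le (2n+1)n^n e^{-n}$ since $en \le 2n+1$ fails... so even this needs the constant $2n+1$ to absorb the factor $e$, and indeed $2n+1 \ge en$ for $n \ge 0$ is false. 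The honest conclusion is that the correct elementary statement needs $\log n! \le n\log n - n + \tfrac12\log n + 1$ (from the midpoint rule / Stirling's formula with explicit bounds), giving $n! \le e\sqrt n\, n^n e^{-n} \le (2n+1) n^n e^{-n}$, valid because $e\sqrt n \le 2n+1$ for all $n \ge 1$ (check $n=1$: $e \le 3$; and $2n+1$ grows faster). So the final plan: (i) prove $\log n! \ge n\log n - n$ by the left Riemann sum lower-bounding $\int_0^n \log$; (ii) prove $\log n! \le n\log n - n + \tfrac12 \log n + 1$ by the standard trapezoidal/midpoint comparison $\log k \le \frac12(\log(k-1)+\log(k+1))$... actually by $\sum_{k=1}^n \log k \le \int_{1/2}^{n+1/2}\log t\, dt = (n+\tfrac12)\log(n+\tfrac12) - (n+\tfrac12) - (\tfrac12\log\tfrac12 - \tfrac12)$ and then estimate; (iii) combine with $e\sqrt n \le 2n+1$. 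The main obstacle is purely bookkeeping the explicit constant in step (ii); no deep idea is required, and for the paper's purposes one could even cite Lemma \ref{LExpTUB}/\ref{LExpTLB} or standard Stirling bounds, but the self-contained integral-comparison argument above is the route I would write out.
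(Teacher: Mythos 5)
Your lower bound is fine, and your final target inequality $\log n! \le n\log n - n + \tfrac12\log n + 1$ combined with $e\sqrt{n} \le 2n+1$ (for $n\ge 1$, with $n=0$ checked separately) would indeed finish the job; but the justification you sketch for that key upper bound goes the wrong way. Since $\log$ is concave, the midpoint rule \emph{over}estimates the integral: the tangent-line bound gives $\int_{k-1/2}^{k+1/2}\log t\,\dd t \le \log k$, so your claimed $\sum_{k=1}^n \log k \le \int_{1/2}^{n+1/2}\log t\,\dd t$ is false for every $n\ge 1$ (already at $n=1$ the right-hand side is $\approx -0.045<0$ while the left is $0$), and likewise $\log k \le \tfrac12(\log(k-1)+\log(k+1))$ is the reverse of concavity. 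The comparison you need is the trapezoid one, which for a concave integrand goes the right way: $\tfrac12(\log(k-1)+\log k) \le \int_{k-1}^{k}\log t\,\dd t$, and summing over $k=2,\dots,n$ gives $\log n! - \tfrac12\log n \le \int_1^n \log t\,\dd t = n\log n - n + 1$, i.e.\ exactly your intermediate bound $n!\le e\sqrt{n}\,n^n e^{-n}$, after which $e\sqrt{n}\le 2n+1$ closes the argument. So the plan is salvageable, but as written the decisive step fails because both concrete inequalities you propose for it are reversed.

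For comparison: the paper states this lemma without proof, and the specific constant $2n+1$ hints at a calculus-free argument you may prefer. From $e^n=\sum_{k\ge 0} n^k/k!$, the single term $k=n$ gives $n!\ge n^n e^{-n}$; for the other direction, each of the $n$ terms with $k<n$ is at most $n^n/n!$ (the terms increase up to $k=n$), and the tail $\sum_{k\ge n} n^k/k! \le \frac{n^n}{n!}\sum_{j\ge 0}\big(\tfrac{n}{n+1}\big)^j = (n+1)\frac{n^n}{n!}$, so $e^n \le (2n+1)\frac{n^n}{n!}$, which is precisely $n!\le (2n+1)n^n e^{-n}$ with the exact constant.
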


\begin{lem}
\label{LIntBd}
Let $x \ge 0$, $p \ge 1$, $q \in (0, p]$ and $r = p/q$. The following bounds hold for some absolute constants $a_0 > 0$ and $b_0 > 0$, and some $C_0 = C_0(q) > 0$ and $c_0 = c_0(q) > 0$ depending only on $q$. 
\begin{enumerate}[\normalfont (a)]
\item $\displaystyle \int_0^\infty t^{p-1} \exp\{-t^q\} \dd t \le C_0^p p^r$. 
\item $\displaystyle \int_0^\infty t^{p-1} \exp\{-t^q\} \dd t \ge c_0^p p^r$.
\item $\displaystyle \int_x^\infty t^{p-1} \exp\{-t^q\} \dd t \le C_0^p p^{r}\exp\left\{-a_0(x^q-\lc r \rc)_+ \min \left\{\frac{x^q}{\lc r \rc}-1, 1\right\}\right\}.$
\item $\displaystyle \int_0^x t^{p-1} \exp\{-t^q\} \dd t \le C_0^p p^{r}\exp\left\{-\frac{b_0(x^q-\lf r \rf)_-^2}{\lf r \rf}\right\}.$
\end{enumerate}
\end{lem}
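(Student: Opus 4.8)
The four bounds all concern the integral $\int t^{p-1}\exp\{-t^q\}\,\dd t$, which after the substitution $u = t^q$ becomes $\frac{1}{q}\int u^{r-1}e^{-u}\,\dd u$ with $r = p/q$. So the strategy is to reduce everything to estimates on incomplete gamma integrals $\int_a^b u^{r-1}e^{-u}\,\dd u$, and then to control $\Gamma(r)$ via Stirling, i.e.\ via Lemma \ref{LFacBnd} applied to $n = \lf r\rf$ or $n = \lc r\rc$ together with monotonicity of $\Gamma$ on $[1,\infty)$ (and a crude separate treatment of $r < 1$, where the integral is bounded by an absolute constant). For (a) and (b) this is all that is needed: $\frac{1}{q}\Gamma(r) \le \frac{1}{q}\Gamma(\lc r\rc) \le \frac{1}{q}\lc r\rc! \le \frac{1}{q}(2\lc r\rc+1)\lc r\rc^{\lc r\rc}e^{-\lc r\rc}$, and since $\lc r\rc \le r+1 \le 2p$ (using $q \le p$, so $r \ge 1$ is not guaranteed — one has to split on $r \ge 1$ vs $r < 1$) and $\lc r\rc^{\lc r\rc} \le (Cr)^r \le (C'p)^r$ after absorbing the $e^{-\lc r\rc}$ and polynomial factors, one gets the form $C_0^p p^r$. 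The lower bound (b) is the same in reverse: $\frac{1}{q}\Gamma(r) \ge \frac{1}{q}\Gamma(\lf r\rf) \ge \frac{1}{q}(\lf r\rf-1)! \gtrsim \lf r\rf^{\lf r\rf}e^{-\lf r\rf} \ge c_0^p p^r$, with the $r < 1$ case handled by noting the integral is bounded below by a positive constant while $p^r \le p$. Here I would be slightly careful that the constants $C_0, c_0$ are allowed to depend on $q$, which gives room to absorb the $1/q$ factor and the crossover between the two regimes.

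\textbf{The tail estimates (c) and (d).} After $u = t^q$, (c) asks to bound $\frac{1}{q}\int_{x^q}^\infty u^{r-1}e^{-u}\,\dd u$. The natural comparison is with a gamma distribution: write $r$ against the nearest integer $N = \lc r\rc$, so $\int_{x^q}^\infty u^{r-1}e^{-u}\,\dd u \le \int_{x^q}^\infty u^{N-1}e^{-u}\,\dd u$ once $x^q \ge 1$ (and for $x^q \le 1$, or $x^q \le N$, the bound is trivial from (a) since the exponential factor on the right is then at most $1$). By \eqref{EIntMon} this last integral equals $(N-1)!\,\P\{S_N \ge x^q\}$, and now Lemma \ref{LExpTUB}(a) applied with $n = N$ and the shift parameter $s = (x^q - N)/N^{1/2}$ gives $\P\{S_N \ge x^q\} \le \exp\{-a_0\min\{s^2, sN^{1/2}\}\}$. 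One checks that $\min\{s^2, sN^{1/2}\} = \frac{(x^q-N)^2}{N}\wedge (x^q-N) = (x^q-N)\min\{\frac{x^q-N}{N},1\}$, which up to adjusting $a_0$ is $(x^q - \lc r\rc)_+\min\{\frac{x^q}{\lc r\rc}-1,1\}$ as stated (here $(x^q-N)/N \ge x^q/N - 1$). Finally $(N-1)! \le N! \le C_0^p p^r$ by the same Stirling bound as in part (a). Part (d) is entirely analogous: with $N = \lf r\rf$, $\int_0^x t^{p-1}e^{-t^q}\,\dd t = \frac{1}{q}\int_0^{x^q} u^{r-1}e^{-u}\,\dd u \le \frac{1}{q}\int_0^{x^q} u^{N-1}e^{-u}\,\dd u = \frac{(N-1)!}{q}\P\{S_N \le x^q\}$, and if $x^q \le N$ one applies Lemma \ref{LExpTUB}(b) with $s = (N - x^q)/N^{1/2}$ to get the factor $\exp\{-b_0 s^2\} = \exp\{-b_0(N-x^q)^2/N\}$, i.e.\ $\exp\{-b_0(x^q-\lf r\rf)_-^2/\lf r\rf\}$; if $x^q \ge N$ the exponential factor on the right of (d) is $1$ and the bound reduces to (a).

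\textbf{Main obstacles.} None of the steps is deep; the work is bookkeeping. The one genuinely fiddly point is the interface between the continuous parameter $r$ and the integer $N$ that the probabilistic lemmas require: one must verify that replacing $u^{r-1}$ by $u^{N-1}$ only increases the integral on the relevant range (which forces the case split at $x^q = N$, i.e.\ at whether one is in the tail or the bulk of the gamma law), and that the resulting exponent in terms of $N$ matches the claimed exponent in terms of $\lc r\rc$ or $\lf r\rf$ after harmless constant adjustments. A secondary nuisance is making sure the constants $C_0, c_0$ genuinely depend only on $q$ (not on $p$): the only $q$-dependence enters through the prefactor $1/q$ and through the passage from $r = p/q$ to $p$, both of which are controlled since $\lc r\rc \le p/q + 1 \le (1 + 1/q)p$ and $(p/q)^{p/q} = p^{p/q}q^{-p/q} \le p^{p/q}\max\{1, q^{-1}\}^{p}$. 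I would also note the degenerate small-$p$ regime ($r < 1$, possible when $q > p$ is excluded by hypothesis $q \le p$, so in fact $r \ge 1$ always — this actually \emph{removes} the $r<1$ headache, and I would use $r \ge 1$ throughout, so Stirling applies directly to $\lf r\rf \ge 1$ and $\lc r\rc \ge 1$). With that observation the proof is a clean three-line computation for each part plus the two invocations of Lemma \ref{LExpTUB}.
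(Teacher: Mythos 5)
Your route is the same as the paper's: substitute $u=t^q$, compare the resulting incomplete gamma integrals with integer-parameter ones, convert those to tail probabilities of sums of i.i.d.\ exponentials via \eqref{EIntMon}, apply Lemmas \ref{LExpTUB} and \ref{LFacBnd}. Parts (a)--(c) go through essentially as you describe; in (b) only note that $\Gamma$ is not monotone on $[1,2]$, so $\Gamma(r)\ge\Gamma(\lf r\rf)$ fails for $r\in(1,2)$, but there $p\le 2q$ and the whole regime is absorbed into $c_0(q)$ (or argue as the paper does, lower-bounding by $\int_1^\infty u^{\lf r\rf-1}e^{-u}\,\dd u$, which avoids the issue).

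The genuine gap is in (d). Your comparison $\int_0^{x^q}u^{r-1}e^{-u}\,\dd u\le\int_0^{x^q}u^{\lf r\rf-1}e^{-u}\,\dd u$ is false in general: $u^{r-1}\le u^{\lf r\rf-1}$ holds only for $u\le1$, and on $(1,x^q]$ the inequality reverses, so whenever $1<x^q\le\lf r\rf$ --- exactly the nontrivial regime of (d) --- the left-hand integral is typically the larger one (try $r=10.5$, $x^q=5$). The range restriction that saved you in (c) (there $u\ge x^q\ge\lc r\rc\ge1$, so $u^{r-1}\le u^{\lc r\rc-1}$ on the domain) has no analogue for the left tail. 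The conclusion of (d) is still true and the repair is cheap. The paper's version: bound $u^{r-1}\le u^{\lc r\rc-1}+u^{\lf r\rf-1}$, and for the first term use $\int_0^{x^q}u^{\lc r\rc-1}e^{-u}\,\dd u=(\lc r\rc-1)!\,\P\{S_{\lc r\rc}\le x^q\}\le(\lc r\rc-1)!\,\P\{S_{\lf r\rf}\le x^q\}$, since $S_{\lc r\rc}\ge S_{\lf r\rf}$; thus the larger factorial $(\lc r\rc-1)!\le C_0^pp^r$ sits in front while the decay is still expressed through $S_{\lf r\rf}$, and Lemma \ref{LExpTUB}(b) gives exactly the exponent $(x^q-\lf r\rf)_-^2/\lf r\rf$. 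Alternatively, apply Lemma \ref{LExpTUB}(b) directly to $S_{\lc r\rc}$ and note $(\lc r\rc-x^q)_+^2/\lc r\rc\ge(\lf r\rf-x^q)_+^2/(2\lf r\rf)$, at the cost of adjusting $b_0$. Either patch keeps your argument intact; as written, though, the key inequality in (d) does not hold.
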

\begin{proof}
Abbreviate $k = \lc r \rc$ and $l = \lf r \rf \ge 1$. Starting with the substitution $s = t^q$ and then appealing to Lemma \ref{LGamTB}(a), one obtains that
\begin{align}
\label{Eq-28}
\begin{split}
&\int_x^\infty t^p \exp\{-t^q\} \dd t = \frac{1}{q}\int_{x^q}^\infty s^{r-1} \exp\{-s\} \dd s \le \frac{1}{2q} \int_{x^q}^\infty (s^{k - 1} + s^{l-1}) \exp\{-s\} \dd s \\
&\le \frac{1}{q}\int_{x^q}^\infty s^{k - 1} \exp\{-s\} \dd s \le \frac{(k-1)!}{q} \cdot \exp\left\{-a_0 (x^q-k)_+ \min \left\{\frac{x^q}{k}-1, 1\right\}\right\}. 
\end{split}
\end{align}
for some absolute constant $a_0 > 0$. The second inequality in \eqref{Eq-28} uses that $k \ge l$ and that the right-hand side in \eqref{EIntMon} is nondecreasing in $n$. In the same vein,
\begin{align*}
\begin{split}
&\int_0^x t^p \exp\{-t^q\} \dd t \le \frac{1}{2q} \int_0^{x^q} (s^{k - 1} + s^{l-1}) \exp\{-s\} \dd s \\
&\le \frac{(k-1)!}{q(l-1)!}\int_{0}^{x^q} s^{l - 1} \exp\{-s\} \dd s \le \frac{(k-1)!}{q} \cdot \exp\left\{-\frac{b_0(x^q-l)_-^2}{l} \right\}. 
\end{split}
\end{align*}
for some absolute constant $b_0 > 0$. To complete the proofs of (c) and (d), it suffices to note from Lemma \ref{LFacBnd} that if $k > 1$ then 
\begin{align*}
(k-1)! \le (2k-1)(k-1)^{k-1}e^{-k+1} \le (2r+1)r^r e \le C_0^p p^{r}
\end{align*}
for sufficiently large $C_0 = C_0(q) > 0$. Part (a) is the special case $x = 0$ of both (c) and (d).  To deduce (b), use the first equality in \eqref{Eq-28} with $x = 0$ and then \eqref{EIntMon} to obtain 
\begin{align}
\label{Eq-29}
\begin{split}
\int_0^\infty t^p \exp\{-t^q\} \dd t &= \frac{1}{q}\int_{0}^\infty s^{r-1} \exp\{-s\} \dd s \ge \frac{1}{q} \int_1^\infty s^{l-1} \exp\{-s\}\dd s \\
&\ge \frac{1}{q} \int_0^\infty s^{l-1} \exp\{-s\}\dd s - \frac{1}{q} \int_0^1 \exp\{-s\} \dd s = \frac{(l-1)!-1+e^{-1}}{q}.  
\end{split}
\end{align}
When $l > 1$, the last expression in \eqref{Eq-29} is at least 
\begin{align*}
\frac{(l-1)!}{2q} \ge \frac{(l-1)^{l-1}e^{-l+1}}{2q} \ge \frac{r^{r-2}e^{-r}}{2q4^{r-2}} \ge c_0^p p^r 
\end{align*}
for some sufficiently small $c_0 = c_0(q) > 0$. The first step above relies on Lemma \ref{LFacBnd}, while the second step uses the inequalities $l \ge r-1$ and $l-1 \ge r/4$, which hold since $r \ge l \ge 2$ in the present case. This completes the proof of (b). 
\end{proof}

\subsection{A maximal inequality}

\begin{lem}
\label{LExpMaxIneq}
Let $n \in \bbZ_{>0}$ and $a, b \in \bbR_{>0}$. Let $\{X_i, Y_i: i \in [n]\}$ be a collection of independent random reals such that $X_i \sim \Exp\{a\}$ and $Y_i \sim \Exp\{b\}$ for $i \in [n]$. Let 
\begin{align}
M_k = \sum_{i \in [k]}\left(X_i-Y_i - \E[X_i-Y_i]\right) = \sum_{i \in [k]}\left(X_i-Y_i-\frac{1}{a}+\frac{1}{b}\right) \quad \text{ for } k \in [n]. \label{Eq-34}
\end{align}
Then the following bound hold with $C = \min \bigg\{\dfrac{a}{4}, \dfrac{1}{a^2} + \dfrac{1}{b^2}\bigg\}$. 
\begin{align*}
\P\left\{\max \limits_{k \in [n]} M_k\ge x\right\} &\le \exp\left\{-Cx \min\left\{\dfrac{x}{n}, 1\right\}\right\} \quad \text{ for } x \in \bbR_{\ge 0} 
\end{align*}
\end{lem}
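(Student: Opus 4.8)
The statement is a maximal-inequality version of a standard one-sided Bernstein bound for the random walk $M_k$ with independent, mean-zero increments $\xi_i = X_i - Y_i - 1/a + 1/b$. The plan is to run a Doob-type exponential-submartingale argument. First I would fix a tilting parameter $\lambda \ge 0$ (to be optimized later, subject to $\lambda < b$ so that $\E[\exp\{\lambda \xi_i\}]$ is finite) and introduce the exponential process $W_k = \exp\{\lambda M_k\}$ for $k \in [n]$, together with $W_0 = 1$. Since the $\xi_i$ are independent with mean zero, $(W_k / \prod_{i \le k}\E[\exp\{\lambda \xi_i\}])_{k}$ is a nonnegative martingale, and because each $\E[\exp\{\lambda \xi_i\}] \ge 1$ by Jensen, $(W_k)_k$ itself is a nonnegative submartingale. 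Doob's maximal inequality then gives
\begin{align*}
\P\left\{\max_{k \in [n]} M_k \ge x\right\} = \P\left\{\max_{k \in [n]} W_k \ge e^{\lambda x}\right\} \le e^{-\lambda x}\,\E[W_n] = e^{-\lambda x}\prod_{i=1}^n \E[\exp\{\lambda \xi_i\}].
\end{align*}

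Next I would estimate the single-increment m.g.f. Writing $\psi(\lambda) = \log \E[\exp\{\lambda \xi_1\}]$, one has for $0 \le \lambda < b$
\begin{align*}
\psi(\lambda) = -\log\left(1-\frac{\lambda}{a}\right) + \log\left(1+\frac{\lambda}{b}\right) - \frac{\lambda}{a} + \frac{\lambda}{b},
\end{align*}
using the l.m.g.f. of the exponential recalled in the Notation section (valid since $-\lambda < 0 < b$ makes the $Y_i$ term automatically fine, and $\lambda < a$ is forced by $\lambda < b$ only if $b \le a$, so in general I restrict to $\lambda \le \min\{a,b\}/2$). A Taylor expansion of $-\log(1-u)-u$ and $\log(1+v)-v$ around $0$ shows $\psi(\lambda) \le \tfrac12(a^{-2}+b^{-2})\lambda^2 \cdot (1 + o(1))$; more precisely, on the range $0 \le \lambda \le \min\{a,b\}/2$ one gets a clean bound of the form $\psi(\lambda) \le c\,(a^{-2}+b^{-2})\lambda^2$ for an absolute constant $c$, which after absorbing $c$ into the constant $C$ in the statement I can take as $\psi(\lambda) \le (a^{-2}+b^{-2})\lambda^2$. (This is where the $a^{-2}+b^{-2}$ half of the constant $C$ enters; the $a/4$ half will come from the cap on $\lambda$.) Plugging this in yields
\begin{align*}
\P\left\{\max_{k \in [n]} M_k \ge x\right\} \le \exp\{-\lambda x + n(a^{-2}+b^{-2})\lambda^2\}
\end{align*}
for all $0 \le \lambda \le \min\{a,b\}/2$, and I would note $\min\{a,b\}/2 \ge$ a convenient multiple of $\min\{a^{-1},\dots\}$ only after being careful; actually the cleaner route is to keep the cap as $\lambda \le a/4$ is too crude — let me instead cap at $\lambda_{\max}$ chosen so that the two regimes in the answer match.

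Finally I would optimize over $\lambda$ in the two regimes. The unconstrained minimizer of $-\lambda x + n(a^{-2}+b^{-2})\lambda^2$ is $\lambda^* = x/(2n(a^{-2}+b^{-2}))$, giving value $-x^2/(4n(a^{-2}+b^{-2}))$. When $\lambda^*$ lies below the cap (the ``$x/n$ small'' regime), this produces the bound $\exp\{-C x^2/n\}$ with $C$ comparable to $(a^{-2}+b^{-2})^{-1}$; when $\lambda^*$ exceeds the cap, I take $\lambda$ equal to the cap and get a linear-in-$x$ bound $\exp\{-C x\}$, with the cap contributing the $a/4$-type term. Taking $C = \min\{a/4,\ a^{-2}+b^{-2}\}$ as in the statement and combining the two cases gives $\exp\{-C x \min\{x/n,\,1\}\}$, after checking that the constant obtained in each regime dominates $C$ (this requires slightly reorganizing the $\psi$ bound so that the quadratic coefficient is exactly $a^{-2}+b^{-2}$ and the cap is exactly where $x/n = 1$, i.e. choosing the cap proportional to $1/(a^{-2}+b^{-2})$ times the relevant scale, or simply verifying the inequality $\min\{a/4, a^{-2}+b^{-2}\} \le$ each regime's constant by a short case analysis on whether $a$ is large or small). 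The main obstacle I anticipate is purely bookkeeping: getting the single-increment m.g.f. bound $\psi(\lambda) \le (a^{-2}+b^{-2})\lambda^2$ to hold with exactly that constant on a $\lambda$-range whose endpoint is precisely calibrated to make the crossover occur at $x/n = 1$ and to make both optimized exponents dominate the stated $C$. This is the kind of estimate where one must be slightly careful that the asymmetry between the $+\log(1+\lambda/b)$ term (which helps) and the $-\log(1-\lambda/a)$ term (which hurts and forces the $\lambda < a$ constraint) is handled so the final constant is genuinely $\min\{a/4, a^{-2}+b^{-2}\}$ and not something messier; once the Taylor bounds are pinned down the optimization is routine.
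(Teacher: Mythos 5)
Your proposal is correct and follows essentially the same route as the paper: the exponential process $\exp\{\mu M_k\}$ is a nonnegative submartingale, Doob's maximal inequality gives $e^{-\mu x}\E[\exp\{\mu M_n\}]$, the log-m.g.f.\ of each increment is bounded by a quadratic via the elementary estimate $-\log(1+t)+t\le t^2$, and one optimizes $\mu$ against a cap, yielding the two regimes. The only small correction: the binding integrability constraint is $\mu<a$ (the $X_i$ term), since $\E[\exp\{-\mu Y_i\}]$ is finite for all $\mu\ge 0$, so the paper caps at $\mu\le a/2$ rather than your more restrictive $\min\{a,b\}/2$; like you, the paper does not track the final constant tightly, which is harmless since only the form of the bound is used downstream.
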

\begin{proof}
The sequence $(M_k)_{k \in [n]}$ is a martingale. Let $\mu \in [0, c]$ where $c = \dfrac{a}{2}$. Since the function $t \mapsto \exp\{\mu t\}$ is convex on $\bbR$ and 
\begin{align*}
\E[\exp\{\mu M_k\}] = \exp\bigg\{k \log \bigg(\frac{a}{a-\mu}\bigg)-\frac{\mu k}{a} + k\log\bigg(\frac{b}{b+\mu}\bigg)+\frac{\mu k}{b}\bigg\} < \infty, 
\end{align*}
the sequence $(\exp\{\mu M_k\})_{k \in [n]}$ is a submartingale. Then applying Doob's submartingale inequality leads to 
\begin{align}
\label{Eq-24}
\begin{split}
\P\left\{\max_{k \in [n]} M_k \ge x\right\} &= \P\left\{\max_{k \in [n]} \exp\{\mu M_k\} \ge \exp\{\mu x\}\right\} \le \E[\exp\{\mu M_n\}] \exp\{-\mu x\} \\
&= \exp\bigg\{-n \log \bigg(1-\frac{\mu}{a}\bigg)-\frac{\mu n}{a}- n\log\bigg(1 + \frac{\mu}{b}\bigg)+\frac{\mu n}{b}-\mu x\bigg\} \\
&\le \exp\bigg\{\frac{1}{2}\mu^2 dn- \mu x\bigg\}, 
\end{split}
\end{align}
where $d = \dfrac{2}{a^2} + \dfrac{2}{b^2}$. The final step uses the inequality $-\log (1+t)+t \le t^2$ for $t \in [-1/2, \infty)$. With $\mu = \min \bigg\{\dfrac{x}{dn}, c\bigg\}$, the last exponent in \eqref{Eq-24} attains its optimal value 
\begin{align*}
&-\frac{x^2}{2dn}\one\{x \le cdn\}  + \bigg(\frac{1}{2}c^2 d n - cx\bigg)\one\{x > cdn\} \\
&\le -\frac{x^2}{2dn}\one\{x \le cdn\}  - \frac{cx}{2}\one\{x > cdn\} = - \frac{1}{2} \min \bigg\{\frac{x^2}{dn}, cx\bigg\} \le - Cx \min \bigg\{\frac{x}{n}, 1\bigg\}, 
\end{align*}
completing the proof. 
\end{proof}


\bibliographystyle{habbrv}
\bibliography{Refs} 

\end{document}